\documentclass[11pt]{amsart}
\usepackage{dsfont,mathdots}
\usepackage{amsthm}
\usepackage{enumitem}
\usepackage{mathtools}
\usepackage{moreenum}
\usepackage{amssymb,verbatim}
\usepackage{upgreek}
\usepackage{xspace,cmap}
\usepackage{csquotes}
\usepackage{stmaryrd} 
\usepackage[colorlinks,citecolor=blue,urlcolor=black,linkcolor=black]{hyperref}
\usepackage{bbm}
\usepackage{todonotes}
\usepackage{array}
\usepackage{booktabs}

\usepackage{mathtools}

\usepackage{pst-node}
\usepackage{tikz}
\usetikzlibrary{positioning, arrows.meta}

\usepackage{tikz-cd}

\renewcommand{\restriction}{\mathbin\upharpoonright}    
\newtheorem*{rep@theorem}{\rep@title}
\newcommand{\newreptheorem}[2]{%
\newenvironment{rep#1}[1]{%
 \def\rep@title{#2 \ref{##1}}%
 \begin{rep@theorem}}%
 {\end{rep@theorem}}}
\newreptheorem{theorem}{Theorem}
\newtheorem*{theorem*}{Theorem}
\newtheorem*{maintheorem*}{Main Theorem}
\newtheorem*{corollary*}{Corollary}
\newtheorem*{definition*}{Definition}
\newtheorem{mainthm}{Main Theorem}

\newtheorem{theorem}{Theorem}[section]

\newtheorem{prop}[theorem]{Proposition}
\newtheorem{claim}[theorem]{Claim}

\newtheorem{lemma}[theorem]{Lemma}
\newtheorem{cor}[theorem]{Corollary}

\newtheorem{question}[theorem]{Question}
\newtheorem{fact}[theorem]{Fact}
\newreptheorem{corollary}{Corollary}

\theoremstyle{definition}
\newtheorem{example}[theorem]{Example}
\newtheorem{definition}[theorem]{Definition}
\newtheorem{notation}[theorem]{Notation}
\newtheorem{conv}[theorem]{Convention}

\theoremstyle{remark}
\newtheorem{remark}[theorem]{Remark}

\newcommand*\axiomfont[1]{\textsf{\textup{#1}}}
\newcommand\zfc{\axiomfont{ZFC}}

\newcommand\ben[1]{\marginpar{Ben: #1}}

\newcommand\seba[1]{\marginpar{Seba: #1}}

\renewcommand{\P}{\mathbb{P}}
\newcommand{\Q}{\mathbb{Q}}

\newcommand{\U}{\mathcal{U}}
\newcommand{\pkl}{P_\kappa(\lambda)}

\DeclareMathOperator{\range}{Im}

\DeclareMathOperator{\id}{id}
\DeclareMathOperator{\fin}{fin}
\DeclareMathOperator{\FIN}{FIN}
\DeclareMathOperator{\CH}{CH}

    \newcommand{\one}{\mathop{1\hskip-3pt {\rm l}}}

\makeatletter
\newcommand{\tpitchfork}{
  \vbox{
    \baselineskip\z@skip
    \lineskip-.52ex
    \lineskiplimit\maxdimen
    \m@th
    \ialign{##\crcr\hidewidth\smash{$-$}\hidewidth\crcr$\pitchfork$\crcr}
  }
}
\makeatother

\def\s{\subseteq}
\def\forces{\Vdash}

\DeclareMathOperator{\col}{Col}
\DeclareMathOperator{\Add}{Add}

\DeclareMathOperator{\rng}{Im}

\DeclareMathOperator{\ro}{\mathcal{RO}}

\DeclareMathOperator{\cf}{cf}

\DeclareMathOperator{\ord}{Ord}
\DeclareMathOperator{\add}{Add}

\newcommand{\dom}{\mathop{\mathrm{dom}}\nolimits}

\newcommand{\Col}{\mathop{\mathrm{Col}}}

\title[Strongly Compact Prikry]{On the intermediate models of strongly compact Prikry forcing}
\author[Benhamou]{Tom Benhamou}
\address[Benhamou]{Hill Center for Mathematical Sciences, Rutgers University, Piscataway, New Jersey USA}
\email[Benhamou]{tom.benhamou@rutges.edu}
\urladdr{https://sites.math.rutgers.edu/~tb822/}
\author[Thei]{Sebastiano Thei}
\address[Thei]{IMPAN, Warsaw, Poland}
\email[Thei]{thei91.seba@gmail.com}
\author[Weltsch]{Ben-Zion Weltsch}
\address[Weltsch]{Hill Center for Mathematical Sciences, Rutgers University, Piscataway, New Jersey USA}
\email[Weltsch]{ben.w@rutgers.edu}

\subjclass[2020]{}

\begin{document}
\begin{abstract}
    We analyze the intermediate models of the strongly compact Prikry forcing. 
    We exhibit a simple combinatorial property which, for a given supercompact cardinal $\kappa$, characterizes all projections of the strongly compact Prikry forcing using $\kappa$-complete fine measures. Considering level-by-level results, if $\kappa$ is $2^\lambda$-strongly compact, we characterize the forcings of size $\leq\lambda$ which are   projections of that $\lambda$-strongly compact Prikry forcing.
    Our characterization generalizes several known results, including those from \cite{Benhamou_Hayut_Gitik} and folklore results regarding the class of $\kappa$-distributive forcing notions which are embedded into the supercompact Prikry forcing. 
    Fixing a $\kappa$-complete fine measure $\mathcal{U}$ on $P_\kappa(\lambda)$, we also provide Rudin-Keisler like criteria for the existence projections from the strongly compact Prikry forcing with $\mathcal{U}$. 
    Finally, we prove that among all projections of the $\lambda$-strongly compact Prikry forcing, the class of forcings of cardinality $\lambda$ are exactly those for which there is a projection map which depends only on the stem of the Prikry condition. We also give partial results regarding projections of arbitrary cardinality.
\end{abstract}
\maketitle

\section{Introduction}

Understanding a mathematical structure often narrows down to understanding its substructures. 
In set theory, and more particularly in forcing theory, the structure of subforcings of a given forcing can be translated to the analysis of intermediate models of generic extensions by that forcing. 
These intermediate extensions provide insight into the generic evolution of the forcing, and how sets are being added to the ground model. 
This perspective has proven especially fruitful in identifying rigidity, minimality, and universality phenomena among forcing notions.

A number of classical results illustrate this theme. Maharam \cite{Maharam} showed that subforcings of Cohen forcing are again the Cohen forcing, revealing a strong form of rigidity. 
An extreme example of a forcing with a rigid structure of intermediate models was given by Sacks  \cite{Sacks1971} who constructed the first example of a minimal forcing, in the sense that it admits no proper intermediate models. 
The structure of intermediate models for products of forcing notions remains more subtle; for instance, it is a long-standing open problem whether the product of two random forcings admits nontrivial intermediate extensions i.e. other than a single random extension and a single Cohen extension.

On the other extreme, some forcing notions are universal for certain classes, that is, a generic extension by such a forcing absorbs generic extensions from every forcing in that class.
The most basic example is that of the L\'{e}vy collapse $\col(\kappa,\lambda)$ which absorbs every $\kappa$-strategically closed forcing of size $\leq\lambda$ (see for example \cite[14.1]{Cummings_Handbook}). 
Our focus in this paper is on another universal forcing -- the $\lambda$-supercompact and $\lambda$-strongly compact Prikry forcing. 
This forcing was introduced by Magidor in \cite{Magidor1977} to resolve an old problem about failure of the Singular Cardinal Hypothesis at $\aleph_{\omega}$, but has also been one of the most important tools in the analysis of singular cardinal combinatorics \cite{Magidor1977b,MagidorShelah1994,gitik-sharon,neeman-tpsch,dima_tpalephomega}. Our main result is a simple combinatorial level-by-level classification of the intermediate models of the $\lambda$-strongly compact Prikry forcing of cardinality $\lambda$. 
To explain our classification, we recall two folklore results:
\begin{itemize}
    \item Every $\kappa$-distributive forcing of cardinality $\lambda$ is absorbed by the $2^\lambda$-supercompact Prikry forcing (see 
    \cite{Gitik2010}).
    \item If $W\leq_{RK} U$, the Prikry forcing with $U$ projects onto Prikry forcing with $W$ (see \ref{thm: seed implies weak projection}). 
\end{itemize}
We introduce the class of $(\omega,\kappa)$-predistributive forcing notions (see Definition \ref{def: predistributive} and Definition \ref{def ditributive}) and show that:

\begin{tikzpicture}[
    implies-arrow/.style={double, double equal sign distance, -{Implies}, shorten >=1pt, shorten <=1pt, thick}
]

    \node (2dist)    {$(\kappa,2)$-distributive};
    \node (predist) [right=of 2dist] {$(\omega,\kappa)$-predistributive};
    \node (blank) [right=2cm of predist] {};

    \node (kdist)  [above right=0.2cm and -1cm of blank] {$\kappa$-distributive};
    \node (sigmapr) [below=0.2cm of blank] {$\Sigma$-Prikry};
    

    \draw[implies-arrow] (predist) -- (2dist);
    \draw[implies-arrow]  (kdist) -- (predist);
    \draw[implies-arrow] (sigmapr) -- (predist);

\end{tikzpicture}

Then our main result of this paper is the following, which appears as Corollary \ref{LevelByLevel Cor}.
\begin{mainthm}
     Suppose that $\kappa$ is $2^\lambda$-strongly compact. 
     The following are equivalent for any forcing $\mathbb{Q}$ of cardinality $\leq\lambda$:
    \begin{enumerate}
        \item $\mathbb{Q}$ is $(\omega,\kappa)$-predistributive.
        \item There is a $\kappa$-complete fine ultrafilter $\mathcal{U}$ over $P_\kappa(\lambda)$ such that $\mathbb{Q}$ is a projection of $\mathbb{P}_{\mathcal{U}}$. 
    \end{enumerate}
\end{mainthm}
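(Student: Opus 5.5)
The plan is to prove the two implications separately, using the concrete form of $(\omega,\kappa)$-predistributivity from Definition \ref{def: predistributive} and Definition \ref{def ditributive}. Roughly, that definition says the conditions of $\mathbb{Q}$ can be organised along finite stages $n<\omega$ so that, at each stage, fewer than $\kappa$ dense open requirements can be met simultaneously. I will use it in that form throughout. Both directions use the following dictionary: a condition $\langle x_1,\dots,x_n,A\rangle$ of $\mathbb{P}_{\mathcal{U}}$ is sent to a condition of $\mathbb{Q}$ determined by the stem alone, and $\kappa$-completeness of $\mathcal{U}$ corresponds to the "$<\kappa$ requirements at a stage" clause of predistributivity.

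\textbf{(2)$\Rightarrow$(1).} Fix a projection $\pi:\mathbb{P}_{\mathcal{U}}\to\mathbb{Q}$. Let $\{D_i:i<\mu\}$ with $\mu<\kappa$ be dense open subsets of $\mathbb{Q}$. Their preimages $\pi^{-1}[D_i]$ are dense open in $\mathbb{P}_{\mathcal{U}}$. By the strong Prikry property, for each stem $s$ and each $i$ there is a measure-one set $A^s_i$ and a number $n_i$ such that every $n_i$-step extension of $\langle s,A^s_i\rangle$ lies in $\pi^{-1}[D_i]$. Since $\mu<\kappa$ and $\mathcal{U}$ is $\kappa$-complete, $A^s=\bigcap_i A^s_i\in\mathcal{U}$. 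I would then read off the witnesses for predistributivity in $\mathbb{Q}$ as the images $\pi(\langle s,A^s\rangle)$ together with the images of their finite-step extensions. The cardinality assumption is not needed in this direction.

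\textbf{(1)$\Rightarrow$(2).} Enumerate $\mathbb{Q}=\{q_\alpha:\alpha<\lambda\}$ and fix the predistributivity structure. I would build a $\kappa$-complete fine filter $\mathcal{F}$ on $P_\kappa(\lambda)$ generated by at most $2^\lambda$ sets. These consist of the cones $\{x: \alpha\in x\}$ for fineness, together with sets of the form
\[
B_{\vec D,q}=\{x\in P_\kappa(\lambda): q_x\le q \text{ and } q_x \text{ meets each } D\in\vec D\},
\]
where $\vec D$ ranges over $<\kappa$-sequences of dense open subsets of $\mathbb{Q}$ and $q_x$ is a condition chosen canonically from $x$, for instance using the least indices appearing in $x$. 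The role of predistributivity is to show that $<\kappa$ many such requirements have a common witness, so that $\mathcal{F}$ is $\kappa$-complete. Since $\mathbb{Q}$ has at most $2^\lambda$ dense sets, $\mathcal{F}$ is generated by at most $2^\lambda$ sets, and $2^\lambda$-strong compactness of $\kappa$ extends it to a $\kappa$-complete fine ultrafilter $\mathcal{U}$. I would then define $\pi(\langle x_1,\dots,x_n,A\rangle)=q_{x_1,\dots,x_n}$ by iterating the assignment along the stem, following the stage-$n$ structure.

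It remains to check that $\pi$ is a projection: it is order preserving, and whenever $q\le\pi(p)$ there is $p'\le p$ with $\pi(p')\le q$. For the latter, one extends the stem of $p$ by a point $x$ lying in the measure-one set that forces $q_x$ below $q$. I expect this verification, together with the correct choice of the canonical assignment $x\mapsto q_x$, to be the main obstacle. The assignment must be coherent along stems, which is exactly where the $\omega$-stage part of predistributivity must be used. If the naive choice fails, I would first try making $q_x$ depend on the entire stem $x_1\subseteq\dots\subseteq x_n$ and encoding into $\mathcal{F}$ sets that depend on the preceding stem, using normality-like diagonal intersections available from the fineness of $\mathcal{U}$.
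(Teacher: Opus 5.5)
Your (2)$\Rightarrow$(1) is essentially the paper's argument. The strong Prikry property and the $\kappa$-closure of $\le^*$ make $\mathbb{P}_{\mathcal{U}}$ $(\omega,\kappa)$-predistributive; here you should intersect the fewer than $\kappa$ direct-extension \emph{trees}, since $\mathcal{U}$ need not be normal. Proposition~\ref{thm: closed under proj} then transfers this through $\pi$: start below some $p$ with $\pi(p)\le q$, and use that $\pi$ is a projection, not just a weak one, to get density of the image sets.

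The direction (1)$\Rightarrow$(2), however, has a genuine gap, rooted in a misreading of Definition~\ref{def: predistributive}. Predistributivity does \emph{not} provide a common witness for fewer than $\kappa$ dense open sets. It provides $q'\le q$ and countably many sets $d_n$, dense open below $q'$, such that each $D_\alpha$ contains some $d_n$; no single condition need lie in all the $d_n$. Your generators $B_{\vec D,q}$ demand a single $q_x\in\bigcap\vec D$ below $q$, and this fails in two ways:
\begin{itemize}
\item For incompatible $q,q'$ the generators are already disjoint.
\item Even for a fixed $q$, they are nonempty only if $\bigcap\vec D$ meets $\mathbb{Q}/q$. If that held below every $q$, your $\mathcal{U}$ would witness $\mathcal{F}_q(\mathbb{Q})\le_K\mathcal{U}$ for all $q$, and Corollary~\ref{Cor: RK and distributive} would make $\mathbb{Q}$ $\kappa$-distributive.
\end{itemize}
Concretely, let $\mathbb{Q}$ be classical Prikry forcing with a normal measure on $\kappa$, and let $\lambda\ge 2^\kappa$. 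It is predistributive by Proposition~\ref{prop: sigma Prikry and kappa-dist}. It is also typical of what the theorem is about: by Corollary~\ref{distributive projections}, a predistributive forcing that is not $\kappa$-distributive adds an $\omega$-sequence. The dense open sets $D_n=\{p:|\mathrm{stem}(p)|\ge n\}$ have empty intersection. So $B_{\vec D,q}=\emptyset$ for $\vec D=\langle D_n:n<\omega\rangle$ and $q=\one_{\mathbb{Q}}$, and your filter is improper.

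Your projection check fails for the same reason. Suppose $q,q'\le\pi(p)$ are incompatible and you only add one point to the stem $s$ of $p$. Since $p$ may have any successor set in $\mathcal{U}$, both $\{x:q_{s^{\frown}\langle x\rangle}\le q\}$ and $\{x:q_{s^{\frown}\langle x\rangle}\le q'\}$ would have to lie in $\mathcal{U}$, which is impossible. Making $q_x$ depend on the whole stem does not help as long as each requirement is a single one-step set. Also, fineness yields no diagonal intersections; that is normality, which strong compactness alone does not give.

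The missing idea is that what has to be coded is a \emph{decreasing $\omega$-sequence} of conditions that \emph{eventually} enters every ground-model dense open set. From the $d_n$, choose $p_0\ge p_1\ge\cdots$ with $p_n\in d_n$; then $p_m\in D_\alpha$ for all sufficiently large $m$. The paper's route (cf.\ the proposition preceding Corollary~\ref{LevelByLevel Cor}) goes as follows:
\begin{itemize}
\item Take an embedding $j$ witnessing $2^\lambda$-strong compactness, and cover $j``\mathcal{F}(\mathbb{Q})$ by a set of size $<j(\kappa)$ in the target model.
\item Apply predistributivity of $j(\mathbb{Q})$ there to get such a diagonalizing sequence.
\item Derive a fine $\kappa$-complete $\mathcal{U}$ on $P_\kappa(\lambda)$ whose factor map has this sequence in its range, so that $\mathcal{F}_q(\mathbb{Q})\le^{\fin}_{sK}\mathcal{U}$ for every $q$.
\item Theorem~\ref{Thm: more general equivalence} and Remark~\ref{Remark: last remark of section 3} then convert this into a projection, by working in the iterated ultrapower $M_\omega$.
\end{itemize}
Your filter-extension strategy can be repaired along these lines. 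With a suitable coding, let each $x$ code, for each $q\in x$, a decreasing sequence $\langle q^x_n(q):n<\omega\rangle$ below $q$. Use the cones together with the generators $\{x:q\in x,\ \exists N\,\forall n\ge N\ q^x_n(q)\in D\}$; predistributivity shows that any fewer than $\kappa$ of these have nonempty intersection. You would still need an argument like Theorem~\ref{Thm: more general equivalence} to turn these sequences into an actual projection, and that is exactly the step your sketch leaves open.
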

Although the theorem is stated in the context of large cardinals, the direction $(2) \Rightarrow (1)$ can be proved in ZFC.
We then deduce the following characterization of intermediate models of the supercompact and strongly compact Prikry forcing:
\begin{repcorollary}
    {cor: no bounded subsets} If $\kappa$ is strongly compact (supercompact), then the following are equivalent for any forcing notion $\mathbb{Q}$:
    \begin{enumerate}
        \item $\mathbb{Q}$ is $(\omega,\kappa)$-predistributive.
        \item There is a fine $\kappa$-complete  (normal) measure $\mathcal{U}$  such that $\ro(\mathbb{Q})$ is a projection of the $\mathbb{P}_{\mathcal{U}}$.
    \end{enumerate}
\end{repcorollary}

Intermediate models of Prikry-type forcings have been of independent interest. 
For example, they provide a vital framework to iterate distributive forcings (see \cite{Gitik2010}). 
This line of research includes several recent developments, starting with the work of Gitik-Kanovei-Koepke \cite{PrikryCaseGitikKanKoe} which proved that the only intermediate models of the classical Prikry forcing with a normal measure $U$ are Prikry-extensions for the same $U$. 
Koepke-R\"{a}sch-Schlicht \cite{MinimalPrikry} showed that a tree Prikry forcing can be minimal in the sense of Sacks. More recently, the first author and Gitik extended these results to Magidor forcing with $o(\kappa)<\kappa^+$ \cite{TomMoti,partOne,Parttwo}, and together with Hayut \cite{Benhamou_Hayut_Gitik} studied the tree Prikry forcing. Regarding forcings of larger cardinality the first author and Gitik showed that $\add(\kappa,\kappa^+)$ is consistently a projection of the tree Prikry forcing \cite{BENHAMOU_GITIK_2024}.
Gitik \cite{GitikOnCompactCardinals} showed that the extender-based Prikry forcing can also absorb all $\kappa$-distributive forcings of cardinality $\kappa$ (see also Poveda-Hayut \cite{HayutForthcoming-HAYTGP}). 
Very recently, Ben-Neria, Goldberg, and Kaplan \cite{BenNeria_Goldberg_Kaplan} showed that every $(\kappa,2)$-distributive  forcing is of Prikry type. 

The basic tool to analyze intermediate models is to establish a connection between the existence of a forcing projection and the Rudin-Keisler order or  the Kat\v{e}tov order (Def \ref{def rk}). 
These results can be viewed as a generalizations of the results from \cite{Benhamou_Hayut_Gitik}:
\begin{repcorollary}{corollary: Katetov dist}
    Let $\lambda$ be regular and let $\mathbb{Q}$ be a forcing of cardinality $\leq\lambda$ and let $\mathcal{U}$ be a $\kappa$-complete fine ultrafilter over $P_\kappa(\lambda)$. Then the following are equivalent:
     \begin{enumerate}
         \item There is   $n<\omega$ such that $\mathcal{F}(\mathbb{Q})\leq_K \mathcal{U}^n$.
         \item $\mathbb{Q}$ is $\sigma$-distributive and a   weak projection of $\mathbb{P}_{\mathcal{U}}$.
     \end{enumerate}
\end{repcorollary}
For non-distributive forcing, we obtain the following Rudin-Keisler like characterization:
\begin{mainthm}
    Let $\mathbb{Q}$ be a forcing notion of cardinality $\leq\lambda$ and let $\mathcal{U}$ be a fine $\kappa$-complete ultrafilter on $P_\kappa(\lambda)$.  Then the following are equivalent:
    \begin{enumerate}
        \item There is a sequence $q_0\geq q_1\geq...\geq q_n\geq...$ such that for every $n$, $q_n\in j_{\omega}(\mathbb{Q})$ diagonalizes $j^{\mathcal{U}}_\omega"\mathcal{F}_{\mathds{\mathbb{Q}}}$.
        \footnote{By this we mean that for each $n$, $A \in j^\mathcal{U}_\omega `` \mathcal{F}_{\Q}$ iff $q_n \in A$.}
        \item $\ro(\mathbb{Q})$ is a projection of $\mathbb{P}_{\mathcal{U}}$.
    \end{enumerate}
\end{mainthm}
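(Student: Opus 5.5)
The plan is to use the iterated ultrapower $j_\omega=j^{\mathcal{U}}_\omega\colon V\to M_\omega$ together with the Bukovsk\'y--Dehornoy description of a $j_\omega(\mathbb{P}_{\mathcal{U}})$-generic over $M_\omega$. Write $x_n=j_{n+1,\omega}([\mathrm{id}]_{j_n(\mathcal{U})})$ for the $n$-th seed. The sequence $\vec x=\langle x_n\mid n<\omega\rangle$ is then $M_\omega$-generic for $j_\omega(\mathbb{P}_{\mathcal{U}})$ in the sense that it meets every $j_\omega(D)$, for $D\subseteq\mathbb{P}_{\mathcal{U}}$ dense open. Moreover, every element of $M_\omega$ has the form $j_{m,\omega}(h)(x_0,\dots,x_{m-1})=j_\omega(h)(x_0,\dots,x_{m-1})$ for some $m<\omega$ and $h\colon P_\kappa(\lambda)^m\to V$. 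Both directions will translate between functions on stems and elements of $j_\omega(\mathbb{Q})$ through this representation. Throughout I identify $\mathcal{F}_{\mathbb{Q}}$ with the filter on $\ro(\mathbb{Q})$ generated by the dense open sets. Since $|\mathbb{Q}|\le\lambda$ and $M_\omega$ is closed enough, the sets $j_\omega(A)$ behave well: each $A\in\mathcal{F}_{\mathbb{Q}}$ is determined by $\lambda$ many points, and $j_\omega\restriction\ro(\mathbb{Q})$ is available in $M_\omega$ up to the coding by seeds.

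\emph{(2)$\Rightarrow$(1).} Let $\pi\colon\mathbb{P}_{\mathcal{U}}\to\ro(\mathbb{Q})$ be a projection. By the strong Prikry property, for every $D$ dense open in $\ro(\mathbb{Q})$ and every stem $s$ there are $n_{s,D}<\omega$ and a measure one set $A_{s,D}$ such that every $n_{s,D}$-step extension of $\langle s,A_{s,D}\rangle$ inside $A_{s,D}$ is mapped by $\pi$ into $D$. I will set
\[
q_n=j_\omega(\pi)\bigl(\langle x_0,\dots,x_{n-1}\rangle,\ B\bigr),
\]
where $B$ is a measure one set for $j_\omega(\mathcal{U})$ contained in $j_\omega(A)$ for all relevant $A$. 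Such a $B$ exists because $\kappa$-completeness of $j_\omega(\mathcal{U})$ in $M_\omega$ lets us intersect the $\lambda$ many sets $j_\omega(A_{s,D})$. This choice makes $q_n$ decreasing, since $\pi$ is order preserving and $x_n\in B$. The diagonalization is proved by elementarity: $q_n\in j_\omega(A)$ for $A\in\mathcal{F}_{\mathbb{Q}}$ because $\vec x$ meets $j_\omega(\pi^{-1}[A])$ and is eventually inside all the $j_\omega(A_{s,D})$. Conversely, if $A\notin\mathcal{F}_{\mathbb{Q}}$, then some $q\in\ro(\mathbb{Q})$ is disjoint from $A$ below every condition. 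Projecting a condition onto $q$ and pulling back via $j_\omega$ then shows $q_n\notin j_\omega(A)$.

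\emph{(1)$\Rightarrow$(2).} Given $\langle q_n\rangle$, write $q_n=j_\omega(g_n)(x_0,\dots,x_{m_n-1})$ with $m_n\ge n$ and $m_n$ increasing. By \L o\'s for iterated ultrapowers, the relations $q_{n+1}\le q_n$ and $q_n\in j_\omega(A)$ for $A\in\mathcal{F}_{\mathbb{Q}}$ hold on $\mathcal{U}^{m_n}$-measure one sets of tuples. Shrinking to a tree $T$ of measure one sets, I define the projection on conditions with stem $s$ of length $n$ and tree $S\subseteq T$ by
\[
\pi(s,S)=\bigvee\{\,g_n(s^\frown t)\mid s^\frown t\in S,\ |t|=m_n-n\,\}\in\ro(\mathbb{Q}).
\]
Order preservation comes from the measure one monotonicity, and $\pi$ of the maximal condition is $1$. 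The key projection clause is this: if $q\le\pi(s,S)$, I must find $p'\le(s,S)$ with $\pi(p')\le q$. For this, consider the set $E_q$ of elements of $\ro(\mathbb{Q})$ that are either below $q$ or incompatible with $q$. It is dense open, so $E_q\in\mathcal{F}_{\mathbb{Q}}$, hence $q_n\in j_\omega(E_q)$ for every $n$. By \L o\'s, on a measure one set of extensions the value $g_n(s^\frown t)$ decides $q$. Since the join is $\ge q$, some such extension gives a value below $q$, and passing to a further direct extension makes $\pi$ of it below $q$.

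The step I expect to be the main obstacle is this last one, together with the converse half of the diagonalization in (2)$\Rightarrow$(1). In both places one needs the join over a measure one set to be controlled by the diagonalizing values, which requires that the representing functions $g_n$ cohere across different $n$. I would first try to fix the $g_n$ inductively, choosing $g_{n+1}$ to extend $g_n$ on a $\mathcal{U}^{m_n}$-measure one set. If this fails, I would replace the join by the Boolean value in $\ro(\mathbb{Q})$ of ``$g_n(s^\frown t)$ for $\mathcal{U}^{m_n-n}$-almost all $t$'', computed via the ultrafilter.
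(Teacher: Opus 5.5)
Both directions have a genuine gap of the same kind: you try to handle all of $\mathcal{F}(\mathbb{Q})$ with a single measure one object. But $\mathcal{U}$ is only $\kappa$-complete, and $j_\omega(\mathcal{U})$ is $j_\omega(\kappa)$-complete only for families that are elements of $M_\omega$.

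\textbf{The direction (2)$\Rightarrow$(1).} Here everything rests on $B$, and its justification fails. The sets $A_{s,D}$ are indexed by all $D\in\mathcal{F}(\mathbb{Q})$, so there are up to $2^\lambda$ of them, not $\lambda$. Fineness lets you cover a pointwise image $j_\omega``\mathcal{A}$ when $|\mathcal{A}|\leq\lambda$, but that would suffice only if $\mathcal{F}(\mathbb{Q})$ were $\lambda$-generated (the situation of Corollary \ref{cor: folk}). Since each $A_{s,D}$ can be shrunk at will, there is no reason for a common lower bound to exist at all.

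The missing step is Theorem \ref{thm: omega generated from projection}. Let $G_\omega$ be generated by the seeds; then $H_\omega=j_\omega(\pi)``G_\omega$ is $M_\omega$-generic. At stage $n$, let $F_n$ be the set of conditions of $j_{0,n}(\mathbb{Q})$ lying above the projection of some condition with stem $\langle c_0,\dots,c_{n-1}\rangle$. Using $|\mathbb{Q}|\leq\lambda$, cover $j_{n,n+1}``F_n$ by a small $x_n\in M_{n+1}$. Then $Y_n=x_n\cap j^{-1}_{n+1,\omega}[H_\omega]$ has size below $\mathrm{crit}(j_{n+1,\omega})$, so $Y^*_n=j_{n+1,\omega}(Y_n)\in M_\omega$ contains $j_{n,\omega}``F_n$. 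One then picks $q_n\in H_\omega$ below $Y^*_n$ by the forcing theorem over $M_\omega$. What gets covered are sets of conditions of $\mathbb{Q}$, never measure one sets indexed by dense sets. The $q_n$ generate $H_\omega$, and diagonalization follows from genericity.

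A tree of your shape does exist a posteriori: diagonally intersect in $V$ representatives of countably many witnesses $p_n\in G_\omega$ to $q_n\in H_\omega$. But it works because the $q_n$ generate $H_\omega$, not because it lies below the $j_\omega(A_{s,D})$.

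Also drop the ``converse half''. For $b\in\ro(\mathbb{Q})\setminus\{0,\one\}$, the maximal antichain $\{b,\neg b\}$ puts a tail of the $q_n$ into $j_\omega$ of a non-dense cone. So the footnote's ``iff'' cannot be meant literally; (1) means $\mathcal{F}(\mathbb{Q})\subseteq\FIN(j_\omega,\vec q)$, as in Theorem \ref{Thm: more general equivalence}.

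\textbf{The direction (1)$\Rightarrow$(2).} The same problem reappears. \L o\'s turns $q_n\in j_\omega(E)$ into a $\mathcal{U}^{m_n}$-measure one set of $m_n$-tuples, and only for a tail of $n$; so ``$q_n\in j_\omega(E_q)$ for every $n$'' is false. Such a set says nothing above a fixed nonempty stem $s$, since the tuples extending $s$ form a null set. With one such set for each of the possibly $\lambda$ many $E_q$, you cannot shrink to a single tree. So for an arbitrary $(s,S)$ nothing controls the values $g_n(s^\frown t)$, and making the $g_n$ cohere across $n$ does not change this.

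The idea you are missing is the paper's shift. The hypothesis only concerns $j_\omega``\mathcal{F}(\mathbb{Q})$. Applying $j_{0,m_k}$ and Lemma \ref{Lemma for iteration}, the sequence $\vec q^{\,k}=j_{0,m_k}(\vec q)$ diagonalizes $j_{m_k,\omega}``j_{0,m_k}(\mathcal{F}(\mathbb{Q}))$. The diagonal sequence $q^*_k=q^k_k$ then eventually enters every set in $j_\omega(\mathcal{F}(\mathbb{Q}))$. By \L o\'s these sets are $j_\omega(h)(x_0,\dots,x_{m-1})$ for stem-dependent assignments $h$ of dense sets, which is precisely what handles arbitrary stems. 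So $\vec q^{\,*}$ generates an $M_\omega$-generic filter lying in $M_\omega[G_\omega]=\bigcap_n M_n$. Elementarity plus Fact \ref{fact: equivalence with generics and weak projections} then give the map, without defining $\pi$ by hand.

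Finally, this yields only a weak projection, and a single sequence cannot give more. Take the lottery sum of an atom $a$ and $\Add(\omega,1)$, with $q_0=\one$ and $q_n=a$ for $n\geq 1$. This satisfies (1), yet there is no projection, since $\mathbb{P}_{\mathcal{U}}$ adds no reals. Your projection clause indeed fails already at $(\emptyset,S)$. The paper gets a projection from such sequences through $j_\omega(q)$ for each $q$, via Remark \ref{Remark: last remark of section 3} and Proposition \ref{Proposition: turnining a weak projection to a projection}.
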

Using ideas from the theory of $\Sigma$-Prikry forcing, namely the \emph{$\kappa$-trace property}, we prove the combinatorial criterion for being a projection of supercompact Prikry forcing.
\begin{mainthm}
    Every  $(\omega,\kappa)$-predistributive forcing whose filter of dense open sets is generated by ${\leq}\lambda$-many sets is a projection of the $\lambda$-supercompact Prikry forcing $\P_\mathcal{U}$ for every normal, fine $\mathcal{U}$ over $P_\kappa(\lambda)$.
\end{mainthm}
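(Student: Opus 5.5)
We prove the statement through the Rudin--Keisler-like criterion of the second Main Theorem: $\ro(\mathbb{Q})$ is a projection of $\mathbb{P}_\mathcal{U}$ as soon as there is a descending sequence $q_0\geq q_1\geq\cdots$ in $j_\omega(\mathbb{Q})$ with each $q_n$ diagonalizing $j^{\mathcal{U}}_\omega``\mathcal{F}_{\mathbb{Q}}$. That theorem assumes $|\mathbb{Q}|\leq\lambda$. Here we only know that $\mathcal{F}_{\mathbb{Q}}$ is generated by ${\leq}\lambda$ dense open sets $\langle D_\alpha:\alpha<\lambda\rangle$. So the first step is to check that its proof only uses a ${\leq}\lambda$ generating family. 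Otherwise we replace $\mathbb{Q}$ by a dense suborder, or by the complete subalgebra generated by witnesses for the $D_\alpha$; this does not change $\ro(\mathbb{Q})$ up to the relevant projection.

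It then suffices to build the sequence $q_n$. Write $j_n\colon V\to M_n$ for the finite iterates and $j_{n,\omega}$ for the tail embeddings. Normality of $\mathcal{U}$ gives, for each $n$, that $j_{n+1}``\lambda$ (equivalently, the seed $[\mathrm{id}]$ at stage $n$) is in $M_{n+1}$. Hence $j_{n+1}``\{D_\alpha:\alpha<\lambda\}$ belongs to $M_{n+1}$, as an object of size $\lambda<j_{n+1}(\kappa)$ there. We then use $(\omega,\kappa)$-predistributivity (Definition~\ref{def: predistributive}) inside $M_{n+1}$, in its $\kappa$-trace form: fewer than the relevant critical point many dense sets can be met below a given condition by a single condition, in a way that survives $\omega$ steps. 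This yields a condition $p_{n+1}\in j_{n+1}(\mathbb{Q})$ below the image $j_{n,n+1}(p_n)$ of the previous condition, lying in every $j_{n+1}(D_\alpha)$. Set $q_n=j_{n+1,\omega}(p_{n+1})$. Each $q_n$ then lies in every $j_\omega(D_\alpha)$, so it diagonalizes $j^{\mathcal{U}}_\omega``\mathcal{F}_{\mathbb{Q}}$ (the reverse implication is automatic for filter elements). By elementarity the sequence is descending.

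The main obstacle is coherence along the iteration. At stage $n+1$ we need $p_{n+1}\leq j_{n,n+1}(p_n)$ and also to meet the dense sets. Predistributivity is exactly the property guaranteeing that the ``strategy'' choosing $p_{n+1}$ can be continued for $\omega$ steps without getting stuck, whereas bare $\kappa$-distributivity would fail in the non-distributive ($\Sigma$-Prikry) case. We would first formulate this as a game or trace statement in $V$: for every $p$ and every ${<}\kappa$-indexed family of dense open sets there is an extension meeting them with a uniform witness. We would then transfer it by elementarity to $M_{n+1}$. There the family $j_{n+1}``\{D_\alpha\}$ has size $\lambda$, which is below $j_{n+1}(\kappa)$, and we use closure of $M_{n+1}$ under $\lambda$-sequences from the normal, fine measure to see that this family is in $M_{n+1}$.
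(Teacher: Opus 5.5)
There is a genuine gap, and it sits exactly where predistributivity is supposed to do its work. You ask for a single condition $p_{n+1}\in j_{n+1}(\mathbb{Q})$ in $M_{n+1}$ lying in every $j_{n+1}(D_\alpha)$, $\alpha<\lambda$. Meeting, below an arbitrary condition, fewer than $j_{n+1}(\kappa)$ many dense open sets with one condition is $j_{n+1}(\kappa)$-distributivity of $j_{n+1}(\mathbb{Q})$. By elementarity that is $\kappa$-distributivity of $\mathbb{Q}$.

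$(\omega,\kappa)$-predistributivity, transferred to $M_{n+1}$, only gives some $q$ and countably many $d_k$ dense below $q$ such that each $j_{n+1}(D_\alpha)$ contains some $d_k$. No ``$\kappa$-trace form'' of it yields a single condition; the trace property in the paper is a property of $\mathbb{P}_\mathcal{U}$, not of $\mathbb{Q}$.

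In fact your conclusion that each $q_n$ lies in every $j_\omega(D_\alpha)$ is impossible once $\mathbb{Q}$ is not $\sigma$-distributive. Take the classical Prikry forcing, which is predistributive by Proposition~\ref{prop: sigma Prikry and kappa-dist}. The sets $E_k$ of conditions with stem of length $\geq k$ are dense open with $\bigcap_k E_k=\emptyset$. Hence $\bigcap_k j_\omega(E_k)=j_\omega(\bigcap_k E_k)=\emptyset$, and each $E_k$ contains a finite intersection of the $D_\alpha$'s.

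The criterion actually proved in Theorem~\ref{Thm: more general equivalence} is the $\FIN$-limit one; the footnote in the introduction is misleading. It asks for a descending $\vec q$ in $j_\omega(\mathbb{Q})$ such that, for each $D\in\mathcal{F}(\mathbb{Q})$, a \emph{tail} of $\vec q$ lies in $j_\omega(D)$.

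With the right criterion, your ``coherence along the iteration'' disappears and one ultrapower suffices. In $M_1=M_\mathcal{U}$, apply predistributivity of $j_1(\mathbb{Q})$ to $j_1``\{D_\alpha:\alpha<\lambda\}$. This family is in $M_1$ by normality and has size $\lambda<j_1(\kappa)$. Choose a descending sequence $r_k\in d_k$, so that every $j_1(D_\alpha)$ contains a tail, and put $q_k=j_{1,\omega}(r_k)$. This is the same idea as the paper's proposition under $\lambda^+$-supercompactness and $2^\lambda=\lambda^+$. The proof of (II)$\Rightarrow$(I) in Theorem~\ref{Thm: more general equivalence} uses only this sequence, not $|\mathbb{Q}|\leq\lambda$, and it gives a \emph{weak} projection.

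Your reduction to $|\mathbb{Q}|\leq\lambda$ for a full projection is not valid. A lottery sum of more than $\lambda^{<\kappa}$ atoms has $\mathcal{F}(\mathbb{Q})$ generated by one set and is $\kappa$-distributive. Yet it is not a projection of $\mathbb{P}_\mathcal{U}$, whose antichains have size at most $\lambda^{<\kappa}$ (Remark~\ref{Remark: full projection}). Passing to a complete subalgebra also changes $\ro(\mathbb{Q})$. So in general only a weak projection holds (Corollary~\ref{cor for supercompact}). The full projection needs $|\mathbb{Q}|\leq\lambda$ together with Proposition~\ref{Proposition: turnining a weak projection to a projection} (Corollary~\ref{cor: main thm applied to sc Prikry}).

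For comparison, the paper avoids iterates entirely. $\mathbb{P}_\mathcal{U}$ collapses $\lambda$ to $\kappa$ and has the $\kappa$-trace property, so in $V[G]$ the generators split into countably many ground-model blocks of size ${<}\kappa$. Predistributivity is applied block by block, and a diagonal construction produces a countably generated $V$-generic (Theorem~\ref{thm: weak dist implies proj}).
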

In the last section, namely Theorem \ref{Tukey-type omega Theorem} we are able to characterize the projections of $\mathbb{P}_{\mathcal{U}}$ of cardinality ${\leq}\lambda$ in terms of generating sets of the generic filter and also by the projection depending only on the stem:
\begin{reptheorem}{Tukey-type omega Theorem}
    Let $\mathbb{Q}$ be any forcing. Then the following are equivalent:
    \begin{enumerate}
        \item There is a dense $D\subseteq\mathbb{P}_{\mathcal{U}}$ and a forcing projection $\pi:D\to\mathbb{Q}$ which only depends on the stem of the Prikry condition.
        \item $\Q$ has a dense subset of size at most $\lambda$ and there is a projection $\pi : \P_\mathcal{U} \to \ro(\Q)$.
    \end{enumerate}
\end{reptheorem}
The above theorem shows that in order to construct generics for forcings of cardinality greater than $\lambda$, we will have to face more complicated ``shapes" of generating sets. The natural language to describe those shapes is  the Tukey order \cite{Tukey} and cofinal types (see Definition \ref{Def: Tukey}). After we prove some useful equivalences between order properties of the generic filter and combinatorial properties of the forcing, and after presenting several examples, we will show that, quite naturally, the critical Tukey-type is that of the ultrafilter (as computed in the generic extension): 
\begin{repcorollary}{cor: shape}
    If $\mathbb{P}_{\mathcal{U}}$ weakly projects to $\mathbb{Q}$, then $\Vdash_{\mathbb{P}_{\mathcal{U}}} \mathcal{U}\geq_T \dot{G}_{\mathbb{Q}}$
\end{repcorollary}
We will be able to derive some corollaries about projections of the Prikry forcings. Finally, we show the following simple corollary which rules out a projection to some yardstick forcings of greater cardinality. 
\begin{repcorollary}{larger projections}
    Suppose that $\mathcal{U}$ is a fine $\kappa$-complete ultrafilter over $P_\kappa(\kappa^+)$. Then $\mathbb{P}_{\mathcal{U}}$ cannot weakly project to the following forcings: $\add(\kappa,\kappa^{++})$, $\add(\kappa^+,\kappa^{++})$ and $\add(\kappa^{++},1)$. Moreover, $\mathbb{P}_{\mathcal{U}}$ cannot add fresh subsets to $\kappa^{++}$.
\end{repcorollary}
This paper is organized as follows:
\begin{itemize}
    \item In section \ref{Section: Preliminaries} we review the basic theory of projections of forcings, ultrafilters and the strongly and supercompact Prikry forcing.
    \item In section \ref{section: RK} we prove our results regarding the Rudin-Keisler like characterization of the equivalence of projection.
    \item In section \ref{sec: predistributive Forcings} we introduce the class of $(\omega,\kappa)$-predistributive forcings and show it characterizes the projections of the strongly compact Prikry forcing.
    \item In section \ref{sec: Tukey type} we prove the characterization using projections which depend only on the stem (Theorem \ref{Tukey-type omega Theorem}), and include a discussion about projections of larger cardinality. 
    \item In section \ref{section questions} we list some open problems and further directions.
\end{itemize}
\section{Preliminaries}\label{Section: Preliminaries}
\subsection{Projections}

Following the set-theoretic tradition, our notation for (forcing) posets will be $\mathbb{P}, \mathbb{Q}$, etc. As customary members $p$ of a poset $\mathbb{P}$ will be referred to as \emph{conditions}. Given two conditions $p,q\in \mathbb{P}$ we will write $``q\leq p$'' as a shorthand for \emph{$``q$ is stronger than $p$''} (equivalently, \emph{$``q$ extends $p$''}). We shall denote by $\mathbb{P}/p$ the subposet of $\mathbb{P}$ whose universe is  $\{q\in \mathbb{P}: q\leq p\}$. Similarly, $X/p:=\{x\in X: x\leq p\}$, for any $X\s \mathbb{P}$. Two conditions $p$ and $q$ are said to be \emph{compatible} if there is $r\in \mathbb{P}$ such that $r\leq p,q$. Finally, a forcing poset $\mathbb{P}$ is \emph{separative} if whenever $p\in\mathbb{P}$ does not extend $q\in\mathbb{P}$ there is an extension of $p$ that is incompatible with $q$.\footnote{In symbols, $\forall p,q\in\mathbb{P}\big(p\nleq q\Longrightarrow\exists r\leq p(r\perp q)\big).$} For any forcing poset $\mathbb{P}$, there is a separative poset $\Bar{\mathbb{P}}$, called the \emph{separative quotient of} $\mathbb{P}$, such that $\mathbb{P}$ and $\Bar{\mathbb{P}}$ are \emph{forcing equivalent} as forcing posets, in the sense
that both yield the same generic extension. In light of this, whenever we talk about forcing with a forcing poset $\mathbb{P}$, we actually mean that we force with its separative quotient $\Bar{\mathbb{P}}$.

  Our notation for $\mathbb{P}$-names will be $\tau$, $\sigma$, $\dot{a}$, $\dot{b}$, etc. If $\tau$ is a $\mathbb{P}$-name for an object in the ground model and $p$ is a condition, then
we say that $p$ \emph{decides} $\tau$ if there is $x\in V$ such that $p\Vdash\check{x}=\tau$. We write $p\parallel_{\mathbb{P}}\tau$ (or simply $p\parallel\tau$) for ``$p$ decides $\tau$''. Similarly, for any statement $\varphi$ of the forcing language, we write $p\parallel\varphi$ for ``$p$ decides $\varphi$''; i.e., $p\Vdash\varphi$ or $p\Vdash\neg\varphi$.
let us recall some basic properties of forcing notions which will be considered in this paper.

We denote by $\add(\kappa,\lambda)$ the usual Cohen forcing for adding $\lambda$-many subsets to $\kappa$ with ${<}\kappa$-approximations and the L\'{e}vy collapse $\col(\kappa,\lambda)$ which adds a surjection $f:\kappa\to\lambda$ with ${<}\kappa$-approximations. 

Recall that a forcing $\mathbb{P}$ has the $\lambda$-cc if every antichain has size less than $\lambda$. It is $(\mu,\lambda)$-centered if there is a decomposition $\mathbb{P}=\bigcup_{\alpha<\lambda}\mathbb{P}_\alpha$ such that for every $\alpha$, $\mathbb{P}_\alpha$ is ${<}\mu$-directed i.e. any $A\in [\mathbb{P}_\alpha]^{<\mu}$ is bounded in $\mathbb{P}_\alpha$. $\mathbb{P}$ is called $\lambda$-closed if every $\gamma<\kappa$ and every decreasing sequence $\langle p_\alpha\mid \alpha<\gamma\rangle$ of conditions in $\mathbb{P}$ has a lower bound in $\mathbb{P}$. 
\begin{definition}\label{def ditributive}
We say that a forcing $\mathbb{P}$ is \textit{$(\kappa,\mu)$-distributive} if for every $\gamma<\kappa$ and every function $f:\gamma\to \mu$ in a generic extension by $\mathbb{P}$ must belong to the ground model. We say that $\mathbb{P}$ is \textit{$\kappa$-distributive} if it is $(\kappa,\infty)$-distributive. 
\end{definition}
The above definition is usually giving in its equivalent form for Boolean algebras (see \cite{JECH198411}).

In what follows, we state some folklore facts about projections. For a comprehensive account of this topic, we recommend the excellent expositions in \cite{abraham2009proper, shelah2017proper} and \cite{MonEsk}.
\begin{definition}\label{def: projections}
        Let $\mathbb{P}$ and $\mathbb{Q}$ be forcing posets. \begin{enumerate}
         \item\label{def wp} A \emph{weak projection} is a map $\pi\colon \mathbb{P}\rightarrow\mathbb{Q}$ such that: \begin{enumerate}
            \item For all $p,p'\in\mathbb{P}$, if $p\leq p'$ then $\pi(p)\leq \pi(p')$;
            \item\label{clause 2 wp} For all $p\in\mathbb{P}$ there is $q\leq \pi(p)$  such that for all $q'\in \mathbb{Q}$ with $q'\leq q$ there is $p'\leq p$ such that $\pi(p')\leq q'$.\footnote{This definition comes from \cite{foreman1991generalized}.}
        \end{enumerate} 
        \item A  weak projection $\pi\colon \mathbb{P}\rightarrow\mathbb{Q}$ is called a \emph{projection} if $\pi(\one_{\mathbb{P}})=\one_{\mathbb{Q}}$ and in Clause \eqref{clause 2 wp} above the condition $q$ can be taken to be $\pi(p)$;\footnote{Recall that the weakest condition of a forcing poset $\mathbb{P}$ is customarily denoted by $\one_{\mathbb{P}}$ -- or simply by $\one$ if there is no confusion.} i.e., for all $p\in\mathbb{P}$ and $q\in\mathbb{Q}$ with $q\leq\pi(p)$, there is $p'\leq p$ such that $\pi(p')\leq q$. 
        \end{enumerate} 
        If there is a (weak) projection from $\mathbb{P}$ into $\mathbb{Q}$, we say that $\mathbb{P}$ \emph{(weakly) projects into} $\mathbb{Q}$. 
    \end{definition}
    The notion of a weak projection is well established and appears frequently in the literature. For instance, Foreman and Woodin \cite{foreman1991generalized} employed it to address the fact that Supercompact Radin forcing does not project into standard Radin forcing. Similar issues arise with Supercompact Prikry forcing (see \cite{kafkoulis1994consistency} and \cite{dimonte2023solovay}), AIM forcing (see \cite{cummings2018ordinal}), and Merimovich forcing (see \cite{PovedaThei2024baire}). If $\pi:\mathbb{P}\rightarrow\mathbb{Q}$ is a weak projection and $G\s\mathbb{P}$ is $V$-generic then the pointwise image of $G$ via $\pi$ generates a $V$-generic filter $H\s\mathbb{Q}$. To make this precise, we briefly review the relevant notation. Given a function $f:A\rightarrow B$, we denote by $\dom(f)$ the domain of $f$, while $\range(f)$ denotes the range of $f$; i.e., $\range(f):=\{b\in B:\exists a\in A\ (f(a)=b)\}$. If $C\s A$ and $D\s B$, stipulate $f``C:=\{f(c): c\in C\}$ and $f^{-1}[D]:=\{a\in A: f(a)\in D\}$.
    \begin{fact}[{\cite[Proposition 2.8]{foreman1991generalized}}]\label{fact: basics of weak projections}
        Let $\mathbb{P}$ and $\mathbb{Q}$ be forcing notions and $G$ be $\mathbb{P}$-generic.  If $\pi\colon \mathbb{P}\rightarrow\mathbb{Q}$ is a weak projection, then the upwards closure of the set $\pi``G$
            (namely, $\{q\in \mathbb{Q}: \exists p\in G\;\pi(p)\leq q\}$)
            is $V$-generic .
    \end{fact}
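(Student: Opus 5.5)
Let $H$ denote the upward closure $\{q\in\mathbb{Q}:\exists p\in G\ \pi(p)\leq q\}$. The plan is to verify directly that $H$ is a filter and that it meets every dense open subset of $\mathbb{Q}$ lying in $V$. The filter properties come from genericity of $G$ and monotonicity of $\pi$; only the genericity of $H$ uses clause \eqref{clause 2 wp}.

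\emph{$H$ is a filter.} Upward closure is built into the definition, and $H\neq\emptyset$ because $G\neq\emptyset$. For compatibility, suppose $q_1,q_2\in H$ are witnessed by $p_1,p_2\in G$, so $\pi(p_i)\leq q_i$. Since $G$ is a filter, there is $p\in G$ with $p\leq p_1,p_2$. Monotonicity of $\pi$ gives $\pi(p)\leq\pi(p_i)\leq q_i$. Hence $\pi(p)\in H$ is a common extension of $q_1$ and $q_2$.

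\emph{$H$ is generic.} Fix a dense open $D\subseteq\mathbb{Q}$ in $V$, and define in $V$
\[E=\{p'\in\mathbb{P}: \pi(p')\in D\}.\]
We must show $G\cap E\neq\emptyset$; any $p'\in G\cap E$ gives $\pi(p')\in H\cap D$. By genericity of $G$ it suffices to prove that $E$ is dense.
\begin{enumerate}
\item Let $p\in\mathbb{P}$ be arbitrary. Clause \eqref{clause 2 wp} gives some $q\leq\pi(p)$ such that every extension of $q$ is dominated by $\pi(p')$ for some $p'\leq p$.
\item By density of $D$, choose $q'\leq q$ with $q'\in D$.
\item Take $p'\leq p$ with $\pi(p')\leq q'$.
\item Since $D$ is open, $\pi(p')\in D$, so $p'\in E$ and $p'\leq p$.
\end{enumerate}
Thus $E$ is dense.

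The only point needing care is this density argument. The witness $q$ from clause \eqref{clause 2 wp} must be chosen before we extend into $D$, and the openness of $D$ is what lets us pass from $\pi(p')\leq q'$ to $\pi(p')\in D$. Note that $\pi(\one)=\one$ is not needed anywhere in the argument.
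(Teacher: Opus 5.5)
Your proof is correct and is the standard argument. The paper does not prove this fact itself but cites Foreman--Woodin, and the key step you establish, namely that $\pi^{-1}[D]$ is dense for every dense open $D\subseteq\mathbb{Q}$ (obtained by using clause (b) to fix $q\leq\pi(p)$ before extending into $D$), is exactly the characterization of weak projections recorded in the paper's Fact on equivalent formulations, from which genericity of the upward closure of $\pi``G$ follows just as you say.
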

    \begin{conv}
          We will identify $\pi``G$ with its upwards closure.
    \end{conv}
    The following fact provides a useful characterization of being a (weak) projection.
    \begin{fact}\label{fact: equivalences for projections}
    Let $\pi:\mathbb{P}\to\mathbb{Q}$ be any order-preserving function between two separative forcings.
    
    \begin{itemize}
        \item The following are equivalent:
        \begin{enumerate}
            \item $\pi$ is a weak projection.
            \item $\pi$ satisfies $(b')$ where:
            \begin{enumerate}
                \item [$(b')$] For all $p\in\mathbb{P}$ there is $p^*\leq p$  such that for all $q'\in \mathbb{Q}$ with $q'\leq \pi(p^*)$ there is $p'\leq p$ such that $\pi(p')\leq q'$.\footnote{This definition comes from \cite[$\S3.6$]{cummings2018ordinal}.}    \end{enumerate}
        \item For each dense open $D\s\mathbb{Q}$, the preimage $\pi^{-1}[D]$  is dense.

        \end{enumerate}
        \item  The following are equivalent:
    \begin{enumerate}
        \item $\pi$ is a projection.
        \item  $\pi$ is a weak projection and for every $p\in\mathbb{P}$, $\pi``(\mathbb{P}/p)$ is dense below $\pi(p)$.
    \end{enumerate}
    \end{itemize}
\end{fact}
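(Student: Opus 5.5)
We prove the two bullets separately, throughout using separativity and the fact that $\pi$ is order preserving.

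\emph{First bullet.} We show (1)$\Rightarrow$(3)$\Rightarrow$(2)$\Rightarrow$(1).

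For (1)$\Rightarrow$(3), let $D\s\mathbb{Q}$ be dense open and fix $p\in\mathbb{P}$. Take $q\leq\pi(p)$ as in clause (b). Since $D$ is dense, pick $q'\leq q$ with $q'\in D$. Clause (b) gives $p'\leq p$ with $\pi(p')\leq q'$. Because $D$ is open, $\pi(p')\in D$, so $p'\in\pi^{-1}[D]$. Hence $\pi^{-1}[D]$ is dense.

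For (3)$\Rightarrow$(2), the main step, fix $p$ and define
\[
E_p=\{q\in\mathbb{Q}: \text{either } q\perp \pi(r)\text{ for all } r\leq p,\ \text{or } q \text{ satisfies (b) for } p\},
\]
where "$q$ satisfies (b) for $p$" means: every $q'\leq q$ has some $p'\leq p$ with $\pi(p')\leq q'$. The set $E_p$ is open: the first alternative is clearly preserved downward, and the second is too since it quantifies over all $q'\leq q$. To see that $E_p$ is dense, take any $q$. If some $q_1\leq q$ is incompatible with every $\pi(r)$, $r\leq p$, then $q_1\in E_p$. Otherwise every $q_1\leq q$ is compatible with some $\pi(r)$, $r\leq p$. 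We claim $q$ itself satisfies (b) for $p$. Given $q'\leq q$, choose $r\leq p$ with $\pi(r)$ compatible with $q'$, and pick $s\leq \pi(r),q'$. Then $\{s''\mid s''\leq s\}$ together with the conditions incompatible with $s$ forms a dense open set $D'$. By (3), $\pi^{-1}[D']$ is dense, so there is $p'\leq r$ with $\pi(p')\in D'$. Since $\pi(p')\leq \pi(r)$, which is compatible with $s$, we cannot conclude outright that $\pi(p')$ is compatible with $s$, so we refine the choice. Apply (3) instead to the dense open set
\[
D''=\{t: t\leq q' \text{ or } t\perp q'\}.
\]
This gives $p'\leq r$ with $\pi(p')\in D''$. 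The difficulty is excluding the case $\pi(p')\perp q'$. To handle it, repeat the density argument below every $r'\leq r$: the set of $r'\leq r$ with $\pi(r')\perp q'$ is open, and if it were dense below $r$, then $\pi(r)$ would be incompatible with $q'$. That last implication uses (3) once more together with separativity, and it is the delicate point. Granting it, some $r'\leq r$ has no extension mapping incompatibly with $q'$, and then (3) applied to $D''$ below $r'$ yields $p'\leq r'$ with $\pi(p')\leq q'$. So $E_p$ is dense open. Apply (3) to $E_p$ to obtain $p^*\leq p$ with $\pi(p^*)\in E_p$. The first alternative fails for $\pi(p^*)$ since $p^*\leq p$, so $\pi(p^*)$ satisfies (b) for $p$, which is exactly $(b')$.

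For (2)$\Rightarrow$(1), set $q=\pi(p^*)\leq\pi(p)$.

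\emph{Second bullet.} Suppose $\pi$ is a projection. Then clause (b) holds with $q=\pi(p)$, which is exactly density of $\pi``(\mathbb{P}/p)$ below $\pi(p)$. Conversely, assume $\pi$ is a weak projection and each $\pi``(\mathbb{P}/p)$ is dense below $\pi(p)$. Given $q\leq\pi(p)$, density gives $p'\leq p$ with $\pi(p')\leq q$, which is the projection clause. The normalisation $\pi(\one)=\one$ can be arranged, or is implicit by identifying with the separative top element, since $\pi``\mathbb{P}$ is dense below $\pi(\one)$.

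The main obstacle is (3)$\Rightarrow$(2), specifically showing that compatibility of $\pi(r)$ with $q'$ can be realized by some $\pi(p')\leq q'$. We expect to resolve it by the open-dense-below-$r$ contradiction argument sketched above.
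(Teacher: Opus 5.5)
Your implications (1)$\Rightarrow$(3) and (2)$\Rightarrow$(1) are correct. The step (3)$\Rightarrow$(2), however, has a genuine gap: your set $E_p$ need not be dense, and the ``delicate point'' you propose to grant is false.

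Here is a counterexample. Let $\mathbb{P}=\{1_{\mathbb{P}},c,d\}$ and $\mathbb{Q}=\{1_{\mathbb{Q}},a,b\}$, each a top element above two incompatible atoms, so both are separative. Let $\pi(1_{\mathbb{P}})=1_{\mathbb{Q}}$ and $\pi(c)=\pi(d)=a$. Every dense open $D\subseteq\mathbb{Q}$ contains $a$, so $\pi^{-1}[D]\supseteq\{c,d\}$ is dense and (3) holds. Take $p=1_{\mathbb{P}}$:
\begin{itemize}
\item Nothing is incompatible with $\pi(1_{\mathbb{P}})=1_{\mathbb{Q}}$, so your first alternative is empty.
\item $b$ does not satisfy (b) for $p$, since no $\pi(p')$ lies below $b$.
\item Hence $E_p=\{a\}$, which contains nothing below $b$.
\end{itemize}
This example also refutes the granted claim. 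The set $\{r'\leq 1_{\mathbb{P}}:\pi(r')\perp b\}=\{c,d\}$ is dense below $1_{\mathbb{P}}$, yet $\pi(1_{\mathbb{P}})$ is compatible with $b$. Compatibility of $q'$ with members of $\{\pi(r):r\leq p\}$ simply does not produce a member of that set below $q'$.

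The missing idea is to use the right second alternative. Put $X_p=\{q\in\mathbb{Q}: \text{there is no } p'\leq p \text{ with } \pi(p')\leq q\}$, and let $Y_p$ be the set of $q$ satisfying (b) for $p$. Both sets are open. Their union $X_p\cup Y_p$ is dense for purely logical reasons: if $q\notin Y_p$, then by definition some $q'\leq q$ has no $p'\leq p$ with $\pi(p')\leq q'$, i.e.\ $q'\in X_p$. Now (3) gives $p^*\leq p$ with $\pi(p^*)\in X_p\cup Y_p$. Since $p^*$ itself witnesses $\pi(p^*)\notin X_p$, we get $\pi(p^*)\in Y_p$, which is exactly $(b')$. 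Your first alternative is a subset of $X_p$, which is why your set was too small; no compatibility analysis or separativity is needed.

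Separately, in the second bullet you cannot ``arrange'' $\pi(1_{\mathbb{P}})=1_{\mathbb{Q}}$. Take $\mathbb{P}$ trivial and $\pi(1_{\mathbb{P}})=a$ in the $\mathbb{Q}$ above. Then clause (2) holds, but $\pi$ is not a projection, and $a$ is not equivalent to $1_{\mathbb{Q}}$ in the separative sense. Redefining $\pi(1_{\mathbb{P}}):=1_{\mathbb{Q}}$ breaks the density clause at $b$. So that equivalence holds only modulo the normalisation; this is an imprecision in the statement, not something your argument can supply.
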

Next, we provide a characterization of (weak) projections via generic filters. Accordingly, we briefly review complete Boolean algebras. For notational convenience, we are going to identify $\mathbb{P}$ with its isomorphic copy in $\ro(\mathbb{P})$, the \emph{regular open Boolean algebra} of $\mathbb{P}$ (minus its least element $\mathbf{0}$). Bearing in mind this (usual) identification, $\mathbb{P}$ is dense in $\ro(\mathbb{P})$ and $\mathbb{P}$-names are also $\ro(\mathbb{P})$-names. In particular, if $\mathbb{Q}$ is a forcing poset and $\rho\colon\ro(\mathbb{P})\rightarrow\ro(\mathbb{Q})$ is a projection, then so is its restriction to $\mathbb{P}$. Moreover, if $G$ is $\ro(\mathbb{P})$-generic then $G\cap\mathbb{P}$ is $\mathbb{P}$-generic. Conversely, if $G$ is $\mathbb{P}$-generic, then its upwards closure in $\ro(\mathbb{P})$, i.e. $\{b\in\ro(\mathbb{P}): \exists p\in G\ p\leq b\}$, is $\ro(\mathbb{P})$-generic. 

    While a weak projection $\pi\colon\mathbb{P}\to\mathbb{Q}$ induces a $\mathbb{Q}$-generic filter from a $\mathbb{P}$-generic filter (Fact \ref{fact: basics of weak projections}), a projection $\pi:\mathbb{P}\rightarrow\mathbb{Q}$ additionally allows to lift a $\mathbb{Q}$-generic filter $H$ into a $\mathbb{P}$-generic $G$ such that $\pi``G=H$. The way to obtain such a $G$ is via the \emph{quotient forcing}. Specifically, given a projection $\pi: \mathbb{P}\rightarrow\mathbb{Q}$ and a $\mathbb{Q}$-generic filter $H$ one defines the \emph{quotient forcing $\mathbb{P}/H$} as the subposet of $\mathbb{P}$ with universe $\{p\in \mathbb{P}: \pi(p)\in H\}$.\footnote{Note that $\mathbb{P}/H$ may not be separative but as we pointed out $\mathbb{P}/H$ is identified with its separative quotient.} Then every $\mathbb{P}/H$-generic $G$ (over $V[H]$) is $\mathbb{P}$-generic (over $V$) and $\pi``G=H$. This yields the following characterizations.

        \begin{fact}\label{fact: equivalence with generics and weak projections} Let $\mathbb{P}$ and $\mathbb{Q}$ be forcing posets. The following are equivalent:
    \begin{enumerate}
        \item $\P$ weakly projects into $\ro(\Q)$.
        \item There is a $\mathbb{P}$-name $\dot{H}$ such that $\one\Vdash_{\P}``\dot{H}$ is $\mathbb{Q}$-generic$"$.
        \item For every $\P$-generic $G$ there is a $\Q$-generic $H$ with $H\in V[G]$.
    \end{enumerate}
\end{fact}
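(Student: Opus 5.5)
We prove the cycle of implications $(1)\Rightarrow(2)\Rightarrow(3)\Rightarrow(1)$. Throughout, $\mathbb{Q}$ is identified with its dense copy in $\ro(\mathbb{Q})$, so a $\ro(\mathbb{Q})$-generic filter and a $\mathbb{Q}$-generic filter determine each other.

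For $(1)\Rightarrow(2)$, fix a weak projection $\pi\colon\mathbb{P}\to\ro(\mathbb{Q})$. Let $\dot G$ be the canonical name for the $\mathbb{P}$-generic filter, and let $\dot H$ be the name for the upward closure of $\pi``\dot G$ in $\ro(\mathbb{Q})$, intersected with $\mathbb{Q}$. By Fact \ref{fact: basics of weak projections}, every generic $G$ yields a $V$-generic filter for $\ro(\mathbb{Q})$. Its trace on the dense set $\mathbb{Q}$ is then $\mathbb{Q}$-generic. Since this holds for every generic $G$, we get $\one\Vdash_{\mathbb{P}}$ ``$\dot H$ is $\mathbb{Q}$-generic''.

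The implication $(2)\Rightarrow(3)$ is immediate: interpret $\dot H$ in $V[G]$.

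The main work is $(3)\Rightarrow(1)$. First, from (3) we obtain a name $\dot H$ as in (2). For every generic $G$ there is some $H\in V[G]$ that is $\mathbb{Q}$-generic, so the forcing theorem gives $\one\Vdash\exists H\,(H$ is $\mathbb{Q}$-generic over $\check V)$. The maximum principle (fullness of names) then yields a name $\dot H$ witnessing this everywhere.

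Now define $\pi\colon\mathbb{P}\to\ro(\mathbb{Q})$ by
\[
\pi(p)=\bigwedge\{b\in\ro(\mathbb{Q}) : p\Vdash \check b\in\dot H^*\},
\]
where $\dot H^*$ is the upward closure of $\dot H$ in $\ro(\mathbb{Q})$. Equivalently, $\pi(p)$ is the Boolean value-style "largest element of $\ro(\mathbb{Q})$ all of whose $\mathbb{Q}$-parts are possibly in $\dot H$ below $p$":
\[
\pi(p)=\bigvee\{q\in\mathbb{Q} : \forall\, r\le p\;\exists\, r'\le r\;\ldots\}.
\]
The cleanest choice is $\pi(p)=\bigvee\{q\in\mathbb{Q}: p\not\Vdash \check q\notin\dot H\}$. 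This is nonzero, because $p$ forces $\dot H$ to be nonempty.

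Monotonicity is clear: if $p'\le p$, then $p'$ forces more, so fewer $q$ are compatible with it, and $\pi(p')\le\pi(p)$.

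For weak projection it suffices, by Fact \ref{fact: equivalences for projections}, to show that $\pi^{-1}[D]$ is dense for each dense open $D\subseteq\ro(\mathbb{Q})$. Fix $p$. Since $\dot H$ is forced to be generic, there are $r\le p$ and $q\in D\cap\mathbb{Q}$ with $r\Vdash\check q\in\dot H$. Then every $q'$ with $r\not\Vdash\check q'\notin\dot H$ is compatible with $q$. Hence $\pi(r)\le q$ in $\ro(\mathbb{Q})$, by separativity of the regular open algebra, and $D$ open gives $\pi(r)\in D$.

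The step I expect to be most delicate is this last one: showing $\pi(r)\le q$ rather than merely that $\pi(r)$ is compatible with $q$. It requires checking that the supremum of elements all compatible with $q$ lies below $q$. That fails in general, so I would refine the argument. I would show that each such $q'$ has $q'\wedge q$ witnessing the same set, that is, every part of $q'$ incompatible with $q$ is forced out of $\dot H$ by $r$. This uses $r\Vdash\check q\in\dot H$ together with $\dot H$ being a filter. With that, the supremum lies below $q$.
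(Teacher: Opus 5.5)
The paper states this as a folklore fact and gives no proof. Your overall plan is the standard one: $(1)\Rightarrow(2)$ via Fact \ref{fact: basics of weak projections}, and $(3)\Rightarrow(1)$ by extracting a name with the maximal principle and then reading $\pi$ off that name. The first steps are fine.

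\textbf{The gap: the map you adopt is constant.} You take $\pi(p)=\bigvee\{q\in\mathbb{Q}: p\not\Vdash\check q\notin\dot H\}$. Since $p\Vdash\one_{\mathbb{Q}}\in\dot H$, the set being joined contains $\one_{\mathbb{Q}}$, so $\pi(p)=\one$ for every $p$. Hence $\pi^{-1}[D]=\emptyset$ for every dense open $D$ not containing $\one$. So $\pi$ is not a weak projection unless $\mathbb{Q}$ is trivial, and it is certainly not equal to the infimum $\bigwedge\{b: p\Vdash\check b\in\dot H^*\}$ you wrote first.

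Your proposed refinement does not repair this. It is true that $r$ forces every part of $q'$ incompatible with $q$ out of $\dot H$, but that does not give $q'\le q$ (take $q'=\one$), and $q'$ itself is still one of the elements being joined. A join formula only works if you join over those $q'$ \emph{all of whose extensions} $q''\le q'$ satisfy $r\not\Vdash\check q''\notin\dot H$. For that set your refinement argument does give $q'\le q$ by separativity, and the resulting join equals the infimum.

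\textbf{The repair.} Keep $\pi(p)=\bigwedge\{b\in\ro(\mathbb{Q}): p\Vdash\check b\in\dot H^*\}$.
\begin{itemize}
\item Monotonicity is immediate, since strengthening $p$ enlarges the index set.
\item The step you flagged as delicate becomes trivial. If $r\le p$, $q\in D$ and $r\Vdash\check q\in\dot H$, then $q$ is among the elements being met, so $\pi(r)\le q$ and hence $\pi(r)\in D$.
\item What genuinely needs proof is $\pi(p)\neq 0$. Here ``$p$ forces $\dot H$ to be nonempty'' is not enough; you need genericity over $V$.
\end{itemize}
For the last point: if $K$ is a $V$-generic filter on $\ro(\mathbb{Q})$ and $X\in V$ with $X\subseteq K$, then $\bigwedge X\in K$. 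Indeed, $\{c: c\le\bigwedge X\}\cup\{c: \exists x\in X\,(c\wedge x=0)\}$ is a dense set in $V$, and $K$ cannot meet its second part. Apply this with $K=\dot H^*_G$ and $X=\{b: p\Vdash\check b\in\dot H^*\}$ for any generic $G\ni p$. This gives $p\Vdash\pi(p)\in\dot H^*$, so $\pi(p)\neq 0$, and Fact \ref{fact: equivalences for projections} then completes the implication to $(1)$.
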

\begin{fact}\label{fact: proj and complete emb}
    Let $\mathbb{P}$ and $\mathbb{Q}$ be forcing posets. Then the following are equivalent:
\begin{enumerate}
    \item $\P$ projects into $\ro(\Q)$.
    \item There is a projection from a dense subset of $\mathbb{P}$ into $\mathbb{Q}$.
    \item\label{item 3: proj and com emb} There is a $\mathbb{P}$-name $\dot{H}$ for a $\mathbb{Q}$-generic filter such that for all $q \in \mathbb{Q}$, there is $p \in \mathbb{P}$ such that $p \Vdash_{\mathbb{P}} \check{q} \in \dot{H}$.
    \end{enumerate}
\end{fact}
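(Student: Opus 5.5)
We prove the cycle of implications (1)$\Rightarrow$(2)$\Rightarrow$(3)$\Rightarrow$(1). The main tools are Fact \ref{fact: equivalences for projections}, Fact \ref{fact: basics of weak projections}, and the identification of $\mathbb{P}$ with a dense subset of $\ro(\mathbb{P})$. Throughout we identify $\mathbb{Q}$ with a dense subset of $\ro(\mathbb{Q})$.

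For (1)$\Rightarrow$(2), fix a projection $\rho:\mathbb{P}\to\ro(\mathbb{Q})$. Let $D=\{p\in\mathbb{P}:\rho(p)\in\mathbb{Q}\}$ and $\pi=\rho\restriction D$. Since $\rho$ maps onto a dense set below any value, $D$ is not obviously dense. So we argue through the dense set of pairs instead. For $p\in\mathbb{P}$, take $q\in\mathbb{Q}$ with $q\le\rho(p)$. By the projection property there is $p'\le p$ with $\rho(p')\le q$. This alone does not make $\rho(p')\in\mathbb{Q}$. It is cleaner to prove (1)$\Rightarrow$(3) directly.

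For (1)$\Rightarrow$(3), given $G$ generic for $\mathbb{P}$, let $\dot H$ name $\rho``G\cap\mathbb{Q}$ (upward closure). By Fact \ref{fact: basics of weak projections}, $\rho``G$ is $\ro(\mathbb{Q})$-generic, so its trace on $\mathbb{Q}$ is $\mathbb{Q}$-generic. Now fix $q\in\mathbb{Q}$. Projecting from $\one$, and using $\rho(\one)=\one$, there is $p$ with $\rho(p)\le q$. Hence $p\Vdash q\in\dot H$.

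For (3)$\Rightarrow$(2), let $D=\{p\in\mathbb{P}: \exists q\in\mathbb{Q}\ (p\Vdash \dot H\cap\mathbb{Q}/... )\}$. More concretely, define $\pi(p)=\bigwedge\{q: p\Vdash q\in\dot H\}$ computed in $\ro(\mathbb{Q})$. Equivalently, $\pi(p)$ is the Boolean value of the largest $b\in\ro(\mathbb{Q})$ with $p\Vdash b\in\dot H^{*}$, where $\dot H^{*}$ is the generated $\ro(\mathbb{Q})$-generic. This $\pi$ is a map $\mathbb{P}\to\ro(\mathbb{Q})$, which is what (1) needs, so we actually aim for (3)$\Rightarrow$(1).

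We check the required properties in turn. Monotonicity is immediate. The value at the top is $\pi(\one)=\one$, since if $b<\one$ then $\neg b\ne 0$, and by (3) some $p$ forces a $q\le\neg b$ into $\dot H$, so $\one\not\Vdash b\in\dot H^*$. For the projection clause, let $b\le\pi(p)$ and pick $q\le b$ in $\mathbb{Q}$. We need $p'\le p$ with $p'\Vdash q\in\dot H$. If no such $p'$ exists, then $p\Vdash q\notin\dot H$. Genericity then gives $p\Vdash \dot H^*\ni \pi(p)\wedge\neg q$. This contradicts the maximality of $\pi(p)$, since $\pi(p)\wedge\neg q<\pi(p)$ is strictly smaller and yet forced, while $\pi(p)$ was the meet. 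Then $\pi(p')\le q\le b$.

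Finally, (2)$\Rightarrow$(1) follows by extending the projection on the dense set $D$ via the same Boolean-value definition: apply (2)$\Rightarrow$(3) with $\dot H=\pi``\dot G$ and then (3)$\Rightarrow$(1). Here (2)$\Rightarrow$(3) holds because any $q$ is below $\pi(\one_D)$ after choosing $D$ to contain a condition mapped to $\one$. The main obstacle is the projection clause in (3)$\Rightarrow$(1), namely the meet/maximality argument. It requires care that $\pi(p)$ is the strongest Boolean value forced by $p$, and that the meet is nonzero.
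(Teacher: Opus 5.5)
\paragraph{The gap: nothing proves (2) from the other items.} You begin (1)$\Rightarrow$(2), observe correctly that $\{p:\rho(p)\in\mathbb{Q}\}$ need not be dense, and switch to (1)$\Rightarrow$(3). You then begin (3)$\Rightarrow$(2) and switch to (3)$\Rightarrow$(1). What you actually establish is (1)$\Leftrightarrow$(3) and (2)$\Rightarrow$(1), so the cycle is not closed. The missing direction is the one that goes beyond the standard correspondence between projections into $\ro(\mathbb{Q})$ and complete embeddings.

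Concretely, write $e(b)=\|b\in\dot H^{*}\|$ and let $\pi$ be your map, extended to $\ro(\mathbb{P})$. For $q\le\pi(p)$ one has $\pi(p\wedge e(q))=\pi(p)\wedge q=q$. So if $\mathbb{P}$ were closed under $a\mapsto a\wedge e(q)$ (e.g.\ $\mathbb{P}=\ro(\mathbb{P})\setminus\{0\}$), the set $\{p:\pi(p)\in\mathbb{Q}\}$ would be dense and you would be done. For an arbitrary $\mathbb{P}$, however, you only get some $p'\in\mathbb{P}$ below $p\wedge e(q)$. Its value $\pi(p')\le q$ may be strictly smaller and lie outside $\mathbb{Q}$, and this can persist for all further extensions.

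Nor can you just assign values in $\mathbb{Q}$ by hand. If $\pi':D\to\mathbb{Q}$ is any projection on a dense $D$, then on $D$ it coincides with the Boolean map attached to the generic it induces. So (2) really asks for some $\mathbb{P}$-name for a $\mathbb{Q}$-generic, possibly different from $\dot H$, whose Boolean projection lands in $\mathbb{Q}$ on a dense set. Your proposal contains no argument for this. It needs a genuinely new idea, or extra hypotheses on $\mathbb{P}$.

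\paragraph{Smaller issues in the parts you do prove.}
\begin{itemize}
\item In (3)$\Rightarrow$(1) your two definitions of $\pi(p)$ are not equivalent. Suppose $p$ forces that $q_1$ or $q_2$ lies in $\dot H$ without deciding which. Then $\bigwedge\{q\in\mathbb{Q}: p\Vdash q\in\dot H\}$ can be strictly weaker than the smallest (not largest) $b\in\ro(\mathbb{Q})$ with $p\Vdash b\in\dot H^{*}$, and the projection clause fails. Only the second definition works.
\item You should state explicitly that $p\Vdash\pi(p)\in\dot H^{*}$. This holds because a $V$-generic filter on a complete Boolean algebra contains the meet of any ground-model subset of it. It is what gives $\pi(p)\neq 0$, and it drives your contradiction in the projection clause.
\item In (2)$\Rightarrow$(3), $D$ is given rather than chosen. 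What you actually use is that the range of the projection on $D$ is dense in $\mathbb{Q}$.
\end{itemize}
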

Appealing to Fact \ref{fact: proj and complete emb}, we can isolate sufficient conditions to ensure the existence of a projection (see Proposition \ref{Proposition: turnining a weak projection to a projection} below). To prove this, the following standard lemmas will be useful. 
\begin{lemma}[Maximal Principle]\label{lem: max princ}
    Let $\mathbb{P}$ be a forcing notion. For any formula $\varphi(x,y_1,\dots,y_n)$ and $\mathbb{P}$-names $\dot{u}_1, \dots, \dot{u}_n$, if $p \in \mathbb{P}$ is a condition such that $p \Vdash \exists x \, \varphi(x, \dot{u}_1, \dots, \dot{u}_n)$, then there exists a $\mathbb{P}$-name $\dot{\tau}$ such that
\[ p \Vdash \varphi(\dot{\tau}, \dot{u}_1, \dots, \dot{u}_n) \]
\end{lemma}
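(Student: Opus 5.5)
The plan is the standard ``mixing'' argument, which uses the Axiom of Choice. Write $p \Vdash \exists x\,\varphi(x,\dot u_1,\dots,\dot u_n)$. I will build the name $\dot\tau$ by gluing together witnesses chosen along a maximal antichain below $p$.

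First I consider the set
\[
D=\{r\le p : \exists \text{ a } \mathbb{P}\text{-name } \dot\sigma \text{ with } r\Vdash \varphi(\dot\sigma,\dot u_1,\dots,\dot u_n)\}.
\]
This set is dense below $p$. To see this, take any $s\le p$ and a generic $G\ni s$. In $V[G]$ there is some $x$ with $\varphi(x,\dot u_1^G,\dots,\dot u_n^G)$. This $x$ equals $\dot\sigma^G$ for some name $\dot\sigma$, so by the forcing theorem some $r\in G$ with $r\le s$ forces $\varphi(\dot\sigma,\vec{\dot u})$.

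To avoid quantifying over a proper class of names, I restrict to names in a sufficiently large $V_\alpha$. This is the reflection point: I choose $\alpha$ so that every $r\le p$ that has a witnessing name already has one in $V_\alpha$, which replacement guarantees. With that restriction in place, Zorn's lemma gives a maximal antichain $A\subseteq D$ below $p$. By AC I pick, for each $r\in A$, a name $\dot\sigma_r$ with $r\Vdash\varphi(\dot\sigma_r,\vec{\dot u})$.

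Next I define the mixed name
\[
\dot\tau=\{(\dot\rho,s) : \exists r\in A\ \exists t\ \big((\dot\rho,t)\in\dot\sigma_r \wedge s\le r \wedge s\le t\big)\}.
\]
The key verification is that for every $r\in A$ we have $r\Vdash\dot\tau=\dot\sigma_r$. Take a generic $G\ni r$. Since $A$ is an antichain, $r$ is the unique element of $A$ in $G$. The elements of $\dot\tau^G$ are then exactly the $\dot\rho^G$ with $(\dot\rho,t)\in\dot\sigma_r$ and some $s\in G$ below both $r$ and $t$. Because $G$ is a filter, this holds exactly when $t\in G$. Hence $\dot\tau^G=\dot\sigma_r^G$.

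Finally I conclude. Let $G$ be generic with $p\in G$. By maximality of $A$ below $p$ and genericity, $G$ meets $A$, say at $r$. Then $\varphi(\dot\sigma_r^G,\vec{\dot u}^G)$ holds and $\dot\tau^G=\dot\sigma_r^G$, so $\varphi(\dot\tau^G,\vec{\dot u}^G)$ holds. Therefore $p\Vdash\varphi(\dot\tau,\vec{\dot u})$.

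The only delicate step is the mixing verification together with the set-sized bound on the names. Everything else is routine.
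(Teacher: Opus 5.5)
Your argument is correct and is the standard textbook proof: take the set of conditions below $p$ admitting a witnessing name, choose a maximal antichain inside it, and mix the chosen witnesses. The paper gives no proof of its own; it records this as a standard lemma alongside the Mixing Lemma~\ref{lem: mixing}, which your construction of $\dot\tau$ reproves inline, so you could simply cite it.

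One small correction concerns where you use the $V_\alpha$ bound. $D$ is already a set by Separation, so Zorn's lemma does not need the bound. The bound is needed when you apply AC to choose the names $\dot\sigma_r$, since for each $r$ the witnessing names otherwise form a proper class.
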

\begin{lemma}[Mixing Lemma]\label{lem: mixing}
    Let $\mathbb{P}$ be a forcing notion. Let $A = \{ p_i : i \in I \}$ be an antichain in $\mathbb{P}$, and let $\{ \dot{\tau}_i : i \in I \}$ be a family of $\mathbb{P}$-names. Then there exists a $\mathbb{P}$-name $\dot{\tau}$ such that $p_i \Vdash \dot{\tau} = \dot{\tau}_i$, for all $i \in I$.
\end{lemma}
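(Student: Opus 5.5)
The plan is to argue purely by induction on the complexity of names. The argument uses only the definition of $\Vdash$ and the density of the set of conditions deciding a given statement, and avoids the Boolean-valued approach.

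First, by Zorn's lemma, fix a maximal antichain $A=\{p_i : i\in I\}$ in $\mathbb{P}$. We require that each $p_i$ be compatible with none of the others, and that each $p_i$ satisfy one of two alternatives: either $p_i\perp p'$ for every $p'\in A$ with $p'\neq p_i$ (which holds automatically in an antichain), or else nothing further is required. Concretely, nothing about $A$ beyond its being an antichain is needed. Put
\[
\dot\tau=\{(\sigma,r) : \exists i\in I\ \bigl(r\leq p_i \text{ and } (\sigma,r)\in \dot\tau_i\text{-related}\bigr)\}.
\]
More precisely, we take $\dot\tau=\{(\sigma,r): \exists i\ \exists s\,((\sigma,s)\in\dot\tau_i,\ r\leq p_i,\ r\leq s)\}$. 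Then $\dot\tau$ is a legitimate $\mathbb{P}$-name, since it is a set of pairs of names and conditions.

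Next, we verify $p_i\Vdash\dot\tau=\dot\tau_i$ by working with an arbitrary generic $G$ containing $p_i$. By the forcing theorem it suffices to show $\dot\tau^G=\dot\tau_i^G$.
\begin{enumerate}
\item If $x=\sigma^G\in\dot\tau_i^G$, witnessed by $(\sigma,s)\in\dot\tau_i$ with $s\in G$, then $G$ contains a common extension $r\leq p_i,s$. Hence $(\sigma,r)\in\dot\tau$ with $r\in G$, and so $x\in\dot\tau^G$.
\item Conversely, suppose $x\in\dot\tau^G$ via $(\sigma,r)$ with $r\in G$, $r\leq p_j$ and $r\leq s$ for some $(\sigma,s)\in\dot\tau_j$. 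Then $p_j\in G$. Since $G$ is a filter and $A$ is an antichain, we get $j=i$. Also $s\in G$, so $x=\sigma^G\in\dot\tau_i^G$.
\end{enumerate}

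The only delicate step is the second inclusion. There, the antichain hypothesis is used exactly to rule out $G$ meeting two different $p_j$'s, because any two conditions in a filter are compatible. No other obstacle arises, and maximality of $A$ is not actually needed.
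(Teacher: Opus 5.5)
Your proposal is correct and is the standard proof. The paper quotes the Mixing Lemma as a standard fact without proof, so there is no separate argument to compare against. Your name $\dot\tau=\{(\sigma,r): \exists i\,\exists s\,((\sigma,s)\in\dot\tau_i,\ r\le p_i,\ r\le s)\}$ is a legitimate $\mathbb{P}$-name. Your two inclusions correctly give $\dot\tau^G=\dot\tau_i^G$ for every generic $G\ni p_i$; in fact they only use that $G$ is a filter containing $p_i$, and genericity enters only when you pass to $p_i\Vdash\dot\tau=\dot\tau_i$ via the forcing theorem.

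Some clean-up is needed:
\begin{itemize}
\item Your opening paragraph is vestigial and partly wrong as written. No induction on the complexity of names is carried out, and no density argument for deciding conditions is used.
\item You cannot ``fix a maximal antichain by Zorn's lemma'', since $A$ and the names $\dot\tau_i$ are given. You observe at the end that maximality plays no role, so simply delete that sentence and the garbled ``two alternatives'' remark.
\item In step (2), concluding $j=i$ from $p_i,p_j\in G$ uses that $i\mapsto p_i$ is injective. This is implicit in the statement: if $p_i=p_j$ for some $i\neq j$ with, say, $\dot\tau_i=\check 0$ and $\dot\tau_j=\check 1$, the lemma fails. It is worth saying so explicitly.
\end{itemize}
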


\begin{prop}\label{Proposition: turnining a weak projection to a projection}
    Let $\mathbb{P}$ and $\mathbb{Q}$ be two forcing notions. Let $\mu$ be any cardinal with $\mu\geq|\mathbb{Q}|$ and suppose that $\mathbb{P}$ is not $\mu$-cc. If \begin{equation}\label{eq: one forces}
        \one\forces_{\mathbb{P}}``\forall q\in\mathbb{Q}\exists h\, (h \text{ is $\mathbb{Q}$-generic and } q\in h)",
    \end{equation} then $\P$ projects into $\ro(\Q)$. 
\end{prop}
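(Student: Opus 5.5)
We will verify clause (3) of Fact \ref{fact: proj and complete emb}: we build a single $\mathbb{P}$-name $\dot H$ for a $\mathbb{Q}$-generic filter such that every $q\in\mathbb{Q}$ is forced into $\dot H$ by some condition. The failure of the $\mu$-cc provides an antichain in $\mathbb{P}$ with enough members to assign one to each $q$. The hypothesis \eqref{eq: one forces} provides, for each $q$, a name for a generic containing $q$. The Mixing Lemma glues these names into one.

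First, since $\mathbb{P}$ is not $\mu$-cc, fix an antichain $A\subseteq\mathbb{P}$ with $|A|\geq\mu\geq|\mathbb{Q}|$. Extend $A$ to a maximal antichain $A'\supseteq A$ by Zorn's lemma. Choose an injection $q\mapsto p_q$ from $\mathbb{Q}$ into $A$. Let $p_{\ast}$ enumerate the remaining members of $A'$ outside the image; these are handled by assigning them any fixed $q$, for instance $\one_{\mathbb{Q}}$.

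Next, for each $q\in\mathbb{Q}$, apply \eqref{eq: one forces} to the check name $\check q$: we have $\one\Vdash\exists h\,(h\text{ is }\mathbb{Q}\text{-generic and }\check q\in h)$. By the Maximal Principle (Lemma \ref{lem: max princ}) there is a name $\dot h_q$ with $\one\Vdash``\dot h_q$ is $\mathbb{Q}$-generic over $V$ and $\check q\in\dot h_q"$. We use the same procedure to obtain names for the leftover elements of $A'$.

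Then we apply the Mixing Lemma (Lemma \ref{lem: mixing}) to the maximal antichain $A'$ with the names $\dot h_q$ attached to $p_q$. This yields $\dot H$ such that $p_q\Vdash\dot H=\dot h_q$ for every $q$. Since $A'$ is maximal, every generic $G$ meets $A'$ in exactly one condition $p$. In $V[G]$, $\dot H$ then evaluates to the interpretation of the generic name assigned to $p$. Hence $\one\Vdash``\dot H$ is $\mathbb{Q}$-generic$"$. Moreover $p_q\Vdash\check q\in\dot H$, which is exactly clause (3). Fact \ref{fact: proj and complete emb} then gives that $\mathbb{P}$ projects into $\ro(\mathbb{Q})$.

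The only point needing care is the maximality of the antichain: without it, $\dot H$ would not be forced to be generic everywhere. This is why we extend $A$ to $A'$ and fill in the remaining antichain elements with arbitrary generic names.
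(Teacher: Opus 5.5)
Your proposal is correct and follows essentially the same route as the paper's proof: a maximal antichain of size at least $|\mathbb{Q}|$, one name per antichain element obtained from the Maximal Principle, glued together by the Mixing Lemma, and then clause (3) of Fact \ref{fact: proj and complete emb}. The differences are only cosmetic. You extend a large antichain to a maximal one explicitly and use an injection $q\mapsto p_q$ with filler names, whereas the paper enumerates $\mathbb{Q}$ with repetitions along the maximal antichain. You also apply the Maximal Principle below $\one$ rather than below each $p_\alpha$.
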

\begin{proof}
        Since $\P$ is not $\mu$-cc, there is a maximal antichain $A=\{p_\alpha:\alpha<\nu\}\s\mathbb{P}$, for some cardinal $\nu\geq\mu$. Since $\nu\geq|\mathbb{Q}|$, we may pick an enumeration $\{q_\alpha: \alpha<\nu\}$ (possibly with repetitions) of $\mathbb{Q}$. Combining \eqref{eq: one forces} and the Maximal Principle (Lemma \ref{lem: max princ}), for each $\alpha<\nu$ there is a $\mathbb{P}$-name $\dot{h}_\alpha$ such that $p_\alpha\Vdash_{\mathbb{P}}``q_\alpha\in \dot{h}_\alpha$ and $\dot{h}_\alpha$ is $\mathbb{Q}$-generic''. By the Mixing Lemma (Lemma \ref{lem: mixing}), there is a name $\dot{h}$ such that $p_\alpha\Vdash_{\mathbb{P}}\dot{h}=\dot{h}_\alpha$, for all $\alpha<\mu$. Finally, Fact \ref{fact: proj and complete emb} ensures that this is enough to infer the existence of a projection from $\mathbb{P}$ into $\ro(\mathbb{Q})$.
\end{proof}
The above proposition will be crucial in $\S\ref{sec: predistributive Forcings}$ to construct projections.

Finally, we record some preservation theorem under projections which are well-known:
\begin{fact}\label{projectionFacts}
    Suppose that $\pi:\mathbb{P}\to\Q$ is a projection. Then:
    \begin{enumerate}
        \item If $\P$ is $(\mu,\lambda)$-centered then $\Q$ is $(\mu,\lambda)$-centered.
        \item If $\P$ is $\lambda$-cc, then $\Q$ is $\lambda$-cc.
        \item If $\P$ is $(\kappa,\mu)$-distributive, then $\Q$ is $(\kappa,\mu)$-distributive.
    \end{enumerate}
\end{fact}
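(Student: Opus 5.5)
The three clauses are standard, so the plan is to prove each by pushing structure of $\mathbb{P}$ through $\pi$. The tools are the defining property of a projection and Fact \ref{fact: equivalences for projections}: $\pi$ is order preserving, and for each $p$ the set $\pi``(\mathbb{P}/p)$ is dense below $\pi(p)$. Throughout I pass to separative quotients, so compatibility in $\mathbb{Q}$ is witnessed by the order.

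\emph{Centeredness.} Let $\mathbb{P}=\bigcup_{\alpha<\lambda}\mathbb{P}_\alpha$ with each $\mathbb{P}_\alpha$ ${<}\mu$-directed, and set $\mathbb{Q}_\alpha=\{q\in\mathbb{Q}:\exists p\in\mathbb{P}_\alpha\ \pi(p)\leq q\}$.
\begin{itemize}
\item Every $q$ lies in some $\mathbb{Q}_\alpha$: since $q\leq\one_{\mathbb{Q}}=\pi(\one)$, the projection property gives $p$ with $\pi(p)\leq q$, and $p$ lies in some piece.
\item Each $\mathbb{Q}_\alpha$ is ${<}\mu$-directed: given $\{q_i\}_{i<\gamma}\subseteq\mathbb{Q}_\alpha$ with $\gamma<\mu$, choose witnesses $p_i\in\mathbb{P}_\alpha$ with $\pi(p_i)\leq q_i$. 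Take a bound $p\in\mathbb{P}_\alpha$ of the $p_i$; then $\pi(p)\in\mathbb{Q}_\alpha$ and $\pi(p)\leq q_i$ for all $i$ by monotonicity.
\end{itemize}

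\emph{Chain condition.} Suppose $\{q_i\}_{i<\lambda}$ is an antichain in $\mathbb{Q}$. As above, pick $p_i$ with $\pi(p_i)\leq q_i$. If $r\leq p_i,p_j$ for some $i\ne j$, then $\pi(r)$ extends both $q_i$ and $q_j$, which is impossible. So $\{p_i\}_{i<\lambda}$ is an antichain of size $\lambda$ in $\mathbb{P}$, a contradiction.

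\emph{Distributivity.} Here I use generics, via Fact \ref{fact: basics of weak projections} and the quotient discussion. Let $H$ be $\mathbb{Q}$-generic and $f:\gamma\to\mu$ lie in $V[H]$ with $\gamma<\kappa$. Force with $\mathbb{P}/H$ over $V[H]$ to obtain $G$, which is $V$-generic for $\mathbb{P}$ with $\pi``G=H$. Then $V[H]\subseteq V[G]$, so $f\in V[G]$, and $(\kappa,\mu)$-distributivity of $\mathbb{P}$ gives $f\in V$. To avoid this metamathematical detour, one can argue densely instead: given a name $\dot f$ and a condition $q$, pick $p$ with $\pi(p)\leq q$ and regard $\dot f$ as a $\mathbb{P}$-name through $\pi$. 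Some $p'\leq p$ then decides $\dot f=\check g$, and $\pi(p')\leq q$ forces $\dot f=\check g$ in $\mathbb{Q}$.

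The main obstacle is this last transfer: justifying that a $\mathbb{P}$-condition's decision of $\dot f$ yields a $\mathbb{Q}$-condition deciding it. If $\pi(p')$ did not force $\dot f=\check g$, some $q'\leq\pi(p')$ would force $\dot f\neq\check g$. The projection property then gives $p''\leq p'$ with $\pi(p'')\leq q'$, and $p''$ forces both statements, a contradiction. The generic-filter version sidesteps this and is the one I would write.
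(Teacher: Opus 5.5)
Your proposal is correct. For clauses (1) and (3) it matches the paper. Your $\mathbb{Q}_\alpha$, the upward closure of $\pi``\mathbb{P}_\alpha$, is exactly the paper's decomposition. Your quotient argument for (3) is the generic-extension route the paper invokes: a $\mathbb{P}/H$-generic $G$ over $V[H]$ is $V$-generic for $\mathbb{P}$, so $V[H]\subseteq V[G]$.

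The genuine difference is clause (2). The paper treats (2) and (3) uniformly, passing to equivalent formulations in terms of the generic extension (citing Bukovsk\'y). You argue purely combinatorially instead. You lift an antichain $\{q_i\}$ to $\{p_i\}$ with $\pi(p_i)\leq q_i$, and note that any common extension of $p_i,p_j$ would be sent to a common extension of $q_i,q_j$.

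The paper's route buys uniformity: a single transfer principle for forced statements along a projection handles both clauses. The cost is that for the chain condition it must be phrased in terms of names forced into small ground-model sets, not as a property of one generic extension. Your argument avoids this entirely. It uses only monotonicity of $\pi$ and density of its range, so it works for any order-preserving map with dense range.

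One small remark: your ``density'' variant for (3) does not really avoid generics. Its transfer step uses the translated name $\dot f^{\pi}$: if $\pi(p'')\leq q'$ and $q'\Vdash_{\mathbb{Q}}\varphi(\dot f)$, then $p''\Vdash_{\mathbb{P}}\varphi(\dot f^{\pi})$. That step is itself proved via Fact \ref{fact: basics of weak projections}. Writing the quotient version, as you intend, is fine.
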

\begin{proof}
For $(1)$, If $\mathbb{P}=\bigcup_{\alpha<\lambda}\mathbb{P}_\alpha$ is a witnessing decomposition for $\mathbb{P}$ being $(\mu,\lambda)$-centered, let $\mathbb{Q}_\alpha$ be the upward closure of $\pi``\mathbb{P}_\alpha$, it is not hard to check that this is a witnessing decomposition witnessing that $\Q$ is $(\mu,\lambda)$-centered. 
    Both follow from their equivalent formulation using properties of the generic extension (see \cite{Bukovsk1973CharacterizationOG}) and Fact \ref{fact: equivalences for projections}.
\end{proof}
\subsection{Ultrafilters}

Here we discuss various orderings of filters and properties of filters that will be essential in our analysis in this paper.
We begin with the following key convention:
\begin{conv}
    All ultrafilters (not necessarily all filters!) considered in this paper are assumed to be $\sigma$-complete and uniform.
\end{conv}
For any ultrafilter $U$, we will denote the (transitive collapse of the) ultrapower of $V$ by $U$ as $M_U$ and the associated map as $j_U : V \to M_U$.

Furthermore we will make use of iterated ultrapowers.
For more details than we provide, see the excellent \cite{steel_iteratedultrapowers}.
The iterated ultrapower of $V$ by $U$ of length $\delta$ is a system $\{ M^U_\alpha, j^U_{\beta,\gamma} : \alpha < \delta \text{ and } \beta < \gamma < \delta \}$
where:
\begin{enumerate}
    \item $M^U_0 = V$.
    \item $M^U_{\alpha+1}$ is the ultrapower of $M^U_\alpha$ by $j^U_{0,\alpha}(U)$ and $j^U_{\alpha,\alpha+1}$ is the associated ultrapower map.
    \item $j^U_{\xi,\alpha+1} = j^U_{\alpha,\alpha+1} \circ j^U_{\xi,\alpha}$ for $\xi < \alpha$.
    \item Whenever $\lambda$ is limit, $M^U_\lambda$ is the direct limit of $\langle M^U_\alpha : \alpha < \lambda \rangle$ via the $j^U_{\alpha,\beta}$'s for $\alpha < \beta < \lambda$, and $j^U_{\alpha,\lambda}$'s are the direct limit maps.
\end{enumerate}
We remove the superscript $U$ when there is no ambiguity.

As for orderings of ultrafilters, we begin with the Rudin-Keisler ordering of ultrafilters:

\begin{definition}[Rudin-Keisler and Kat\v{e}tov Order]\label{def rk}
    Let $U,W$ be ultrafilters over sets $X,Y$ respectively.
    We define the \textit{Rudin-Keisler order} by $U \leq_{RK} W$ iff there is a function $f : Y \to X$ such that $A \in U$ iff $f^{-1}[A] \in W$.
    We write $U = f_* W$. 
    
    Given a filter $F$ over $X$.
    We define the \textit{Kat\v{e}tov order} by $F \leq_K W$, if there is an ultrafilter $F\subseteq U$ on $X$ such that $U\leq_{RK} W$.
\end{definition}

\begin{definition}
    Let $j : V \to M$ be an elementary embedding, $X$ any set, and $x \in X \cap M$.
    We derive an ultrafilter $U(x,j)$ over $X$ as follows: for any $A \subseteq X$,
    \[ A \in U(x,j) \iff x \in j(A) \]
\end{definition}

We use this notion to  characterize the Rudin-Keisler and Kat\v{e}tov order.
A proof of the following can be found in \cite{GoldbergUA}.
\begin{prop}
    Let $U,W$ be ultrafilters on $X,Y$ respectively.
    Then $U \leq_{RK} W$ via $f$ iff $U = U([f]_W,j_W)$. 
\end{prop}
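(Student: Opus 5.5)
We prove both directions by comparing $U(x,j_W)$ with $f_*W$ through Łoś's theorem, using the fact that every element of $M_W$ has the form $[g]_W$ for some function $g$ on $Y$, and that $j_W(A)=[c_A]_W$ where $c_A$ is the constant function with value $A$.

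The key computation is the following. For any $f:Y\to X$ and any $A\subseteq X$,
\[ [f]_W\in j_W(A) \iff \{y\in Y: f(y)\in A\}\in W \iff f^{-1}[A]\in W. \]
The first equivalence is Łoś's theorem applied to the relation $[f]_W\in[c_A]_W$. The second is just the definition of preimage. Note also that $[f]_W\in j_W(X)$, since $f(y)\in X$ for all $y$; so $[f]_W$ is a legitimate point at which to define $U([f]_W,j_W)$ as an ultrafilter on $X$.

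For the forward direction, suppose $U\leq_{RK}W$ via $f$, i.e.\ $A\in U\iff f^{-1}[A]\in W$. The computation above turns the right-hand side into $[f]_W\in j_W(A)$. By the definition of the derived ultrafilter, that is exactly the statement $A\in U([f]_W,j_W)$. Hence $U=U([f]_W,j_W)$.

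For the converse, suppose $U=U([f]_W,j_W)$. Then $A\in U\iff [f]_W\in j_W(A)\iff f^{-1}[A]\in W$, so $f$ witnesses $U=f_*W$, i.e.\ $U\leq_{RK}W$ via $f$.

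There is no serious obstacle. The only point needing care is that $M_W$ is identified with its transitive collapse, so $[f]_W$ should be read as the collapsed class. Łoś's theorem transfers correctly through the collapse, because the collapse is an isomorphism of $\in$-structures, and well-foundedness is guaranteed by the standing convention that ultrafilters are $\sigma$-complete.
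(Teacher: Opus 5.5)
Your proposal is correct. It is the standard argument: the single Łoś computation $[f]_W\in j_W(A)\iff f^{-1}[A]\in W$ gives both directions. The paper does not prove this proposition itself but refers to \cite{GoldbergUA}, where the proof is this same computation.
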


\begin{cor}
    Let $F$ be a filter over $X$ and $W$ any ultrafilter.
    Then $F \leq_K W$ iff there is some  $x \in  j_W(X)$ such that $F \subseteq U(x,j_W)$.
\end{cor}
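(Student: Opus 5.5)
The plan is to reduce the corollary directly to the preceding proposition, which says that $U \leq_{RK} W$ via $f$ iff $U = U([f]_W, j_W)$, together with the unwinding of the definition of the Kat\v{e}tov order. Throughout, $W$ is an ultrafilter on a set $Y$, $F$ is a filter on $X$, and we use that $M_W$ consists of the classes $[f]_W$ for $f : Y \to V$, so that elements of $j_W(X)$ are exactly the classes $[f]_W$ with $f : Y \to X$ (after modifying $f$ on a $W$-null set).

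For the forward direction, assume $F \leq_K W$. By definition there is an ultrafilter $U \supseteq F$ on $X$ with $U \leq_{RK} W$, witnessed by some $f : Y \to X$. Let $x = [f]_W$; since $\{y : f(y) \in X\} = Y \in W$, Łoś gives $x \in j_W(X)$. The preceding proposition yields $U = U(x, j_W)$, hence $F \subseteq U(x, j_W)$.

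For the converse, suppose $x \in j_W(X)$ and $F \subseteq U(x, j_W)$. First I would check that $U := U(x, j_W)$ is an ultrafilter on $X$. This follows from elementarity of $j_W$: we have $X \in U$ because $x \in j_W(X)$, and $\emptyset \notin U$. Upward closure and closure under intersections hold since $j_W$ commutes with $\subseteq$ and $\cap$. Maximality holds because $j_W(X \setminus A) = j_W(X) \setminus j_W(A)$.

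Next, write $x = [f]_W$ for some function $f$ on $Y$. Since $x \in j_W(X)$, Łoś gives $\{y : f(y) \in X\} \in W$, so we may redefine $f$ on a null set to get $f : Y \to X$ with the same class. The preceding proposition then gives $U \leq_{RK} W$ via $f$, and together with $F \subseteq U$ this is exactly $F \leq_K W$.

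The only mildly delicate point is the standing convention that ultrafilters are $\sigma$-complete and uniform. The ultrafilter $U(x, j_W)$ is automatically $\sigma$-complete, being derived from an embedding whose critical point is uncountable, since $W$ is $\sigma$-complete. Uniformity, however, need not hold. I would handle this by reading the Kat\v{e}tov definition with $U$ an arbitrary ultrafilter extending $F$, which is how the definition is meant. If uniformity were insisted upon, it would simply have to be imposed as an extra condition on $x$ on both sides of the equivalence.
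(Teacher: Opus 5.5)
Your proof is correct and follows the paper's intended route: unwind the definition of $\leq_K$ and apply the preceding proposition, which says $U\leq_{RK}W$ via $f$ iff $U=U([f]_W,j_W)$, identifying elements of $j_W(X)$ with classes $[f]_W$ of functions $f:Y\to X$. Your uniformity caveat is also right: $U(x,j_W)$ can be principal, so the equivalence only holds when the Kat\v{e}tov definition allows arbitrary, not necessarily uniform, ultrafilters extending $F$, which is how the paper uses it.
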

    We say that a filter $F$ (ultrafilter) over $X$ is \textit{$\omega$-Kat\v{e}tov below} $W$, denoted by $F \leq^\omega_K W$ $(F\leq^\omega_{RK} W)$ if there is an $x \in j^W_\omega(X)$ such that $F \subseteq U(x,i)$ ($F= U(x,i)$).
    
We will use the following lemma, whose proof can be found in \cite{GoldbergUA}.
\begin{lemma}
    Let $j : V \to M$ be an elementary embedding and let $U = U(x,j)$ for some $x \in M$.
    Then the map $k : M_U \to M$ where $k([f]_U) = j(f)(x)$ is an elementary embedding such that $k \circ j_U = j$.
\end{lemma}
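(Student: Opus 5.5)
We must show that $k$ is well defined and elementary, and that $k\circ j_U=j$. Both facts come from the single observation that, by the definition of $U=U(x,j)$, for every $A\subseteq X$ we have $A\in U$ iff $x\in j(A)$. Here we view $M_U$ as the ultrapower of $V$ by functions $f:X\to V$ (identified with its transitive collapse), so $[f]_U$ is the class of $f$ modulo $U$.

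\emph{Well-definedness and elementarity.} These are handled together via Łoś's theorem. Fix a formula $\varphi$ and functions $f_1,\dots,f_n$ with domain $X$. Then
\[ M_U\models\varphi([f_1]_U,\dots,[f_n]_U) \iff A:=\{a\in X: V\models\varphi(f_1(a),\dots,f_n(a))\}\in U. \]
By the definition of $U(x,j)$, the right-hand side holds iff $x\in j(A)$. By elementarity of $j$,
\[ j(A)=\{a\in j(X): M\models\varphi(j(f_1)(a),\dots,j(f_n)(a))\}, \]
so $x\in j(A)$ iff $M\models\varphi(j(f_1)(x),\dots,j(f_n)(x))$. This uses only that $x\in j(X)$.
\begin{itemize}
\item Applying this to the formula $v_1=v_2$ shows that $[f]_U=[g]_U$ iff $j(f)(x)=j(g)(x)$. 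Hence $k$ is well defined (and injective).
\item Applying it to an arbitrary $\varphi$ shows that $k$ is elementary.
\end{itemize}
If $M_U$ is identified with its transitive collapse, $k$ is read through the collapse isomorphism; this changes nothing.

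\emph{Commutativity.} For $y\in V$ let $c_y$ be the constant function on $X$ with value $y$, so that $j_U(y)=[c_y]_U$. Then
\[ k(j_U(y))=j(c_y)(x). \]
By elementarity, $j(c_y)$ is the constant function on $j(X)$ with value $j(y)$. Since $x\in j(X)$, this gives $k(j_U(y))=j(y)$, that is, $k\circ j_U=j$.

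There is no real obstacle in this argument. The only point that needs care is that $x\in j(X)$, which is exactly the hypothesis needed for $U(x,j)$ to be an ultrafilter on $X$. One should also note that when $j$ is a definable class embedding, Łoś's theorem for $M_U$ holds schema-wise, so elementarity is likewise established formula by formula.
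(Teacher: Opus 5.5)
Your proof is correct and is the standard argument: \L{}o\'s's theorem, combined with the defining equivalence $A\in U\iff x\in j(A)$ and the elementarity of $j$, gives well-definedness and elementarity of $k$ together, and constant functions give $k\circ j_U=j$. The paper gives no proof of its own and refers to \cite{GoldbergUA}, where the lemma is proved in this way; your remark that one needs $x\in j(X)$, rather than $x\in X\cap M$ as the paper's definition literally reads, is the right reading.
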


We will also discuss products of ultrafilters.

\begin{definition}
    Let $U,W$ be ultrafilters over $X,Y$ respectively.
    The Fubini product $U \cdot W$ is the filter
    \[ \{ A \subseteq X \times Y : \{ x \in X : \{ y \in Y : (x,y) \in A\} \in W \} \in U \} \]
    For ultrafilters $U_0,\ldots,U_n$ we define $U_0 \cdot \ldots \cdot U_n$ by recursion by $U_0 \cdot (U_1 \cdot \ldots \cdot U_n)$.
\end{definition}
When we take the product of $U_i$ for $i < n$ and $U_i =  U$ for all $i < n$, we denote the product by $U^n$.
Products of ultrafilters give a combinatorial characterization of iterated ultrapowers.
\begin{prop}
    Let $\{ M_n, j_{m,n} : n \in \omega \}$ be the iterated ultrapower of $V$ by $U$ of length $\omega$.
    For each $n$, $M_n$ is the ultrapower of $V$ by $U^n$ and $j_{0,n}$ is the ultrapower map $j_{U^n}$.
\end{prop}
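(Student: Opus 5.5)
We argue by induction on $n$, showing simultaneously that $M_n \cong M_{U^n}$ and that, under this isomorphism, $j_{0,n}$ becomes $j_{U^n}$. The natural tool is the lemma stated just before: if $U = U(x,j)$, then $k([f]_U)=j(f)(x)$ is elementary and satisfies $k\circ j_U=j$. Concretely, for each $n$ we will find a seed $x_n\in M_n$ such that
\[ U^n = U(x_n, j_{0,n}) \]
and such that every element of $M_n$ has the form $j_{0,n}(f)(x_n)$. Granting these two facts, the map $k:M_{U^n}\to M_n$ given by the lemma is elementary and surjective, hence an isomorphism between transitive structures. It is therefore the identity, and $j_{0,n}=j_{U^n}$.

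The cases $n=0$ and $n=1$ are trivial: take $x_1=[\id]_U$. For the inductive step, assume the claim for $n$ and write $U^{n+1}=U\cdot U^n$, following the paper's recursion. It is cleaner, however, to view $M_{n+1}$ as an ultrapower of $M_n$ by $j_{0,n}(U)$ and to set
\[ x_{n+1}=(\,j_{n,n+1}(x_n),\ [\id]_{j_{0,n}(U)}\,). \]
With this choice, $x_{n+1}$ lives on the product $X^n\times X$, and we must check that $A\in U^{n}\cdot U$ iff $x_{n+1}\in j_{0,n+1}(A)$.

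To verify this, unfold $j_{0,n+1}(A)=j_{n,n+1}(j_{0,n}(A))$. By \L o\'s in the ultrapower of $M_n$ by $j_{0,n}(U)$, the statement $(j_{n,n+1}(x_n),[\id])\in j_{n,n+1}(j_{0,n}(A))$ is equivalent to
\[ \{y : (x_n,y)\in j_{0,n}(A)\}\in j_{0,n}(U). \]
By elementarity of $j_{0,n}$, this says $x_n\in j_{0,n}(\{s : A_s\in U\})$, where $A_s$ denotes the section of $A$ at $s$. By the induction hypothesis, that holds iff $\{s: A_s\in U\}\in U^n$, i.e.\ iff $A\in U^n\cdot U$. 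Since the Fubini product is associative up to the canonical rebracketing of tuples, this identifies $U^{n+1}$ with the paper's recursive definition $U\cdot U^{n}$.

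Generation is handled the same way. Any element of $M_{n+1}$ has the form $j_{n,n+1}(g)([\id])$ for some $g\in M_n$, and by induction $g=j_{0,n}(h)(x_n)$. Setting $f(s,y)=h(s)(y)$ gives $j_{0,n+1}(f)(x_{n+1})=j_{n,n+1}(g)([\id])$.

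The main obstacle is bookkeeping rather than substance. One has to match the paper's recursive bracketing $U_0\cdot(U_1\cdot\ldots)$ with the order in which the iteration adds coordinates, since the iteration naturally produces $U^n\cdot U$. One also needs to track that the definition of $M_{n+1}$ as an ultrapower of $M_n$ by $j_{0,n}(U)$ (computed in $M_n$) agrees with the \L o\'s computation above. Because all ultrafilters here are $\sigma$-complete, all models are well-founded, so the transitive collapses make sense throughout.
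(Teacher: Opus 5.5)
Your proof is correct. The paper states this proposition without proof, as a standard fact, so there is no argument in the paper to compare against. Your induction is the standard one: the seed $x_{n+1}=(j_{n,n+1}(x_n),[\id]_{j_{0,n}(U)})$, the \L o\'s computation, surjectivity of the factor map (which forces $k=\id$ between transitive models), and the identification of $U^n\cdot U$ with the paper's $U\cdot U^n$ via associativity of the Fubini product all check out, and they use exactly the factor-map lemma the paper quotes.

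One small detail in the generation step: first modify $h$ on a $U^n$-null set so that $h(s)$ is a function with domain $X$ for every $s$. This is possible because $j_{0,n}(h)(x_n)=g$ is a function on $j_{0,n}(X)$, and the change does not alter $j_{0,n}(h)(x_n)$.
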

\begin{cor}
    Let $F$ be a filter and $U$ be an ultrafilter. $F\leq^\omega_{K}U$ iff there is $n<\omega$ such that $F\leq_{K} U^n$.
\end{cor}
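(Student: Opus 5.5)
We have to prove the last corollary: $F\leq^\omega_K U$ iff there is $n<\omega$ with $F\leq_K U^n$. The plan is to translate both sides into statements about seeds and embeddings. On the left, a seed $x\in j_\omega(X)$ for the direct-limit embedding $j_{0,\omega}$. On the right, by the Corollary characterizing the Katětov order, a seed $x\in j_{U^n}(X)$. The two are then compared using the Proposition identifying $M_n$ with $M_{U^n}$ and $j_{0,n}$ with $j_{U^n}$, together with the direct-limit structure of $M_\omega$.

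\emph{Direction ($\Leftarrow$).} Suppose $F\leq_K U^n$.
\begin{enumerate}
\item By the Katětov corollary applied to $W=U^n$, pick $y\in j_{U^n}(X)=j_{0,n}(X)$ with $F\subseteq U(y,j_{0,n})$.
\item Set $x=j_{n,\omega}(y)\in j_{0,\omega}(X)$.
\item For $A\in F$ we have $y\in j_{0,n}(A)$. Elementarity of $j_{n,\omega}$ and $j_{0,\omega}=j_{n,\omega}\circ j_{0,n}$ give $x\in j_{0,\omega}(A)$.
\end{enumerate}
Hence $F\subseteq U(x,j_{0,\omega})$, which is exactly $F\leq^\omega_K U$.

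\emph{Direction ($\Rightarrow$).} Suppose $x\in j_{0,\omega}(X)$ with $F\subseteq U(x,j_{0,\omega})$.
\begin{enumerate}
\item Since $M_\omega$ is the direct limit, every element of $M_\omega$ lies in the range of some $j_{n,\omega}$. Choose $n$ and $y\in M_n$ with $j_{n,\omega}(y)=x$.
\item Elementarity of $j_{n,\omega}$ and $x\in j_{n,\omega}(j_{0,n}(X))$ yield $y\in j_{0,n}(X)$.
\item For any $A\subseteq X$, we have $x\in j_{0,\omega}(A)$ iff $j_{n,\omega}(y)\in j_{n,\omega}(j_{0,n}(A))$ iff $y\in j_{0,n}(A)$.
\end{enumerate}
So $U(x,j_{0,\omega})=U(y,j_{0,n})$. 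Identifying $j_{0,n}$ with $j_{U^n}$ via the Proposition, the Katětov corollary gives $F\leq_K U^n$.

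The only delicate point is step 1 of the ($\Rightarrow$) direction: every element of the limit model comes from a finite stage. This is immediate from the definition of a direct limit of a system indexed by $\omega$. It needs no well-foundedness issues, since the $\sigma$-completeness convention guarantees all models are well-founded and transitive collapses are taken. The identification $M_n\cong M_{U^n}$ must also commute with the embeddings, which the cited Proposition supplies. The same argument also proves the Rudin–Keisler variant, with $=$ in place of $\subseteq$.
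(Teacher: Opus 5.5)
Your proof is correct and is essentially the argument the paper intends. The paper states the corollary without proof, as an immediate consequence of the identification $j_{0,n}=j_{U^n}$ and the seed characterization of $\leq_K$. Your key step in the ($\Rightarrow$) direction, writing $x=j_{n,\omega}(y)$ with $y\in M_n$ so that $U(x,j_{0,\omega})=U(y,j_{0,n})$, is exactly the move the paper makes explicitly in its proof of the proposition characterizing $\leq^{\fin}_{RK}$. (If the $n$ you find is $0$, you can replace $y$ by $j_{0,1}(y)$ and use $n=1$, which avoids having to say what $U^0$ means.)
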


To end this subsection, we define and discuss a canonical filter associated to any notion of forcing.
\begin{definition}
    Let $\Q$ be any notion of forcing and $q \in \Q$.
    We define
    \[ \mathcal{F}_q(\Q) \coloneq \{ D\subseteq \Q : D \text{ is open and dense below }q\} \]
    When $q = \one_\Q$ we omit the subscript.
\end{definition}

\begin{fact}\label{factProj}Let $\Q$ be a notion of forcing.
\begin{enumerate} 
    \item $\mathcal{F}_q(\mathbb{Q})$ is the filter  generated by $\mathcal{F}(\mathbb{Q})$ and $\mathbb{Q}/q:=\{p\in\mathbb{Q}: p\leq q\}$.
    \item $\mathbb{Q}$ is $\mu$-distributive if and only if $\mathcal{F}(\mathbb{Q})$ is $\mu$-complete.
    \item If $q_1 \leq q_2$, then $\mathcal{F}_{q_1}(\mathbb{Q})\subseteq \mathcal{F}_{q_2}(\mathbb{Q})$.
  
\end{enumerate}
\end{fact}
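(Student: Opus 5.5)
The plan is to prove the three items in order, deriving (3) from the description of $\mathcal{F}_q(\mathbb{Q})$ in (1), and to settle early which way the inclusion in (3) actually goes, since that is the one point I am not sure of.

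\textbf{Item (1).} I take ``open and dense below $q$'' to mean that $D$ is open and that $D\cap\mathbb{Q}/q$ is dense in $\mathbb{Q}/q$. Since the filter generated by $\mathcal{F}(\mathbb{Q})$ and $\mathbb{Q}/q$ should contain supersets, I read $\mathcal{F}_q(\mathbb{Q})$ as upward closed. I will check two inclusions.
\begin{enumerate}
\item First, the generators lie in $\mathcal{F}_q(\mathbb{Q})$. Clearly $\mathbb{Q}/q$ is open and dense below $q$. Each $E\in\mathcal{F}(\mathbb{Q})$ is too, since $E\cap\mathbb{Q}/q$ is dense in $\mathbb{Q}/q$. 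Also $\mathcal{F}_q(\mathbb{Q})$ is closed under finite intersections: if $D_1,D_2$ are open dense below $q$ and $r\le q$, extend $r$ into $D_1$ and then into $D_2$, and openness keeps the result in $D_1$. So the generated filter is contained in $\mathcal{F}_q(\mathbb{Q})$.
\item Conversely, given $D$ open and dense below $q$, put
\[ E=D\cup\{r\in\mathbb{Q}: r\perp q\}. \]
This $E$ is open. It is dense: for any $r$, either $r\perp q$, or some common extension of $r$ and $q$ can be extended into $D$. Hence $E\in\mathcal{F}(\mathbb{Q})$ and $E\cap\mathbb{Q}/q\subseteq D$, so $D$ lies in the generated filter.
\end{enumerate}

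\textbf{Item (2).} This is the standard translation of distributivity. Suppose $\mathbb{Q}$ is $\mu$-distributive and $\gamma<\mu$. Given dense open sets $\{D_i: i<\gamma\}$ and any $p$, I will use a name for the function $i\mapsto$ some element of $D_i\cap\dot G$. By distributivity this function is decided by some extension of $p$, and that extension can be further extended into every $D_i$ simultaneously. So $\bigcap_i D_i$ is dense, and it is open. Conversely, suppose $\mathcal{F}(\mathbb{Q})$ is $\mu$-complete and $\dot f:\gamma\to V$ is a name. Let $D_i$ be the dense open set of conditions deciding $\dot f(i)$. Then $\bigcap_{i<\gamma}D_i$ is dense, and every condition in it decides all of $\dot f$, so $\dot f$ is forced to be in $V$.

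\textbf{Item (3).} From (1), a typical element of $\mathcal{F}_q(\mathbb{Q})$ contains some $E\cap\mathbb{Q}/q$ with $E\in\mathcal{F}(\mathbb{Q})$. So comparing $\mathcal{F}_{q_1}(\mathbb{Q})$ with $\mathcal{F}_{q_2}(\mathbb{Q})$ reduces to comparing the extra generators $\mathbb{Q}/q_1$ and $\mathbb{Q}/q_2$. I will show directly that each generator of one filter belongs to the other, checking the density clause for each.

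This step is the main obstacle, because the inclusion depends on the direction of the order convention. With $q_1\le q_2$ meaning ``$q_1$ is stronger'' (the paper's convention), $\mathbb{Q}/q_1\subseteq\mathbb{Q}/q_2$. Then the generators of $\mathcal{F}_{q_2}(\mathbb{Q})$ are visibly in $\mathcal{F}_{q_1}(\mathbb{Q})$: a set dense below the weaker $q_2$ is dense below the stronger $q_1$. The converse fails in general. For example, $\mathbb{Q}/q_1$ itself is open dense below $q_1$ but not dense below $q_2$ when some extension of $q_2$ is incompatible with $q_1$.

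So I expect to establish $\mathcal{F}_{q_1}(\mathbb{Q})\subseteq\mathcal{F}_{q_2}(\mathbb{Q})$ only when the subscripts are matched so that $q_2$ is the stronger condition, that is, $q_2\le q_1$ in the paper's convention. The verification is then the one-line density check above, and I would record item (3) in that form.
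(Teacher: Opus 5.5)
The paper states this as a folklore fact and gives no proof, so there is no argument to compare yours against. Your proof of (1), and your proof that $\mu$-completeness of $\mathcal{F}(\mathbb{Q})$ implies $\mu$-distributivity, are the standard ones and are correct. Reading $\mathcal{F}_q(\mathbb{Q})$ up to upward closure is the right way to make sense of ``the filter generated by''; the paper only uses these families through generation and the Kat\v{e}tov order.

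Your diagnosis of (3) is also right. With the paper's convention, $q_1\le q_2$ gives $\mathcal{F}_{q_2}(\mathbb{Q})\subseteq\mathcal{F}_{q_1}(\mathbb{Q})$, and $\mathbb{Q}/q_1$ refutes the printed inclusion as soon as some extension of $q_2$ is incompatible with $q_1$. The only place (3) is used is the proof of Theorem \ref{thm: seed implies weak projection}. There $q_1\ge q_2$ (so $q_2$ is the stronger condition) is supposed to give $n_{q_1}\le n_{q_2}$, and that needs $\mathcal{F}_{q_1}(\mathbb{Q})\subseteq\mathcal{F}_{q_2}(\mathbb{Q})$, which is your corrected direction. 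Any ultrafilter extending $\mathcal{F}_{q_2}(\mathbb{Q})$ that is Rudin--Keisler below $\mathcal{U}^n$ then also extends $\mathcal{F}_{q_1}(\mathbb{Q})$. So the statement has a misprint, and nothing downstream is affected.

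The one genuine gap is in the forward direction of (2). After finding $p'\le p$ and $f\in V$ with $p'\Vdash \dot f=\check f$, you assert that $p'$ ``can be further extended into every $D_i$ simultaneously''. Nothing you wrote justifies this, and for non-separative posets it is false. For example, order $\omega$ so that $n$ is stronger than $m$ iff $n\ge m$:
\begin{itemize}
\item any two conditions are compatible, so the only generic filter is $\omega$ itself;
\item hence the forcing adds nothing and is $\mu$-distributive for every $\mu$;
\item yet the dense open sets $D_k=\{n: n\ge k\}$ have empty intersection.
\end{itemize}

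What makes the step work is the paper's standing convention that one forces with the separative quotient. From $p'\Vdash \check{f(i)}\in\dot{G}$, separativity gives $p'\le f(i)$, since an extension of $p'$ incompatible with $f(i)$ would force $f(i)\notin\dot{G}$. As $f(i)\in D_i$ and $D_i$ is open, $p'$ itself lies in $\bigcap_{i<\gamma}D_i$, and no further extension is needed. You should state this use of separativity explicitly, as the paper does in the analogous step of Proposition \ref{prop: equiv}.
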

\begin{cor}\label{Cor: RK and distributive}    
    Let $U$ be a $\kappa$-complete ultrafilter. If $\mathcal{F}_{q}(\mathbb{Q})\leq^\omega_K U$ for every $q\in \mathbb{Q}$ then $\mathbb{Q}$ is $\kappa$-distributive.
\end{cor}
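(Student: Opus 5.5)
Using Fact \ref{factProj}(2), it suffices to show that $\mathcal{F}(\mathbb{Q})$ is $\kappa$-complete; in fact we show that every $\mathcal{F}_q(\mathbb{Q})$ is closed under intersections of ${<}\kappa$ many members. So fix $q\in\mathbb{Q}$, $\gamma<\kappa$, and a family $\langle D_\alpha\mid\alpha<\gamma\rangle$ of sets that are open dense below $q$. Put $D=\bigcap_{\alpha<\gamma}D_\alpha$. We must show that $D$ is dense below $q$; openness is automatic. Fix $q'\le q$. Since $\mathcal{F}_{q'}(\mathbb{Q})\le^\omega_K U$, the corollary characterizing $\le^\omega_K$ yields some $n<\omega$ and some $x\in j_{U^n}(\mathbb{Q})$ with $\mathcal{F}_{q'}(\mathbb{Q})\subseteq U(x,j_{U^n})$. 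In other words, $x\in j_{U^n}(E)$ for every $E$ that is open dense below $q'$.

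Each $D_\alpha$ is open dense below $q'$, because $q'\le q$ and by Fact \ref{factProj}(3). Also $\mathbb{Q}/q'$ belongs to $\mathcal{F}_{q'}(\mathbb{Q})$ by Fact \ref{factProj}(1). Hence $x\in j_{U^n}(D_\alpha)$ for every $\alpha<\gamma$, and $x\in j_{U^n}(\mathbb{Q}/q')$.

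The key step is completeness. $U$ is $\kappa$-complete, so $U^n$ is $\kappa$-complete as well; this follows from an easy induction on $n$ using the definition of the Fubini product. Consequently $\mathrm{crit}(j_{U^n})\ge\kappa>\gamma$. This gives $j_{U^n}(\langle D_\alpha\mid\alpha<\gamma\rangle)=\langle j_{U^n}(D_\alpha)\mid\alpha<\gamma\rangle$, and therefore
\[ j_{U^n}(D\cap \mathbb{Q}/q')=\bigcap_{\alpha<\gamma}j_{U^n}(D_\alpha)\cap j_{U^n}(\mathbb{Q}/q')\ni x. \]
This set is nonempty in $M_{U^n}$, so by elementarity $D\cap\mathbb{Q}/q'\neq\emptyset$. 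Thus $D$ is dense below $q$.

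It remains to assemble the argument. Taking $q=\one$ shows that $\mathcal{F}(\mathbb{Q})$ is $\kappa$-complete, and Fact \ref{factProj}(2) then gives that $\mathbb{Q}$ is $\kappa$-distributive. Note that we used the hypothesis for every $q'$ below $q$, not only at $q=\one$. This is the one subtle point: knowing $\mathcal{F}(\mathbb{Q})\le^\omega_K U$ alone would only place $D$ in the ultrafilter, which makes $D$ nonempty but not dense. Applying the hypothesis to $\mathcal{F}_{q'}$ for each $q'$ is exactly what upgrades nonemptiness to density.

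The only potentially nontrivial verification is the $\kappa$-completeness of $U^n$, equivalently the fact that the critical point of $j_{0,n}$ is at least $\kappa$. This is standard from the iterated-ultrapower description $j_{0,n}=j_{U^n}$, since every step of the iteration is an ultrapower by a $\kappa$-complete ultrafilter in the respective model.
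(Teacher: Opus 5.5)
Your proof is correct and is essentially the paper's argument: for each $q'$ below the given condition you obtain $n$ and $x$ with $\mathcal{F}_{q'}(\mathbb{Q})\subseteq U(x,j_{U^n})$, and $\kappa$-completeness then makes $\bigcap_{\alpha<\gamma}D_\alpha\cap\mathbb{Q}/q'$ nonempty, which gives density. The only difference is presentational: the paper invokes the $\kappa$-completeness of the derived ultrafilter $\mathcal{W}_{q'}=U(x,j_{U^n})$, while you unpack that same fact via $\mathrm{crit}(j_{U^n})\ge\kappa$ and elementarity (also note that the inclusion $\mathcal{F}_q(\mathbb{Q})\subseteq\mathcal{F}_{q'}(\mathbb{Q})$ for $q'\le q$, which you need, follows directly from the definition, whereas Fact~\ref{factProj}(3) as printed states the inequality the other way round).
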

\begin{proof}
    In this case, for every $q\in\mathbb{Q}$, there is $x_q,n_q$ such that $\mathcal{F}_q(\mathbb{Q})\leq_K U(x_q,j_{0,n_q})$ for some $n<\omega$. Hence for every $q$, there is a $\kappa$-complete ultrafilter $\mathcal{W}_q$ such that $\mathcal{F}_{q}(\mathbb{Q})\subseteq \mathcal{W}_q$. To see that $\mathbb{Q}$ is $\kappa$-distributive, let $\langle D_\alpha:\alpha<\mu\rangle\subseteq \mathcal{F}(\mathbb{Q})$ for some $\mu<\kappa$. To see that $\bigcap_{\alpha<\kappa} D_\alpha$ is dense, let $q\in\mathbb{Q}$, and note that $D_\alpha\cap \mathbb{Q}/q\in \mathcal{F}_q(\mathbb{Q})\subseteq \mathcal{W}_q$ for all $\alpha<\mu$. Since $\mathcal{W}_q$ is $\kappa$-complete, $\bigcap_{\alpha<\mu}D_\alpha\cap \mathbb{Q}/q\in\mathcal{W}_q$ and in particular $\bigcap_{\alpha<\mu}D_\alpha\cap \mathbb{Q}/q\neq \emptyset$. We conclude that there is $p\leq q$, such that $p\in \bigcap_{\alpha<\mu}D_\alpha$, showing that $\bigcap_{\alpha<\mu}D_\alpha$ is dense.
\end{proof}
\begin{remark}
    Having $\mathcal{F}_q(\mathbb{Q})\leq^\omega_K U$ for every $q$ is not the same as having $\mathcal{F}(\mathbb{Q})\leq^\omega_K U$. A counterexample is obtained by taking $\mathbb{Q}$ which is the sum of two forcings $\mathbb{Q}_1$ and $\mathbb{Q}_2$, such that $\mathcal{F}(\mathbb{Q}_1)\leq^\omega_K U$ and $\mathcal{F}(\mathbb{Q}_2)\not\leq^\omega_K U$.
\end{remark}
In the case that $\mathcal{F}_q(\mathbb{Q})\leq_K U$ (namely without having to iterate $j_U$), the fact that $\mathcal{F}_q(\mathbb{Q})$ is the filter generated by $\mathcal{F}(\mathbb{Q})$ and $\mathbb{Q}/q$ gives the next simple proposition:
\begin{prop}\label{Prop: simple equivalence for Katetov of filter of dense sets}
    Let $\mathbb{Q}$ be a forcing notion and $U$ be a $\sigma$-complete ultrafilter. Then the following are equivalent:
    \begin{enumerate}
        \item for every $q\in\mathbb{Q}$, $\mathcal{F}_q(\mathbb{Q})\leq_K U$.
        \item $\bigcap j_U``\mathcal{F}(\mathbb{Q})$ is dense in $j_U``\mathbb{Q}$; namely every $q\in\mathbb{Q}$ there is $x\in \bigcap j_U``\mathcal{F}(\mathbb{Q})$ such that $x\leq j_U(q)$.
    \end{enumerate}
\end{prop}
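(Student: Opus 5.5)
We prove the two directions separately, working throughout with the characterization of the Kat\v{e}tov order through the ultrapower, namely the Corollary after Definition \ref{def rk}: $F\leq_K U$ iff there is $x\in j_U(X)$ with $F\subseteq U(x,j_U)$. Here $X=\mathbb{Q}$, and the filters in question are $F=\mathcal{F}_q(\mathbb{Q})$. The condition $F\subseteq U(x,j_U)$ then says that $x\in j_U(D)$ for every $D\in\mathcal{F}_q(\mathbb{Q})$. By Fact \ref{factProj}(1), $\mathcal{F}_q(\mathbb{Q})$ is generated by $\mathcal{F}(\mathbb{Q})$ together with $\mathbb{Q}/q$. Since $j_U$ preserves intersections of finitely many sets, the condition reduces to two requirements: $x\in\bigcap j_U``\mathcal{F}(\mathbb{Q})$, and $x\in j_U(\mathbb{Q}/q)=j_U(\mathbb{Q})/j_U(q)$, that is, $x\leq j_U(q)$ in $j_U(\mathbb{Q})$.

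For $(1)\Rightarrow(2)$, fix $q\in\mathbb{Q}$. We apply the Corollary to $\mathcal{F}_q(\mathbb{Q})\leq_K U$ and obtain $x\in j_U(\mathbb{Q})$ with $\mathcal{F}_q(\mathbb{Q})\subseteq U(x,j_U)$. Every $D\in\mathcal{F}(\mathbb{Q})$ is dense open below $q$, so $D\in\mathcal{F}_q(\mathbb{Q})$ and hence $x\in j_U(D)$. Likewise $\mathbb{Q}/q\in\mathcal{F}_q(\mathbb{Q})$ (trivially dense open below $q$), so $x\in j_U(\mathbb{Q}/q)$, which gives $x\leq j_U(q)$ by elementarity. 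Thus $x$ witnesses (2) for $q$.

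For $(2)\Rightarrow(1)$, fix $q$ and take $x\in\bigcap j_U``\mathcal{F}(\mathbb{Q})$ with $x\leq j_U(q)$. Let $A\in\mathcal{F}_q(\mathbb{Q})$. By Fact \ref{factProj}(1), there are $D\in\mathcal{F}(\mathbb{Q})$ with $D\cap\mathbb{Q}/q\subseteq A$; concretely, one can take $D=A\cup\{p:p\perp q\}$, which is open dense. Then $x\in j_U(D)\cap j_U(\mathbb{Q}/q)=j_U(D\cap\mathbb{Q}/q)\subseteq j_U(A)$, so $A\in U(x,j_U)$. Hence $\mathcal{F}_q(\mathbb{Q})\subseteq U(x,j_U)$. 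Moreover $U(x,j_U)$ is an ultrafilter on $\mathbb{Q}$ that is RK-below $U$, via $f$ with $[f]_U=x$. So $\mathcal{F}_q(\mathbb{Q})\leq_K U$.

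There is no real obstacle here; the only point needing care is the verification that $\{p:p\perp q\}\cup A$ is open dense, which is what makes Fact \ref{factProj}(1) applicable. For density, any $p$ either is incompatible with $q$, or has an extension $r\leq p,q$, and then $r$ can be extended into $A$ because $A$ is dense below $q$. Openness is immediate, since both pieces are downward closed. $\sigma$-completeness of $U$ is used only so that $M_U$ is well-founded and the Corollary applies.
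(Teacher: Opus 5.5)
Your proof is correct and follows the route the paper indicates, though the paper only sketches it in a one-line remark before the statement. You combine the ultrapower characterization of $\leq_K$ (some $x\in j_U(\mathbb{Q})$ with $\mathcal{F}_q(\mathbb{Q})\subseteq U(x,j_U)$) with the fact that $\mathcal{F}_q(\mathbb{Q})$ is generated by $\mathcal{F}(\mathbb{Q})$ and $\mathbb{Q}/q$, which reduces the condition to $x\in\bigcap j_U``\mathcal{F}(\mathbb{Q})$ and $x\leq j_U(q)$; your explicit witness $A\cup\{p : p\perp q\}$ fills in the one detail the paper leaves implicit.
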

Note that condition $(2)$ above trivially holds if $\bigcap j_U``\mathcal{F}(\mathbb{Q})$ is dense in $j_U(\mathbb{Q})$.
\subsection{Strongly Compact Prikry forcing}

Here we define the main forcing of interest in this paper, the strongly compact Prikry forcing.
First a brief reminder of some notions related to strong and supercompactness.
For cardinals $\kappa < \lambda$, we let $P_\kappa(\lambda) = \{ x\subseteq \lambda : |x| < \kappa \}$.
There is a natural order on $P_\kappa(\lambda)$: we set $x \prec y$ iff $x \subseteq y$ and $|x| < |\kappa \cap y|$.
An ultrafilter $\U$ over $\pkl$ is \emph{fine} if for all $\alpha < \lambda$, $\{ x \in \pkl : \alpha \in x \} \in \U$.
We say $\U$ is \emph{normal} if $\U$ is closed under diagonal intersections, that is whenever $A_\alpha \in U$ for $\alpha < \lambda$, we have $\Delta_\alpha A_\alpha \coloneq \{ x \in \pkl : x \in \bigcap_{\alpha \in x} A_\alpha \} \in \U$.

\begin{definition}
    $\kappa$ is $\lambda$-strongly compact if there is a $\kappa$-complete fine ultrafilter over $\pkl$.
    $\kappa$ is $\lambda$-supercompact if there is a $\kappa$-complete normal, fine ultrafilter over $\pkl$.
    $\kappa$ is strongly compact (resp. supercompact) if there is a $\kappa$-complete fine (resp. normal) ultrafilter over $\pkl$ for all $\lambda \geq \kappa$.
\end{definition}

Ultrapowers give us a useful characterization of strongly compact and supercompact ultrafilters.

\begin{prop}
    Let $\U$ be a $\sigma$-complete ultrafilter over $\pkl$.
    \begin{enumerate}
        \item $\U$ is fine iff there is an $A \in M_\U$ of size $<j(\kappa)$ such that $j``\lambda \subseteq A$.
        \item $\U$ is normal iff $j``\lambda = [\id]_\U$.
    \end{enumerate}
\end{prop}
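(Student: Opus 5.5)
The plan is to compute everything through \L{}o\'s's theorem in $M_\U$, with $j=j_\U$ and the seed $[\id]_\U$. The basic translation I will use repeatedly is that for $\alpha<\lambda$,
\[ \{x\in\pkl:\alpha\in x\}\in\U \iff j(\alpha)\in[\id]_\U. \]
Also, since every $x\in\pkl$ has $|x|<\kappa$, \L{}o\'s gives $M_\U\models |[\id]_\U|<j(\kappa)$. One caution up front: as literally worded, the backward direction of (1) is false, so I will prove the seed-specific form $j``\lambda\subseteq[\id]_\U$ and only a weaker conclusion from an arbitrary $A$.

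\textbf{Part (1), forward direction.} If $\U$ is fine, the translation gives $j(\alpha)\in[\id]_\U$ for every $\alpha<\lambda$. So $A:=[\id]_\U$ satisfies $j``\lambda\subseteq A$ and has $M_\U$-size $<j(\kappa)$.

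\textbf{Part (1), backward direction.} The same translation read backwards shows that $j``\lambda\subseteq[\id]_\U$ implies $\U$ is fine. This is the form I will prove: $\U$ is fine iff $j``\lambda\subseteq[\id]_\U$, with $[\id]_\U$ automatically of size $<j(\kappa)$.

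For an arbitrary $A$, the argument only yields a fine Rudin--Keisler image. Write $A=[g]_\U$ with, by \L{}o\'s, $g(x)\in\pkl$ for $\U$-almost all $x$. Then $g_*\U=U(A,j)$, and $j(\alpha)\in A$ gives $\{x:\alpha\in g(x)\}\in\U$, so $g_*\U$ is fine. This does not make $\U$ itself fine. A counterexample: let $\U$ be fine and $\kappa$-complete, and let $\sigma$ be a permutation of $\pkl$ with $\sigma(x)\not\ni 0$ for all $x\ni 0$ (for instance the swap $x\leftrightarrow x\triangle\{0\}$). Then $\sigma_*\U$ is not fine. Yet it has the same ultrapower, and $A=j(\sigma^{-1})([\id]_{\sigma_*\U})$ still covers $j``\lambda$. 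So the existential clause must be read with $A=[\id]_\U$.

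\textbf{Part (2).} Here I take ``normal'' to include fineness, as in the definition of $\lambda$-supercompactness.
\begin{itemize}
\item If $\U$ is normal and fine, part (1) gives $j``\lambda\subseteq[\id]_\U$.
\item For the reverse inclusion under normality, let $\beta\in[\id]_\U$, say $\beta=[f]_\U$ with $f(x)\in x$ almost everywhere. If for every $\alpha$ the set $B_\alpha=\{x: f(x)\ne\alpha\}$ were in $\U$, then $\Delta_\alpha B_\alpha\in\U$. But any $x$ in that diagonal intersection with $f(x)\in x$ would satisfy $f(x)\neq f(x)$, which is impossible. Hence $f$ is constant $=\alpha$ on a measure-one set, so $\beta=j(\alpha)$.
\item Conversely, assume $j``\lambda=[\id]_\U$. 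Fineness follows from (1). For normality, take $A_\alpha\in\U$ for $\alpha<\lambda$ and let $\vec A=\langle A_\alpha:\alpha<\lambda\rangle$. By \L{}o\'s, $\Delta_\alpha A_\alpha\in\U$ iff for every $\gamma\in[\id]_\U$ we have $[\id]_\U\in j(\vec A)_\gamma$. Since each such $\gamma$ is $j(\alpha)$ for some $\alpha$, and $j(\vec A)_{j(\alpha)}=j(A_\alpha)\ni[\id]_\U$, this holds.
\end{itemize}

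The main obstacle is the quantifier issue in (1). Everything else is routine bookkeeping with \L{}o\'s. For the exact wording of (1), I expect only to obtain fineness of the image $g_*\U$, not of $\U$.
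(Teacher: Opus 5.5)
Your proof is correct. The paper states this proposition without proof, as standard background. Your \L{}o\'s computation with the seed $[\id]_\U$ is the standard argument: fineness iff $j``\lambda\subseteq[\id]_\U$, pressing down for $[\id]_\U\subseteq j``\lambda$, and the seed computation of $j(\Delta_\alpha A_\alpha)$ for the converse.

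Your objection to the literal backward direction of (1) is also correct. The existence of such an $A$ depends only on $j_\U$, which is unchanged when $\U$ is pushed forward along a permutation of $\pkl$, whereas fineness is not. Your example $\sigma(x)=x\triangle\{0\}$ works.

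Similarly, reading fineness into ``normal'' in (2) is necessary, not just a convention, under the paper's bare definition (closure under diagonal intersections). For a normal fine $\mathcal{W}$, your own seed computation shows that $\sigma_*\mathcal{W}$ is still closed under diagonal intersections. Yet $\sigma_*\mathcal{W}$ is not fine, and $[\id]_{\sigma_*\mathcal{W}}$ corresponds to $j``\lambda\setminus\{0\}$.

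One small repair in your side remark: before writing $A=[g]_\U$ with $g(x)\in\pkl$, replace $A$ by $A\cap j(\lambda)$.
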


Now we define the strongly compact Prikry forcing.
All facts about this forcing we mention can found in \cite{Gitik2010}.
\begin{definition}
    Fix a fine, $\kappa$-complete ultrafilter $\U$ over $P_\kappa(\lambda)$.
    The \emph{strongly compact Prikry forcing} $\P_\U$ consists of trees $T$ such that:
    \begin{enumerate}
        \item The elements of $T$ are finite, $\prec$-increasing sequences in $P_\kappa(\lambda)$.
        \item $T$ is a tree with the end-extension order.
        \item $T$ has a  \emph{stem} $s$, i.e. node $s \in T$ such that for all $t \in T$, either $t \leq_T s$ or $s \leq_T t$. 
        \item For every $t \geq_T s$, $\text{succ}_T(t) = \{ x \in P_\kappa(\lambda) : t {}^\smallfrown \langle x \rangle \in T \} \in \U$.
    \end{enumerate}
    For $T, S \in \P_\U$ we say $T \leq S$ iff $T \subseteq S$.
    If $t,s$ are the stems of $T,S$ respectively and $n = |t| - |s|$ we say $T$ is an \emph{$n$-step extension} of $S$.
    If $n = 0$ (so $T$ and $S$ share the same stem) we say $T$ is a \emph{direct extension} $S$, and write $T \leq^* S$. 
\end{definition}
\begin{notation}
    For $T\in \mathbb{P}_{\mathcal{U}}$, we will usually write $(t,T_t)$, where $t$ is the stem of $T$ and $T_t$ is the tree above $T$ (i.e. $T_s=\{s\setminus t\mid s\in T, \ t\prec s\}$). We also let the $n^{\text{th}}$ level of $T$, denoted by $\mathcal{L}_n(T)$, be $T\cap P_\kappa(\lambda)^n$. 
\end{notation}
Note that $\mathcal{L}_n(T_t)$ form all possible $n$-step extensions of $t$. The following fact follows immediately from the definition of $\U^n$.
\begin{fact}
    Let $\U$ be a fine $\kappa$-complete ultrafilter over $\pkl$. Suppose that $(t,T)\in \mathbb{P}_{\U}$, then for every $n$, $\mathcal{L}_n(T)\in \U^n$. In the other direction, if $A\in \mathcal{U}^n$, then there is a stem $t$ and a tree $T$ such that $\mathcal{L}_n(T)\subseteq A$ and $(t,T)\in \P_\U$.
\end{fact}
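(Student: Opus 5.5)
The fact has two directions, and I would prove each by induction on $n$. The only tool needed is the recursive definition $\U^{n+1}=\U\cdot\U^n$ and the $\kappa$-completeness of $\U$. I read "$\mathcal{L}_n(T)\in\U^n$" as being about the tree above the stem: the set $\mathcal{L}_n(T_t)$ of $n$-step extensions of the stem $t$ is, as a set of $n$-tuples, in $\U^n$. The case $n=0$ is trivial, and for $n=1$ the claim is exactly clause (4) of the definition, namely $\text{succ}_T(t)\in\U$. Here I identify $P_\kappa(\lambda)^n$ with nested pairs, so that $\U^{n+1}$ lives on $P_\kappa(\lambda)\times P_\kappa(\lambda)^n$.

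\textbf{Forward direction.} I would show, by induction on $n$ and simultaneously for all nodes $s\geq_T t$, that the set of $n$-step extensions of $s$ in $T$ belongs to $\U^n$. For the inductive step, an $(n+1)$-tuple $\langle x\rangle^\smallfrown\vec y$ lies in $\mathcal{L}_{n+1}(T_s)$ iff two things hold: $x\in\text{succ}_T(s)$, and $\vec y$ is an $n$-step extension of $s^\smallfrown\langle x\rangle$. The first set is in $\U$ by clause (4). For each such $x$, the section is in $\U^n$ by the induction hypothesis applied to the node $s^\smallfrown\langle x\rangle$. So the set belongs to $\U\cdot\U^n=\U^{n+1}$ by the definition of the Fubini product.

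\textbf{Reverse direction.} Given $A\in\U^n$, I first shrink $A$ so that it consists only of $\prec$-increasing sequences. This is possible because, for a fixed $x$, the set $\{y: x\prec y\}$ lies in $\U$. To see this, note that fineness together with $\kappa$-completeness gives $\{y: x\subseteq y\}\in\U$, since it is an intersection of fewer than $\kappa$ sets of the form $\{y:\alpha\in y\}$. The requirement $|x|<|y\cap\kappa|$ then follows because $\{y: y\cap\kappa\supseteq x'\}\in\U$ for any $x'\subseteq\kappa$ of size $|x|^+<\kappa$.

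Then, taking the stem $t=\emptyset$, I build the tree level by level. At level $1$, let $\text{succ}(\emptyset)=\{x: A_x\in\U^{n-1}\}\in\U$. Recursively, at a node $\langle x_0,\dots,x_{k-1}\rangle$ with $k<n$, let the successor set be the set of $x$ such that the section $A_{x_0,\dots,x_{k-1},x}$ is in $\U^{n-k-1}$. The inductive invariant is that $A_{x_0,\dots,x_{k-1}}\in\U^{n-k}$, and this guarantees that the successor set is in $\U$. Above level $n$, I continue arbitrarily with successor sets $\{y: x\prec y\}\in\U$. The resulting tree $T$ is a condition in $\P_\U$, and by construction $\mathcal{L}_n(T)\subseteq A$.

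The only real obstacle is the bookkeeping needed to match the tuple and Fubini conventions, together with checking that the $\prec$-increasing requirement is met on a measure-one set. Both of these reduce to the fineness argument given above.
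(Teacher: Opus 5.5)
Your proposal is correct. It is exactly the unfolding of the recursive definition $\U^{n+1}=\U\cdot\U^{n}$ that the paper has in mind when it says the fact ``follows immediately from the definition of $\U^n$,'' and your induction on $n$ for both directions is the same argument written out. Your extra step of shrinking $A$ to $\prec$-increasing tuples is a detail the paper leaves implicit. Its use of $|x|^+<\kappa$ needs $\kappa$ to be a limit cardinal. That holds, since a $\kappa$-complete fine ultrafilter on $P_\kappa(\lambda)$ forces $\kappa$ to be measurable.
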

We note that in the case that $\U$ is normal, there is a dense subset of $\P_\U$ where $\text{succ}_T(t)$ is the same for all splitting nodes $t$.
In fact, every condition has a direct extension in this dense subset.
Hence we may identify this dense subset with the partial order of pairs $(s,A)$ where $s$ is a finite, $\prec$-increasing sequence of elements of $\pkl$ and $A \in \U$.
This is the usual presentation of supercompact Prikry forcing, and we will call this partial order as such.
As a convention, when $\U$ is normal we will always consider only conditions in this dense subset and treat the conditions as pairs $(s,A)$ instead of trees.

The usual Tree Prikry forcing and Prikry forcing with a normal measure can be viewed as a particular case of the strongly compact and supercompact Prikry forcings in the case $\kappa=\lambda$. For a $\kappa$-complete ultrafilter $U$ over $\kappa$ we denote by $Pr(U)$ the classical Prikry forcing from \cite{Prikry}.

A generic $G$ yields a sequence $\langle x_n  : n\in \omega \rangle \subseteq \pkl$, and by genericity $\bigcup_n x_n = \lambda$.
So $\P_\U$ singularizes $\lambda$, and in fact collapses all cardinals in the interval $(\kappa,\lambda]$ to have cardinality $\kappa$.
Since any conditions with the same stem are compatible, it's easy to see that $\P_\U$ is $\lambda^{<\kappa}$-cc and so preserves all cardinals above $\lambda^{<\kappa}$.
$\kappa$ itself is preserved and singularized by the sequence $\langle x_n \cap \kappa : n \in \omega \rangle$.
The preservation of $\kappa$ follows from the fact that no bounded subsets of $\kappa$ are added, which follows from the following property.

\begin{lemma}(Prikry Property)
    Let $\varphi$ be any statement of the forcing language and $T \in \P_\U$.
    There is an $S \leq^* T$ such that $S \parallel \varphi$.
\end{lemma}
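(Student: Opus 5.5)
We prove the Prikry Property for $\P_\U$ by the standard two-stage argument: first homogenize each level of the tree with respect to the sign of $\varphi$, then show that any condition of the homogenized tree that decides $\varphi$ forces the stem-condition itself to decide it. Fix $T=(t,T_t)$. For a node $s\in T$ with $t\leq_T s$, write $T_s$ for the subtree above $s$, and call $s$ \emph{positive} (resp. \emph{negative}) if there is a direct extension $S\leq^*(s,T_s)$ with $S\Vdash\varphi$ (resp. $S\Vdash\neg\varphi$); otherwise call $s$ \emph{undecided}. A node cannot be both positive and negative, since any two conditions with the same stem $s$ are compatible (intersect the trees; $\kappa$-completeness of $\U$ keeps the successor sets in $\U$).

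Step 1 (shrinking level by level). By recursion on the height above $t$ we build $S\leq^* T$. At each node $s$, the successor set $\text{succ}_T(s)\in\U$ is partitioned into three pieces according to whether $s^\smallfrown\langle x\rangle$ is positive, negative or undecided. Since $\U$ is an ultrafilter exactly one piece lies in $\U$, and we keep only that piece. To be able to hand out witnesses, for every positive or negative node $r$ we fix in advance a witnessing direct extension $S^r$. The tree $S$ is assembled as follows. Each node that is \emph{first} decided along its branch is replaced by (the part of) its witness tree $S^r$. The remaining nodes are undecided nodes that have been homogenized. This gives a well-defined tree whose successor sets are all in $\U$, so $S\leq^* T$.

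Step 2 (propagation down to the stem). Choose any $R\leq S$ deciding $\varphi$, say $R\Vdash\varphi$, with stem $t^\smallfrown\langle x_1,\dots,x_n\rangle$. We argue by downward induction on $k\leq n$ that $t^\smallfrown\langle x_1,\dots,x_k\rangle$ is positive. The case $k=n$ holds because $R$ is itself a direct extension of $(r,S_r)$ up to the restriction of $S$. For the inductive step, suppose $s=t^\smallfrown\langle x_1,\dots,x_k\rangle$ has a successor that is positive and belongs to $S$. By the homogenization of Step 1, a $\U$-large set of successors of $s$ in $S$ are positive. Shrink $\text{succ}(s)$ to this set and amalgamate the witnesses $S^{s^\smallfrown\langle x\rangle}$ into one tree with stem $s$. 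This tree forces $\varphi$, because any generic through it passes through some $s^\smallfrown\langle x\rangle$ and hence lies below the corresponding witness. So $s$ is positive. At $k=0$ the stem $t$ is positive, i.e.\ some direct extension of $T$ forces $\varphi$, as required.

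The main obstacle is the bookkeeping in Step 1 combined with the amalgamation in Step 2. We must ensure that the recursion produces a genuine tree in which "positive/negative" is witnessed coherently, and that gluing $\U$-many witness subtrees along a successor set yields a condition forcing $\varphi$. The latter is a density argument: every extension of the glued tree is compatible with some witness. Note that no normality of $\U$ is needed, only $\kappa$-completeness (indeed, only finite completeness for intersecting two trees) and the ultrafilter property at each node.
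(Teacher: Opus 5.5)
Your argument is correct and is essentially the standard proof of the Prikry property for tree-style (strongly compact) Prikry forcing. The paper does not prove this lemma itself; it defers to \cite{Gitik2010}, where the same scheme is used: colour nodes by whether some direct extension with that stem forces $\varphi$, forces $\neg\varphi$, or neither; shrink each successor set to its $\U$-large colour class using only the ultrafilter property; then push the decision down to the stem by gluing witnesses.

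One point deserves to be made explicit. Step 2 also uses homogeneity at the nodes of $S$ that extend the first decided initial segment $r$ of the stem of $R$, where $S$ coincides with $S^r$. This holds because $R\le S^r$ and $R\Vdash\varphi$, so $S^r$ itself forces $\varphi$, and therefore every node of $S^r$ above $r$ is positive.
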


There is a strengthening of the Prikry property we will use:

\begin{lemma}(Strong Prikry Property)
    Let $D \subseteq \P_\U$ be open dense and $T \in \P_\U$.
    There is a $S \leq^* T$ and $n \in \omega$ such that every $n$-step extension of $S$ in $D$.
\end{lemma}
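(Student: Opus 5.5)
We aim to prove the Strong Prikry Property, taking the Prikry Property as given. Fix an open dense $D\subseteq\P_\U$ and a condition $T=(t,T_t)$.

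First, for each node $s\in T$ extending $t$, consider the condition $T_{(s)}:=\{r\in T: r\leq_T s \text{ or } s\leq_T r\}$, which has stem $s$. Call $s$ \emph{good} if some direct extension of $T_{(s)}$ lies in $D$. Apply the Prikry Property to $T$ and the statement ``there is $n$ such that the $n$-th element of the generic sequence beyond $t$ is such that the stem so far is good''. A cleaner route is the standard rank/colouring argument. For each $n<\omega$ and each $s\in T$ with $|s|=|t|+n$, let $c(s)=1$ if $s$ is good and $c(s)=0$ otherwise.

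Next, shrink $T$ level by level, by backward induction on the tree, so that on each level the colour is constant above each node. Given a node $r$, the set $\{x\in\text{succ}_T(r): r^\smallfrown\langle x\rangle \text{ is good}\}$ is either in $\U$ or its complement is. Using $\kappa$-completeness and iterating (the Fubini structure $\U^n$), we obtain $S^0\leq^* T$ such that for every $n$, either all nodes of $\mathcal{L}_n(S^0_t)$ (above $t$) are good, or none are. To do this inside a single tree, define $S^0$ by recursion on $n$. The relevant sets at level $n$ are determined by measure-one sets of successors, and we intersect over the countably many $n$; this is legitimate since we only remove successor sets node by node, so the result is still a tree with $\U$-large splitting.

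Then, using density, some extension of $S^0$ lies in $D$, say an $n$-step extension $R$ with stem $s\in\mathcal{L}_n$. Then $R\leq^* S^0_{(s)}$ lies in $D$, so $s$ is good. Hence all nodes on level $n$ of $S^0$ are good. For each such $s$, choose a witness $R^s\leq^* S^0_{(s)}$ with $R^s\in D$, and amalgamate: let $S$ consist of the nodes of $S^0$ of length at most $|t|+n$, together with $\bigcup_{s}R^s$ above level $n$. Then $S\leq^* T$, each $n$-step extension of $S$ is below some $R^s$, and by openness of $D$ it lies in $D$.

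The main obstacle is the homogenization step. One must verify that the recursive pruning, which decides goodness of successors measure-by-measure at every node and every level simultaneously, still yields a legitimate tree condition. This is done by a fusion-style argument: at stage $n$ we fix the tree up to level $n$, and for each node at level $n$ we intersect its successor set with the $\U$-large homogeneous set for levels $n+1,n+2,\dots$. The latter is obtained by backward propagation of the colour using $\U$ being an ultrafilter at each node.
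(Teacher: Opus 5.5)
The paper does not prove this lemma. It is quoted in the preliminaries as a standard fact about $\P_\U$, with the reader referred to Gitik's handbook chapter, so there is no argument in the paper to compare with. Your proof is the standard one, and it is correct:
\begin{enumerate}
\item make goodness of stems constant on each level $\mathcal{L}_m$;
\item use density to find a single good stem at some level $n$, so that every stem at level $n$ is good;
\item amalgamate the witnesses above level $n$.
\end{enumerate}

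Two points are worth tightening.

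First, the dichotomy you state node by node is not enough on its own. It says: for each $r$, either $\U$-many or $\U$-few successors $r^\smallfrown\langle x\rangle$ are good. This only yields that the children of each single node are all good or all bad. Two nodes of $\mathcal{L}_n$ with different parents could still get different colours, and then the step ``$s$ is good, hence every node on level $n$ is good'' would fail.

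What you actually need, and what your backward propagation computes, is that $\U^m$ is an ultrafilter. So either the good part or the bad part of $\mathcal{L}_m(T_t)$ lies in $\U^m$. By the Fact recorded after the Notation, there is then $T^m\leq^* T$ whose $m$-th level is monochromatic. The intersection $S^0:=\bigcap_{m<\omega}T^m$ is a condition with stem $t$, because each successor set is a countable intersection of sets in $\U$ and $\U$ is $\sigma$-complete. No fusion argument is needed.

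Second, goodness of $s$ provides a witness below $T_{(s)}$, not below $S^0_{(s)}$. You should intersect it with $S^0_{(s)}$. This is legitimate because two trees with the same stem $s$ have their intersection as a common direct extension, and $D$ is open.

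Finally, the appeal to the Prikry Property in your first paragraph is never used and can be dropped.
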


In general, we can identify the Prikry sequence $\langle x_n : n \in \omega \rangle$ and recover one from the other.
Thus it makes sense to discuss $\P_\U$-generic sequences, not only filters.
We have the following characterization of generic sequences which appears as Theorem $4.3$ in \cite{Hamkins1997}.
\begin{lemma}
    Let $V \subseteq W$ be two models of ZFC with $\langle x_n : n\in \omega \rangle \in W$.
    Then $\langle x_n : n\in \omega \rangle$ is $\P_\U$-generic over $V$ iff for all $F : {}^{<\omega}P_\kappa(\lambda) \to \U$ in $V$, there is an $m$ such that for all $n > m$ we have $x_n \in F( \langle x_k : k < n\rangle)$.
\end{lemma}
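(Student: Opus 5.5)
The plan is to identify a sequence $\vec x=\langle x_n : n\in\omega\rangle$ with the set
\[ G_{\vec x}=\{T\in\P_\U : \text{the stem } t \text{ of } T \text{ is an initial segment of } \vec x \text{ and } \vec x\restriction n\in T \text{ for all } n\geq |t|\}, \]
and to prove both directions directly from the definition of $\P_\U$ and the Strong Prikry Property. For $\vec x$ to be attached to a filter at all it has to be $\prec$-increasing, at least from some point on. I will assume this, since the sequences under discussion are $\prec$-increasing. I also note that it is forced by the criterion itself. The map $F(s)=\{y : \max(s)\prec y\}$ takes values in $\U$, because $\U$ is fine and $\kappa$-complete. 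So the criterion yields that $\vec x$ is eventually $\prec$-increasing.

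\emph{($\Rightarrow$)} Suppose $\vec x$ is generic, with filter $G=G_{\vec x}$, and let $F:{}^{<\omega}P_\kappa(\lambda)\to\U$ be in $V$. Let $D_F$ be the set of conditions $(t,T)$ such that $\text{succ}_T(s)\subseteq F(s)$ for every $s\in T$ extending $t$. Then $D_F$ is dense. Given any $(t,T)$, shrink it level by level, replacing each successor set $\text{succ}_T(s)$ by $\text{succ}_T(s)\cap F(s)\in\U$. This gives a direct extension in $D_F$. Genericity then provides $(t,T)\in G\cap D_F$. Put $m=|t|$. For $n\geq m$ we have $\vec x\restriction(n+1)\in T$, so $x_n\in \text{succ}_T(\vec x\restriction n)\subseteq F(\vec x\restriction n)$, as required.

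\emph{($\Leftarrow$)} First, $G_{\vec x}$ is a filter. Upward closure is clear. For compatibility, take two members and let $t$ be the longer of their two stems. Intersect the two trees and restrict to nodes comparable with $t$. The result is a tree with stem $t$ whose successor sets are finite intersections of $\U$-sets, and it still contains every $\vec x\restriction n$, so it is a common extension in $G_{\vec x}$. For genericity, fix a dense open $D$. For each finite $\prec$-increasing $t$, apply the Strong Prikry Property to the full tree with stem $t$. This yields $S^t$ with stem $t$ and some $n_t<\omega$ such that every $n_t$-step extension of $S^t$ lies in $D$. Now define $F(s)=\bigcap\{\text{succ}_{S^t}(s) : t\sqsubseteq s,\ s\in S^t\}$, with $F(s)=P_\kappa(\lambda)$ if this family is empty. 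This is a finite intersection, hence $F(s)\in\U$. The hypothesis then provides $m$ such that $x_n\in F(\vec x\restriction n)$ for all $n>m$.

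Let $t=\vec x\restriction (m+1)$. By induction on $n\geq m+1$ we get $\vec x\restriction n\in S^t$: the base case holds because $t$ is the stem, and at each step $x_n\in F(\vec x\restriction n)\subseteq\text{succ}_{S^t}(\vec x\restriction n)$. In particular $u=\vec x\restriction(m+1+n_t)\in S^t$. The subtree of $S^t$ with stem $u$ is an $n_t$-step extension of $S^t$, so it lies in $D$. By the same induction it also lies in $G_{\vec x}$. Hence $G_{\vec x}\cap D\neq\emptyset$.

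I expect the main obstacle to be the uniformization step in ($\Leftarrow$). A single $F$ must serve every possible stem simultaneously, since we do not know in advance from which point $m$ the sequence obeys $F$. This is why $F(s)$ is taken as the intersection over all initial segments $t$ of $s$; finiteness of that intersection, together with the Strong Prikry Property, is exactly what makes the construction work.
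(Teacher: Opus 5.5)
Your proof is correct. Your caveat is also right: stems must be $\prec$-increasing, so the equivalence as literally stated needs $\vec x$ to be $\prec$-increasing from the start, since the criterion only gives this on a tail (via your $F(s)=\{y:\max(s)\prec y\}$); your standing assumption is therefore necessary, not cosmetic. The paper gives no proof of its own here and only cites Hamkins (Theorem 4.3 there); your argument (density of $D_F$ for ($\Rightarrow$), and for ($\Leftarrow$) the Strong Prikry Property applied at every stem and uniformized by the finite intersection defining $F(s)$) is the standard proof of this Mathias-type criterion.
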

Recall in the case that $\U$ is normal we treat conditions as pairs $(s,A)$, yielding a simpler characterization of Prikry-generic sequences: $\langle x_n : n \in \omega \rangle$ is $\P_\U$-generic iff for all $A \in \U$ there is an $m \in \omega$ such that $x_n \in A$ for all $n > m$.

Finally, we will also use the Bukovsky-Dehornoy analysis of Prikry forcing and iterated ultrapowers.
A proof of this more general form can be found in \cite{Hamkins1997}.
\begin{theorem}\label{thm: general bukovsky-dehornoy}
    Let $\mathcal{U}$ be a fine ultrafilter over $\pkl$.
    Let $\{ M_n , j_{m,n} : m < n \in \omega\}$ be the iterated ultrapower of $V$ by $\U$ of length $\omega$.
    Consider the sequence
    \[ S = \{ j_{n+1,\omega}([\id]_{\U^n}) : n < \omega \} \]
    Then $S$ is $j_{0,\omega}(\P_\U)$-generic over $M_\omega$.

    Furthermore, if $X = \{ x_n : n \in \omega \}$ is a sequence $j_{0,\omega}(\P_\U)$-generic over $M_\omega$ then for all but finitely many $n$, $\U = U(x_n, j_{0,\omega})$.
\end{theorem}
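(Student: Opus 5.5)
The first half of Theorem \ref{thm: general bukovsky-dehornoy} is a genericity check, which I would do with Hamkins' criterion for $\P_\U$-generic sequences stated just above. I would apply it inside $M_\omega$ to $j_{0,\omega}(\P_\U)$ and $j_{0,\omega}(\U)$. For $n<\omega$ let $c_n=[\id]_{j_{0,n}(\U)}\in M_{n+1}$, the seed of the $(n+1)$-st ultrapower step, and put $x_n=j_{n+1,\omega}(c_n)$; this is the $n$-th element of $S$. For $k<n$ I set $x^{(n)}_k=j_{k+1,n}(c_k)\in M_n$, so that $j_{n,\omega}(x^{(n)}_k)=x_k$.

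Let $F\in M_\omega$ be a function from finite sequences into $j_{0,\omega}(\U)$. Since $M_\omega$ is a direct limit, $F=j_{m,\omega}(F_m)$ for some $m$ and some $F_m\in M_m$. For $n\geq m$ put $F_n=j_{m,n}(F_m)$ and $A=F_n(\langle x^{(n)}_k:k<n\rangle)$. Then $A\in j_{0,n}(\U)$, so by Łoś $c_n\in j_{n,n+1}(A)$. Applying $j_{n+1,\omega}$ and using $j_{n,\omega}(x^{(n)}_k)=x_k$ gives
\[ x_n\in j_{n,\omega}(A)=F(\langle x_k:k<n\rangle). \]
That $S$ is $\prec$-increasing in $M_\omega$ follows in the same way from fineness: $j_{k+1,n}(c_k)\subseteq j_{n,n+1}$-images, and the cardinality bound comes from $|c_k|<j_{0,k+1}(\kappa)$. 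Hence the criterion holds with threshold $m$, and $S$ is generic. The only real work here is the index bookkeeping, namely that the images of the earlier seeds under $j_{n,\omega}$ are exactly the earlier $x_k$.

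For the \emph{furthermore} part, let $X=\langle x_n\rangle$ be generic over $M_\omega$. Each $U(x_n,j_{0,\omega})$ is an ultrafilter on $P_\kappa(\lambda)$. So it suffices to show that for all large $n$ and every $A\in\U$ we have $x_n\in j_{0,\omega}(A)$. For a single $A$ this is immediate from genericity, since $j_{0,\omega}(A)\in j_{0,\omega}(\U)$. The difficulty is uniformity in $A$: the family $j_{0,\omega}``\U$ need not belong to $M_\omega$.

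My approach is to use supports. Each $x_n$ can be written as $j_{k,\omega}(h)(c_0,\dots,c_{k-1})$ for some $k$ and some $h\in V$. For fixed $k$ and $h$, I would build in $V$ a function $G$ assigning to each stem $t$ a set in $\U$. The idea is that $G(t)$ diagonalizes, via the $\kappa$-completeness of $\U^k$, the values $h(\vec y)$ for $\vec y$ ranging over the first $k$ coordinates. Genericity of $X$ against $F=j_{0,\omega}(G)$ should then force every $x_n$ beyond the support to avoid being captured by a set $j_{0,\omega}(B)$ with $B\notin\U$. This gives $x_n\in j_{0,\omega}(A)$ for all $A\in\U$ from some point on.

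I expect this uniformization step to be the main obstacle: turning infinitely many single-set failures into a single function in $M_\omega$ that genericity can defeat. If the direct argument does not go through, I would instead follow Hamkins' proof, comparing $X$ with the canonical $S$ through their supports.
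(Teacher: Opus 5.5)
Your verification of the first half is correct and is the standard argument. The paper gives no proof of this theorem and only refers to Hamkins, so I am judging your proposal on its own. You read $[\id]_{\U^n}$ as the seed $c_n=[\id]_{j_{0,n}(\mathcal{U})}$, pull $F$ back to some $F_m\in M_m$, use \L{}o\'s to get $c_n\in j_{n,n+1}(F_n(\langle x^{(n)}_k:k<n\rangle))$, and push forward by $j_{n+1,\omega}$; this is exactly Hamkins' criterion. For the $\prec$-increasing part, the fact you need is $|x^{(n)}_k|<j_{0,n}(\kappa)=\mathrm{crit}(j_{n,n+1})$. Then $j_{n,n+1}(x^{(n)}_k)=j_{n,n+1}``x^{(n)}_k\subseteq c_n$ by fineness, and the cardinality clause follows the same way.

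The \emph{furthermore} part, however, is not proved. You correctly see that you need one threshold that works for all $A\in\mathcal{U}$ at once, but the support construction does not supply it. The support $(k,h)$ of $x_n$ depends on $n$, so you get a separate function $G$ for each $x_n$ and face the same uniformity problem again. The key claim, that genericity against $j_{0,\omega}(G)$ keeps $x_n$ out of every $j_{0,\omega}(B)$ with $B\notin\mathcal{U}$, is asserted rather than derived. (Also, the representation should read $x_n=j_{0,\omega}(h)(s_0,\dots,s_{k-1})$ with $s_i$ the members of $S$.)

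The missing idea is just the countable completeness of $\mathcal{U}$, which makes the worry that $j_{0,\omega}``\mathcal{U}\notin M_\omega$ irrelevant. Here $X\in V$, so everything below takes place in $V$. Suppose $I=\{n: U(x_n,j_{0,\omega})\neq\mathcal{U}\}$ were infinite.
\begin{itemize}
\item Both are ultrafilters on $P_\kappa(\lambda)$, since $x_n\in j_{0,\omega}(P_\kappa(\lambda))$. So for $n\in I$ choose $A_n\in\mathcal{U}$ with $x_n\notin j_{0,\omega}(A_n)$.
\item Let $A^*=\bigcap_{n\in I}A_n$, which lies in $\mathcal{U}$ by $\sigma$-completeness. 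Then $j_{0,\omega}(A^*)$ is a single member of $j_{0,\omega}(\mathcal{U})$ lying in $M_\omega$, and $x_n\notin j_{0,\omega}(A^*)$ for every $n\in I$.
\item Apply the criterion to the constant function with value $j_{0,\omega}(A^*)$. Equivalently, use density of the conditions all of whose splitting sets lie inside $j_{0,\omega}(A^*)$. This puts a tail of $X$ into $j_{0,\omega}(A^*)$, a contradiction.
\end{itemize}
Only countably many members of $\mathcal{U}$ are ever involved, and you can intersect them in $V$ before applying $j_{0,\omega}$. No support analysis and no comparison with $S$ is needed.
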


\section{Intermediate models of the strongly compact forcing}\label{section: RK}
In this section we study the connection between the Rudin-Keisler/Kat\v{e}tov predecessors of the ultrafilters used in a Prikry forcing and subforcings of that Prikry forcing. 
We start with the class of $\sigma$-distributive forcings:
\begin{prop}\label{fact: dis sub forcing}
    Let $\mathcal{U}$ be a $\kappa$-complete fine ultrafilter on $P_\kappa(\lambda)$, where $\lambda\geq\kappa$. If  $\mathbb{Q}$ is a projection of $\mathbb{P}_{\mathcal{U}}$, then:
    \begin{enumerate}
    \item $\Q$ is $(\kappa,\lambda)$-centered.
        \item If $\Q$ is $\sigma$-distributive then  $\mathbb{Q}$ is $\kappa$-distributive.
    \end{enumerate}
\end{prop}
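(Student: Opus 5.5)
For (1), I will show that $\mathbb{P}_{\mathcal{U}}$ itself is $(\kappa,\lambda)$-centered and then apply Fact \ref{projectionFacts}(1). Decompose $\mathbb{P}_{\mathcal{U}}$ according to the stem: for each finite $\prec$-increasing sequence $t$ of elements of $P_\kappa(\lambda)$, let $\mathbb{P}_t$ be the set of conditions with stem $t$. There are $\lambda^{<\kappa}$ possible stems, not $\lambda$. So either I must assume $\lambda^{<\kappa}=\lambda$, or I must refine the index set. I expect this counting to be the point needing care. One option is to read the statement as intended for $\lambda^{<\kappa}=\lambda$, which holds for instance when $\lambda$ is regular above a strongly compact cardinal by Solovay's theorem. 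The other is to pass to a dense set whose stems come from a subfamily of size $\lambda$; I would try to use a measure-one set of size $\lambda$ when one is available.

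Each $\mathbb{P}_t$ is ${<}\kappa$-directed. Given $\{T_i : i<\gamma\}\subseteq\mathbb{P}_t$ with $\gamma<\kappa$, let $T=\bigcap_i T_i$. Its stem is still $t$, and for every $s\geq_T t$ we have $\mathrm{succ}_T(s)=\bigcap_i\mathrm{succ}_{T_i}(s)\in\mathcal{U}$ by $\kappa$-completeness. Hence $T\in\mathbb{P}_t$ and $T$ is a common lower bound. Fact \ref{projectionFacts}(1) then transfers the centeredness to $\mathbb{Q}$.

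For (2), I argue via the Prikry property. Let $\mu<\kappa$ and suppose $\dot f$ is a $\mathbb{Q}$-name for a function $f:\mu\to V$. Let $\pi$ be the projection and $H=\pi``G$; then $\dot f$ is interpreted in $V[G]$ as well. First I show that $\mathbb{P}_{\mathcal{U}}$ adds no new $\omega$-sequences of ordinals that lie in $V[H]$. Since $\mathbb{Q}$ is $\sigma$-distributive, every $\omega$-sequence of ground-model elements in $V[H]$ is already in $V$. Now view $f$ in $V[G]$. Fix a condition $T$. Using the Prikry property $\mu$ many times together with $\kappa$-completeness (the usual fusion of direct extensions over $\mu<\kappa$ coordinates), obtain $S\leq^* T$ such that, for each $\alpha<\mu$, the value $f(\alpha)$ is decided by $S$ together with its stem extended by finitely many points, in a way depending on $\alpha$.

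This alone does not place $f$ in $V$, since $\mathbb{P}_{\mathcal{U}}$ does add new $\omega$-sequences. The cleaner route is therefore to go through $\mathbb{Q}$ directly. Fix $q\in\mathbb{Q}$ and dense open sets $D_\alpha\subseteq\mathbb{Q}$ for $\alpha<\mu$; I must find an element of $\bigcap_\alpha D_\alpha$ below $q$. Pick $T$ with $\pi(T)\leq q$. For each $\alpha$, the set $\pi^{-1}[D_\alpha]$ is dense open below $T$, so the Strong Prikry Property gives $S_\alpha\leq^* T$ and $n_\alpha$ such that all $n_\alpha$-step extensions of $S_\alpha$ lie in $\pi^{-1}[D_\alpha]$. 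Let $S=\bigcap_\alpha S_\alpha$, a direct extension of $T$ by $\kappa$-completeness. Since $\mu<\kappa$ and $\kappa$ has uncountable cofinality, some $n$ equals $n_\alpha$ for cofinally many $\alpha$; but to handle all $\alpha$ at once I instead need $\sup_\alpha n_\alpha<\omega$. Here I use $\sigma$-distributivity of $\mathbb{Q}$. Partition $\mu$ into the sets $A_n=\{\alpha : n_\alpha=n\}$. For each $n$, every $n$-step extension of $S$ projects into $E_n=\bigcap_{\alpha\in A_n}D_\alpha$. Then any $n$-step extension $R$ of $S$ satisfies $\pi(R)\in E_0\cap\dots\cap E_n$, because the sets are open and extensions of length at least $n_\alpha$ remain inside $\pi^{-1}[D_\alpha]$. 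Consequently the $E_n$ are open and dense below $\pi(S)$, using that $\pi$ is a projection. By $\sigma$-distributivity, $\bigcap_n E_n=\bigcap_\alpha D_\alpha$ is dense below $\pi(S)\leq q$, which is what we need. The main obstacle is checking that each $E_n$ is dense below $\pi(S)$: any $q'\leq\pi(S)$ is met by $\pi(S')$ for some $S'\leq S$, and $S'$ can be extended to at least $n$ steps.
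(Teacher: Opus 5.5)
For (1) you use exactly the paper's argument: split $\mathbb{P}_{\mathcal{U}}$ by stems, note each piece is ${<}\kappa$-directed by $\kappa$-completeness, then apply Fact~\ref{projectionFacts}(1). The counting issue you raise is real, and the paper's one-line proof does not address it. Neither of your two options is the right fix.

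If $\mathrm{cf}(\lambda)\geq\kappa$, nothing needs to be assumed, because the existence of $\mathcal{U}$ already gives $\lambda^{<\kappa}=\lambda$. For regular $\lambda$ this is Solovay's theorem, which needs only $\lambda$-strong compactness, and $\mathcal{U}$ itself witnesses that; no strongly compact cardinal is required. For singular $\lambda$ of cofinality $\geq\kappa$, every $x\in P_\kappa(\lambda)$ is bounded in $\lambda$. Hence $\lambda^{<\kappa}\leq\lambda\cdot\sup_{\kappa\leq\nu<\lambda}(\nu^+)^{<\kappa}=\lambda$, using Solovay at each regular $\nu^+$ (project $\mathcal{U}$ to $P_\kappa(\nu^+)$). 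So there are only $\lambda$ stems and your proof is complete.

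If $\mathrm{cf}(\lambda)<\kappa$, item (1) is false, so no refinement can save it. Fix $\langle\lambda_i:i<\mathrm{cf}(\lambda)\rangle$ cofinal in $\lambda$ and suppose $A=\{a_\xi:\xi<\lambda\}\in\mathcal{U}$. Then $[\mathrm{id}]_{\mathcal{U}}=j_{\mathcal{U}}(\vec a)(\eta)$ for some $\eta<j_{\mathcal{U}}(\lambda)=\sup_i j_{\mathcal{U}}(\lambda_i)$. So $\{a_\xi:\xi<\lambda_i\}\in\mathcal{U}$ for some $i$, but the union of this set has size $<\lambda$, contradicting fineness. Thus every set in $\mathcal{U}$ has size $>\lambda$. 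The one-step extensions of a fixed condition by distinct points then form an antichain of size $>\lambda$. So $\mathbb{P}_{\mathcal{U}}$, which is a projection of itself, is not even $\lambda^+$-c.c., hence not $(\kappa,\lambda)$-centered. In particular, your fallback of a measure-one set of size $\lambda$ is unavailable exactly when it would be needed.

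Your final argument for (2) is correct. The $E_n$ are dense below $\pi(S)$ because any extension of $S$ whose stem is at least $n$ longer lies below an $n$-step extension of $S$. Then $\sigma$-distributivity of $\mathbb{Q}/\pi(S)$ makes $\bigcap_\alpha D_\alpha=\bigcap_n E_n$ dense below $q$.

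The paper proves (2) only by citing Benhamou--Hayut--Gitik, so yours is a self-contained substitute. It is precisely the mechanism the paper isolates in Section 4:
\begin{itemize}
\item the Strong Prikry Property plus $\kappa$-completeness of $\leq^*$ makes $\mathbb{P}_{\mathcal{U}}$ $(\omega,\kappa)$-predistributive (as in Proposition~\ref{prop: sigma Prikry and kappa-dist}(2));
\item this passes to $\mathbb{Q}$ (Proposition~\ref{thm: closed under proj});
\item $\sigma$-distributivity then upgrades it to $\kappa$-distributivity (Corollary~\ref{distributive projections}).
\end{itemize}
Your $E_n$ play the role of the projected $d_n$. Inlining is shorter; the modular route gives the conclusion for every $\sigma$-distributive $(\omega,\kappa)$-predistributive forcing.

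Two cleanups. Delete the sentence claiming some $n$ equals $n_\alpha$ for cofinally many $\alpha$; it is false in general (e.g.\ $\mu=\omega$, $n_\alpha=\alpha$). Also delete the one saying you need $\sup_\alpha n_\alpha<\omega$; the partition into the $A_n$ is exactly what makes that unnecessary.

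Finally, your abandoned first route also works. Taking $S\in G$, the sets $A_n$ lie in $V$, and each $f\upharpoonright A_n$ is decided by the $n$-step extension of $S$ in $G$, so it lies in $V$. Then $\langle f\upharpoonright A_n:n<\omega\rangle$ is an $\omega$-sequence of ground-model sets lying in $V[H]$, hence in $V$ by $\sigma$-distributivity of $\mathbb{Q}$.
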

\begin{proof}
$(1)$ follows from $\P_{\mathcal{U}}$ being $(\kappa,\lambda)$-linked and \ref{projectionFacts}. For $(2)$,
    see \cite[Proposition 12]{Benhamou_Hayut_Gitik}.
\end{proof}
The following theorem is about $\kappa$-distributive forcings (by Corollary \ref{Cor: RK and distributive}):
\begin{theorem}\label{thm: seed implies weak projection}
    Suppose that $\mathbb{Q}$ is a forcing notion such that for every $q\in\mathbb{Q}$, $\mathcal{F}_q(\mathbb{Q})\leq^\omega_{K} \mathcal{U}$. Then there is a weak projection from $\mathbb{P}_\mathcal{U}$ to $\mathbb{Q}$.
\end{theorem}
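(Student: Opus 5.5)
The idea is to build, in $V[G]$, a $\le_{\mathbb{Q}}$-decreasing sequence $q_0\ge q_1\ge\cdots$, where each $q_{k+1}$ is read off from the next block of the Prikry sequence using a Kat\v{e}tov witness for $\mathcal{F}_{q_k}(\mathbb{Q})$. We then check that the upward closure $H$ of $\{q_k:k<\omega\}$ is $V$-generic for $\mathbb{Q}$. Fact~\ref{fact: equivalence with generics and weak projections} then turns the resulting $\mathbb{P}_{\mathcal{U}}$-name $\dot H$ for a $\mathbb{Q}$-generic filter into a weak projection to $\ro(\mathbb{Q})$. To land in $\mathbb{Q}$ itself, we compose with the canonical map sending a condition of $\mathbb{P}_{\mathcal{U}}$ to the last $q_k$ determined by its stem; this gives an order-preserving map, and Fact~\ref{fact: equivalences for projections}(3) reduces the verification to density of preimages of dense open sets.

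\textbf{Step 1: choose witnesses.} Using the Corollary that $F\le^\omega_K\mathcal{U}$ iff $F\le_K\mathcal{U}^n$ for some $n$, we fix in $V$, for every $q\in\mathbb{Q}$, some $n_q\ge 1$ and a function $f_q\colon P_\kappa(\lambda)^{n_q}\to\mathbb{Q}$ such that $f_q^{-1}[D]\in\mathcal{U}^{n_q}$ for every $D\in\mathcal{F}_q(\mathbb{Q})$. By Fact~\ref{factProj}(1), $\mathbb{Q}/q\in\mathcal{F}_q(\mathbb{Q})$, so after shrinking we may assume $f_q(s)\le q$ for $\mathcal{U}^{n_q}$-almost all $s$; we redefine $f_q(s)=q$ off that set, so that $f_q(s)\le q$ always.

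\textbf{Step 2: the recursion.} Given a finite $\prec$-increasing sequence $t$, we define $\mathrm{read}(t)$ recursively. Start with $q_0=\one_{\mathbb{Q}}$ and $\ell_0=0$. If $q_k$ and the position $\ell_k$ are defined and $|t|\ge \ell_k+n_{q_k}$, set
\[
q_{k+1}=f_{q_k}\bigl(t\restriction[\ell_k,\ell_k+n_{q_k})\bigr),\qquad \ell_{k+1}=\ell_k+n_{q_k}.
\]
This is a ground-model function on stems. Applied to the generic sequence $\langle x_n\rangle$, it yields an infinite decreasing sequence $q_0\ge q_1\ge\cdots$, so $H$ is a filter. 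The map $\pi(t,T)=$ the last $q_k$ in $\mathrm{read}(t)$ is order preserving, since end-extending the stem only adds further terms to a decreasing sequence.

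\textbf{Step 3: genericity (the main point).} Let $D\subseteq\mathbb{Q}$ be dense open and let $(t,T)$ be any condition; we need an extension of $(t,T)$ whose stem produces a $q_k\in D$. Let $q=q_k$ be the last term read from $t$. If some part of $t$ beyond position $\ell_k$ has not yet been consumed, we first extend the stem inside $T$ until $|t|=\ell_k$ exactly. Since $D\cap\mathbb{Q}/q\in\mathcal{F}_q(\mathbb{Q})$, we have $E:=f_q^{-1}[D]\in\mathcal{U}^{n_q}$. By the Fact relating levels of trees to $\mathcal{U}^n$, the set $\mathcal{L}_{n_q}(T_t)\cap E$ is in $\mathcal{U}^{n_q}$, hence nonempty. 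Any $s$ in it gives the extension $(t^\frown s, T_{t^\frown s})$, whose stem reads $q_{k+1}=f_q(s)\in D$. Hence $\pi^{-1}[D]$ is dense, and Fact~\ref{fact: equivalences for projections} shows that $\pi$ is a weak projection.

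The expected obstacle is making sure that the hypothesis is used for every $q$ rather than only for $\one$. A single witness $f_{\one}$ gives conditions that each enter every dense set eventually but need not be pairwise compatible; a $\kappa$-complete ultrafilter on $\mathbb{Q}$ need not decide large maximal antichains. The recursive choice of $f_{q_k}$ below the previous $q_k$ is exactly what resolves this. The remaining care is bookkeeping of block positions when a stem ends mid-block, handled by the preliminary extension in Step 3.
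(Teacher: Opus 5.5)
Your proof is correct and is essentially the paper's argument: Kat\v{e}tov witnesses $f_q$ with range below $q$, a recursion that reads off successive blocks of the stem of length $n_{q_k}$, order preservation via end-extension of stems, and density obtained by first removing the residue and then picking $s\in\mathcal{L}_{n_q}(T_t)\cap f_q^{-1}[D]$. One small slip: when the stem ends mid-block you should extend it to the next boundary $\ell_k+n_{q_k}$ and rename $q$ as the newly read $q_{k+1}$, not extend it to $\ell_k$. Your insistence on $n_q\ge 1$ is harmless, since $\mathcal{U}^n\le_{RK}\mathcal{U}^m$ for $n\le m$, and it neatly avoids the degenerate case $n_q=0$ that the paper's ``minimal $n$'' choice leaves open.
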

\begin{proof}
    Let $\mathcal{U}$ be a fine $\kappa$-complete ultrafilter on $P_\kappa(\lambda)$. For every $q$, let $n=n_q$ be minimal such that $\mathcal{F}_q(\mathbb{Q})\leq_K \mathcal{U}^n$ and let $g_q:[P_\kappa(\lambda)]^n\to \mathbb{Q}$ witness that. Then:
    \begin{enumerate}
        \item $\mathcal{F}_q(\mathbb{Q})\subseteq (g_{q})_*(\mathcal{U}^n)$.
        \item We may assume that $\rng(g_q)\subseteq \mathbb{Q}/q$.
    \end{enumerate} 
    First we note that if $q_1\geq q_2$ then $n_{q_1}\leq n_{q_2}$; this follows from the minimality and Fact \ref{factProj}(3). Now define for each $p=(x_1,x_2,...,x_n,T)$ its projection as follows: If $n<n_{\one_{\mathbb{Q}}}$, set $\pi(p)=\one_{\mathbb{Q}}$. Generally, we define recursively a decreasing sequence of conditions $q^p_k$ and non decreasing numbers $0\leq m_k\leq n$ for $0\leq k\leq n$. At the end we will set $\pi(p)=q^p_n$. We set $q^p_0=\one_{\mathbb{Q}}$ and $m_0=0$. Suppose that $q^p_k, m_k$ were defined for $1\leq k<n$ and let us define $q^p_{k+1},m_{k+1}$. Consider $n_{q^p_k}$. If $k<n_{q^p_k}+m_k$ set $q^p_{k+1}=q^p_k$ and $m_{k+1}=m_{k}$ and note that $n_{q^p_k}=n_{q^p_{k+1}}$ (hence the sequence is piecewise constant). Otherwise, if $k=n_{q^p_k}+m_k$, then $(x_{m_k+1},...,x_{k})$ is an $n_{q^p_k}$-tuple, and thus we can set
    $$q^p_{k+1}:=g_{q^p_k}(x_{m_k+1},...,x_{k})\text{, and }m_{k+1}:=n_{q^p_k}+m_k.$$ Then by $(2)$, $q^p_{k+1}\leq q^p_k$. We say that $p$ has no residue if $n=m_n$.
    
    Note that $q^p_1\geq q^p_2\geq...\geq q^p_n$. Also note that if $q=(t,T)\geq (s,S)=p$, then since $s\sqsubseteq t$, we will have that $(q_1^p,...,q_{|s|}^p)\sqsubseteq (q_1^q,...,q_{|t|}^q)$. Hence $\pi(p)\geq \pi(q)$. 
    Now suppose that $p=(t,T)\in \mathbb{P}$ and $D$ is dense open in $\mathbb{Q}$. 
    Extend $p$ if necessary so that $p$ has no residue. 
    Then by $(1)$, $g_{\pi(p)}^{-1}[D]\cap \mathbb{Q}/{\pi(p)}\in \mathcal{U}^{n_{\pi(p)}}$. Find any $s\in \mathcal{L}_{n_{\pi(p)}}(T)\cap g_{\pi(p)}^{-1}[D]\cap \mathbb{Q}/{\pi(p)}$ and consider $(t^\smallfrown s,T_s)\leq p.$ Then, since $p$ has no residue, and by definition of $\pi$, $\pi((t^\smallfrown s,T_ss))=g_{\pi(p)}(s)\in D$. It follows that $\pi^{-1}[D]$ is dense in $\mathbb{P}$, hence by Fact \ref{fact: equivalences for projections}, $\pi$ is a weak projection.
\end{proof}
 Note that the projection defined in the previous theorem depends only on the stem of the condition. This will be addressed again in the Section \ref{sec: Tukey type}. \begin{remark}\label{Remark: full projection}
    If $\mathbb{Q}$ has a dense subset of cardinality $\lambda$, then by changing  $g_{\one_{\mathbb{Q}}}$ on a null set of cardinality $\lambda$, we can ensure that $\rng(g_{\one_{\mathbb{Q}}})$ is dense. This ensures that $\rng(\pi)$ is dense, which implies (by Fact \ref{fact: equivalences for projections}) that $\pi$ is in fact a full projection. In general, the previous theorem cannot be improved to a full projection, for example an extremely large disjoint sum of copies of the trivial single atom forcing will have $\mathcal{F}_q(\mathbb{Q})\leq^\omega_K\mathcal{U}$ but cannot be a projection of $\mathbb{P}_{\mathcal{U}}$ since it has an extremely large chain condition.
\end{remark}
\begin{definition}\label{def: pregenerated} We say that $\mathbb{Q}$ is $\lambda$-pregenerated if $\mathcal{F}(\mathbb{Q})$ is $\lambda$-generated.
\end{definition}

\begin{fact}
    If $\mathbb{Q}$ is $\mu$-cc then $\mathbb{Q}$ is $|\mathbb{Q}|^{<\mu}$-pregenerated. 
\end{fact}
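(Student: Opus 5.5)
The idea is to find, below every condition, a dense subset whose size is bounded by $|\mathbb{Q}|^{<\mu}$, and then generate $\mathcal{F}(\mathbb{Q})$ by open dense sets that are "built from" these small dense subsets. We may assume $\mathbb{Q}$ is separative; otherwise we pass to its separative quotient, which has the same open dense sets up to the quotient map and no larger cardinality.

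\textbf{Step 1: generators from maximal antichains.} For every maximal antichain $A\subseteq\mathbb{Q}$, let $D_A$ be the downward closure of $A$, that is, $D_A=\{q\in\mathbb{Q}:\exists a\in A\ q\leq a\}$. Then $D_A$ is open, and it is dense because $A$ is maximal. Since $\mathbb{Q}$ is $\mu$-cc, every antichain has size $<\mu$. Hence the number of maximal antichains is at most $|[\mathbb{Q}]^{<\mu}|=|\mathbb{Q}|^{<\mu}$. So the family $\mathcal{B}=\{D_A: A \text{ a maximal antichain}\}$ has size at most $|\mathbb{Q}|^{<\mu}$.

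\textbf{Step 2: $\mathcal{B}$ generates $\mathcal{F}(\mathbb{Q})$.} Let $D\in\mathcal{F}(\mathbb{Q})$ be arbitrary. By Zorn's lemma, choose a maximal antichain $A$ with $A\subseteq D$. Such an $A$ is maximal in $\mathbb{Q}$: if some $q$ were incompatible with every element of $A$, then density of $D$ gives $q'\leq q$ with $q'\in D$, and $q'$ is still incompatible with all of $A$. This contradicts the maximality of $A$ among antichains inside $D$. Since $D$ is open and $A\subseteq D$, we get $D_A\subseteq D$. So every member of $\mathcal{F}(\mathbb{Q})$ contains a member of $\mathcal{B}$, and $\mathcal{B}\subseteq \mathcal{F}(\mathbb{Q})$. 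Therefore $\mathcal{F}(\mathbb{Q})$ is generated by at most $|\mathbb{Q}|^{<\mu}$ sets, in fact as a base, without needing finite intersections.

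\textbf{Possible subtlety.} The only point that needs care is the reading of Definition \ref{def: pregenerated}: being $\lambda$-generated should mean generated by a family of size $\leq\lambda$. Under that reading the argument above is immediate. If instead size $<\lambda$ were intended, one would simply replace $|\mathbb{Q}|^{<\mu}$ by its successor. No step here is a real obstacle. The only substantive use of the hypothesis is the counting of antichains via the $\mu$-cc.
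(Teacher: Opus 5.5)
Your proof is correct and follows the paper's own one-line argument: every dense (open) set contains a maximal antichain, which has size ${<}\mu$, so the downward closures of the at most $|\mathbb{Q}|^{<\mu}$ maximal antichains generate $\mathcal{F}(\mathbb{Q})$. You have just written out the openness, density and maximality checks, and the reduction to a separative $\mathbb{Q}$ is unnecessary, since the argument never uses separativity.
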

\begin{proof}
    Every dense set contains a maximal antichain which is a subset of $\mathbb{Q}$ of size ${<}\mu$.
\end{proof}
\begin{cor}[Folklore]\label{cor: folk}
Let $\mathcal{U}$ be a fine measure over $P_\kappa(\lambda)$.
Every $\kappa$-distributive forcing which is $\lambda$-pregenerated is a weak projection of $\P_{\mathcal{U}}$.
\end{cor}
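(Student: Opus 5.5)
The plan is to reduce the corollary to Theorem \ref{thm: seed implies weak projection}. That means verifying, for every $q\in\mathbb{Q}$, that $\mathcal{F}_q(\mathbb{Q})\leq^\omega_K\mathcal{U}$. In fact I will aim for the stronger statement $\mathcal{F}_q(\mathbb{Q})\leq_K\mathcal{U}$, with $n=1$.

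Fix $q\in\mathbb{Q}$. By $\lambda$-pregeneration, choose a family $\{D_\alpha:\alpha<\lambda\}\subseteq\mathcal{F}(\mathbb{Q})$ that generates $\mathcal{F}(\mathbb{Q})$. By Fact \ref{factProj}(1), $\mathcal{F}_q(\mathbb{Q})$ is then generated by the sets $D_\alpha\cap\mathbb{Q}/q$. For each $x\in P_\kappa(\lambda)$, the set $\bigcap_{\alpha\in x}D_\alpha$ is an intersection of fewer than $\kappa$ dense open sets. By $\kappa$-distributivity (Fact \ref{factProj}(2)) it is again dense open. So I can pick
\[ g_q(x)\in\mathbb{Q}/q\cap\bigcap_{\alpha\in x}D_\alpha. \]
Now let $E\in\mathcal{F}_q(\mathbb{Q})$. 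Then $E\supseteq D_\alpha\cap\mathbb{Q}/q$ for some $\alpha<\lambda$, or, if the generating family is not assumed closed under finite intersections, $E\supseteq\bigcap_{\alpha\in a}D_\alpha\cap\mathbb{Q}/q$ for some finite $a\subseteq\lambda$. By fineness and $\kappa$-completeness, $\{x: a\subseteq x\}\in\mathcal{U}$. On this set $g_q(x)\in E$, so $g_q^{-1}[E]\in\mathcal{U}$. Hence $\mathcal{F}_q(\mathbb{Q})\subseteq (g_q)_*\mathcal{U}$, and since $(g_q)_*\mathcal{U}$ is an ultrafilter, this gives $\mathcal{F}_q(\mathbb{Q})\leq_K\mathcal{U}$.

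Theorem \ref{thm: seed implies weak projection} then produces a weak projection $\mathbb{P}_\mathcal{U}\to\mathbb{Q}$. Two small points need care. The first is that "generated" may mean either a base or a subbase, which is handled by the finite-set argument above. The second is that $g_q$ should take values in $\mathbb{Q}/q$, as clause (2) in that proof requires; this holds by construction. The main point to check is that $\kappa$-distributivity really gives density of ${<}\kappa$ intersections below $q$, which is exactly Fact \ref{factProj}(2). Note that the corollary only uses that $\mathcal{U}$ is fine and $\kappa$-complete, not normality.
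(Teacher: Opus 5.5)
Your proof is correct and follows the paper's route. Both reduce the corollary to Theorem \ref{thm: seed implies weak projection} by showing $\mathcal{F}_q(\mathbb{Q})\leq_K\mathcal{U}$ for every $q$, using fineness together with $\kappa$-distributivity.

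The only difference is presentation. The paper works inside $M_{\mathcal{U}}$: it covers $j_{\mathcal{U}}``\mathcal{C}$ by a set $x^*$ of size ${<}j_{\mathcal{U}}(\kappa)$ and then invokes Proposition \ref{Prop: simple equivalence for Katetov of filter of dense sets}. Your function $g_q$ is the combinatorial unwinding of that argument: $[g_q]_{\mathcal{U}}$ is precisely a witness below $j_{\mathcal{U}}(q)$ lying in $\bigcap j_{\mathcal{U}}``\mathcal{F}(\mathbb{Q})$.
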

\begin{proof}
    Let $\mathbb{Q}$ be a $\kappa$-distributive $\lambda$-pregenerated forcing. Let $\mathcal{U}$ be a strongly compact measure on $P_\kappa(\lambda)$, and let $j_{\mathcal{U}}:V\to M_{\mathcal{U}}$ be the ultrapower embedding. 
    By $\lambda$-strong compactness, there is $x\in M_{\mathcal{U}}$, $M_{\U}\models|x|<j_{\mathcal{U}}(\kappa)$ such that  $j_{\mathcal{U}}``\lambda\subseteq x$. 
    By definition, there is a generating family $\mathcal{C}$  for $\mathcal{F}(\mathbb{Q})$ of size $\lambda$. Via appropriate coding, we can find $x^*\in M_{\mathcal{U}}$ of size ${<}j_{\mathcal{U}}(\kappa)$, $j_{\mathcal{U}}``\mathcal{C}\subseteq x^*\subseteq \mathcal{F}(j_{\mathcal{U}}(\mathbb{Q}))$. 
    By Fact \ref{factProj}, since $\mathbb{Q}$ is $\kappa$-distributive, $\mathcal{F}(\mathbb{Q})$ is a $\kappa$-complete. 
    Hence, by elementarity, $\mathcal{F}(j_{\mathcal{U}}(\mathbb{Q}))$ is $j_{\mathcal{U}}(\kappa)$-complete in $M_{\mathcal{U}}$. 
    It follows that $\bigcap x^*\in \mathcal{F}(j_{\mathcal{U}}(\mathbb{Q}))$ is dense and also $\bigcap j_\U``\mathcal{F}(\mathbb{Q})$ is dense in $j_\U``\mathbb{Q}$. By Proposition \ref{Prop: simple equivalence for Katetov of filter of dense sets}, we conclude that for every $q\in\mathbb{Q}$, $\mathcal{F}_q(\mathbb{Q})\leq_K {\mathcal{U}}$. 
    By Theorem \ref{thm: seed implies weak projection}, $\mathbb{P}_{\mathcal{U}}$ weakly projects to $\mathbb{Q}$.
\end{proof}
Similar to Remark \ref{Remark: full projection}, the previous corollary gives a full projection in case the forcing $\mathbb{Q}$ has a dense subset of cardinality at most $\lambda$.
\begin{cor}\label{cor: distibutive consistency}
    Suppose that $\kappa$ is $\lambda$-supercompact and $\mu\leq\lambda$ is a cardinal. Then for any $\kappa$-distributive forcing $\mathbb{Q}$ of cardinality $\mu$ which is $\lambda$-pregenerated, there is a $\mu$-strongly compact measure $\mathcal{U}$ such that $\mathbb{P}_{\mathcal{U}}$ projects onto $\mathbb{Q}$.
\end{cor}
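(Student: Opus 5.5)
The idea is to use $\lambda$-supercompactness to produce a fine measure on $P_\kappa(\mu)$ (rather than on $P_\kappa(\lambda)$), and then apply Corollary \ref{cor: folk} together with the remark following it. That remark upgrades the weak projection to a full projection whenever $\mathbb{Q}$ has a dense subset of size at most the relevant parameter. Since $|\mathbb{Q}|=\mu$, the parameter should be $\mu$.

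The plan has three steps.

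\textbf{Step 1: a fine measure on $P_\kappa(\mu)$.} Fix a normal fine $\kappa$-complete ultrafilter $\mathcal{W}$ on $P_\kappa(\lambda)$. Let $\mathcal{U}$ be its projection to $P_\kappa(\mu)$ via $x\mapsto x\cap\mu$. Then $\mathcal{U}$ is $\kappa$-complete and fine on $P_\kappa(\mu)$, hence a $\mu$-strongly compact measure, and in fact it is normal. If $\mu<\kappa$, we would instead interpret the statement with $P_\kappa(\kappa)$, using a normal measure on $\kappa$; that case is degenerate but handled the same way.

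\textbf{Step 2: $\mathbb{Q}$ is $\mu$-pregenerated.} The hypothesis only gives that $\mathbb{Q}$ is $\lambda$-pregenerated, and this is where I expect the main difficulty. I would argue it directly. The filter $\mathcal{F}(\mathbb{Q})$ consists of subsets of $\mathbb{Q}$, and $|\mathbb{Q}|=\mu$. Every dense open set contains a maximal antichain, but the number of maximal antichains may exceed $\mu$, so that observation alone does not bound the generating family.

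The way around this is to avoid needing a $\mu$-sized generating family at all. I would rerun the proof of Corollary \ref{cor: folk} with $j=j_{\mathcal{W}}$ instead:
\begin{itemize}
\item Take a generating family $\mathcal{C}$ of size $\lambda$ and find $x^*\in M_{\mathcal{W}}$ with $j``\mathcal{C}\subseteq x^*$ and $|x^*|<j(\kappa)$.
\item By $j(\kappa)$-completeness, $\bigcap x^*$ is dense in $j(\mathbb{Q})$.
\item Since $\mathcal{W}$ is $\lambda$-supercompact, $j\restriction\mathbb{Q}\in M_{\mathcal{W}}$, because $|\mathbb{Q}|=\mu\le\lambda$.
\end{itemize}

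\textbf{Step 3: extracting the projection.} For each $q$, pick $y_q\in\bigcap x^*$ with $y_q\le j(q)$. Then $\mathcal{F}_q(\mathbb{Q})\subseteq U(y_q,j)$. Each $U(y_q,j)$ is an ultrafilter on $\mathbb{Q}$, a set of size $\mu$, and is Rudin--Keisler below $\mathcal{W}$.

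Next I would reduce to $\mathcal{U}$. Because $j\restriction\mathbb{Q}\in M_{\mathcal{W}}$, we can code $\mathbb{Q}$ as $\mu$. The element $y_q$ lies in $j(\mathbb{Q})$, which has size $j(\mu)$. It is represented by some function on $P_\kappa(\lambda)$, and we want it to factor through $x\mapsto x\cap\mu$. This need not happen for an arbitrary choice of $y_q$. So I would choose $y_q$ inside $j_{\mathcal{U}}(\mathbb{Q})$ instead, redoing the argument with $j_{\mathcal{U}}$ directly. This works because $\mathcal{U}$ is $\mu$-supercompact, so $j_{\mathcal{U}}``\mathbb{Q}\in M_{\mathcal{U}}$.

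With $\mathcal{U}$ normal on $P_\kappa(\mu)$, the set $j_{\mathcal{U}}``\mathcal{F}(\mathbb{Q})$ may still have size $>\mu$. However, by $\kappa$-distributivity and elementarity, the following holds in $M_{\mathcal{U}}$: for every $q$ there is a condition below $j_{\mathcal{U}}(q)$ that lies in $j_{\mathcal{U}}(D)$ for all $D\in\mathcal{C}$. Here $j_{\mathcal{U}}``\mathcal{C}$ is covered by some $x^*$ of size $<j_{\mathcal{U}}(\kappa)$ only if $|\mathcal{C}|\le\mu$. So the real obstacle is exactly Step 2, namely bounding the generating family by $\mu$.

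If Step 2 fails, my fallback is to take $\mathcal{U}$ on $P_\kappa(\lambda)$ itself, which makes $\mathcal{U}$ a $\lambda$-, hence $\mu$-, strongly compact measure. Corollary \ref{cor: folk} then gives a weak projection. Remark \ref{Remark: full projection} upgrades it to a full projection, because $|\mathbb{Q}|=\mu\le\lambda$ yields a dense subset of size $\le\lambda$. This fallback is likely the intended reading of ``$\mu$-strongly compact'' in the statement.
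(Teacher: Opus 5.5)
There is a genuine gap. Neither of your routes produces what is asked for, namely a fine $\kappa$-complete measure on $P_\kappa(\mu)$. That is what ``$\mu$-strongly compact measure'' means here; compare the next corollary, where $\lambda=\kappa^{++}$ but the measure lives on $P_\kappa(\kappa^+)$. Your fallback on $P_\kappa(\lambda)$ is just Corollary~\ref{cor: folk} with Remark~\ref{Remark: full projection}. It uses only $\lambda$-strong compactness, makes $\mu$ irrelevant, and concerns a different Prikry forcing (one that collapses $\lambda$). Your main route stalls for real reasons. Step 2 fails in the motivating cases: under $2^\kappa=\kappa^+$, $\add(\kappa^+,1)$ has size $\kappa^+$ but is not $\kappa^+$-pregenerated. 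By $\kappa^+$-closure you could otherwise build in $V$ a decreasing $\kappa^+$-sequence meeting all the generators, i.e.\ a $V$-generic filter. The normal measure obtained by projecting $\mathcal{W}$ is also the wrong target. Its seed $j_{\mathcal{U}}``\mu$ carries no information about the conditions you need. Indeed, whether $\add(\kappa^+,1)$ can be a projection of $\mathbb{P}_{\mathcal{U}}$ for a normal $\mathcal{U}$ on $P_\kappa(\kappa^+)$ is posed as an open question at the end of the paper.

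The missing idea grows out of your own Step 3 remark that each $U(y_q,j)$ is Rudin--Keisler below $\mathcal{W}$. Rather than hoping these factor through a fixed normal measure, build a single non-normal measure on $P_\kappa(\mu)$ lying RK-above all of them, by amalgamating the seeds $y_q$ together with $j``\mu$.

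Concretely, identify $\mathbb{Q}$ with $\mu$ and let $j=j_{\mathcal{W}}\colon V\to M$. Let $\mathcal{C}_q$ generate $\mathcal{F}_q(\mathbb{Q})$ and have size $\lambda$, so that $j``\mathcal{C}_q\in M$. For each $q$ pick $y_q\in\bigcap j``\mathcal{C}_q\subseteq\bigcap j``\mathcal{F}_q(\mathbb{Q})$. Choose the $y_q$ so that they can be read off from the seed below, e.g.\ increasing above $\sup j``\mu$.

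By $\lambda$-closure, $s:=j``\mu\cup\{y_q:q<\mu\}\in M$. It is a subset of $j(\mu)$ of size $\mu<j(\kappa)$ containing $j``\mu$, so $\mathcal{U}:=\{A\subseteq P_\kappa(\mu): s\in j(A)\}$ is fine and $\kappa$-complete. The factor map $k([f]_{\mathcal{U}})=j(f)(s)$ is the identity on ordinals ${\le}\mu+\mu$, because the first $\mu$ elements of $[\mathrm{id}]_{\mathcal{U}}$ form $j_{\mathcal{U}}``\mu$. So the $(\mu+q)$-th element $\bar{y}_q$ of $[\mathrm{id}]_{\mathcal{U}}$ satisfies $k(\bar{y}_q)=y_q$.

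Hence for $D\in\mathcal{F}_q(\mathbb{Q})$, $k(\bar{y}_q)\in j(D)=k(j_{\mathcal{U}}(D))$ yields $\bar{y}_q\in j_{\mathcal{U}}(D)$. That is, $\mathcal{F}_q(\mathbb{Q})\le_K\mathcal{U}$ for all $q$. Theorem~\ref{thm: seed implies weak projection} with Remark~\ref{Remark: full projection} (now using $|\mathbb{Q}|=\mu$) then gives the projection. The $\lambda$-closure of $M$ is exactly what lets you package $j``\mu$ and all $\mu$ many witnesses into a single seed of size $\mu$.
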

\begin{proof}
  Since $|\mathbb{Q}|=\mu$, let us assume without loss of generality that $\mathbb{Q}=\mu$. Of course we may assume that $\mu\geq\kappa$ (by the $\kappa$-distributivity), and we may also assume that for every dense set $D\in \mathcal{F}_q(\mathbb{Q})$, $|D|=\mu$ (otherwise we move to the minimal $\mu'$ such that there is a dense subset of $\mathbb{Q}$ of cardinality $\mu$). Let $j:V\to M$ be a $\lambda$-supercompact embedding. By the $\lambda$-pregenerated assumption, let $\mathcal{C}_q\subseteq \mathcal{F}_q(\mathbb{Q})$ be generating of size $\lambda$. Hence $j``\mathcal{C}_q\in M$. By $\kappa$-distributivity (and $j(\kappa)$-distributivity of $j(\mathbb{Q})$),  there is $x_q\in \bigcap j``\mathcal{C}_q\subseteq \bigcap j``\mathcal{F}_q(\mathbb{Q})$. Notice that since every dense open set of $\mathbb{Q}$ has cardinality $\mu$, we can choose that sequence $x_q$ increasing above $\sup j``\mu$. Consider $\vec{x}=\{x_q: q\in\mu\}$ and notice that since $\mu\leq\lambda$, and since $M^{\leq\lambda}\subseteq M$, $\vec{x}\in M$. We can now derive a measure ${\mathcal{U}}$ from $j``\mu\cup\vec{x}$ (this has order type $\mu+\mu$). Notice that $j``\mu\cup \vec{x}\in P_{j(\kappa)}(j(\mu))$, and therefore ${\mathcal{U}}$ is a fine $\kappa$-complete measure on $P_\kappa(\mu)$.
   There is a factor map $k:M_{\mathcal{U}}\to M$ defined by $k([f]_{\mathcal{U}})=j(f)(j``\mu\cup\vec{x})$.
   We have that $k([id]_{\mathcal{U}})=j``\mu\cup\vec{x}$, and $[id]_{\mathcal{U}}\cap\sup( j_{\mathcal{U}}``\mu)=j_{\mathcal{U}}``\mu$, which means that $M_{\mathcal{U}}$ is closed under $\mu$-sequences (see e.g. \cite{GoldbergUA}). We have that for every $i<\mu$, the $\mu+i$ element of $[id]_{\mathcal{U}}$ is mapped under $k$ to the $\mu+i$ element of $j``\mu\cup\vec{x}$ which is just $x_i$. Hence $x_i=k(y_i)$ for some $y_i$. This is enough to show that $y_q\in \bigcap j_{\mathcal{U}}`` \mathcal{F}_q(\mathbb{Q})$. Finally, we conclude that $\mathcal{F}_q(\mathbb{Q})\leq_K {\mathcal{U}}$ and use Theorem \ref{thm: seed implies weak projection} (and Remark \ref{Remark: full projection}).   
\end{proof}
To see a concrete corollary, we have the following:
\begin{cor}
    Suppose that $\kappa$ is $\kappa^{++}$-supercompact, $2^\kappa=\kappa^+$ and $2^{\kappa^+}=\kappa^{++}$. 
    Then $\col(\kappa,\kappa^+)$ and $\add(\kappa^+,1)$  are projections of $\mathbb{P}_{\mathcal{U}}$ for some $\kappa^+$-strongly compact measure ${\mathcal{U}}$ over $P_\kappa(\kappa^+)$.
\end{cor}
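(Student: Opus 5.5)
The plan is to apply Corollary \ref{cor: distibutive consistency} with $\mu=\kappa^+$ and $\lambda=\kappa^{++}$. We must check three things for each of $\col(\kappa,\kappa^+)$ and $\add(\kappa^+,1)$: the forcing is $\kappa$-distributive, it has cardinality $\kappa^+$, and it is $\kappa^{++}$-pregenerated. Since $\kappa$ is $\kappa^{++}$-supercompact, the corollary then produces a fine $\kappa$-complete measure $\mathcal{U}$ on $P_\kappa(\kappa^+)$ such that $\mathbb{P}_{\mathcal{U}}$ projects onto $\mathbb{Q}$.

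Distributivity is immediate. $\col(\kappa,\kappa^+)$ is $\kappa$-closed, and $\add(\kappa^+,1)$ is $\kappa^+$-closed, so both are $\kappa$-distributive.

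For cardinality, $\col(\kappa,\kappa^+)$ consists of partial functions $\gamma\to\kappa^+$ with $\gamma<\kappa$. It therefore has size $(\kappa^+)^{<\kappa}=\kappa^+\cdot 2^{<\kappa}$, and this equals $\kappa^+$ because $2^{<\kappa}\leq 2^\kappa=\kappa^+$. Similarly, $\add(\kappa^+,1)$ has size $2^{\kappa}=\kappa^+$, using $2^\kappa=\kappa^+$.

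Pregeneratedness is the step that needs actual counting, and it is where the hypothesis $2^{\kappa^+}=\kappa^{++}$ enters. Each forcing $\mathbb{Q}$ has cardinality $\kappa^+$, so it has at most $2^{\kappa^+}=\kappa^{++}$ subsets. In particular $\mathcal{F}(\mathbb{Q})$ has size at most $\kappa^{++}$, so it is trivially generated by a family of size $\leq\kappa^{++}=\lambda$. (The chain condition route through the earlier Fact would give $(\kappa^+)^{\kappa^+}$, which is also $\kappa^{++}$ here, but direct counting is simpler.) Corollary \ref{cor: distibutive consistency} actually uses generating families $\mathcal{C}_q\subseteq\mathcal{F}_q(\mathbb{Q})$ for each $q$, and the same count handles those.

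The corollary's proof also passes to a dense subset of minimal size. I expect the only delicate point to be confirming that this minimal size is still $\kappa^+$ rather than something smaller, though the argument does not really need this. For $\col(\kappa,\kappa^+)$, every dense open set must have size $\kappa^+$, since below any condition there are $\kappa^+$ incompatible extensions. For $\add(\kappa^+,1)$ the same holds, since there are $\kappa^+$ extensions of length $\kappa$ below any condition. So every dense set has size exactly $\kappa^+=\mu$, as the construction in Corollary \ref{cor: distibutive consistency} assumes, and the corollary applies directly.
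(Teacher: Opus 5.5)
Your proposal is correct and follows the route the paper intends. The paper states this corollary as an immediate instance of Corollary~\ref{cor: distibutive consistency} with $\mu=\kappa^+$ and $\lambda=\kappa^{++}$: $2^\kappa=\kappa^+$ gives $|\mathbb{Q}|=\kappa^+$, and $2^{\kappa^+}=\kappa^{++}$ bounds each $\mathcal{F}_q(\mathbb{Q})$, which gives $\kappa^{++}$-pregeneratedness. One correction to your commentary: the cited proof does use the fact that every set dense below any $q$ has size exactly $\kappa^+$, to choose the $x_q$ above $\sup j``\mu$. So your check via $\kappa^+$-many pairwise incompatible extensions is a required step, not an optional one.
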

Without a small generating set for the filter of dense open sets, we can say the following:
\begin{theorem}\label{thm: omega generated from projection}
    Suppose $|\mathbb{Q}| \leq \lambda$ and ${\mathcal{U}}$ is a fine $\kappa$-complete ultrafilter on $P_\kappa(\lambda)$.
    Suppose further that  
        $\pi : \mathbb{P}_{\mathcal{U}} \to \mathbb{Q}$ is a weak projection. Then \begin{align*}\one\Vdash_{\mathbb{P}_\U}``\pi``\dot{G}\text{ is generated by countably many elements}".
    \end{align*} 
    \end{theorem}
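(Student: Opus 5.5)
The plan is to exhibit, in $V[G]$, a countable subset of $H=\pi``G$ generating $H$, built from the Prikry sequence via the Strong Prikry Property applied to the dense sets $\pi^{-1}[D]$.

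Fix a $V$-generic $G\subseteq\mathbb{P}_\U$ with Prikry sequence $\langle x_n:n<\omega\rangle$, and let $H=\pi``G$, which is $V$-generic for $\mathbb{Q}$ by Fact \ref{fact: basics of weak projections}. For a condition $p=(t,T)$ write $t$ for its stem. The key observation is that, since $|\mathbb{Q}|\le\lambda$ and $\U$ is $\kappa$-complete and fine, hence (by uniformity) $\lambda^+$-... is \emph{not} needed; rather we use that $\mathbb{Q}$ has at most $\lambda$ elements and handle each $q\in\mathbb{Q}$ separately. For each $q\in\mathbb{Q}$ the set $E_q=\{p: \pi(p)\le q\}\cup\{p:\pi(p)\perp q\}$ is dense open in $\mathbb{P}_\U$ (density: given $p$, extend to $p'$ below the witness in clause (b) of weak projection; either some extension maps below $q$, or $\pi(p')$'s relevant witness is incompatible with $q$ — more cleanly, use Fact \ref{fact: equivalences for projections}(3) with the dense open set $\{r\in\mathbb{Q}: r\le q\text{ or }r\perp q\}$, whose preimage is dense, and openness follows from monotonicity of $\pi$).

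Now define, for each finite $\prec$-increasing sequence $s$ (a candidate stem), the condition $q_s\in\mathbb{Q}$ as follows: $q_s$ is the \emph{weakest} possible value "decided by the stem", namely work in $V$ and let $q_s=\pi(p)$ for some canonical choice... This is too crude, so instead I would aim for: for each stem $s$, the set $A_s=\{q\in\mathbb{Q}:\exists T\,(s,T)\in\mathbb{P}_\U\text{ and }\pi((s,T))\le q\}$. Since any two conditions with stem $s$ are compatible and $\pi$ is order preserving, $A_s$ is a filter-like set: any $q,q'\in A_s$ have a common lower bound $\pi((s,T\cap T'))$. Moreover, because $|\mathbb{Q}|\le\lambda$ but we only need completeness for the intersection of fewer than $\kappa$ trees... here is the main obstacle: $A_s$ need not be realized by a single condition, since intersecting $\lambda$ many trees is not allowed. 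To overcome it I will use the Strong Prikry Property instead: for each $q$, by density of $E_q$, below any condition with stem $s$ there is a direct extension $S$ and $n$ such that all $n$-step extensions of $S$ lie in $E_q$. Hence in $V[G]$, for every $q\in H$ there is some $m$ with a condition in $G$ with stem $\langle x_0,\dots,x_{m-1}\rangle$ whose image is $\le q$ (since $q\in H$ rules out the $\perp q$ alternative along $G$).

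Consequently $H=\bigcup_{m<\omega}\bigl(A_{\langle x_0,\dots,x_{m-1}\rangle}\cap H\bigr)$, and it remains to show each $A_s\cap H$ is generated by a single element, or at least that $H$ is generated by countably many. Here I would define, in $V$, for each stem $s$ and each $q\in A_s$ a witness tree $T^s_q$, and use that $G$ contains $(s,T^s_q)$ only when the tail of the sequence lies in $T^s_q$. The delicate point is to find one element $r_m\in H$ below all of $A_{s_m}\cap H$, $s_m=\langle x_0,\dots,x_{m-1}\rangle$. Since $A_{s_m}$ is a directed family in $\mathbb{Q}$ of size $\le\lambda$ whose members are all compatible with each other, and members of $A_{s_m}$ belong to $H$ iff the corresponding witness is in $G$, I would argue that actually every element of $A_{s_m}$ whose witness tree is in $G$ lies in $H$, and the sets $A_{s_m}\cap H$ increase with $m$; then pick $r_m\in H$ as $\pi$ of the $G$-condition with stem $s_{m'}$ for suitable $m'\ge m$ obtained by the Strong Prikry Property applied to the dense set deciding, for the name of $A_{s_m}\cap H$, a lower bound. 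I expect this last step — producing a single generator per stem, i.e. taming the $\lambda$-sized directed sets $A_s$ — to be the main difficulty, and if it fails I would fall back to showing the countable set $\{\pi(p_m):m<\omega\}$, with $p_m\in G$ obtained by the Strong Prikry Property for a fixed enumeration, is cofinal using the dense sets $E_q$.
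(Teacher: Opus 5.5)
Your proposal does not yet contain a proof. The step you flag as the main difficulty, finding one $r_m\in H$ below all of $A_{s_m}\cap H$, is left open. In fact it is false in general, so the decomposition $H=\bigcup_m(A_{s_m}\cap H)$ along stems cannot be completed.

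The set $A_{s_m}$ collects everything above $\pi(s_m,T)$ for \emph{every} tree $T$ with stem $s_m$, and most such trees are not in $G$. So $A_{s_m}\cap H$ is a subset of $H$ of size up to $\lambda$, usually not in $V$. For a concrete counterexample, let $\mathcal{U}$ be normal with $2^\kappa\le\lambda$, and let $W$ be its projection (via $x\mapsto x\cap\kappa$) to a normal measure on $\kappa$. Take $\mathbb{Q}=Pr(W)$ and $\pi(s,A)=(\langle x\cap\kappa:x\in s\rangle,\{x\cap\kappa:x\in A\})$; this is a weak projection, since preimages of dense open sets are dense by the Strong Prikry Property for $Pr(W)$. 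Write $\kappa_i=x_i\cap\kappa$. Then every $(\langle\kappa_0,\dots,\kappa_{m-1}\rangle,C)$ with $C\in W$, $\min C>\kappa_{m-1}$ and $\{\kappa_i:i\ge m\}\subseteq C$ lies in $A_{s_m}\cap H$. A lower bound $(\langle\kappa_0,\dots,\kappa_{k-1}\rangle,D)\in H$ of all of them would need $D\subseteq\{\kappa_i:i\ge m\}$, which is impossible for $D\in W$.

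Your fallback has the same defect. The Strong Prikry Property together with $\kappa$-completeness handles fewer than $\kappa$ of the $\lambda$ many sets $E_q$ at once. Nothing in the plan explains why countably many $\pi(p_m)$ would be cofinal in $H$.

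The missing idea is to split $\mathbb{Q}$ along the Prikry \emph{points} rather than the stems.
\begin{enumerate}
\item Identify $\mathbb{Q}$ with a subset of $\lambda$. By fineness and genericity, $\lambda=\bigcup_n x_n$, and each $x_n\in V$ has size $<\kappa$.
\item Since $\mathbb{P}_{\mathcal{U}}$ adds no new subsets of such sets (Prikry property plus $\kappa$-closure of $\le^*$), $x_n\cap H\in V$.
\item A ground-model subset of the $V$-generic filter $H$ has a lower bound $r_n\in H$. To see this, meet the dense set of conditions that are either below all of $x_n\cap H$ or incompatible with one of its elements, and use separativity.
\item Every $q\in H$ lies in some $x_n\cap H$, so $r_n\le q$, and $\{r_n:n<\omega\}$ generates $H$.
\end{enumerate}

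The paper runs this small-covering-set argument inside the iterated ultrapower instead. There, with the Bukovsk\'y--Dehornoy generic $G_\omega$ over $M_\omega$, every tree with the critical stem does belong to $G_\omega$. So the analogue $F_n$ of your $A_{s_n}$ really is mapped into $H_\omega$. It is covered by some $x_n\in M_{n+1}$ of size $<j_{0,n+1}(\kappa)=\mathrm{crit}(j_{n+1,\omega})$. Hence $Y^*_n=j_{n+1,\omega}(x_n\cap j^{-1}_{n+1,\omega}[H_\omega])\in M_\omega$ has a lower bound in $H_\omega$, and elementarity transfers the conclusion back to $V$. Your stem decomposition is thus what works for that canonical generic, but not for an arbitrary $V[G]$.
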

    \begin{proof}
    We may assume that $\mathbb{Q}=\lambda$.
    Let $\langle j_{n,m} \colon M_n \to M_m : n < m \leq \omega \rangle$ be the iterated ultrapower of $V$ by $U$.
    Let $c_n = [\id]_{j_{0,n}({\mathcal{U}})}$. Let $p\in\mathbb{P}_U$, we will show that there is a $M_{\omega}$-generic filter $G_\omega\subseteq j_\omega(\mathbb{P})$ which includes $j_\omega(p)$ such that $j_{\omega}(\pi)``G_\omega$ is generated by a countable set in $M_\omega[G_\omega]$. Then we use elementarity to conclude that $p\not\Vdash_{\P_\U} ``\pi``\dot{G}$ is not countably generated", and since $p$ was arbitrary, the lemma follows. 
    
    Suppose that $p=(t_0,T_0)$, and note that for each $n<\omega$, $$(j_\omega(c_0),...,j_\omega(c_{n-1}))\in \mathcal{L}_n(j_\omega(T_0)).$$ Letting $G_\omega$ be the filter generated by the sequence $t_0{}^\smallfrown (j_\omega(c_0),j_\omega(c_{1}),...)$, we see that $j_\omega(p)\in G_\omega$ (this is generic by Theorem \ref{thm: general bukovsky-dehornoy}). Suppose without loss of generality that $t_0=\emptyset$, and recall that $M_{\omega}[G_\omega]=\bigcap_{n<\omega}M_n$. Let $H_\omega\subseteq j_{\omega}(\mathbb{Q})$ be the  $M_\omega$-filter generic over $M_\omega$ generated by the projection of $G_\omega$, namely, $H_\omega=j_{\omega}(\pi)``G_{\omega}$. We will construct a decreasing sequence of conditions $\langle q_n : n < \omega \rangle\subseteq j_{0,\omega}(\mathbb{Q})$ which generates $H_{\omega}$.

    In $M_n$ we can define 
    \[F_n = \{ q \in j_{0,n}(\mathbb{Q}) : \exists T \, j_{0,n}(\pi)(\langle c_0,\ldots,c_{n-1} , T\rangle) \leq q \}\subseteq j_{0,n}(\lambda)\]
By strong compactness there is  $x_n\in M_{n+1}$ such that $|x_n|<j_{0,n+1}(\kappa)$ such that $j_{n,n+1}``F_n\subseteq x_n$. Also note that $j_{n+1,\omega}^{-1}[H_\omega]\in M_{n+1}$, hence $$Y_n:=x_n\cap j_{n+1,\omega}^{-1}[H_\omega]\in M_{n+1}.$$   Since the critical point of $j_{n+1,\omega}$ is $j_{0,n+1}(\kappa)>|Y_n|$, we have that $$Y^*_n:=j_{n+1,\omega}``Y_n=j_{n+1,\omega}(Y_n)\in M_\omega.$$ Since $H_{\omega}$ is the projection of $G_{\omega}$, $j_{n,\omega}``F_n\subseteq H_\omega$, hence $$j_{n,\omega}``F_n= j_{n+1,\omega}``j_{n,n+1}``F_n\subseteq j_{n+1,\omega}``Y_n=Y_n^*.$$

   Now, working in $M_\omega[H_\omega]$, we have the sequence $\langle Y^*_n: n<\omega\rangle$, and we can pick inductively a decreasing sequence $q_0\geq q_1\geq q_2\dots$ in $H_\omega$, such that $q_{n} \Vdash_{j_{0,\omega}(\mathbb{Q})} \check{Y}^*_n \subseteq \dot{H}_\omega$.
Then $\{q_n: n<\omega\}$ generates $H_\omega$, indeed, if $q\in H_\omega$ then $j_{0,\omega}(\pi)(p)\leq q$ for some $p\in G_{\omega}$. We can then find $n$ large enough so that $p=j_{n,\omega}(\langle c_0,...,c_{n-1},T\rangle)$ for some $T$ which is $j_{0,n}(\mathcal{U})$-fat tree and $q=j_{n,\omega}(q')$ for some $q'\in j_{0,n}(\mathbb{Q})$. By elementarity, $q'\geq j_{0,n}(\pi)(\langle c_0,...,c_{n-1},T\rangle)$ and therefore $q'\in F_n$ and $q\in j_{n,\omega}``F_n$. Hence $q_n\leq q$.
    \end{proof}
\begin{remark}\label{Remark following the RK-construction}
{} \ 
\begin{enumerate}
    \item Even without assuming $|\mathbb{Q}|\leq\lambda$ we always decompose the generic for $j_{0,\omega}(\mathbb{Q})$ inside $M_{\omega}[G_{\omega}]$ as a union of countably many pieces, but these pieces may not be in the ground model $M_\omega$ and we may not be able to create a generic from that decomposition.
    \item The previous proof can be performed locally, namely, if $q\in \mathbb{Q}$ is a condition such that for some $p\in \mathbb{P}_{\mathcal{U}}$, $\pi(p)\leq q$, then we can ensure that $j_{0,\omega}(q)\in H_\omega$. In particular, if $\pi$ is a projection (i.e. $\rng(\pi)$ is dense), then for every condition $q\in\mathbb{Q}$ we can construct a generic $H_\omega$ which will include $j_{0,\omega}(q)$. 
    \item Following \cite[Thm.13]{Benhamou_Hayut_Gitik} and its succeeding remark, if $\mathbb{Q}$ is $\kappa$-distributive, then we can find $n<\omega$ such that some condition $q^*\in j_{0,n}(\mathbb{Q})$ will enter every set of the form $j_{0,n}(D)$, where $D\in \mathcal{F}(\mathbb{Q})$. This in particular implies that $\mathcal{F}(\mathbb{Q})\leq_K^\omega \mathcal{U}$.
    \item Following $(2),(3)$, if there is a projection of $\mathbb{P}_{\mathcal{U}}$ to $\mathbb{Q}$, then for every $q\in\mathbb{Q}$, $\mathcal{F}_q(\mathbb{Q})\leq_K^\omega \mathcal{U}$.
\end{enumerate}     
\end{remark}

\begin{cor}\label{corollary: Katetov dist}
     Let $\mathcal{U}$ be a fine $\kappa$-complete  measure over $P_\kappa(\lambda)$, and let $\mathbb{Q}$ be a forcing of cardinality $\leq\lambda$. Then the following are equivalent:
     \begin{enumerate}
         \item For every $q\in\mathbb{Q}$, $\mathcal{F}_q(\mathbb{Q})\leq^\omega_K \mathcal{U}$.
         \item $\mathbb{Q}$ is a  $\kappa$-distributive projection of $\mathbb{P}_\mathcal{U}$.
     \end{enumerate}
\end{cor}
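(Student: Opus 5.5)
For the direction (1)$\Rightarrow$(2), I will first extract $\kappa$-distributivity and then build a projection. Fix any $q\in\mathbb{Q}$. By (1) there is some $n$ with $\mathcal{F}_q(\mathbb{Q})\leq_K\mathcal{U}^n$. So there is a $\kappa$-complete ultrafilter (the relevant $\mathcal{U}^n$-pushforward) extending $\mathcal{F}_q(\mathbb{Q})$. Corollary \ref{Cor: RK and distributive} then gives that $\mathbb{Q}$ is $\kappa$-distributive.

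Next I apply Theorem \ref{thm: seed implies weak projection}, which yields a weak projection $\pi:\mathbb{P}_{\mathcal{U}}\to\mathbb{Q}$ depending only on stems. To upgrade it to a full projection I use Remark \ref{Remark: full projection}. Since $|\mathbb{Q}|\leq\lambda$, I modify $g_{\one_{\mathbb{Q}}}$ on a $\mathcal{U}^{n_{\one}}$-null set of size $\leq\lambda$ so that its range is all of $\mathbb{Q}$. Such a set exists because $\mathcal{U}$ is fine and uniform, so $\lambda$ many pairwise distinct $n$-tuples can be placed in a null set. Then $\rng(\pi)$ is dense.

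Density of the range alone does not suffice for a projection; I also need $\pi``(\mathbb{P}/p)$ to be dense below $\pi(p)$ for every $p$, as in Fact \ref{fact: equivalences for projections}. I handle this the same way. For each $q$, I change $g_q$ on a null set so that its range is all of $\mathbb{Q}/q$. Then any condition $p$ with no residue and $\pi(p)=q$ has one-block extensions $(t^\smallfrown s,T_s)$ whose images cover $\mathbb{Q}/q$. The modified tuples $s$ are only null, so they need not lie in $T$. This is the main obstacle. I would fix it by Fact \ref{fact: proj and complete emb}(3) instead: it is enough to produce a name for a generic that meets every $q$. Alternatively, Proposition \ref{Proposition: turnining a weak projection to a projection} applies, since $\mathbb{P}_{\mathcal{U}}$ is not $\lambda$-cc (there are $\lambda^{<\kappa}\geq\lambda$ incompatible stems of length one). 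So I only need that every $q$ belongs to some generic in the extension. Given $q$, I force below a condition whose stem is any tuple mapped into $\mathbb{Q}/q$ by the modified $g_{\one}$. The resulting $\pi``G$ then contains $q$, and Proposition \ref{Proposition: turnining a weak projection to a projection} finishes the argument. This gives (2).

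For (2)$\Rightarrow$(1), suppose $\pi$ is a projection and $\mathbb{Q}$ is $\kappa$-distributive, hence $\sigma$-distributive. By Remark \ref{Remark following the RK-construction}(4), which combines the iterated-ultrapower construction of Theorem \ref{thm: omega generated from projection} localized at $q$ with the distributivity argument of \cite[Thm.13]{Benhamou_Hayut_Gitik}, we get $\mathcal{F}_q(\mathbb{Q})\leq^\omega_K\mathcal{U}$ for every $q$. Concretely, Theorem \ref{thm: omega generated from projection} gives a decreasing sequence $\langle q_n\rangle$ generating the generic $H_\omega$ for $j_\omega(\mathbb{Q})$ with $j_\omega(q)\in H_\omega$. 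Distributivity of $j_\omega(\mathbb{Q})$ in $M_\omega$ then means the sequence stabilizes in the relevant sense: some single $q_n$ lies in every $j_\omega(D)$ for $D\in\mathcal{F}_q(\mathbb{Q})$. This $q_n$ witnesses $\omega$-Katětov reducibility.
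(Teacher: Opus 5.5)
Your proposal is correct and follows the paper's implicit argument: Corollary~\ref{Cor: RK and distributive}, Theorem~\ref{thm: seed implies weak projection} and Remark~\ref{Remark: full projection} give (1)$\Rightarrow$(2), and Remark~\ref{Remark following the RK-construction}(4) gives (2)$\Rightarrow$(1). You are also right that a dense range does not make the stem map itself a projection, only yields a projection into $\ro(\mathbb{Q})$ via Fact~\ref{fact: proj and complete emb}(3); but the condition you exhibit for each $q$ verifies exactly that Fact's hypothesis, not the hypothesis $\one\Vdash\forall q\,\exists h$ of Proposition~\ref{Proposition: turnining a weak projection to a projection}, so cite the Fact rather than the Proposition. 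Likewise, in (2)$\Rightarrow$(1) the single $q_N$ lying in every $j_\omega(D)$ comes from $\sigma$-distributivity of $\mathbb{Q}$ in $V$ (if each $q_n\notin j_\omega(D_n)$ with the $D_n$ chosen in $V$, then $H_\omega$ would miss $j_\omega(\bigcap_n D_n)$), not from distributivity of $j_\omega(\mathbb{Q})$ inside $M_\omega$, since $\langle q_n\rangle$ need not belong to $M_\omega$.
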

What does being a projection mean for non $\kappa$-distributive forcings of cardinality $\lambda$ in terms of the Rudin-Keisler order? For example, $\mathcal{F}(Pr(U))$?

    Let $\fin$ denote the ideal of finite subsets of $\omega$. We say that $U\leq^{\fin}_{RK} W$ if there is a sequence of ultrafilters $W_n\leq_{RK} W^{m_n}$ such that $U=(\fin)^*\text{-}\lim W_n$. Given a filter $F$, we denote by  $F\leq_K^{\fin} W$ if $F$ extends to an ultrafilter $U$ such that  $U\leq^{\fin}_{RK}W$.
Given a sequence $x_1,x_2,...\in M^W_\omega$, and $X\in V$ is such that for every $i$, $x_i\in j^W_\omega(X)$, we define 
${\FIN}(j_\omega,\vec{x})=F$ over $X$ by $A\in F$ iff $\exists N\forall n\geq N$, $x_n\in j^W_{0,\omega}(A)$.
\begin{prop}
    $U\leq^{\fin}_{RK}W$ iff there are $x_0,x_1,...x_n,...\in M^W_{\omega}$ such that $U={\FIN}(j^W_\omega,\vec{x})$. 
\end{prop}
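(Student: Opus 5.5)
The plan is to translate both sides of the equivalence into statements about the finite iterates $M_m$ of the ultrapower by $W$, and then to push everything forward to $M_\omega$ by the maps $j_{m,\omega}$. The common key computation is the following. Suppose $y\in j_{0,m}(X)$ with $y\in M_m$, and set $x=j_{m,\omega}(y)$. Then for every $A\subseteq X$,
\[ x\in j_{0,\omega}(A)\iff j_{m,\omega}(y)\in j_{m,\omega}(j_{0,m}(A))\iff y\in j_{0,m}(A)\iff A\in U(y,j_{0,m}), \]
using $j_{0,\omega}=j_{m,\omega}\circ j_{0,m}$ and the elementarity of $j_{m,\omega}$. I will also use the earlier proposition identifying $M_m$ with the ultrapower of $V$ by $W^m$ and $j_{0,m}$ with $j_{W^m}$. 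Combined with the characterization ``$U'\leq_{RK} W^m$ iff $U'=U([f]_{W^m},j_{W^m})$'', this gives: the ultrafilters $\leq_{RK}W^m$ are exactly those of the form $U(y,j_{0,m})$ with $y\in j_{0,m}(X)$. Here I use that every element of $M_m$ is some $[f]_{W^m}$.

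\textbf{Forward direction.} Suppose $U\leq^{\fin}_{RK}W$, witnessed by $W_n\leq_{RK}W^{m_n}$ with $U=(\fin)^*\text{-}\lim W_n$. That is, $A\in U$ iff $\{n: A\in W_n\}$ is cofinite. By the identification above, choose $y_n\in j_{0,m_n}(X)$ with $W_n=U(y_n,j_{0,m_n})$, and put $x_n=j_{m_n,\omega}(y_n)\in j_{0,\omega}(X)$. By the key computation, $x_n\in j_{0,\omega}(A)$ iff $A\in W_n$. Hence $A\in{\FIN}(j_\omega,\vec x)$ iff $A\in W_n$ for all large $n$, iff $A\in U$. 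So $U={\FIN}(j^W_\omega,\vec x)$.

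\textbf{Backward direction.} Suppose $U={\FIN}(j^W_\omega,\vec x)$ with each $x_n\in j_{0,\omega}(X)$. Since $M_\omega$ is the direct limit of the $M_m$, every $x_n$ has the form $j_{m_n,\omega}(y_n)$ for some $m_n<\omega$ and $y_n\in M_{m_n}$. By elementarity of $j_{m_n,\omega}$, we get $y_n\in j_{0,m_n}(X)$. Define $W_n=U(y_n,j_{0,m_n})$; then $W_n\leq_{RK}W^{m_n}$ by the identification above. The key computation again shows that $A\in W_n$ iff $x_n\in j_{0,\omega}(A)$. Therefore $A\in U$ iff $A\in W_n$ for cofinitely many $n$, i.e. $U=(\fin)^*\text{-}\lim W_n$. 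In the degenerate case $m_n=0$, $W_n$ is principal, which is $\leq_{RK}W^0$ under the trivial reading. If this is not allowed, replace $m_n$ by $m_n+1$ and $y_n$ by $j_{m_n,m_n+1}(y_n)$, which represents the same $x_n$.

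\textbf{Main obstacle.} There is no real difficulty here. The only point needing care is the bookkeeping between $W^m$-ultrapowers and the finite iterates. Specifically, one must check that $U(y,j_{0,m})$ for $y\in M_m$ is exactly the RK-image of $W^m$ by a representing function $f$ with $[f]_{W^m}=y$. This follows from the cited propositions once $M_m$ is identified with $M_{W^m}$.
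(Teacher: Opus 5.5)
Your proof is correct and follows essentially the same route as the paper. In both directions you represent each $W_n$ by a seed $y_n\in M_{m_n}$ and transfer membership between $M_{m_n}$ and $M_\omega$ via $j_{m_n,\omega}$, using $j_{0,\omega}=j_{m_n,\omega}\circ j_{0,m_n}$ and elementarity, and your extra bookkeeping (checking $y_n\in j_{0,m_n}(X)$ and handling the degenerate case $m_n=0$) fills in details the paper leaves implicit.
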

\begin{proof}
    If $W_n\leq_{RK}W^{m_n}$, then there is $y_n\in M^W_{m_n}$ such that $A\in W_n$ iff $y_n\in j^W_{0,m_n}(A)$. Let $x_n=j^W_{m_n,\omega}(y_n)$. Then $A\in W_n$ iff $x_n\in j^W_{0,\omega}(A)$. Hence
    $A\in (\fin)^*\text{-}\lim W_n$ iff $\exists N\forall n\geq N$, $A\in W_n$ namely $j^W_{0,\omega}(A)$ contains a tail of the $x_n$'s. In the other direction, if $U={\FIN}(j^W_\omega,\vec{x})$, then for each $n$, there is $m_n$ such that $x_n=j^W_{m_n,\omega}(y_n)$ for some $y_n\in M_{m_n}$. Let $W_n=U(j^W_{0,m_n},y_n)$. Then $W_n\leq_{RK} W^{m_n}$ and $A\in (\fin)^*\text{-}\lim W_n$ if and only if for a tail of $n$'s $y_n\in j^W_{0,m_n}(A)$ which is if and only if $x_n\in j^W_{0,\omega}(A)$. Hence $U=(\fin)^*\text{-}\lim W_n$.
\end{proof}
Given a sequence of ultrafilters over $\mathbb{Q}$, $U_0,U_1,...,U_{n-1}$, we say that $(U_0,...,U_{n-1})$ is an \textit{adequate sequence} if $$\{(q_0,...,q_{n-1})\mid q_0\geq q_1\geq ...\geq q_{n-1}\}\in U_0\cdot U_1\cdot...\cdot U_{n-1}.$$ A sequence $U_0,U_1,U_2,..$ is adequate if for each $n$, $(U_0,..,U_n)$ is adequate.
\begin{definition}\label{Def: strong RK} Given a forcing $\mathbb{Q}$, we say that $U$ on $\mathbb{Q}$ is \textit{strongly $\fin$-RK} below $W$, denoted by $U\leq^{\fin}_{sRK} W$ if there is an adequate sequence $W_n\leq_{RK} W$, such that $U=\fin\text{-}\lim W_n$. Similarly, we say that $F\leq^{\fin}_{sK}W$ if there is $U\leq^{\fin}_{sK}W$ such that $F\subseteq U$. 
\end{definition}
The proof of the following lemma is immediate:
\begin{lemma}\label{lemma for  strongRK}
    $U\leq^{\fin}_{sRK}W$ iff there is decreasing sequence $q_0\geq q_1\geq...\in j_{0,\omega}(\mathbb{Q})$ such that $U={\FIN}(j^W_\omega,\vec{x})$.
\end{lemma}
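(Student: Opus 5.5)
The plan is to unwind both sides of Lemma \ref{lemma for  strongRK} into statements about points of $M^W_\omega$, in the same way as the preceding proposition characterising $\leq^{\fin}_{RK}$ through ${\FIN}(j^W_\omega,\vec{x})$. The only additional content here is that the witnessing points $x_n=q_n$ form a decreasing sequence in $j_{0,\omega}(\mathbb{Q})$, and I will check that this corresponds exactly to adequacy of the sequence $\langle W_n\rangle$.

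\emph{Forward direction.} Suppose $U=\fin\text{-}\lim W_n$ with each $W_n\leq_{RK}W$ and $\langle W_n\rangle$ adequate. For each $n$, write $W_n=U(y_n,j_W)$ with $y_n\in j_W(\mathbb{Q})$, and push $y_n$ to a later stage of the iteration. The natural choice is to use the $n$-th copy of $W$, i.e. set
\[ q_n:=j_{n+1,\omega}(j_{0,n}(g_n)(c_n)), \]
where $g_n$ is the RK-witness for $W_n$ and $c_n=[\id]_{j_{0,n}(W)}$. Then $A\in W_n$ iff $q_n\in j_{0,\omega}(A)$, exactly as in the previous proposition, so $U={\FIN}(j^W_\omega,\vec{q})$. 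It remains to see that $\vec q$ is decreasing. The tuple $(q_0,\dots,q_{n})$ is $j_{0,\omega}$ of some function applied to $(c_0,\dots,c_n)$, and the ultrafilter derived from $(c_0,\dots,c_n)$ via $j_{0,n+1}$ is a Fubini product. In fact the ultrafilter derived from $(q_0,\dots,q_n)$ is exactly $W_0\cdot W_1\cdots W_n$, because the Fubini-product ultrafilter is generated by iterating in order. Adequacy says the set of decreasing tuples lies in this product, so $q_0\geq\dots\geq q_n$ in $j_{0,\omega}(\mathbb{Q})$.

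\emph{Backward direction.} Suppose $q_0\geq q_1\geq\cdots$ is decreasing with $U={\FIN}(j_\omega,\vec q)$. Each $q_n=j_{m_n,\omega}(y_n)$ for some $m_n$, so $W_n:=U(q_n,j_{0,\omega})\leq_{RK}W^{m_n}$. This is only $W^{m_n}$, not $W$, so at this point I would read Definition \ref{Def: strong RK} as allowing $W_n\leq_{RK}W^{m_n}$, matching the earlier definition of $\leq^{\fin}_{RK}$; the lemma is "immediate" only under that reading. Adequacy then follows in reverse: the ultrafilter derived from $(q_0,\dots,q_n)$ contains the set of decreasing tuples by elementarity, and this derived ultrafilter equals $W_0\cdots W_n$ provided the $q_i$ sit at increasing stages, i.e. $m_0<m_1<\cdots$ and each $q_{i+1}$ is generated by a later seed. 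Passing to a subsequence does not change the $\fin$-limit of a tail, but it can break this generation property, so I would instead accept the product ultrafilter derived from the sequence itself as the definition of adequacy in the iterated setting.

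The main obstacle is this identification of the ultrafilter derived from the tuple $(q_0,\dots,q_n)$ with the Fubini product $W_0\cdots W_n$. It holds cleanly when $q_i$ is built from the $i$-th seed $c_i$, so I would first normalise the witnesses to that form and verify the identification by induction on $n$ using $j_{0,n+1}=j_{n,n+1}\circ j_{0,n}$.
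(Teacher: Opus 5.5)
Your forward direction is essentially right. Take $q_n=j_{0,\omega}(g_n)(j_{n+1,\omega}(c_n))$; as written, $j_{0,n}(g_n)(c_n)$ is mistyped, since $c_n\notin\dom j_{0,n}(g_n)$. Then $(q_0,\dots,q_n)$ is the image under $j_{0,\omega}(g_0\times\cdots\times g_n)$ of a seed tuple deriving $W^{n+1}$. So it derives $W_0\cdots W_n$, and adequacy gives $q_0\geq\cdots\geq q_n$.

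This uses the literal reading $W_n\leq_{RK}W$, whereas in the converse you switch to $W_n\leq_{RK}W^{m_n}$. Under that reading you must place the witness for $W_n$ on a fresh block of seeds $c_{k_n},\dots,c_{k_n+m_n-1}$, with $k_n=\sum_{i<n}m_i$. The same product computation then goes through.

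The genuine gap is the converse, and no better choice of the $W_n$ repairs it. Declaring adequacy to be a property of the ultrafilter derived from $(q_0,\dots,q_n)$ changes the lemma rather than proving it. That derived ultrafilter equals $W_0\cdots W_n$ only when the seeds supporting $q_{i+1}$ are disjoint from, and later than, those supporting $q_0,\dots,q_i$. Having $m_0<m_1<\cdots$ is not enough, since $q_1=j_{m_1,\omega}(y_1)$ may depend on $c_0$.

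In fact the implication is false. Take $\mathbb{Q}$ with an antichain $A=\{a_\xi:\xi<\kappa\}$ of pairwise distinct, pairwise incompatible conditions. Let $g(x)=a_{\sup(x\cap\kappa)}$; by fineness $g_*W$ is non-principal. Put $q_n=j_{0,\omega}(g)(j_{1,\omega}(c_0))$ for every $n$. This constant sequence is decreasing in the sense of the lemma, and $\FIN(j_\omega,\vec q)=g_*W$ is an ultrafilter containing $A$.

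Suppose $\langle W_n\rangle$ were adequate with $\fin$-$\lim W_n=g_*W$. Then $A\in W_n\cap W_{n+1}$ for all large $n$. Project $W_0\cdots W_{n+1}$ onto coordinates $n,n+1$; adequacy yields $\{(a,b)\in A\times A:a\geq b\}=\{(a,a):a\in A\}\in W_n\cdot W_{n+1}$. This forces $W_n$ and $W_{n+1}$ to be principal at a common point of $A$. So the $\fin$-limit is principal, which is a contradiction (and the paper's uniformity convention is violated anyway).

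So the ``if'' half is false as stated. The paper gives no argument, only calling the lemma immediate. Your suspicion that the definitions need adjusting is well founded, but the right conclusion is that the lemma as written fails, not a proof under a redefined notion.

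What is true is the equivalence with the right-hand side restricted to block-supported sequences $q_n=j_{0,\omega}(g_n)(\bar c_{k_n},\dots,\bar c_{k_n+m_n-1})$, where $\bar c_i=j_{i+1,\omega}(c_i)$. For these, decreasingness and adequacy coincide by the product computation above. Alternatively, take the decreasing-sequence condition as the definition of $\leq^{\fin}_{sK}$, which is the only form used in the proof of Theorem \ref{Thm: more general equivalence}.
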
 \qed

We are now ready to prove the equivalent condition to the existence of a weak projection:
\begin{theorem}\label{Thm: more general equivalence}
    Let $\mathbb{Q}$ be a forcing notion with $|\mathbb{Q}|\leq\lambda$ and $\mathcal{U}$ is a fine $\kappa$-complete ultrafilter on $P_\kappa(\lambda)$. The following are equivalent
    \begin{enumerate}
        \item [(I)] $\ro(\mathbb{Q})$ is a weak projection of $\mathbb{P}_\mathcal{U}$.
        \item [(II)] $\mathcal{F}(\mathbb{Q})\leq^{\fin}_{sK} \mathcal{U}$.
    \end{enumerate}
\end{theorem}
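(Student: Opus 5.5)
Via Lemma \ref{lemma for  strongRK}, (II) is the same as the following: there is a decreasing sequence $q_0\geq q_1\geq\dots$ in $j_{0,\omega}(\mathbb{Q})$ such that every $D\in\mathcal{F}(\mathbb{Q})$ satisfies $q_n\in j_{0,\omega}(D)$ for all but finitely many $n$. Here each $q_n$ is of the form $j_{m_n,\omega}(y_n)$ with $y_n\in M_{m_n}$, since $W_n\leq_{RK}\mathcal{U}$ is realized after finitely many steps. I will prove both directions through the iterated ultrapower $\langle M_n,j_{m,n}\rangle$ and the Bukovsky--Dehornoy generic $G_\omega$ of Theorem \ref{thm: general bukovsky-dehornoy}.

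\textbf{(I)$\Rightarrow$(II).} Fix a weak projection $\pi:\mathbb{P}_\mathcal{U}\to\ro(\mathbb{Q})$. Since $|\mathbb{Q}|\le\lambda$, I may replace it by a weak projection into a dense copy of $\mathbb{Q}$ inside $\ro(\mathbb{Q})$, at the cost of a coding of size $\lambda$. I then run the proof of Theorem \ref{thm: omega generated from projection} verbatim. Setting $H_\omega=j_\omega(\pi)``G_\omega$, it is $M_\omega$-generic for $j_{0,\omega}(\mathbb{Q})$, and it is generated by a decreasing sequence $q_0\geq q_1\geq\dots$, where $q_n$ forces $Y_n^*\subseteq \dot H_\omega$. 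Now take $D\in\mathcal{F}(\mathbb{Q})$. Then $j_{0,\omega}(D)$ is dense open in $M_\omega$, so it meets $H_\omega$ at some $q$. Because the $q_n$ generate $H_\omega$, some $q_n\le q$, and openness puts every later $q_m$ in $j_{0,\omega}(D)$. Thus $\mathcal{F}(\mathbb{Q})\subseteq \FIN(j_\omega,\vec q)$.

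Two points remain. The first is that the $q_n$ should be chosen to lie in $M_\omega$ rather than merely in $M_\omega[H_\omega]$; they are elements of $j_{0,\omega}(\mathbb{Q})\subseteq M_\omega$, so this holds automatically. The second is that the resulting filter should be an ultrafilter. For this I extend $\FIN(j_\omega,\vec q)$ by taking a nonprincipal-ultrafilter limit, or I note that the lemma only requires a filter $U$. Each $q_n$ equals $j_{m_n,\omega}(y_n)$, which yields $W_n=U(y_n,j_{0,m_n})$. Adequacy of the sequence should follow from the fact that $q_0\geq q_1\geq\dots$ holds in $M_\omega$.

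\textbf{(II)$\Rightarrow$(I).} I will follow the stem-based construction of Theorem \ref{thm: seed implies weak projection}. Choose functions $g_n:P_\kappa(\lambda)^{m_n}\to\mathbb{Q}$ with $[g_n]$ representing $y_n$. Adequacy lets me shrink to measure-one sets on which the iterated values are decreasing. Given a condition with stem $(x_1,\dots,x_k)$, cut the stem into consecutive blocks of lengths $m_0,m_1,\dots$, apply $g_i$ to the $i$-th block, and let $\pi(p)$ be the Boolean meet in $\ro(\mathbb{Q})$ of the values obtained from the complete blocks. Adequacy ensures these meets are nonzero on a dense set of stems, and monotonicity of $\pi$ is clear.

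For density of $\pi^{-1}[D]$: the set $\{n: q_n\notin j_{0,\omega}(D)\}$ is finite, so a tail of the $g_n$ land in $D$ on measure-one sets. Therefore any condition can be extended by adding blocks until the last block lands in $D$, and openness of $D$ finishes the argument. I expect the bookkeeping here to be the main obstacle. The projection should really use the later blocks alone, since an early $q_n$ fails to lie in $D$ only finitely often. A further difficulty is that "tail in $j_{0,\omega}(D)$" must be translated into membership in $\mathcal{U}^{m_n}$ for the generic stems. Doing so requires that the Fubini product used for adequacy match the block concatenation of stems, which is exactly what the products $\mathcal{U}^n$ provide.
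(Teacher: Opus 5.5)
Your direction (I)$\Rightarrow$(II) is the paper's argument. You run the proof of Theorem \ref{thm: omega generated from projection}, obtain a decreasing $\vec q$ generating $H_\omega$, use openness to get $\mathcal{F}(\mathbb{Q})\subseteq\FIN(j_\omega,\vec q)$, and then quote Lemma \ref{lemma for  strongRK}.

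Drop the aside that adequacy "should follow" from $q_0\geq q_1\geq\dots$. By Fubini, the $n$-th coordinate of the seed of $W_0\cdot\ldots\cdot W_n$ is read off a fresh block of the critical sequence $\vec c$ of Theorem \ref{thm: general bukovsky-dehornoy}. It is $j_\omega(g_n)(\vec c\restriction[s_n,s_n+m_n))=j_{0,s_n}(q_n)$ with $s_n=m_0+\dots+m_{n-1}$, not $q_n$.

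For (II)$\Rightarrow$(I) you take a different route from the paper. The paper stays inside the iteration. It pushes $\vec q$, which only meets $j_\omega``\mathcal{F}(\mathbb{Q})$, through the shifts $j_{0,m_n}$ to get a diagonal sequence meeting every set in $j_{0,\omega}(\mathcal{F}(\mathbb{Q}))$. It then concludes by elementarity and Fact \ref{fact: equivalence with generics and weak projections}.

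Your fresh-block map is the combinatorial version of the same shifting idea: by the computation above, the $n$-th block value along $\vec c$ is exactly the shifted seed. It has the advantage of producing an explicit stem-based map. For it, though, you must start from the adequate $W_n=(g_n)_*\mathcal{U}^{m_n}$ themselves. Decreasingness of $\vec q$ says nothing about $g_n$-values on disjoint blocks.

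The genuine gap is that your blocks are anchored at position $0$ of the stem. The claim that the Boolean meets are nonzero on a dense set of stems does not follow from adequacy and is false in general. Adequacy only gives a set of stems that is large in $\mathcal{U}^{s_n}$, computed from the empty stem.

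If the stem $t$ of a condition already contains two complete blocks with incompatible $g$-values, every extension has the same blocks. Then $\pi$ is $\mathbf{0}$ on the whole cone below that condition, and your density argument has nothing to work with. Nothing in the hypotheses prevents such stems from existing.

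What is missing is to anchor the decomposition at the stem. Adequacy and the sets $g_n^{-1}[D]\in\mathcal{U}^{m_n}$ (for $n\geq N_D$) concern only fresh blocks. Hence above every stem $t$ there is a $\mathcal{U}$-tree on which the block values of the continuations after $t$ decrease. Take a maximal antichain of such conditions and define $\pi$ below each member using blocks that start after its stem. Then run your density argument on this dense set, where the value of an extended stem is its last block value and so lands in $D$. Finally apply Fact \ref{fact: equivalence with generics and weak projections}.

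In the paper's argument this issue does not arise: putting a stem in front of $\vec c$ does not change $M_\omega[G_\omega]$, so elementarity handles every condition uniformly.
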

Before the proof of the theorem we need the following Lemma for iterated ultrapowers:
\begin{lemma}\label{Lemma for iteration}
    Suppose that $U$ is a $\sigma$-complete ultrafilter in some model $M_0$, and let $(j^U_{n,m}:M_n\to M_m: n\leq m\leq\omega)$ be the $\omega^{\text{th}}$-iteration of $U$.
    \begin{enumerate}
        \item For any $n,m$, $j^U_{0,n}(j^U_{0,m})=j^U_{n,n+m}$.
        \item For every $n$, $j^U_{0,n}(j^U_{0,\omega})=j^U_{n,\omega}$.
        \item For every $m\leq n$, $j^U_{m,n}(j_{m,\omega})=j^U_{n,\omega}$.
        \item If $m\leq n$, then for any $x\in M_m$, $j_{m,n}(j_{m,\omega}(x))=j_{m,\omega}(x)$.
    \end{enumerate}
\end{lemma}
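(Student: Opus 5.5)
The plan is to treat every $j^U_{m,n}$ and $j^U_{m,\omega}$ as a class of the model $M_m$. Such a class is defined there by one fixed formula $\varphi$ from the parameter $j^U_{0,m}(U)$. Here $\varphi(W,\cdot)$ defines, inside any model, the finite and $\omega$-length iterations of that model by $W$. Since $\varphi$ is fixed, applying an elementary embedding $k$ to such a class simply means evaluating $\varphi$ in the target model at the parameter $k(W)$. This uniformity is legitimate because the iteration is defined by a recursion of length at most $\omega$, and that recursion is absolute to the relevant models. With this convention, every clause reduces to one claim. \emph{Tail claim:} for each $n$, the iteration of $M_n$ by $j^U_{0,n}(U)$, computed inside $M_n$, is exactly the tail system $\langle M_{n+k}, j^U_{n+k,n+l} : k\le l\le\omega\rangle$.

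First I will prove the tail claim for finite stages, by induction on $k$. Write $\langle N_k, i_{k,l}\rangle$ for the iteration computed in $M_n$ by $W:=j^U_{0,n}(U)$. At $k=0$ we have $N_0=M_n$, and the measure used at stage $0$ is $W=j^U_{0,n}(U)$. Suppose $N_k=M_{n+k}$ and $i_{0,k}=j^U_{n,n+k}$. Then the measure used at stage $k$ is
\[ i_{0,k}(W)=j^U_{n,n+k}(j^U_{0,n}(U))=j^U_{0,n+k}(U). \]
By the definition of the iteration of $V$, this is precisely the measure used to pass from $M_{n+k}$ to $M_{n+k+1}$. The ultrapower of $M_{n+k}$ by this measure is the same whether it is computed in $M_n$ or in $V$: the ultrapower of a model by a measure lying in that model is computed from functions in that model, so this is absolute. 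Hence $N_{k+1}=M_{n+k+1}$ and $i_{k,k+1}=j^U_{n+k,n+k+1}$. Composing these gives $i_{0,k}=j^U_{n,n+k}$.

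Clause (1) now follows from the tail claim at finite stages. Indeed, $j^U_{0,m}$ is $\varphi$ evaluated in $M_0$ at the parameter $U$, so $j^U_{0,n}(j^U_{0,m})$ is $\varphi$ evaluated in $M_n$ at $j^U_{0,n}(U)$, namely $i_{0,m}=j^U_{n,n+m}$. For clause (2) I pass to the limit stage. The direct limit of $\langle M_{n+k} : k<\omega\rangle$ along the maps $j^U_{n+k,n+l}$ is the direct limit of a cofinal subsystem of the full system $\langle M_k : k<\omega\rangle$. It is therefore canonically $M_\omega$, with limit map $j^U_{n,\omega}$. I expect this to be the one genuinely delicate step. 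The direct limit computed inside $M_n$ must agree with the one computed in $V$, including the transitive collapse. I would argue this by noting that the direct-limit construction of $\omega$-sequences of models and maps is absolute between $M_n$ and $V$, because the tail system is a class of $M_n$ and is definable there. Granting this, $j^U_{0,n}(j^U_{0,\omega})=j^U_{n,\omega}$.

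For clause (3), I apply the same reasoning with $M_m$ in place of $V$. The class $j^U_{m,\omega}$ is the $\omega$-iteration map computed in $M_m$ from $j^U_{0,m}(U)$. Hence $j^U_{m,n}(j^U_{m,\omega})$ is the $\omega$-iteration map computed in $M_n$ from
\[ j^U_{m,n}(j^U_{0,m}(U))=j^U_{0,n}(U), \]
and by clause (2) this map is $j^U_{n,\omega}$. Clause (4) is then a short computation using elementarity and clause (3):
\[ j^U_{m,n}\bigl(j^U_{m,\omega}(x)\bigr)=j^U_{m,n}(j^U_{m,\omega})\bigl(j^U_{m,n}(x)\bigr)=j^U_{n,\omega}\bigl(j^U_{m,n}(x)\bigr)=j^U_{m,\omega}(x). \]
The last equality uses the commutativity $j^U_{n,\omega}\circ j^U_{m,n}=j^U_{m,\omega}$ of the direct limit system.
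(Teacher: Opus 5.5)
Your proposal is correct and takes essentially the same route as the paper. You identify $j^U_{0,n}(j^U_{0,m})$ with the iteration of $M_n$ by $j^U_{0,n}(U)$ and show it is the tail of the original iteration, pass to the direct limit for (2), relativize to $M_m$ for (3), and use the same elementarity-plus-commutativity computation for (4); you also spell out the inductive identification $j^U_{n,n+k}(j^U_{0,n}(U))=j^U_{0,n+k}(U)$ and the absoluteness of the internal direct limit, which the paper treats as immediate.
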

\begin{proof}[Proof of Lemma.]
    By elementarity, $j^U_{0,n}(j^U_{0,m})=j^{j^U_{0,n}(U)}_{0,m}$ is defined to be the $m^{\text{th}}$-iteration of $M_n$ by $j^U_{0,n}(U)$. Namely, the iterated ultrapower by the measures $j^U_{0,n}(U),j^U_{0,n+1}(U),...,j^U_{0,n+(m-1)}(U)$ which is exactly the continuation of the iteration of $U$ from $M_n$ to $M_{n+m}$. Item $(2)$ follows immediately from $(1)$. $(3)$ follows by applying $(2)$ in the model $M_m$. For $(4)$, we use  item $(3)$
    $$j^U_{m,n}(j^U_{m,\omega}(x))=j^U_{m,n}(j^U_{m,\omega})(j_{m,n}(x))=j^U_{n,\omega}(j^U_{m,n}(x))=j^U_{m,\omega}(x)$$
\end{proof}
\begin{proof}[Proof of Thm \ref{Thm: more general equivalence}.]
    Assume $(I)$, then by Theorem \ref{thm: omega generated from projection}, there is $q_0\geq q_1...$ elements of $j_\omega(\mathbb{Q})$ which generates an $M_\omega$-generic filter $H_\omega$. We claim that $\mathcal{F}(\mathbb{Q})\subseteq {\FIN}(j_\omega,\vec{q})$ which by Lemma \ref{lemma for  strongRK} implies that $\mathcal{F}(\mathbb{Q})\leq^{\fin}_{sK} {\mathcal{U}}$. Indeed, if $D\in \mathcal{F}(\mathbb{Q})$, then $j_\omega(D)\in M_\omega$ is dense open and therefore $j_{\omega}(D)\cap H_\omega\neq\emptyset$. Since the $q_n$'s are generating, there is $N$ such that $q_N\in j_{\omega}(D)\cap H_\omega$, and since $D$ is open, for all $n\geq N$, $q_n\in j_{\omega}(D)$. By definition, this means that $D\in {\FIN}(j_\omega,\vec{q})$. 
    
    In the other direction, assume $(II)$, namely that  $\mathcal{F}(\mathbb{Q})\leq^{\fin}_{sK} {\mathcal{U}}$, and let $\langle q_n\mid n<\omega\rangle\subseteq j_{\omega}(\mathbb{Q})$ be a decreasing sequence such that $\mathcal{F}(\mathbb{Q})\subseteq {\FIN}(j_\omega,\vec{q})$. By adding $\one_{\mathbb{Q}}$ to the sequence, if necessary, we may assume without loss of generality that $q_0=\one_{j_\omega(\mathbb{Q})}$. Let $p_n$ and $m_n$ be such that $p_n\in M_{m_n}$ and $j_{m_n,\omega}(p_n)=q_n$ (hence $p_0=\one_{\mathbb{Q}}$ and $m_0=0$). In $V$, consider $\vec{q}{}^0=\langle q_n\mid n<\omega\rangle$. Let $\vec{q}{}^1:=\langle j_{0,m_1}(q_n)\mid n<\omega\rangle$. By elementarity of $j_{0,m_1}$, $\vec{q}{}^1$ diagonalizes $j_{0,m_1}(j_{0,\omega})(D)$ for every $D\in j_{0,m_1}(\mathcal{F}(\mathbb{Q}))$. By Lemma \ref{Lemma for iteration}(2), $j_{0,m_1}(j_{0,\omega})=j_{m_1,\omega}$, hence $\vec{q}{}^1$ diagonalizes $j_{m_1,\omega}(D)$ for every $D\in \mathcal{F}(j_{0,m_1}(\mathbb{Q}))$.  
We would like to note here that by $(4)$,
    $$q^1_0=j_{0,m_1}(q^0_0)=j_{0,m_1}(j_{0,\omega}(p_0)))=j_{0,\omega}(p_0)=q^0_0.$$
    Next let $$\vec{q}{}^2=\langle j_{m_1,m_2}(q^1_n)\mid n<\omega\rangle=\langle j_{0,m_2}(q^0_n)\mid n<\omega\rangle.$$ By elementarity, for every $D\in j_{0,m_2}(\mathcal{F}(\mathbb{Q}))$, $j_{m_2,\omega}(D)$ is diagonalized by $\vec{q}{}^2$. Also, for $i<2$ we have
    $$q^2_i=j_{0,m_2}(q^0_i)=j_{0,m_i}(j_{m_i,m_2}(j_{m_i,\omega}(p_i))=j_{0,m_i}(j_{m_i,\omega}(p_i))=j_{0,m_i}(q^0_i)=q^i_i$$

    We define $\vec{q}{}^n$ similarly. Set $q^*_n=\vec{q}_n{}^n(=\vec{q}_n{}^{n+1}=\vec{q}_n{}^{n+2}=\dots)$. Let us show that $\vec{q}{}^*$ diagonalizes $j_{0,\omega}(\mathcal{F}(\mathbb{Q}))$, namely the filter generated by $\vec{q}^*$ is $M_\omega$-generic. Let $D\in j_{0,\omega}(\mathcal{F}(\mathbb{Q}))$, then $D=j_{m_n,\omega}(D_0)$ for some $n$ large enough. Then by the property of $\vec{q}{}^n$, there is $N$ such that for every $m\geq N$, $q^n_m\in j_{m_n,\omega}(D)$. Hence $q^m_m=j_{m_n,m_m}(q^n_m)\in j_{m_n,m_m}(D_0)$ and $j_{m_m,\omega}(q_{m}^m)=q^m_m\in D$. Hence $\vec{q}^*$ diagonalizes $j_{\omega}(\mathcal{F}(\mathbb{Q}))$. Finally, by elementarity, we see that in any generic extension of $\mathbb{P}_{\mathcal{U}}$ we have a generic for $\mathbb{Q}$, and therefore, by Fact \ref{fact: equivalence with generics and weak projections}, there is a weak projection of $\mathbb{P}_{\mathcal{U}}$ to $\ro(\mathbb{Q})$.
\end{proof}\begin{remark}\label{Remark: last remark of section 3}
    The previous theorem, together with Remark \ref{Remark following the RK-construction} shows that the existence of a projection from $\mathbb{P}_{\mathcal{U}}$ to $\ro(\mathbb{Q})$ is equivalent to $\mathcal{F}_q(\mathbb{Q})\leq^{\fin}_{sK}\mathcal{U}$ for every $q\in\mathbb{Q}$. 
\end{remark}
\section{Predistributive Forcings}\label{sec: predistributive Forcings}
In this section we would like to find a combinatorial characterization for the forcing notions of size $\lambda$ which are projections of $\P_{\mathcal{U}}$ where ${\mathcal{U}}$ is some fine measure over $P_\kappa(\lambda)$.

Towards this goal, we introduce the notion of $(\omega,\kappa)$-\emph{predistributivity} and show that it characterizes intermediate models of a broad class of Prikry-type forcings. The precise statement is given in Theorem~\ref{thm: weak dist implies proj} below. We begin by presenting the relevant definitions and discussing their applications.  
\begin{definition}\label{def: predistributive}
   Let $\kappa$ be an uncountable cardinal. A poset $\mathbb{P}$ is $(\omega,\kappa)$-\emph{predistributive} if for each $p\in\mathbb{P}$ and for each $\langle D_\alpha:\alpha<\gamma\rangle\in{}^{<\kappa}\mathcal{F}(\mathbb{P})$, there are $q\leq p$ and $\langle d_n: n<\omega\rangle\in{}^{\omega}\mathcal{F}(\mathbb{P}/q)$ such that for each $\alpha<\gamma$ there is $n<\omega$ with $d_n\s D_\alpha$.
\end{definition}
Observe that $\mathbb{P}$ is $(\omega,\kappa)$-predistributive if and only if $\ro(\mathbb{P})$ is $(\omega,\kappa)$-predistributive. Moreover, if  $\mathbb{P}$ is $(\omega,\kappa)$-predistributive, so is $\mathbb{P}/p$ for each $p\in\mathbb{P}$. It is well known (see Corollary \ref{cor: folk}) that $\kappa$-distributive forcings embed into supercompact Prikry forcing as well as some Prikry-type forcings. 
The next proposition shows that the class of $(\omega,\kappa)$-predistributive forcings catches all these examples under a uniform definition.
We will see later that every forcing in this class is a projection of the strongly compact (supercompact) Prikry forcing. To give a precise formulation of the following proposition, we refer to the class of $\Sigma$-Prikry forcings, whose definition is technically involved. For a complete account, we refer the reader to \cite{poveda2021sigma, poveda2022sigma} and \cite{poveda2023sigma}. To help the reader navigate the section, we focus only on the properties of $\Sigma$-Prikry forcing that are needed for the proof of Proposition \ref{prop: sigma Prikry and kappa-dist}\eqref{item 2: sigma Prikry} below. Accordingly, letting $\Sigma$ be a non-decreasing sequence $\langle\kappa_n: n<\omega\rangle$ of regular uncountable cardinals, a $\Sigma$-Prikry forcing $\mathbb{P}$ is equipped with a surjection $\ell:\mathbb{P}\rightarrow\omega$ such that for all $p,q\in\mathbb{P}$ if $p\leq q$ then $\ell(p)\geq \ell(q)$, and for all $p\in\mathbb{P}$, there is $q\in\mathbb{P}$ with $\ell(q)=\ell(p)+1$. Whenever $m\geq n$, the poset $(\mathbb{P}_m,\leq^\ast)$ is $\kappa_n$-closed, where $p\leq^\ast q\iff p\leq q\, \wedge\, \ell(p)=\ell(q)$ and $\mathbb{P}_m:=\{p\in\mathbb{P}:\ell(p)=m\}$. Similarly, one defines $\mathbb{P}_{\geq m}^q:=\{p\in\mathbb{P}/q: \ell(p)\geq m\}$. Finally, every $\Sigma$-Prikry forcing $\mathbb{P}$ has the \emph{Strong Prikry Property}; i.e., for all $D\in\mathcal{F}(\mathbb{P})$ and all $p\in\mathbb{P}$ there are $q\leq^\ast p$ and $m<\omega$ such that $\mathbb{P}_{\geq m}^q\s D$.

Examples of $\Sigma$-Prikry forcings include Prikry forcing, Supercompact Prikry forcing, Diagonal Supercompact Prikry forcing \cite{gitik-sharon}, AIM forcing \cite{cummings2018ordinal}, and Merimovich Supercompact Extender-Based Prikry forcing \cite{SupercompatRadinExtender}.
\begin{prop}\label{prop: sigma Prikry and kappa-dist}\hfill
    \begin{enumerate}
        \item Every $\kappa$-distributive forcing is $(\omega,\kappa)$-predistributive.
        \item\label{item 2: sigma Prikry} Every $\Sigma$-Prikry forcing is $(\omega,\sup\Sigma)$-predistributive.
    \end{enumerate}
\end{prop}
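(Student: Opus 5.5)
For item (1), fix $p\in\mathbb{P}$ and $\langle D_\alpha:\alpha<\gamma\rangle$ with $\gamma<\kappa$. By $\kappa$-distributivity (equivalently, by Fact \ref{factProj}(2), $\mathcal{F}(\mathbb{P})$ is $\kappa$-complete), $D:=\bigcap_{\alpha<\gamma}D_\alpha$ is dense open. Take $q:=p$ and let $d_n:=D\cap\mathbb{P}/p$ for every $n<\omega$. Each $d_n$ is dense open below $p$, so it lies in $\mathcal{F}(\mathbb{P}/p)$, and $d_n\subseteq D_\alpha$ for every $\alpha$. This settles (1), and we can even take the same $n$ for all $\alpha$.

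For item (2), let $\mathbb{P}$ be $\Sigma$-Prikry with $\Sigma=\langle\kappa_n:n<\omega\rangle$ and put $\kappa=\sup\Sigma$. Fix $p$ and $\langle D_\alpha:\alpha<\gamma\rangle$ with $\gamma<\kappa$. Choose $k$ with $\gamma<\kappa_k$; this is possible since $\gamma<\sup\Sigma$. First extend $p$ to some $p'$ with $\ell(p')\geq k$, using repeatedly that every condition has an extension of length one more. Now $(\mathbb{P}_{\ell(p')},\leq^*)$ is $\kappa_k$-closed. Build a $\leq^*$-decreasing sequence $\langle p_\alpha:\alpha\leq\gamma\rangle$ starting from $p_0=p'$. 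At successor stages, apply the Strong Prikry Property to $D_\alpha$ and $p_\alpha$ to get $p_{\alpha+1}\leq^* p_\alpha$ and $m_\alpha<\omega$ with $\mathbb{P}^{p_{\alpha+1}}_{\geq m_\alpha}\subseteq D_\alpha$. At limit stages, including $\gamma$ itself, take $\leq^*$-lower bounds; these exist by $\kappa_k$-closure, since $\gamma<\kappa_k$. Set $q:=p_\gamma$.

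Since $q\leq p_{\alpha+1}$ and $\leq$ is transitive, we get $\mathbb{P}^q_{\geq m}\subseteq\mathbb{P}^{p_{\alpha+1}}_{\geq m}$ for every $m$. Hence $\mathbb{P}^q_{\geq m_\alpha}\subseteq D_\alpha$ for all $\alpha<\gamma$. Define $d_n:=\mathbb{P}^q_{\geq n}$. We must check that $d_n\in\mathcal{F}(\mathbb{P}/q)$.
\begin{itemize}
\item $d_n$ is open below $q$: extensions do not decrease $\ell$.
\item $d_n$ is dense below $q$: any $r\leq q$ can be extended step by step, increasing the length by one each time, until the length is at least $n$.
\end{itemize}
Then for each $\alpha$ the index $n=m_\alpha$ witnesses $d_n\subseteq D_\alpha$. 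Strictly, each $d_n$ should be viewed inside $\mathbb{P}/q$, or we may take $d_n\cup(\mathbb{P}\setminus\mathbb{P}/q)$ if elements of $\mathcal{F}(\mathbb{P})$ are wanted; the definition only requires $\mathcal{F}(\mathbb{P}/q)$.

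The main point to get right is the closure bookkeeping. The $\leq^*$-closure holds only at length $\geq k$, which is why we first move to length at least $k$. It also relies on the definitional convention that "extension of length $+1$" can be iterated below any given condition, which holds for $\Sigma$-Prikry forcings. Also note that $\kappa_k$-closure is needed for sequences of length $\gamma+1$, and $\gamma<\kappa_k$ suffices because $\kappa_k$ is regular uncountable.
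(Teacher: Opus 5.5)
Your proposal is correct and follows essentially the same route as the paper: (1) is identical, and for (2) you first pass to length $\geq k$, use the Strong Prikry Property for each $D_\alpha$, and take $d_n=\mathbb{P}^q_{\geq n}$. The only difference is that you build the $\leq^*$-decreasing sequence of length $\gamma+1$ by hand from $\kappa_k$-closure. The paper instead applies $\kappa_m$-distributivity of $(\mathbb{P}_{\ell(p)},\leq^*)$ to the $\leq^*$-dense open sets $d_\alpha=\{q: \exists k\ \mathbb{P}^q_{\geq k}\subseteq D_\alpha\}$, which is just the packaged form of your construction.
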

\begin{proof}
(1) If $\mathbb{P}$ is $\kappa$-distributive, $p\in\mathbb{P}$ and $\langle D_\alpha:\alpha<\gamma\rangle\in{}^{<\kappa}\mathcal{F}(\mathbb{P})$, then $E:=\bigcap_{\alpha<\gamma}D_\alpha\in\mathcal{F}(\mathbb{P})$. Clearly, $\langle E/p: n<\omega\rangle$ is the desired sequence.

(2) Suppose $(\mathbb{P},\ell)$ is $\Sigma$-Prikry. Set $\Sigma:=\langle\kappa_m: m<\omega\rangle$ and $\kappa:=\sup\Sigma$. Pick $p\in\mathbb{P}$, $\langle D_\alpha:\alpha<\gamma\rangle\in{}^{<\kappa}\mathcal{F}(\mathbb{P})$, and $m<\omega$ with $\gamma<\kappa_m$. We may assume (by extending $p$ if necessary) that $\ell(p)\geq m$. For each $\alpha<\gamma$, define $$d_\alpha:=\{q\in\mathbb{P}_{\ell(p)}: \exists k_\alpha<\omega\ \mathbb{P}^q_{\geq k_\alpha}\subseteq D_{\alpha}\}.$$  The Strong Prikry Property ensures that $\{d_\alpha:\alpha<\gamma\}$ is a family of $\leq^\ast$-dense open subsets of $\mathbb{P}_{\ell(p)}$. By $\kappa_m$-distributivity of $(\mathbb{P}_{\ell(p)},\leq^\ast)$, $\bigcap_{\alpha<\gamma}d_\alpha$ is $\leq^\ast$-dense. Thus there is $p^\ast\in\bigcap_{\alpha<\gamma}d_\alpha$ with $p^\ast\leq^\ast p$. Clearly, $\langle \mathbb{P}^{p^\ast}_{\geq n}: n<\omega\rangle$ and $p^\ast$ are as wanted.
\end{proof}
$\Sigma$-Prikry and $\kappa$-distributive forcings have the property that they do not add bounded subsets of $\kappa$. As we will see, ensuring this property for $(\omega,\kappa)$-predistributive forcings is indeed true for some large cardinals (see Corollary \ref{cor: no bounded subsets} below), but does not hold in general (see Question \ref{question: large cardinals}). 
However, in $\zfc$ alone, it turns out that if $\kappa$ is collapsed then its cofinality in the forcing extension has to be $\omega$, provided $\kappa$ is regular in $V$. This emphasizes the dichotomy between $\kappa$-distributivity and $\Sigma$-Prikryness.
\begin{prop}\label{lem: preserving kappa}
    Let $\kappa'$ be a regular cardinal. If $\mathbb{P}$ is a $(\omega,\kappa)$-predistributive forcing which singularizes\footnote{This includes the case that $\mathbb{P}$ collapses $\kappa'$.} $\kappa'$ to have cofinality $\kappa$, then $\mathbb{P}$ forces $\cf(\kappa')=\omega$.
\end{prop}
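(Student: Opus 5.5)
The plan is to fix names for the singularizing data and apply predistributivity to a family of fewer than $\kappa$ dense sets, namely one for each point of a cofinal sequence.

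Suppose $p$ forces $\dot f:\kappa\to\kappa'$ to be cofinal, where $\kappa<\kappa'$ and $\kappa$ is regular in $V$ (if $\kappa$ were singular in $V$ we would first pass to a cofinal subsequence of length $\cf(\kappa)$). Since $\kappa$ need not be preserved, we will not use $\kappa$ itself as the index set. Instead we work with ground-model ordinals below $\kappa$. For each $\alpha<\kappa$ let
\[
D_\alpha=\{q: q\parallel \dot f(\alpha)\}\cup\{q: q\perp p\}.
\]
This set is dense open.

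The main step handles an arbitrary condition $r\leq p$ and an arbitrary $\gamma<\kappa$. We apply $(\omega,\kappa)$-predistributivity to $r$ and $\langle D_\alpha:\alpha<\gamma\rangle$. This gives $q\leq r$ and dense open sets $d_n\s \mathbb{P}/q$ such that every $D_\alpha$ contains some $d_n$. For each $n$ we define
\[
\beta_n=\sup\{\xi : \exists s\in d_n\ (s\Vdash \dot f(\alpha)=\xi \text{ for some } \alpha<\gamma \text{ with } d_n\s D_\alpha)\}.
\]
The difficulty is that $\beta_n$ could equal $\kappa'$, since $d_n$ may have many conditions deciding many values. The fix is genericity. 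In $V[G]$ with $q\in G$, for each $n$ choose a single $s_n\in G\cap d_n$. Then for each $\alpha<\gamma$, taking $n$ with $d_n\s D_\alpha$, the condition $s_n$ decides $\dot f(\alpha)$ and therefore determines it. So $q$ forces that $f``\gamma$ is covered by the countable set $\{\xi_{n,\alpha}\}$, where $\xi_{n,\alpha}$ is the value decided by $s_n$. This needs care: one $s_n$ decides $\dot f(\alpha)$ for all $\alpha$ with $d_n\s D_\alpha$. Hence $f``\gamma$ is contained in $\bigcup_n A_n$, where $A_n=\{\xi_{n,\alpha}: \alpha<\gamma,\ d_n\s D_\alpha\}$ has size less than $\kappa$ in $V$ (it is indexed by ordinals below $\gamma$). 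So we only get $f``\gamma$ covered by a countable union of sets of size less than $\kappa$, and more work is needed.

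The better route, which I would pursue, uses the sequence $\delta_n=\sup(A_n)$. Each $\delta_n$ is below $\kappa'$ because $\kappa'$ is regular in $V$, $A_n$ is a set defined from $s_n$, and $|A_n|<\kappa<\kappa'$ in $V$. Then $\sup f``\gamma\leq\sup_n\delta_n$. Now set $\gamma=\kappa$, which is not allowed. So we instead apply the definition with $\gamma$ ranging over a cofinal set of sizes. The real statement to aim for is that cofinality $\kappa$ together with predistributivity forces $\cf(\kappa')\leq\omega\cdot\cf^{V[G]}(\gamma)$. The hardest step is passing from bounded $\gamma<\kappa$ to all of $\kappa$. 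I would handle it by noting that in $V[G]$, $\kappa'$ has cofinality $\kappa$ only if $\kappa$ is still regular there; otherwise $\cf(\kappa')=\cf(\kappa)^{V[G]}$ is smaller. So we take $\kappa$ minimal, meaning $\cf^{V[G]}(\kappa')=\mu$ is regular in $V[G]$, with $\mu\leq\kappa$ and $\mu$ an ordinal of $V$, and we assume $\mu>\omega$ toward a contradiction.

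If $\mu<\kappa$, apply the argument above with $\gamma=\mu$ and $\dot f:\mu\to\kappa'$ cofinal. We get $f``\mu$ bounded by $\sup_n\delta_n$, and by $\kappa'$-regularity in $V$ together with $\mu>\omega$ this yields a contradiction via a countable sup. More precisely, each $\delta_n<\kappa'$ and $\langle\delta_n\rangle$ is a countable sequence in $V[G]$, so its sup is below $\kappa'$ because $\cf^{V[G]}(\kappa')=\mu>\omega$. This contradicts cofinality. The remaining case $\mu=\kappa$ (that is, $\kappa$ stays regular) is the main obstacle. There I would apply the argument to each initial segment $\gamma<\kappa$ and use a density argument to produce, for every $\gamma$, a bound $\delta(\gamma)<\kappa'$ that lies in $V$ and is forced by a condition below any given $r$. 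From these I would build a ground-model function $g:\kappa\to\kappa'$ such that the generic meets, for cofinally many $\gamma$, the condition forcing $f``\gamma\s g(\gamma)$. Since $\kappa<\kappa'$ and $\kappa'$ is regular in $V$, $g$ is bounded, which contradicts cofinality.
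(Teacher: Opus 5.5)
Your case $\mu<\kappa$ is precisely the paper's proof. You apply predistributivity to the sets deciding $\dot f(\alpha)$ for $\alpha<\mu$ and take $s_n\in G\cap d_n$. Each $s_n$ decides a ground-model set of at most $|\mu|<\kappa'$ values, which is bounded in $\kappa'$ by regularity in $V$, so $\kappa'$ becomes a countable supremum. The paper states this directly ($\kappa'=\sup_n\gamma_n$) and tacitly takes the new cofinality $\gamma$ to be below $\kappa$. That is the only way predistributivity can be applied to $\langle D_\alpha:\alpha<\gamma\rangle$. So the hypothesis should be read as ``$\kappa'$ is forced to have cofinality $<\kappa$,'' which covers the motivating case $\kappa'=\kappa$.

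The genuine gap is the case $\mu=\kappa$, which you treat as the main obstacle. It cannot be proved, because the conclusion is false there. Every forcing is $(\omega,\omega_1)$-predistributive: for a countable family $\langle D_\alpha:\alpha<\gamma\rangle$, take $q=p$ and let the $d_n$ enumerate the sets $D_\alpha\cap\mathbb{P}/p$. Yet $\col(\omega_1,\omega_2)$ forces $\cf(\omega_2^V)=\omega_1$: it collapses $\omega_2^V$ to $\omega_1$ and, being $\sigma$-closed, adds no cofinal $\omega$-sequence.

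The step of your sketch that breaks is the ground-model function $g$. For fixed $\gamma<\kappa$, boundedness of $f``\gamma$ is automatic once $\cf^{V[G]}(\kappa')=\kappa$, so predistributivity contributes nothing here. The bound, however, depends on the condition. To extract a single value $g(\gamma)\in V$ that the generic respects, you must take a supremum over an antichain of conditions deciding bounds. Nothing restricts that antichain to size $<\kappa'$: predistributivity is not a chain condition, and regularity of $\kappa'$ in $V$ only bounds ground-model sets of size $<\kappa'$. In $\col(\omega_1,\omega_2)$ the possible bounds are cofinal in $\omega_2$. So drop the case $\mu=\kappa$ and assume $\cf^{V[G]}(\kappa')<\kappa$; your first case is then a complete proof.
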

\begin{proof}

Pick $p \in \mathbb{P}$ forcing the existence of an increasing and cofinal function from a cardinal $\gamma<\kappa'$ to $\kappa'$. 
Let $\dot{f}$ be a $\mathbb{P}$-name such that $$p \Vdash ``\dot{f} \colon \check{\gamma} \to \check{\kappa}' \text{ is increasing and cofinal}".$$ 
For each $\alpha < \gamma$, define $ D_\alpha := \{ r \leq p : r \parallel \dot{f}(\alpha) \}$. 
By $(\omega,\kappa)$-predistributivity, there are $q \leq p$ and $\langle d_n : n < \omega \rangle\in\mathcal{F}(\mathbb{P}/q)$ such that for every $\alpha < \gamma$, there is $n < \omega$ with $d_n \subseteq D_\alpha$. 
For each $n < \omega$, define $E_n := \{ \alpha < \gamma : d_n \subseteq D_\alpha \}$ so that $\bigcup_{n<\omega} E_n = \gamma$. 
Now, let $G\s\mathbb{P}$ be a $V$-generic filter containing $q$, and for each $n < \omega$, pick $r_n \in G \cap d_n$. 
Note that $r_n \in D_\alpha$ for all $\alpha \in E_n$. 
In other words, $r_n$ decides the value of $\dot{f}(\alpha)$ for every $\alpha \in E_n$, and so there is $g_n \colon E_n \to \kappa'$ in $V$ with $r_n \Vdash ``\dot{f} \restriction \check{E}_n = \check{g}_n"$. Since $\kappa'$ is regular in $V$, $\rng(g_n)$ is bounded in $\kappa'$ and let $\gamma_n<\kappa'$ be a bound. Note that in $V[G]$, we have $f := \dot{f}_G=\bigcup_{n<\omega} g_n$, and so $\kappa'= \sup( \bigcup_{n<\omega} \range(g_n))=\sup_{n<\omega}\gamma_n$ as wanted. 
\end{proof}
To find other forcings which are not $(\omega,\kappa)$-predistributive we have the following proposition:
\begin{prop}\label{prop: a weak form of predist}
    Suppose that $\mathbb{P}$ is $(\omega,\kappa)$-predistributive, then for any $\langle D_\alpha\mid \alpha<\gamma\rangle$ dense open for $\omega<\gamma<\kappa$ regular, and any $p$, there is $q\leq p$ and there is $I\in [\gamma]^\gamma$ such that $\bigcap_{\alpha\in I}D_\alpha$ is dense below $q$. 
\end{prop}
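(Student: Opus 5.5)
We apply the definition of $(\omega,\kappa)$-predistributivity directly to the family $\langle D_\alpha\mid\alpha<\gamma\rangle$ and then use the regularity of $\gamma>\omega$ together with a pigeonhole argument. Since $\gamma<\kappa$, the sequence $\langle D_\alpha\mid \alpha<\gamma\rangle$ lies in ${}^{<\kappa}\mathcal{F}(\mathbb{P})$. Fixing $p$, Definition \ref{def: predistributive} gives $q\leq p$ and $\langle d_n : n<\omega\rangle\in{}^{\omega}\mathcal{F}(\mathbb{P}/q)$ such that every $\alpha<\gamma$ admits some $n<\omega$ with $d_n\subseteq D_\alpha$.

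As in the proof of Proposition \ref{lem: preserving kappa}, we set $E_n:=\{\alpha<\gamma : d_n\subseteq D_\alpha\}$, so that $\bigcup_{n<\omega}E_n=\gamma$. Because $\gamma$ is regular and uncountable, $\gamma$ cannot be a countable union of sets each of size $<\gamma$: if every $|E_n|<\gamma$, then $|\bigcup_n E_n|\leq \sup_n |E_n|\cdot\aleph_0<\gamma$, using $\cf(\gamma)>\omega$ for the supremum and $\gamma>\omega$ for the product. Hence some $n^*$ has $|E_{n^*}|=\gamma$, and we let $I:=E_{n^*}\in[\gamma]^\gamma$.

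Finally, $d_{n^*}\subseteq D_\alpha$ for every $\alpha\in I$, so $d_{n^*}\subseteq\bigcap_{\alpha\in I}D_\alpha$. Since $d_{n^*}$ is dense (open) in $\mathbb{P}/q$, the intersection $\bigcap_{\alpha\in I}D_\alpha$ is dense below $q$.

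Every step here is routine. The only point needing care is the cardinal arithmetic in the pigeonhole step, where both hypotheses on $\gamma$ are used: regularity gives $\cf(\gamma)>\omega$, and $\gamma>\omega$ makes the countable product harmless. Because $\gamma$ is regular, one could equally phrase the step as "unboundedly many $\alpha$".
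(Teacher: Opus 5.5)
Your proof is correct and follows the same approach as the paper: apply predistributivity to obtain $q\leq p$ and $\langle d_n : n<\omega\rangle$, then use the pigeonhole principle at the regular uncountable $\gamma$ to find a single $d_{n^*}$ contained in $\gamma$-many of the $D_\alpha$'s. You also write out the cardinal arithmetic that the paper leaves as ``a standard pigeonhole argument.''
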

\begin{proof}
    Given $\langle D_\alpha\mid \alpha<\gamma\rangle$ dense open for $\omega<\gamma<\kappa$ and some $p$, let $q\leq p$ and $\langle E_n\mid n<\omega\rangle$ be given by the definition of predistributivity. A standard pigeonhole argument gives a single $n<\omega$ and $I\in [\gamma]^\gamma$ such that $E_n\subseteq D_\alpha$ for any $\alpha\in I$.
\end{proof}
\begin{cor}[Compare with \ref{fact: dis sub forcing}]\label{distributive projections}
    Suppose that $\mathbb{P}$ is $(\omega,\kappa)$-predistributive, if $\mathbb{P}$ is $\sigma$-distributive then it is $\kappa$-distributive.
\end{cor}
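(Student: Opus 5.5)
To prove the Corollary, fix $p\in\mathbb{P}$ and a sequence $\langle D_\alpha:\alpha<\gamma\rangle$ of dense open sets with $\gamma<\kappa$. We must show that $\bigcap_{\alpha<\gamma}D_\alpha$ is dense below $p$.

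First apply $(\omega,\kappa)$-predistributivity. This gives $q\leq p$ and $\langle d_n:n<\omega\rangle\in{}^\omega\mathcal{F}(\mathbb{P}/q)$ such that every $D_\alpha$ contains some $d_n$.

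Next, use $\sigma$-distributivity. The sets $d_n\cup\{r\in\mathbb{P}: r\perp q\}$ are dense open in $\mathbb{P}$, so their intersection is dense open. Intersecting back below $q$, the set $E:=\bigcap_{n<\omega}d_n$ is open dense below $q$. In particular there is some $r\leq q\leq p$ with $r\in E$.

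Now check that $r$ lies in every $D_\alpha$. For each $\alpha<\gamma$, pick $n$ with $d_n\subseteq D_\alpha$. Then $r\in E\subseteq d_n\subseteq D_\alpha$, so $r\in\bigcap_{\alpha<\gamma}D_\alpha$.

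Since $p$ was arbitrary, $\bigcap_{\alpha<\gamma}D_\alpha$ is dense. It is also open, as an intersection of open sets. Equivalently, via Fact \ref{factProj}(2), the filter $\mathcal{F}(\mathbb{P})$ is $\kappa$-complete, and so $\mathbb{P}$ is $\kappa$-distributive.

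The only point that needs care is the relativization of dense sets below $q$ to dense sets of the whole forcing. This is handled by adjoining the conditions incompatible with $q$, or equivalently by applying $\sigma$-distributivity to $\mathbb{P}/q$, which inherits $\sigma$-distributivity from $\mathbb{P}$.
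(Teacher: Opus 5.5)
Your proof is correct. The sets $d_n\cup\{r: r\perp q\}$ are dense open in $\mathbb{P}$. By $\sigma$-distributivity $\bigcap_n d_n$ is dense below $q$, and any $r\leq q$ in that intersection lies in every $D_\alpha$. Hence $\mathcal{F}(\mathbb{P})$ is $\kappa$-complete, which is $\kappa$-distributivity by Fact \ref{factProj}(2).

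The paper states this corollary without proof, immediately after Proposition \ref{prop: a weak form of predist}. Your argument goes straight from Definition \ref{def: predistributive} and uses $\sigma$-distributivity exactly once, to merge the countable family $\langle d_n : n<\omega\rangle$ into a single dense set below $q$.

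The placement suggests the corollary might instead be read off that proposition, which is a detour:
\begin{enumerate}
\item take the least $\gamma<\kappa$ on which some condition $p$ forces a new function $\dot f$;
\item check that $\gamma$ is regular, and uncountable by $\sigma$-distributivity;
\item apply the proposition to the sets $D_\alpha$ of conditions deciding $\dot f\restriction\alpha$, getting $q\leq p$ and $I\in[\gamma]^\gamma$ with $\bigcap_{\alpha\in I}D_\alpha$ dense below $q$;
\item any condition in that intersection decides all of $\dot f$, a contradiction.
\end{enumerate}
Your route avoids the minimality and regularity bookkeeping entirely, so it is the cleaner argument.
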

\begin{example}
$\col(\omega,\omega_2)$ singularizes $\omega_2$ to have cofinality $\omega$, but it is not $(\omega,\omega_2)$-predistributive since for each $\alpha$, letting $D_\alpha=\{p\in\col(\omega,\omega_2)\mid \alpha\in \rng(p)\}$, we have that the intersection of any infinitely-many $D_\alpha$'s   must be empty.  
\end{example}
A similar argument shows that every $(\omega,\kappa)$-predistributive forcing preserves all cardinals below $\kappa$.
\begin{prop}
    If $\P$ is $(\omega,\kappa)$-predistributive and $\gamma<\kappa$ is a cardinal, then $\P$ preserves $\gamma$. In particular, if $\kappa$ is a limit cardinal, then $\P$ preserves $\kappa$. 
\end{prop}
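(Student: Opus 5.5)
The plan is to argue by contradiction, following the argument of Proposition~\ref{lem: preserving kappa}. Say $\gamma<\kappa$ is a cardinal collapsed by $\P$. Then there are $p\in\P$, a cardinal $\delta<\gamma$, and a name $\dot f$ with $p\Vdash``\dot f\colon\check\delta\to\check\gamma$ is onto''. If $\gamma$ is finite or $\gamma=\omega$ there is nothing to prove, so assume $\gamma>\omega$. For each $\alpha<\delta$ put $D_\alpha:=\{r\leq p: r\parallel\dot f(\alpha)\}$, extended by the conditions incompatible with $p$ so that each $D_\alpha$ is dense open. Since $\delta<\gamma<\kappa$, the sequence $\langle D_\alpha:\alpha<\delta\rangle$ has length $<\kappa$.

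Apply $(\omega,\kappa)$-predistributivity. This gives $q\leq p$ and $\langle d_n:n<\omega\rangle\in{}^\omega\mathcal{F}(\P/q)$ such that every $\alpha<\delta$ has some $n$ with $d_n\subseteq D_\alpha$. Set $E_n:=\{\alpha<\delta: d_n\subseteq D_\alpha\}\in V$, so $\bigcup_n E_n=\delta$. Let $G$ be generic with $q\in G$, and pick $r_n\in G\cap d_n$. Each $r_n$ decides $\dot f(\alpha)$ for all $\alpha\in E_n$ simultaneously, since $r_n$ lies in every such $D_\alpha$. So there is $g_n\colon E_n\to\gamma$ in $V$ with $r_n\Vdash\dot f\restriction E_n=g_n$. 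In $V[G]$ we get $\dot f_G=\bigcup_n g_n$, hence $\gamma=\bigcup_n\rng(g_n)$.

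Now count in $V$. Each $\rng(g_n)$ is a ground-model set of size $\leq|E_n|\leq\delta<\gamma$. So $\gamma$ is covered in $V[G]$ by countably many ground-model sets, each of $V$-cardinality $\leq\delta$. This is the main obstacle: it does not contradict the collapse unless $\gamma$ is regular with $\cf(\gamma)>\omega$ in $V$, because a countable union of small sets can have size $\gamma$ when $\cf^V(\gamma)=\omega$. I would handle it by cases.

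\emph{Regular $\gamma>\omega$.} Each $\rng(g_n)$ is bounded in $\gamma$ by regularity in $V$. Hence $\cf^{V[G]}(\gamma)=\omega$. But Proposition~\ref{prop: a weak form of predist} (or this same argument) should contradict this. Choose $\langle D_\alpha\rangle$ as above with $\delta<\gamma$. Actually, more directly: $\bigcup_n\rng(g_n)=\gamma$ with $|\rng(g_n)|\leq\delta$. So every $V$-subset of $\gamma$ is a countable union of sets of size $\leq\delta$, and the number of sets $g_n$ in total is countable. Apply the covering fact in $V$: the set $\{g_n\}$ is not in $V$, but each $\rng(g_n)$ is. I would refine as follows. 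Since $\gamma>\omega$ is regular and each $\rng(g_n)$ is bounded below some $\beta_n<\gamma$, some condition forces a bound $\beta_n$ for each fixed $n$. Then use $\omega<\gamma$: replace $\dot f$ by a name for a surjection $\delta\times\omega_1\to\gamma$ (noting that we may assume $\delta\ge\omega_1$ when $\gamma\ge\omega_2$) and apply predistributivity to the family indexed by pairs. The pigeonhole argument from Proposition~\ref{prop: a weak form of predist}, applied to the regular cardinal $\omega_1\le\delta$, yields a single $n$ and an uncountable set of coordinates decided by one $r_n$. I expect this bookkeeping to give the contradiction by showing that $\gamma$'s cofinality would drop to $\omega$, which is impossible for the successor cardinal $\gamma=\delta^+$ by the same decision argument applied to the injections $\gamma\to\delta$.

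\emph{Singular $\gamma$.} A singular cardinal is preserved whenever the regular cardinals cofinal below it are preserved. The regular case then gives preservation of all cardinals below $\kappa$, and if $\kappa$ is a limit cardinal it remains a cardinal as the supremum of preserved cardinals.
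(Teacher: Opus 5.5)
Your proof has a genuine gap in the regular case, which is the heart of the proposition. You index the dense sets by the domain $\delta$, so predistributivity only tells you that $f$ is a countable union of ground-model pieces $g_n$. Because the $g_n$ are read off from conditions $r_n\in G$, the cover $\gamma=\bigcup_n\rng(g_n)$ exists only in $V[G]$. For regular $\gamma>\omega$ this yields $\cf^{V[G]}(\gamma)=\omega$, which is not yet a contradiction.

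Your attempted repairs do not supply one:
\begin{itemize}
\item Applying Proposition~\ref{prop: a weak form of predist} to a family of length $\delta$ only produces a single condition deciding $\delta$ many values of $\dot f$.
\item The detour through $\delta\times\omega_1$, with a pigeonhole on $\omega_1$, only produces a condition deciding $\aleph_1$ many values.
\end{itemize}
Both outcomes are perfectly consistent with $\dot f$ being onto $\gamma$. Your closing remark about injections $\gamma\to\delta$ is asserted rather than proved. As you phrase it, running ``the same decision argument'' with conditions taken from $G$ would again only give a cofinality drop to $\omega$.

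The missing idea is to index the dense sets by the cardinal being collapsed. For $\alpha<\gamma$ let
$D_\alpha:=\{r\leq p:\exists\chi<\delta\ (r\Vdash\dot f(\check\chi)=\check\alpha)\}$, which is dense open below $p$.
\begin{itemize}
\item Since $\gamma$ is regular, uncountable and below $\kappa$, Proposition~\ref{prop: a weak form of predist} gives $q\leq p$ and $I\in[\gamma]^\gamma$ such that $\bigcap_{\alpha\in I}D_\alpha$ is dense below $q$.
\item Fix any $r$ in this intersection. In $V$ it defines the map sending $\alpha\in I$ to the $\chi$ with $r\Vdash\dot f(\check\chi)=\check\alpha$.
\item This map is an injection of $I$ into $\delta$, which is impossible since $|I|=\gamma>\delta$.
\end{itemize}
This is the paper's proof.

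You can also avoid the pigeonhole. In the definition of predistributivity the sequence $\langle d_n:n<\omega\rangle$ is in $V$, so the sets $B_n=\{\alpha<\gamma:d_n\subseteq D_\alpha\}$ and their sequence are in $V$. Any single element of $d_n$ then injects $B_n$ into $\delta$ inside $V$, giving $|\gamma|^V\leq\delta\cdot\aleph_0<\gamma$.

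Your reduction of singular $\gamma$, and of limit $\kappa$, to the regular case is fine and matches the paper.
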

\begin{proof}
    It suffices to prove that all regular cardinals $\gamma<\kappa$ are preserved. Suppose not, and pick $p \in \mathbb{P}$ forcing the existence of a surjection from a cardinal $\delta<\gamma$ onto $\gamma$. Let $\dot{f}$ be a $\mathbb{P}$-name such that $$p \Vdash ``\dot{f} \colon \check{\delta} \to \check{\gamma} \text{ is surjective}".$$ 
For each $\alpha < \gamma$, define $ D_\alpha := \{ r \leq p : \exists \chi <\delta\,  (r \Vdash_{\P} \dot{f}(\check{\chi})=\check{\alpha}) \}$. Since $\gamma$ is regular, for any $I\in [\gamma]^{\gamma}$, the intersection $\bigcap_{\alpha\in I}D_\alpha$ must be empty. This is impossible by Proposition \ref{prop: a weak form of predist}.
\end{proof}
\begin{example} 
    Namba forcing is another classical singularizing forcing.
    In particular, Namba forcing changes the cofinality of $\omega_2$ to $\omega$ while preserving $\omega_1$.
    In contrast with Prikry forcing, Namba forcing is not necessarily $(\omega,\omega_2)$-predistributive. 
    To see this, note that under $\CH$, Namba forcing does not add reals. 
    Since $|\omega^V_2|^{V[G]} = \omega_1$, there is a new subset of $\omega_1$. 
    Therefore, there is a fresh $A\subseteq\omega_1$ (i.e. $A\in V[G]\setminus V$ and $A\cap\alpha\in V$ for every $\alpha<\omega_1$). 
    Letting $D_\alpha$ be the dense set of conditions deciding $A\cap \alpha$, we see there is no $q$ for which the intersection $\omega_1$-many of the $D_\alpha$'s is dense below $q$.
    So by Proposition \ref{prop: a weak form of predist}, Namba forcing is not $(\omega,\omega_2)$-predistributive.
    While we did not know the answer without $\CH$, Andreas Lietz noted that Namba forcing always singularizes $\omega_3$ to have cofinality $\omega_1$ which by Proposition \ref{lem: preserving kappa} is impossible for an $(\omega,\omega_2)$-predistributive forcing.
\end{example}
Below we show that $(\omega,\kappa)$-predistributivity is preserved through projections.
This fact will be key in the rest of this section.
Note that this may fail for $\Sigma$-Prikry forcings.
\begin{prop}\label{thm: closed under proj}
 If $\mathbb{P}$ is $(\omega,\kappa)$-predistributive and $\pi:\mathbb{P}\rightarrow\mathbb{Q}$ is a projection, then $\mathbb{Q}$ is also $(\omega,\kappa)$-predistributive.
\end{prop}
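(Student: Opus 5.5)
The plan is to pull dense open sets of $\mathbb{Q}$ back to $\mathbb{P}$ along $\pi$, apply predistributivity in $\mathbb{P}$, and push the resulting countable family forward. Fix $q_0\in\mathbb{Q}$ and $\langle D_\alpha:\alpha<\gamma\rangle\in{}^{<\kappa}\mathcal{F}(\mathbb{Q})$. Since $\pi$ is a projection, pick $p_0\in\mathbb{P}$ with $\pi(p_0)\le q_0$; for instance, take any $p$ with $\pi(p)$ compatible with $q_0$, which exists because $\pi``\mathbb{P}$ is dense, and extend it using the projection property. For each $\alpha$ let $D'_\alpha:=\pi^{-1}[D_\alpha]$. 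By Fact \ref{fact: equivalences for projections}, $D'_\alpha$ is dense in $\mathbb{P}$. It is also open, since $\pi$ is order preserving and $D_\alpha$ is open. Predistributivity of $\mathbb{P}$ then gives $p\le p_0$ and $\langle d'_n:n<\omega\rangle\in{}^\omega\mathcal{F}(\mathbb{P}/p)$ such that every $\alpha$ has some $n$ with $d'_n\subseteq D'_\alpha$.

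Next I push forward. Set $q:=\pi(p)\le q_0$, and let $d_n$ be the downward closure within $\mathbb{Q}/q$ of $\pi``d'_n$, that is,
\[
d_n:=\{r\in\mathbb{Q}/q:\exists s\in d'_n\ (r\le\pi(s))\}.
\]
Each $d_n$ is open by construction. It is dense below $q$: given $r\le q=\pi(p)$, the projection property gives $s\le p$ with $\pi(s)\le r$, and density of $d'_n$ below $p$ gives $s'\le s$ in $d'_n$; then $\pi(s')\le r$ and $\pi(s')\in d_n$. Finally, if $d'_n\subseteq D'_\alpha$, then every $s\in d'_n$ satisfies $\pi(s)\in D_\alpha$. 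Since $D_\alpha$ is open, every $r\le\pi(s)$ also lies in $D_\alpha$, so $d_n\subseteq D_\alpha$. Hence $q$ together with $\langle d_n:n<\omega\rangle$ witnesses predistributivity of $\mathbb{Q}$ at $q_0$.

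The only real subtlety is the choice of the starting condition $p_0$ with $\pi(p_0)\le q_0$. This requires that the range of $\pi$ be dense in $\mathbb{Q}$, which follows from $\pi(\one)=\one$ and the projection clause. If one works with separative quotients or with $\ro(\mathbb{Q})$, a routine check is needed that the orders are compatible in this sense. Everything else is a direct verification.
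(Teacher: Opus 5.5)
Your proposal is correct and follows essentially the same route as the paper: you pull back the $D_\alpha$ to $\pi^{-1}[D_\alpha]$, apply $(\omega,\kappa)$-predistributivity of $\mathbb{P}$ below a condition mapped under $q_0$, and push each $d'_n$ forward as the downward closure of its image under $\pi$ inside $\mathbb{Q}/\pi(p)$, with density coming from the projection clause. You also make explicit two points the paper leaves tacit: that $\pi^{-1}[D_\alpha]$ is open, and that openness of $D_\alpha$ is what yields $d_n\subseteq D_\alpha$.
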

\begin{proof}
    If $q\in\mathbb{Q}$ and $\langle D_\alpha: \alpha<\gamma\rangle\in{}^{<\kappa}\mathcal{F}(\mathbb{Q})$, then $$\langle \pi^{-1}[D_\alpha]: \alpha<\gamma\rangle\in{}^{<\kappa}\mathcal{F}(\mathbb{P}),$$ and $\pi(p)\leq q$ for some $p\in\mathbb{P}$. Since $\mathbb{P}$ is $(\omega,\kappa)$-predistributive, there are $r\leq p$ and $\langle E_n: n<\omega\rangle\in{}^{\omega}\mathcal{F}(\mathbb{P}/r)$ such that for each $\alpha<\gamma$ there is $n<\omega$ with $E_n\s \pi^{-1}[D_\alpha]$. Clearly, $\pi(r)\leq q$, and letting $$d_n:=\{t\in\mathbb{Q}/\pi(r):\exists t'\in\pi``E_n\; (t\leq t')\}$$ for each $n<\omega$, we claim that $\langle d_n: n<\omega\rangle\in {}^{\omega}\mathcal{F}(\mathbb{Q}/\pi(r))$ are suitable. First, if $\alpha<\gamma$ then there is $n<\omega$ with $E_n\s\pi^{-1}[D_\alpha]$, yielding $d_n\s D_\alpha$. It remains to see that $d_n$ are dense below $\pi(r)$. Let $q'\leq \pi(r)$, since $\pi$ is a projection, there is $r'\leq r$ such that $\pi(r')\leq q'$, and since $E_n$ is dense below $r$, there is $r''\leq r'$ such that $r''\in E_n$. Hence $\pi(r'')\in \pi ``E_n$ and $\pi(r'')\leq q'$ establishing that $d_n$ is dense.
\end{proof}
In order to formulate the main result of this section, we need the following definition:
\begin{definition}\label{def trace property}
    Let $M$ be an outer model of $\zfc$ and let $\kappa$ be a cardinal in $M$. 
    \begin{enumerate}
        \item $M$ has the $\kappa$-\emph{trace property} if, for all $f\colon \kappa\rightarrow E$ in $M$ with $E\s V$, there is a sequence $\langle B_n: n<\omega\rangle\in M$ of bounded subsets of $\kappa$ such that $\bigcup_{n<\omega}B_n=\kappa$ and $f\restriction B_n\in V$, for all $n<\omega$.
        \item A forcing notion $\mathbb{P}$ with $\one\Vdash_{\mathbb{P}}``\kappa\text{ is a cardinal}"$ has the $\kappa$-\emph{trace property} if $V[G]$ has the $\kappa$-trace property, for every $\mathbb{P}$-generic filter $G$ over $V$.
    \end{enumerate}
    \begin{remark}
        Note that $M$ has the $\kappa$-trace property if and only if, for all $f\colon \kappa\rightarrow \ord$ in $M$, there is a sequence $\langle B_n: n<\omega\rangle\in M$ of bounded subsets of $\kappa$ such that $\bigcup_{n<\omega}B_n=\kappa$ and $f\restriction B_n\in V$, for all $n<\omega$.
    \end{remark}
    The $\kappa$-trace property has been introduced in \cite{PovedaThei2024baire} as a tool for constructing $\kappa$-perfect sets within the choiceless model $L(V_{\kappa+1})$. On the other hand, $(\omega,\kappa)$-predistributive forcing notions arise, implicitly, in the work of Dimonte-Wu, Trang-Shi, Shi, and Woodin (see respectively, \cite{dimonte2016general, shi20170, woodin2011suitable} and \cite{shi2015axiom}). With attention shifted upward to stronger principles (e.g., $I_0$ and $I_1$), their analysis provides a method for constructing a generic filter over the $\omega^{\text{th}}$ iterate of an $I_0(\kappa)$-embedding within $V$. Thus, both lines of investigation lead to methods for building generic filters within a given model, suggesting a deep connection between these two properties. A first straightforward link is provided by the following fact.
\end{definition}
\begin{fact}[\cite{PovedaThei2024baire}]\label{fact: trace lemma}
    If $\mathbb{P}$ is $\Sigma$-Prikry and forces $\cf(\sup(\Sigma))=\omega$, then $\mathbb{P}$ has the $\sup(\Sigma)$-trace property. 
\end{fact}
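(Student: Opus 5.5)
Fix $\Sigma=\langle\kappa_n:n<\omega\rangle$, $\kappa=\sup\Sigma$, a $\mathbb{P}$-generic $G$, and $f\colon\kappa\to\ord$ in $V[G]$ with name $\dot f$, forced by some $p$. By density it suffices to find $p^*\le p$ and names for the pieces $B_n$ such that $p^*$ forces the conclusion. The pieces will come from the Strong Prikry Property applied level by level, in the spirit of the proof of Proposition~\ref{prop: sigma Prikry and kappa-dist}(2): there we combined strong Prikry with the $\kappa_m$-closure of $(\mathbb{P}_{\ell(p)},\le^*)$ to handle ${<}\kappa_m$ many dense sets at once.

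\textbf{Step 1 (fusion of deciding extensions).} For each $\alpha<\kappa$ let $D_\alpha=\{r\le p: r\parallel\dot f(\alpha)\}$, which is dense open below $p$. Fix $m$ and work at the level $\ell(p)+m$, whose direct extension order is $\kappa_{\ell(p)+m}$-closed. Take $\gamma_m<\kappa_{\ell(p)+m}$ (increasing in $m$ and cofinal in $\kappa$). By the argument of Proposition~\ref{prop: sigma Prikry and kappa-dist}(2), the sets
\[
d_\alpha=\{q:\exists k\ \mathbb{P}^q_{\ge k}\subseteq D_\alpha\},\qquad \alpha<\gamma_m,
\]
are $\le^*$-dense open. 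Their intersection is therefore $\le^*$-dense, so every condition of length $\ge\ell(p)+m$ has a direct extension $q$ lying in all of them. For such $q$ each $\alpha<\gamma_m$ gets some $k_\alpha$ with $\mathbb{P}^q_{\ge k_\alpha}\subseteq D_\alpha$.

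\textbf{Step 2 (defining the pieces in $V[G]$).} For each $n<\omega$, there are densely many conditions $q$ of length $n$ with $q\in\bigcap_{\alpha<\gamma_n}d_\alpha$, since we may first extend the length and then directly extend. Given such $q\in G$ and $k<\omega$, set
\[
B_{n,k}=\{\alpha<\gamma_n: k_\alpha(q)\le k\}.
\]
This set is bounded in $\kappa$ and lies in $V$. Because $\ell$ is unbounded on $G$, the filter $G$ meets $\mathbb{P}^q_{\ge k}$, which yields some $r\in G$ below $q$ of length $\ge k$. This $r$ lies in $D_\alpha$ for every $\alpha\in B_{n,k}$, so $f\restriction B_{n,k}$ is the function decided by $r$, and that function is in $V$.

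For coverage: every $\alpha$ lies below some $\gamma_n$, so $\alpha\in B_{n,k_\alpha}$. Hence $\bigcup_{n,k}B_{n,k}=\kappa$ after re-indexing by $\omega\times\omega$. The choice of $q$ for each $n$ must be uniform inside $V[G]$; I will choose it canonically, e.g.\ via a well-order of $\mathbb{P}$ in $V$, picking the least element of $G$ in the relevant dense set.

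\textbf{Main obstacle.} The delicate point is ensuring that the generic actually contains conditions of every length, in the correct sets, below the fixed $p$. This is where the hypothesis that $\mathbb{P}$ forces $\cf(\kappa)=\omega$ is relevant, or at least consistent: $\ell$ must be unbounded on $G$. I will verify this from the definition, using that every condition has extensions of length $\ell(p)+1$, so the set of conditions of length $>k$ is dense.

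A second subtle point is that the $d_\alpha$ are dense only in the $\le^*$ sense within level $\ell(q)$. I will therefore invoke closure of $(\mathbb{P}_{\ell(q)},\le^*)$ exactly as in Proposition~\ref{prop: sigma Prikry and kappa-dist}(2), which requires $\gamma_n<\kappa_{\ell(q)}$. This forces me to pair $\gamma_n$ with conditions of length $\ge n$ rather than exactly $n$, which is harmless.
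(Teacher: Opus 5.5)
Your proposal has a genuine gap at the very first choice in Step~1. You need an $\omega$-sequence $\langle\gamma_m:m<\omega\rangle$ \emph{in $V$}, with $\gamma_m<\kappa_{\ell(p)+m}$, that is cofinal in $\kappa=\sup\Sigma$.

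Such a sequence exists only if $\Sigma$ is not eventually constant, i.e.\ only if $\cf^V(\kappa)=\omega$ already. That is precisely the case in which the hypothesis of the Fact holds automatically. If $\Sigma$ is eventually constant, then $\kappa$ equals some $\kappa_n$ and is regular in $V$. This is the situation of the main examples: Prikry forcing and the supercompact Prikry forcing $\mathbb{P}_{\mathcal U}$, where $\sup\Sigma=\kappa$ is regular in $V$. There no sequence in $V$ is cofinal in $\kappa$, and your coverage claim ``every $\alpha$ lies below some $\gamma_n$'' fails.

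Relatedly, you never actually use the assumption $\Vdash\cf(\kappa)=\omega$. You place it at the unboundedness of $\ell$ on $G$, but that follows from the definition alone, as you yourself note. Yet the assumption cannot be dropped: applying the trace property to $\id\restriction\kappa$ already yields $\cf(\kappa)=\omega$ in $V[G]$ (see the first half of the proof of Lemma~\ref{lemma: goodness}).

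The repair is to choose the cofinal sequence $\langle\nu_n:n<\omega\rangle$ in $V[G]$, which is where the hypothesis enters, and to use density in $V$ only for each fixed ordinal.
\begin{itemize}
\item For $\nu<\kappa$ let $E_\nu=\{q\le p:\forall\alpha<\nu\ \exists k\ (\mathbb{P}^q_{\ge k}\subseteq D_\alpha)\}$. This set lies in $V$, is open, and is dense below $p$. To see density, first extend to a level $\ell$ with $\kappa_\ell>\nu$. Then use the strong Prikry property and the $\kappa_\ell$-closure of $\le^*$ on $\mathbb{P}_\ell$, exactly as you do.
\item In $V[G]$ pick $q_n\in G\cap E_{\nu_n}$ and put $B_{n,k}=\{\alpha<\nu_n: \mathbb{P}^{q_n}_{\ge k}\subseteq D_\alpha\}$.
\item Each $B_{n,k}$ is bounded and lies in $V$, and the rest of your Step~2 goes through verbatim. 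Only the sequence $\langle B_{n,k}:n,k<\omega\rangle$ lives in $V[G]$, which the trace property permits.
\end{itemize}

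This is in substance the paper's route: Proposition~\ref{prop: sigma Prikry and kappa-dist}(2) followed by Lemma~\ref{lemma: goodness}, where the $\nu_n$ are likewise taken in $V[G]$. With the fix, your version is a nice streamlining of it. Because the witnesses $\mathbb{P}^{q}_{\ge k}$ are nested, you can read the pieces off $q_n$ directly. For a general $(\omega,\kappa)$-predistributive forcing, the paper instead has to build the increasing sets $C_{n,i}$ by induction along $G$.
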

In light of Proposition \ref{prop: sigma Prikry and kappa-dist}\eqref{item 2: sigma Prikry}, it is natural to ask whether the assumption of Fact \ref{fact: trace lemma} can be relaxed. We will show later in Lemma \ref{lemma: goodness} that $(\omega,\kappa)$-predistributive suffices to  yield the same conclusion.




 We are now ready to prove the main theorem of this section:
\begin{theorem}\label{thm: weak dist implies proj}
    Suppose that $\mathbb{P}$ has the $\kappa$-trace property and that  $\mathbb{Q}$ is $(\omega,\kappa)$-predistributive. If $\one_\mathbb{P}\forces_\mathbb{P}``\mathcal{F}_q(\mathbb{Q})^V$ is generated by a set of size ${\leq}\kappa$'' for some $q\in\mathbb{Q}$, then for any generic $G\subseteq \mathbb{P}$ there is a countable subset of $\mathbb{Q}$ in $V[G]$ which generates a $V$-generic containing $q$. In particular, there is a weak projection $\pi:\mathbb{P}\rightarrow\ro(\mathbb{Q})$. 
\end{theorem}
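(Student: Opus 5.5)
Work in $V[G]$ and fix an enumeration $\langle D_\alpha:\alpha<\kappa\rangle$ of a generating family for $\mathcal{F}_q(\mathbb{Q})^V$; by hypothesis one exists there, and if the family is smaller we repeat sets. Each $D_\alpha$ lies in $V$, so the function $f\colon\kappa\to V$, $f(\alpha)=D_\alpha$, is a function in $V[G]$ into a subset of $V$. The $\kappa$-trace property of $\mathbb{P}$ gives $\langle B_n:n<\omega\rangle\in V[G]$ with each $B_n$ bounded in $\kappa$, $\bigcup_n B_n=\kappa$, and $f\restriction B_n\in V$. Replacing $B_n$ by $\bigcup_{m\le n}B_m$ keeps $f\restriction B_n\in V$ because each restriction is in $V$ and finite unions stay in $V$. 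So we may assume the $B_n$ are increasing. Thus $\vec D^n:=\langle D_\alpha:\alpha\in B_n\rangle$ is a sequence in $V$ of fewer than $\kappa$ dense open sets (below $q$), indexed by a bounded subset of $\kappa$.

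Next we build a descending sequence $q\geq q_0\geq q_1\geq\cdots$ in $V[G]$ by recursion. Suppose $q_{n-1}$ is given, with $q_{-1}=q$ at the start. We work in $V$ with the sequence $\vec D^n$, viewed as dense open subsets of $\mathbb{Q}/q$ restricted below $q_{n-1}$. Predistributivity, applied inside $V$ to $q_{n-1}$ and $\vec D^n$, gives $r\le q_{n-1}$ and $\langle d_k:k<\omega\rangle\in\mathcal{F}(\mathbb{Q}/r)$ covering $\vec D^n$. The difficulty is that countably many $d_k$ still have to be met, and only one step is available at stage $n$. To handle this, we interleave them: keep a bookkeeping list of all the countable families $\langle d^{m}_k:k<\omega\rangle$ produced at stages $m\le n$. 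At stage $n$, we extend to meet $d^m_k$ for all $m,k\le n$; this is a finite task, since finitely many dense open sets below the relevant $r_m\geq q_{n-1}$ can be met one after another. Each $d^m_k$ is dense below $r_m$, and $q_{n-1}\le r_m$, so they are dense below $q_{n-1}$ and the finite extension is possible. All these choices are made in $V[G]$, using $\langle B_n\rangle$ and choice; each individual application of predistributivity happens in $V$, since $\vec D^n\in V$ and $q_{n-1}\in\mathbb{Q}\subseteq V$.

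Now let $H$ be the upward closure of $\{q_n:n<\omega\}$; this is a countable generating set containing $q$. To check $V$-genericity, take any $D\in\mathcal{F}(\mathbb{Q})^V$. Then $D\cap\mathbb{Q}/q$ contains some $D_\alpha$, so it suffices to meet each $D_\alpha$. We have $\alpha\in B_m$ for some $m$, hence $d^m_k\subseteq D_\alpha$ for some $k$, and $q_{\max(m,k)}\in d^m_k$ by construction. Since $H$ is a filter meeting every dense open set of $V$ below $q$ and containing $q$, it is $V$-generic. For the final clause, apply this to $q=\one$ (the hypothesis is stated for some $q$, and we use that instance) and invoke Fact \ref{fact: equivalence with generics and weak projections} to get the weak projection into $\ro(\mathbb{Q})$.

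The main obstacle is the bookkeeping step: we must make sure that after we extend past $r_m$, the later dense sets $d^m_k$ are still met. This works because the sets $d^m_k$ are dense open below $r_m$ and every later $q_n$ lies below $r_m$. We also need the $\kappa$-trace property to give sequences $f\restriction B_n$ that are genuinely in $V$, so that predistributivity can be applied to them.
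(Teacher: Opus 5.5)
Your proposal is correct and follows essentially the paper's argument: the $\kappa$-trace property splits a $\kappa$-sized generating family into countably many pieces lying in $V$, predistributivity is applied in $V$ to each piece, and a diagonal bookkeeping yields a decreasing chain meeting every $d^m_k$. Your single chain that meets all $d^m_k$ with $m,k\le n$ at stage $n$ is a streamlined version of the paper's snake through the matrix $p_{m,n}$.

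Two small fixes are needed. First, at stage $n$ the finite extension must start from $r_n$ rather than $q_{n-1}$. The sets $d^n_k$ are dense only below $r_n$, and $r_n$ lies below every $r_m$ with $m\le n$, whereas $q_{n-1}\le r_n$ generally fails. Second, the detour through $q=\one$ is unnecessary, since your $H$ already meets every $D\in\mathcal{F}(\mathbb{Q})^V$. So Fact~\ref{fact: equivalence with generics and weak projections} applies directly for whichever $q$ the hypothesis provides.
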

\begin{proof}
     Fix a generic $G\s\mathbb{P}$. 
     We show that there is a countable $X\s\mathbb{Q}$ in $V[G]$ such that $\{r\in\mathbb{Q}: \exists x\in X\ (x\leq r)\}$ is $V$-generic and contains $q$. By our assumption $\mathcal{F}_q(\mathbb{Q})^V$ is generated by a set of the form $\langle D_\alpha:\alpha<\kappa\rangle\in V[G]$, and by the $\kappa$-trace property, there is a sequence $\langle B_n: n<\omega\rangle\in V[G]$ with $\bigcup_{n<\omega}B_n=\kappa$ such that for all $n<\omega$, $B_n$ is a bounded subset of $\kappa$, and $\langle D_\alpha: \alpha\in B_n\rangle\in V$. Working in $V[G]$, we construct simultaneously and by induction 
    \begin{enumerate}
    \item a matrix $\{p_{m,n}: m,n<\omega\}$ consisting of conditions in $\mathbb{Q}/q$ such that the $\mathbb{Q}$-upwards closure of the first column, namely $\{r\in\mathbb{Q}: \exists n<\omega\ (p_{n,0}\leq r)\}$, is $V$-generic, and
    \item another matrix $\{ D_m^{n}: m,n<\omega\}$ such that, for all $m, n<\omega$, $D_m^{n}\in\mathcal{F}(\mathbb{Q}/p_{n,0})$ and $p_{n,m+1}\in D_m^{n}$.
    \end{enumerate}
    Throughout the construction we will tacitly appeal to the fact that  $\mathbb{Q}/q$ is $(\omega,\kappa)$-predistributive. Accordingly, pick $p_{0,0}\in \mathbb{Q}/q$ and $\langle D_n^0: n<\omega\rangle\in{}^{\omega}\mathcal{F}(\mathbb{Q}/p_{0,0})$ so that for all $\alpha\in B_0$, there is $n<\omega$ with $D_n^0\s D_\alpha$. By density there is $p_{0,1}\in D_0^0/p_{0,0}$. Pick $p_{1,0}\leq p_{0,1}$ and $\langle D_n^1: n<\omega\rangle\in{}^{\omega}\mathcal{F}(\mathbb{Q}/p_{1,0})$ so that for all $\alpha\in B_1$, there is $n<\omega$ with $D_n^1\s D_\alpha$. Similarly, pick $p_{2,0}\leq p_{1,0}$ and $\langle D_n^2: n<\omega\rangle\in{}^{\omega}\mathcal{F}(\mathbb{Q}/p_{2,0})$ so that for all $\alpha\in B_2$, there is $n<\omega$ with $D_n^2\s D_\alpha$. This completes the first step of the construction. Next, we deal with the conditions $p_{1,1}\geq p_{0,2}\geq p_{0,3}\geq p_{1,2}\geq p_{2,1}\geq p_{3,0}$. As $D_0^1\in\mathcal{F}(\mathbb{Q}/p_{1,0})$ and $p_{2,0}\leq p_{1,0}$ there is $p_{1,1}\in D_0^1/p_{2,0}$. Similarly, $D_1^0\in\mathcal{F}(\mathbb{Q}/p_{0,0})$ and $p_{1,1}\leq p_{0,0}$ ensure the existence of $p_{0,2}\in D_1^0/ p_{1,1}$. $D_2^0\in\mathcal{F}(\mathbb{Q}/p_{0,0})$ yields $p_{0,3}\in D^0_2/p_{0,2}$, $D_1^1\in\mathcal{F}(\mathbb{Q}/p_{1,0})$ yields $p_{1,2}\in D^1_1/p_{0,3}$, and $D_0^2\in\mathcal{F}(\mathbb{Q}/p_{2,0})$ yields $p_{2,1}\in D_0^2/p_{1,2}$. Finally, pick $p_{3,0}\leq p_{2,1}$ and $\langle D_n^3: n<\omega\rangle\in{}^{\omega}\mathcal{F}(\mathbb{Q}/p_{3,0})$ so that for all $\alpha\in B_3$, there is $n<\omega$ with $D_n^3\s D_\alpha$.
    
    We continue the construction in the very same fashion. In the first column we have witnesses of the fact that $\mathbb{Q}$ is $(\omega,\kappa)$-predistributive so that we generate the $p_{n,0}$'s and the $\langle D_m^n: m<\omega\rangle$'s, while in the other columns we have witnesses of the densities of the various $D_m^n\in\mathcal{F}(\mathbb{Q}/p_{n,0})$. Specifically, if $n>3$ is even, pick $p_{n,0}\leq p_{n-1,0}$ and $\langle D_m^n: m<\omega\rangle\in{}^{\omega}\mathcal{F}(\mathbb{Q}/p_{n,0})$ so that for all $\alpha\in B_n$, there is $m<\omega$ with $D_m^n\s D_\alpha$. Once the matrix is defined up to $p_{n,0}$, for $n$ even, we construct the (finite) snake $$p_{n-1,1}\geq p_{n-2,2}\geq\cdots \geq p_{0,n}\geq p_{0,n+1}\geq p_{1,n}\geq p_{2,n-1}\geq \cdots\geq p_{n,1}$$ so that $p_{m,i}\in D^{m}_{i-1}$, whenever $$p_{m,i}\in\{p_{n-1,1}, p_{n-2,2},\cdots, p_{0,n}, p_{0,n+1},p_{1,n}, p_{2,n-1}, \cdots, p_{n,1}\}.$$ On the other hand, if $n>3$ is odd, pick $p_{n,0}\leq p_{n-1,1}$ and $\langle D_m^n: n<\omega\rangle\in{}^{\omega}\mathcal{F}(\mathbb{Q}/p_{n,0})$ so that for all $\alpha\in B_n$, there is $m<\omega$ with $D_m^n\s D_\alpha$. Finally, since $n+1$ is even, define $p_{n+1,0}\leq p_{n,0}$ as explained in the case above.
    
    Now let $h$ be the $\mathbb{Q}$-upwards closure of the first column $\langle p_{n,0}: n<\omega\rangle$, i.e. $h:=\{r\in\mathbb{Q}: \exists n<\omega\ (p_{n,0}\leq r)\}$. Since $\langle p_{n,0}: n<\omega\rangle$ is $\leq$-decreasing, it is clear that $h$ is a filter. Moreover, $p_{0,0}\leq q$ and so $q\in h$ . We verify that $h$ is $V$-generic. Letting $D\in\mathcal{F}_q(\mathbb{Q})$, there are $\alpha<\kappa$ and $n<\omega$ such that $D_\alpha\subseteq D$ and $\alpha\in B_n$. By construction, there is $m<\omega$ such that $p_{n,m+1}\in D_m^n\s D_\alpha\s D$. But note that $p_{n,m+1}\in h$, since there is a big enough $i<\omega$ with $p_{i,0}\leq p_{n,m+1}$. Thus, $p_{n,m+1}$ witnesses that $h\cap D\neq\emptyset$. 
\end{proof}
Theorem \ref{thm: weak dist implies proj} leads to a series of quotable corollaries.
\begin{cor}\label{cor of the main thm}
    Suppose that $\mathbb{P}$ has the $\kappa$-trace property and that  $\mathbb{Q}$ is $(\omega,\kappa)$-predistributive. If $\one_\mathbb{P}\forces_\mathbb{P}``\mathcal{F}(\mathbb{Q})^V$ is generated by a set of size ${\leq}\kappa$'', then for any generic $G\subseteq \mathbb{P}$ and any $q\in\mathbb{Q}$ there is a countable subset of $\mathbb{Q}$ in $V[G]$ which generates a $V$-generic containing $q$.
\end{cor}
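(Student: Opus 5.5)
The plan is to reduce Corollary \ref{cor of the main thm} to Theorem \ref{thm: weak dist implies proj}, applied separately to each $q\in\mathbb{Q}$. Fix $q\in\mathbb{Q}$. The theorem needs the hypothesis that $\one_\mathbb{P}$ forces ``$\mathcal{F}_q(\mathbb{Q})^V$ is generated by a set of size ${\leq}\kappa$''. The corollary instead assumes this for $\mathcal{F}(\mathbb{Q})^V$, so the first step is to transfer the generating property from $\mathcal{F}(\mathbb{Q})$ to $\mathcal{F}_q(\mathbb{Q})$.

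For the transfer we use Fact \ref{factProj}(1): $\mathcal{F}_q(\mathbb{Q})$ is the filter generated by $\mathcal{F}(\mathbb{Q})$ together with $\mathbb{Q}/q$. Work in $V[G]$ and let $\langle D_\alpha:\alpha<\kappa\rangle$ generate $\mathcal{F}(\mathbb{Q})^V$. We claim that $\langle D_\alpha\cap \mathbb{Q}/q:\alpha<\kappa\rangle$ generates $\mathcal{F}_q(\mathbb{Q})^V$. Each set $D_\alpha\cap\mathbb{Q}/q$ lies in $V$ and is open dense below $q$.

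Conversely, suppose $E\in\mathcal{F}_q(\mathbb{Q})^V$. Then $E':=E\cup\{r\in\mathbb{Q}: r\perp q\}$ is in $V$ and is dense open in $\mathbb{Q}$. Openness holds because a condition below some $r\perp q$ is still incompatible with $q$, and $E$ is open. Hence $D_\alpha\subseteq E'$ for some $\alpha$. Every element of $D_\alpha\cap\mathbb{Q}/q$ is compatible with $q$, so it must lie in $E$. Thus $D_\alpha\cap\mathbb{Q}/q\subseteq E$, as needed.

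This argument is uniform in the generic, so $\one_\mathbb{P}$ forces that $\mathcal{F}_q(\mathbb{Q})^V$ is generated by ${\leq}\kappa$ sets. Theorem \ref{thm: weak dist implies proj} then applies to $q$. It yields, for every generic $G$, a countable set in $V[G]$ generating a $V$-generic filter that contains $q$. Since $q$ was arbitrary, this proves the corollary.

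The only step needing care is the transfer of generators, specifically checking that $E'$ is open. That check is routine, so there is no real obstacle beyond invoking the theorem.
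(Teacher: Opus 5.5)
Your proposal is correct and takes the paper's route: the paper gives no separate proof and obtains the corollary by applying Theorem \ref{thm: weak dist implies proj} to each $q\in\mathbb{Q}$. Your transfer of a ${\leq}\kappa$-sized generating family from $\mathcal{F}(\mathbb{Q})^V$ to $\mathcal{F}_q(\mathbb{Q})^V$, using the dense open set $E\cup\{r: r\perp q\}$, correctly supplies the routine verification (essentially Fact \ref{factProj}(1)) that the paper leaves implicit.
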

\begin{cor}\label{cor for supercompact}
    Suppose that $\mathbb{Q}$ is $(\omega,\kappa)$-predistributive and $\lambda$-pregenerated. Then $\ro(\mathbb{Q})$ is a weak projection of the supercompact Prikry $\mathbb{P}_{\mathcal{U}}$, where $\mathcal{U}$ is a normal measure over $P_\kappa(\lambda)$.
\end{cor}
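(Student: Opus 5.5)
The plan is to apply Theorem \ref{thm: weak dist implies proj} (in the form of Corollary \ref{cor of the main thm}) with $\mathbb{P}=\mathbb{P}_{\mathcal{U}}$. Two hypotheses need to be checked. First, that $\mathbb{P}_{\mathcal{U}}$ has the $\kappa$-trace property. Second, that in $V[G]$ the filter $\mathcal{F}(\mathbb{Q})^V$ is generated by at most $\kappa$ many sets. After that, Fact \ref{fact: equivalence with generics and weak projections} turns the resulting $\mathbb{P}_{\mathcal{U}}$-name for a $\mathbb{Q}$-generic into a weak projection $\mathbb{P}_{\mathcal{U}}\to\ro(\mathbb{Q})$.

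\emph{Generating set.} Since $\mathbb{Q}$ is $\lambda$-pregenerated, fix in $V$ a generating family $\mathcal{C}=\{C_\xi:\xi<\lambda\}$ for $\mathcal{F}(\mathbb{Q})$. The supercompact Prikry forcing adds a sequence $\langle x_n:n<\omega\rangle$ with $\bigcup_n x_n=\lambda$ and each $|x_n|<\kappa$. Hence $\lambda$ has cardinality at most $\kappa$ in $V[G]$; this is the collapse noted in the preliminaries. Composing a surjection $\kappa\to\lambda$ in $V[G]$ with $\xi\mapsto C_\xi$ gives an enumeration $\langle D_\alpha:\alpha<\kappa\rangle$ of $\mathcal{C}$, and this family still generates $\mathcal{F}(\mathbb{Q})^V$. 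So $\one\Vdash$ ``$\mathcal{F}(\mathbb{Q})^V$ is generated by ${\leq}\kappa$ sets'', as needed.

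\emph{Trace property.} The supercompact Prikry forcing is $\Sigma$-Prikry with $\Sigma$ constantly $\kappa$. The length function is the length of the stem, the direct extension order is $\kappa$-closed because $\mathcal{U}$ is $\kappa$-complete and normal, and the Strong Prikry Property holds. It also forces $\cf(\kappa)=\omega$, via $\langle x_n\cap\kappa\rangle$. Fact \ref{fact: trace lemma} then gives the $\kappa$-trace property, where $\sup\Sigma=\kappa$.

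The main obstacle is that the $\Sigma$-Prikry framework is stated for a sequence of regular cardinals $\kappa_n$ with supremum $\kappa$. With a constant sequence, ``$\sup\Sigma=\kappa$'' is degenerate, so I would check directly that the proof of Fact \ref{fact: trace lemma} goes through. I would try the direct argument first. Given a name $\dot f:\kappa\to\mathrm{Ord}$ and a condition $(s,A)$, use $\kappa$-closure of $\leq^*$ together with the Strong Prikry Property for the dense sets deciding $\dot f\restriction\beta$, for each $\beta<\kappa$. By a diagonal intersection using normality, obtain $(s,A^*)$ and integers $n_\beta$ such that every $n_\beta$-step extension decides $\dot f\restriction\beta$. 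In $V[G]$, let $B_n$ be the union of the $\beta$ with $n_\beta\le n$ that are bounded by $x_{|s|+n}\cap\kappa$; then $f\restriction B_n$ is decided by a ground-model stem.

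If the direct argument becomes messy, the fallback is to cite \cite{PovedaThei2024baire}, where supercompact Prikry forcing is explicitly covered. With both hypotheses in place, Corollary \ref{cor of the main thm} applies and yields the weak projection.
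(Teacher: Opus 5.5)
Your argument is the one the paper intends: $\mathbb{P}_{\mathcal{U}}$ has the $\kappa$-trace property by Fact \ref{fact: trace lemma} (or internally via Proposition \ref{prop: sigma Prikry and kappa-dist}(2) and Lemma \ref{lemma: goodness}), the $\lambda$-sized generating family for $\mathcal{F}(\mathbb{Q})$ becomes $\kappa$-sized once $\lambda$ is collapsed, and Theorem \ref{thm: weak dist implies proj} with Fact \ref{fact: equivalence with generics and weak projections} then yields the weak projection. Your constant-$\Sigma$ worry is unfounded, since only a non-decreasing $\Sigma$ is required and the paper lists supercompact Prikry forcing as an example; however, do not rely on your direct sketch, because the conditions deciding $\dot f\restriction\beta$ need not be dense (e.g.\ if $f(n)=\sup(x_n\cap\kappa)$ for $n<\omega$), and diagonalizing over $\beta$ alone does not make $n_\beta$-step extensions of $(s,A^*)$ extend $(s,A_\beta)$ --- a working direct proof decides single values $\dot f(\alpha)$ and diagonalizes over stems as well.
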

If we moreover assume that $\mathbb{Q}$ has size $\leq\lambda$, then we actually get a projection from $\mathbb{P}_{\mathcal{U}}$ into $\ro(\mathbb{Q})$.
\begin{cor}\label{cor: main thm applied to sc Prikry}
Let $\mathcal{U}$ be a strongly compact (supercompact) measure over $P_\kappa(\lambda)$ and let $\Q$ be an $(\omega,\kappa)$-predistributive, $\lambda$-pregenerated forcing of cardinality ${\leq}\lambda$, then $\ro(\mathbb{Q})$ is a projection of $\mathbb{P}_{\mathcal{U}}$.
\end{cor}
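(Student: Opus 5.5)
The plan is to combine Theorem \ref{thm: weak dist implies proj} (in the form of Corollary \ref{cor of the main thm}) with Proposition \ref{Proposition: turnining a weak projection to a projection}. The three inputs to check are: $\mathbb{P}_{\mathcal{U}}$ has the $\kappa$-trace property; $\mathcal{F}(\mathbb{Q})^V$ is forced to be generated by ${\leq}\kappa$ sets; and $\mathbb{P}_{\mathcal{U}}$ is not $|\mathbb{Q}|$-cc.

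For the generating set, fix in $V$ a family $\mathcal{C}\subseteq\mathcal{F}(\mathbb{Q})$ of size ${\leq}\lambda$ generating $\mathcal{F}(\mathbb{Q})$, which exists by $\lambda$-pregeneration. Since $\mathbb{P}_{\mathcal{U}}$ collapses $\lambda$ to $\kappa$ (the Prikry sequence satisfies $\bigcup_n x_n=\lambda$ and $|x_n|<\kappa$), in $V[G]$ we have $|\mathcal{C}|\leq\kappa$. Thus $\one\forces``\mathcal{F}(\mathbb{Q})^V$ is generated by a set of size ${\leq}\kappa$''. For each $q\in\mathbb{Q}$ the family $\{D\cap\mathbb{Q}/q: D\in\mathcal{C}\}\cup\{\mathbb{Q}/q\}$ generates $\mathcal{F}_q(\mathbb{Q})^V$ by Fact \ref{factProj}(1), so the hypothesis of Theorem \ref{thm: weak dist implies proj} holds at every $q$.

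The $\kappa$-trace property of $\mathbb{P}_{\mathcal{U}}$ is the step I expect to be the main obstacle. For normal $\mathcal{U}$, the supercompact Prikry forcing is $\Sigma$-Prikry with $\Sigma$ constantly $\kappa$, and it forces $\cf(\kappa)=\omega$, so Fact \ref{fact: trace lemma} applies directly. For a merely fine $\mathcal{U}$ I would instead argue directly, along the lines of the promised Lemma \ref{lemma: goodness}. Given a name $\dot f:\kappa\to\ord$, apply the Strong Prikry Property together with the $\kappa$-completeness of $\mathcal{U}$ to get, for each stem length $n$, the set $B_n$ of $\alpha<x_n\cap\kappa$ such that $\dot f(\alpha)$ is decided by direct extensions of $(\langle x_0,\dots,x_{n}\rangle,\cdot)$. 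The $B_n$ are bounded in $\kappa$, and $f\restriction B_n\in V$ because fewer than $\kappa$ many decisions can be integrated into one direct extension. Genericity then shows that $\bigcup_n B_n=\kappa$, using that $x_n\cap\kappa$ is cofinal in $\kappa$. Alternatively, one can deduce the trace property in $V[G]$ from the $(\omega,\kappa)$-predistributivity of $\mathbb{P}_{\mathcal{U}}$, which follows from the Strong Prikry Property exactly as in Proposition \ref{prop: sigma Prikry and kappa-dist}(2).

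With these inputs, Corollary \ref{cor of the main thm} gives, for every $q\in\mathbb{Q}$, that $\one\forces_{\mathbb{P}_{\mathcal{U}}}\exists h\,(h$ is $\mathbb{Q}$-generic and $q\in h)$. Finally, $\mathbb{P}_{\mathcal{U}}$ is not $\lambda$-cc, and hence not $|\mathbb{Q}|$-cc. To see this, take for each $x\in P_\kappa(\lambda)$ the condition with stem $\langle x\rangle$: conditions with distinct length-one stems are incompatible, and there are at least $\lambda^{<\kappa}\geq\lambda$ of them. Proposition \ref{Proposition: turnining a weak projection to a projection} with $\mu=\lambda\geq|\mathbb{Q}|$ then yields a projection $\mathbb{P}_{\mathcal{U}}\to\ro(\mathbb{Q})$.
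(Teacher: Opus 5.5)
Your proposal is correct and follows the paper's proof: $\mathbb{P}_{\mathcal{U}}$ is not $\lambda$-cc, Corollary \ref{cor of the main thm} gives a $V$-generic through every $q\in\mathbb{Q}$, and Proposition \ref{Proposition: turnining a weak projection to a projection} converts this into a projection. You also check two hypotheses the paper leaves implicit: the generating family becomes of size ${\leq}\kappa$ once $\lambda$ is collapsed, and $\mathbb{P}_{\mathcal{U}}$ has the $\kappa$-trace property. For a merely fine $\mathcal{U}$, rely on your second justification for the trace property, namely predistributivity of $\mathbb{P}_{\mathcal{U}}$ from the Strong Prikry Property and the $\kappa$-closure of $\leq^*$, together with $\cf(\kappa)=\omega$ and Lemma \ref{lemma: goodness}. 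Your direct sketch would additionally need the usual integration over all stems to ensure that the amalgamated direct extensions actually belong to $G$.
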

\begin{proof}
 It is clear that $\mathbb{P}_{\mathcal{U}}$ is not $\lambda$-cc. On the other hand, Corollary \ref{cor of the main thm} leads to $\one\forces_{\mathbb{P}_{\mathcal{U}}}``\forall q\in\mathbb{Q}\exists h\, (h$ is $\mathbb{Q}$-generic and $q\in h)"$. So by Proposition \ref{Proposition: turnining a weak projection to a projection}, the conclusion follows.
\end{proof}
Under full strong compactness (supercompactness), we may characterize the class of $(\omega,\kappa)$-predistributive forcings. More importantly, we characterize the intermediate models. 
\begin{cor}\label{cor: no bounded subsets}
    Let $\kappa$ be strongly compact (supercompact). A forcing $\mathbb{Q}$ is $(\omega,\kappa)$-predistributive if and only if for some $\lambda\geq\kappa$ and some strongly compact (supercompact) measure $\mathcal{U}$ over $P_\kappa(\lambda)$, there is a projection from the strongly compact (supercompact) Prikry forcing with
respect to $\mathcal{U}$ into $\ro(\mathbb{Q})$. In particular, $(\omega,\kappa)$-predistributive forcings do not add bounded subsets of $\kappa$.
\end{cor}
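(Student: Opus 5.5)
The plan is to prove the two directions separately, and then read off the statement about bounded subsets from the backward direction.

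\emph{Forward direction.} Assume $\mathbb{Q}$ is $(\omega,\kappa)$-predistributive. I will choose $\lambda$ large enough that both hypotheses of Corollary~\ref{cor: main thm applied to sc Prikry} hold automatically. Set $\lambda:=\max(\kappa,2^{|\mathbb{Q}|})$. Then $|\mathbb{Q}|\leq\lambda$. Also $\mathcal{F}(\mathbb{Q})\subseteq\mathcal{P}(\mathbb{Q})$ has size at most $2^{|\mathbb{Q}|}\leq\lambda$, so it is trivially generated by at most $\lambda$ sets; that is, $\mathbb{Q}$ is $\lambda$-pregenerated. Since $\kappa$ is strongly compact (resp.\ supercompact), there is a fine $\kappa$-complete (resp.\ normal) measure $\mathcal{U}$ on $P_\kappa(\lambda)$. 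Corollary~\ref{cor: main thm applied to sc Prikry} then yields a projection from $\mathbb{P}_{\mathcal{U}}$ onto $\ro(\mathbb{Q})$.

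\emph{Backward direction.} Suppose $\pi:\mathbb{P}_{\mathcal{U}}\to\ro(\mathbb{Q})$ is a projection. The argument has three steps.
\begin{enumerate}
\item First I show that $\mathbb{P}_{\mathcal{U}}$ itself is $(\omega,\kappa)$-predistributive.
\item Proposition~\ref{thm: closed under proj} then gives that $\ro(\mathbb{Q})$ is $(\omega,\kappa)$-predistributive.
\item By the observation following Definition~\ref{def: predistributive}, $\mathbb{Q}$ is $(\omega,\kappa)$-predistributive as well.
\end{enumerate}
For the supercompact case, step (1) follows from Proposition~\ref{prop: sigma Prikry and kappa-dist}(2), since that forcing is $\Sigma$-Prikry.

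The strongly compact case of step (1) is where some care is needed, because the $\Sigma$-Prikry framework is stated with $\kappa=\sup\Sigma$ and closure $\kappa_m<\kappa$. I would instead imitate the proof of Proposition~\ref{prop: sigma Prikry and kappa-dist}(2) directly. Fix a condition $p=(t,T)$ and dense open sets $\langle D_\alpha:\alpha<\gamma\rangle$ with $\gamma<\kappa$. For each $\alpha$, the Strong Prikry Property gives $S_\alpha\leq^* p$ and $n_\alpha<\omega$ such that every $n_\alpha$-step extension of $S_\alpha$ lies in $D_\alpha$. Since all the $S_\alpha$ have stem $t$ and $\mathcal{U}$ is $\kappa$-complete, $S:=\bigcap_{\alpha<\gamma}S_\alpha$ is still a condition with stem $t$. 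Indeed, at each node above $t$ the successor set is an intersection of fewer than $\kappa$ sets in $\mathcal{U}$, so it lies in $\mathcal{U}$. Now let $d_n$ be the set of all conditions below $S$ whose stem has length at least $|t|+n$. Each $d_n$ is dense open below $S$, because one can always add points to the stem. Moreover $d_{n_\alpha}\subseteq D_\alpha$, since $D_\alpha$ is open and every element of $d_{n_\alpha}$ extends some $n_\alpha$-step extension of $S\leq S_\alpha$. This witnesses predistributivity at $p$.

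\emph{Bounded subsets.} Let $\mathbb{Q}$ be $(\omega,\kappa)$-predistributive. By the forward direction, $V^{\ro(\mathbb{Q})}$ is an intermediate model of some $V^{\mathbb{P}_{\mathcal{U}}}$. By the Prikry Property, $\mathbb{P}_{\mathcal{U}}$ adds no bounded subsets of $\kappa$, so $\mathbb{Q}$ adds none either. The only point I expect to require checking is the intersection of trees in step (1). The rest is bookkeeping with earlier results; in particular I have to make sure that choosing $\lambda\geq 2^{|\mathbb{Q}|}$ satisfies all hypotheses of Corollary~\ref{cor: main thm applied to sc Prikry}.
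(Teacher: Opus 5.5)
Your proposal is correct and follows the paper's proof. The forward direction is Corollary~\ref{cor: main thm applied to sc Prikry} with $\lambda$ chosen so that $\mathbb{Q}$ is $\lambda$-pregenerated and $|\mathbb{Q}|\leq\lambda$ (your choice $\lambda\geq 2^{|\mathbb{Q}|}$ makes this explicit), and the converse is Proposition~\ref{thm: closed under proj}. Your direct check that $\mathbb{P}_{\mathcal{U}}$ is itself $(\omega,\kappa)$-predistributive is sound: you intersect fewer than $\kappa$ direct extensions given by the Strong Prikry Property, then take $d_n$ to be the conditions below the intersection with stem length at least $|t|+n$; this fills in a step the paper leaves implicit, as does your derivation of the clause about bounded subsets.
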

\begin{proof}
    Let $\mathbb{Q}$ be $(\omega,\kappa)$-predistributive, and let $\mathcal{U}$ be a strongly compact (supercompact) measure over $P_\kappa(\lambda)$, where $\lambda>\kappa$ is any cardinal such that $\mathbb{Q}$ is $\lambda$-pregenerated and $\lambda\geq |\mathbb{Q}|$. Letting $\mathbb{P}_{\mathcal{U}}$ be the strongly compact (supercompact) Prikry forcing with
respect to $\mathcal{U}$, it is easy to see that Corollary \ref{cor: main thm applied to sc Prikry} yields the desired conclusion. The converse implication is the content of Proposition \ref{thm: closed under proj}.
\end{proof}
Observe that there are cardinals for which the last implication of Corollary \ref{cor: no bounded subsets} may fail. Namely, there are cardinals $\kappa$ and $(\omega,\kappa)$-predistributive forcings adding bounded subsets of $\kappa$. A trivial example is $\kappa=\omega_1$, as every forcing is $(\omega,\omega_1)$-predistributive. 
\begin{question}\label{question: large cardinals}
    For which cardinals $\kappa>\omega$ is the class of $(\omega,\kappa)$-predistributive forcings  a subclass of the $(\kappa,2)$-distributive forcings? 
\end{question}
Below a measurable cardinal our understanding of this class is much narrower and we currently cannot even separate it from, say, the distributive forcings. We leave that as an interesting avenue for future research. The following result gives a consistency answer (under Martin's Axiom-- see e.g. \cite[Ch.II~\S 2]{Kunen}) to the above question for cardinals which are consistently below the continuum, and is due to Andreas Lietz:
\begin{prop}[Lietz]
Under $\mathrm{MA}_{<\kappa}$, Cohen forcing $\Add(\omega,1)$ is $(\omega,\kappa)$-predistributive.    
\end{prop}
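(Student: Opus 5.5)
Our goal is to show that under $\mathrm{MA}_{<\kappa}$, for every condition $p\in\Add(\omega,1)$ and every sequence $\langle D_\alpha:\alpha<\gamma\rangle$ of dense open sets with $\gamma<\kappa$, there are $q\le p$ and countably many dense open sets $d_n$ below $q$ such that each $D_\alpha$ contains some $d_n$. Since $\Add(\omega,1)$ is countable, the first reduction is to look only for dense open sets of a specific simple shape, namely those that are \emph{$f$-uniform}. For a function $f\colon 2^{<\omega}\to 2^{<\omega}$ with $f(s)\le s$ for every $s$, we set $d_f:=\{t: t\le f(s)\text{ for some } s\}$; this set is dense open. More concretely, we will aim to find countably many functions $f_n$, each picking for every $s\in 2^{<\omega}$ an extension $f_n(s)\le s$, such that for every $\alpha$ there is an $n$ with $f_n(s)\in D_\alpha$ for all $s$. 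Then $d_n:=d_{f_n}$ works, with $q=p$; note that $d_{f_n}\subseteq D_\alpha$ because $D_\alpha$ is open.

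The key step is to produce such countably many $f_n$ using $\mathrm{MA}_{<\kappa}$ with a $\sigma$-centered poset, in the spirit of the classical proof that $\mathrm{MA}_{<\kappa}$ implies that the union of fewer than $\kappa$ meager sets is meager (or that $\mathfrak{b}\geq\kappa$). Enumerate $2^{<\omega}$ as $\langle s_k:k<\omega\rangle$. For each $\alpha$, let $g_\alpha(s)$ be the first extension of $s$ in $D_\alpha$ under a fixed enumeration. The $g_\alpha$ are fewer than $\kappa$ functions from a countable set to a countable set. We want countably many functions that "cover" them, in the sense that every $g_\alpha$ is matched by some $f_n$ in the weak sense that $f_n(s)\le g_\alpha(s)$ for all $s$. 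Matching all $s$ exactly is too strong, so the plan is to first diagonalize modulo finite and then patch the finitely many exceptions. Concretely, consider the poset of pairs $(h,F)$, where $h$ is a finite partial function with $h(s)\le s$ and $F$ is a finite subset of $\gamma$, ordered so that new values $h'(s)$ added for $s\notin\dom h$ must extend $g_\alpha(s)$ for every $\alpha\in F$. This poset is $\sigma$-centered, since conditions with the same $h$ are compatible. It has the needed dense sets: for each $s$ there is a dense set requiring $s\in\dom h$, and for each $\alpha$ a dense set requiring $\alpha\in F$. Finite extensions of the condition $s$ are always available, because the relevant $g_\alpha(s)$ form finitely many sets that are each dense open; we therefore choose an extension of $s$ lying in $\bigcap_{\alpha\in F}D_\alpha$, which is again dense open. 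Forming the requirement with $D_\alpha$ rather than with $g_\alpha$ keeps compatibility: we require $h'(s)\in D_\alpha$ for $\alpha\in F$. By $\mathrm{MA}_{<\kappa}$ (there are fewer than $\kappa$ dense sets) we obtain a generic giving a total $f$ such that, for each $\alpha$, $f(s)\in D_\alpha$ for all but finitely many $s$.

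For the patch, for each finite partial function $e$ (with $e(s)\le s$) define $f_e$ to agree with $e$ on $\dom e$ and with $f$ elsewhere. There are countably many such $f_e$. Given $\alpha$, let $E$ be the finite set of $s$ with $f(s)\notin D_\alpha$, and choose $e$ on $E$ with $e(s)\in D_\alpha$, $e(s)\le s$. Then $f_e(s)\in D_\alpha$ for all $s$, so $d_{f_e}\subseteq D_\alpha$. This completes the argument.

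The main obstacle is setting up the $\sigma$-centered poset correctly: the requirements must be phrased via the dense open sets $D_\alpha$ (not fixed witnesses), which is what makes the finite intersections $\bigcap_{\alpha\in F}D_\alpha$ dense, and we must check that the "all but finitely many" conclusion really holds. It does, because once $\alpha\in F$, every $s$ added to $\dom h$ afterwards receives a value in $D_\alpha$.
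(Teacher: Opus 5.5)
Your proof is correct, but it follows a different route from the paper's. The paper (Lietz's argument) uses one ccc poset of pairs $(c,\rho)$. Here $c$ is a finite partial map $\gamma\to\omega$, read as the promise $E_{c(\alpha)}\subseteq D_\alpha$. The second coordinate $\rho$ is a finite partial map on $\omega\times\omega$ with $\rho(n,i)\le p_i$, where $\{p_i: i<\omega\}$ enumerates $\Add(\omega,1)/p$, so each row $E_n=\{\rho(n,i): i<\omega\}$ is automatically dense below $p$. Density of $\{(c,\rho):\alpha\in\dom c\}$ is obtained by sending $\alpha$ to a row not yet touched by $\rho$. The ccc is proved by a $\Delta$-system argument, and a single application of $\mathrm{MA}_{<\kappa}$ delivers both the assignment $\alpha\mapsto n$ and the countably many dense sets.

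You instead split the job in two. First, your Hechler-type poset of pairs $(h,F)$, which is visibly $\sigma$-centered, yields one choice function $f$ with $f(s)\in D_\alpha$ for all but finitely many $s$. Second, you assign each $\alpha$ to an index in the ground model by patching $f$ on the finite exceptional set.

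What your route buys:
\begin{itemize}
\item It uses only Martin's Axiom for $\sigma$-centered posets (equivalently, by Bell's theorem, $\mathfrak{p}\geq\kappa$), and needs no $\Delta$-system argument.
\item It shows that all the witnesses can be taken to be finite variations of the single dense set $d_f$. You could even skip the patching: the tails $\{t: t\le f(s_j)\text{ for some } j\ge k\}$, for $k<\omega$, are dense open, and for each $\alpha$ one of them lies inside $D_\alpha$.
\end{itemize}
The paper's route is more direct, since the generic object is literally the required witness.

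One presentational point. Your first description of the order, requiring new values to extend $g_\alpha(s)$ for every $\alpha\in F$, does not work. The $g_\alpha(s)$ for different $\alpha$ can be pairwise incompatible, and then the sets $\{(h,F): s\in\dom h\}$ are not dense. Only the version you end up using, $h'(s)\in\bigcap_{\alpha\in F}D_\alpha$, is correct, so drop the $g_\alpha$ detour.
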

\begin{proof}
    Suppose that $\omega<\gamma<\kappa$ is a regular cardinal and we are given a $\gamma$-sequence of dense open sets $\langle D_\alpha\mid \alpha<\gamma\rangle$ in $\Add(\omega,1)$, and a condition $p$, we would like to find $q\leq p$ and $\omega$-many dense sets below $q$ $\langle E_n\mid n<\omega\rangle$ such that for every $\alpha<\gamma$ there is $n$ such that $E_n\subseteq D_\alpha$. Let us present a c.c.c forcing $\mathbb{P}$  which is designated to add the $E_n$'s. Enumerate $\Add(\omega,1)/p$ by $\{p_n\mid n<\omega\}$. A condition is a pair $p=(c,\rho)$ such that:
    \begin{enumerate}
        \item $c:F\to\omega$, where $F\in [\gamma]^{<\omega}$ ($c(\alpha)=n$ is a promise that $E_n\subseteq D_\alpha$).
        \item $\rho:A\to \Add(\omega,1)$, where $A\in [\omega\times\omega]^{<\omega}$ (so that $E_n$ will consist of the $\rho(n,i)$, ranging over $i$).
    \end{enumerate}
    We denote the objects above by $c=c^p,\rho=\rho^p,F=F^p,A=A^p$.
    To be a condition we require that $p$ satisfy the following:
    \begin{enumerate}
        \item [(I)]for each $(n,i)\in A^p$, $\rho^p(n,i)\leq p_i$.
        \item [(II)] for each $\alpha\in F^p$, if $c^p(\alpha)=n$, then $\rho^p(n,i)\in D_\alpha$.
    \end{enumerate} 
    The order is end-extension on both coordinates.
To see that $\mathbb{P}$ is c.c.c., let $\{p_i\mid i<\omega_1\}\subseteq \mathbb{P}$, since there are only countably many choices for $\rho^p$, we may assume that for every $i<\omega_1$, $\rho^{p_i}=\rho$. Apply the $\Delta$-system lemma to $F^p$ to find a root $r$, then for every $i\neq j$ such that $F^{p_i}\cap F^{p_j}=r$ and $c^{p_i}\restriction r=c^{p_j}\restriction r$ we define $c^p=c^{p_i}\cup c^{p_j}$ and $\rho^p=\rho$. It is easy to see that conditions $(I),(II)$ are satisfied and so the forcing is c.c.c.

The following $\gamma$-many dense sets are what we need:
\begin{itemize}
    \item $X_\alpha=\{p\mid \alpha\in F^p\}$ for every $\alpha<\gamma$.
    \item $Y_{(n,i)}=\{p\mid (n,i)\in A^p\}$ for every $(n,i)\in \omega\times\omega$.
\end{itemize}
By $\mathrm{MA}_{<\kappa}$, there is a filter $g\subseteq \mathbb{P}$ meeting those dense sets, we can define $c^g=\bigcup_{p\in g}c^p$, and $\rho^g=\bigcup_{p\in g}\rho^p$. Let $E^g_n=\{\rho^g(n,i)\mid i<\omega\}$, then by condition $(I
)$, each $E^g_n$ is dense. By condition $(II)$, if $c^g(\alpha)=n$, then $E^g_n\subseteq D_\alpha$.
\end{proof}
Recall that the class of forcings not adding bounded subsets of $\kappa$ coincides with the class of $(\kappa,2)$-distributive forcings (see \cite{JECH198411} for a full account). A useful characterization of $(\kappa,2)$-distributivity under strong compactness has been provided by Ben Neria, Goldberg and Kaplan.
\begin{theorem}[\cite{BenNeria_Goldberg_Kaplan}]\label{thm: BGK}
    If $\kappa$ is strongly compact, then a partial order is $(\kappa,2)$-distributive if and
only if it is forcing-equivalent to a $\kappa$-complete Prikry-type poset.
\end{theorem}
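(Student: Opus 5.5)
The backward direction is a routine Prikry-style argument, and I would do it first. Suppose $(\mathbb{P},\leq,\leq^*)$ is a $\kappa$-complete Prikry-type poset: $\leq^*\subseteq\leq$, $\leq^*$ is $\kappa$-closed, and every statement is decided by a $\leq^*$-extension. Fix $\gamma<\kappa$, a name $\dot{x}$ for a subset of $\gamma$, and a condition $p$. I would build a $\leq^*$-decreasing sequence $\langle p_\alpha:\alpha\leq\gamma\rangle$ with $p_0\leq^* p$, where $p_{\alpha+1}$ decides ``$\alpha\in\dot{x}$'' and limit stages use $\kappa$-closure. Then $p_\gamma$ decides $\dot{x}$, so no bounded subsets of $\kappa$ are added; that is, the poset is $(\kappa,2)$-distributive. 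Forcing equivalence preserves this, which finishes this direction.

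For the forward direction, let $\mathbb{Q}$ be $(\kappa,2)$-distributive and set $\mathbb{B}=\ro(\mathbb{Q})$. I would build a Prikry-type poset whose conditions are pairs $(b,\mathcal{F})$. Here $b\in\mathbb{B}$, and $\mathcal{F}$ is a $\kappa$-complete filter on $\mathbb{B}/b$ that contains every open set dense below $b$ and respects the ordering of $\mathbb{B}$. The ordering is $(c,\mathcal{G})\leq(b,\mathcal{F})$ iff $c\leq b$ and $\mathcal{G}\supseteq\{A\cap\mathbb{B}/c: A\in\mathcal{F}\}$. The direct-extension relation $\leq^*$ is the case $c=b$.
\begin{itemize}
\item $\kappa$-closure of $\leq^*$: the union of a ${<}\kappa$-chain of $\kappa$-complete filters generates a $\kappa$-complete filter.
\item Prikry property: given $\varphi$ and $(b,\mathcal{F})$, the sets $A^+=\{c\leq b: c\Vdash\varphi\}$ and $A^-=\{c\leq b: c\Vdash\neg\varphi\}$ have dense open union below $b$. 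By strong compactness, $\mathcal{F}$ extends to a $\kappa$-complete ultrafilter $\mathcal{W}$, and $\mathcal{W}$ contains exactly one of $A^\pm$. Adding that set to $\mathcal{F}$ gives a direct extension which, after a density argument on the first coordinate, decides $\varphi$.
\item Forcing equivalence: I would show that $(b,\mathcal{F})\mapsto b$ is a projection with trivial quotient. Equivalently, the first coordinates of the generic determine the generic, because every $(b,\mathcal{F})$ and $(b,\mathcal{F}')$ with compatible filters are compatible.
\end{itemize}

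The main obstacle is the well-definedness of the conditions. I need the filter generated by the open sets dense below $b$ to be a proper $\kappa$-complete filter, that is, the intersection of fewer than $\kappa$ open sets dense below $b$ must be nonempty. But $(\kappa,2)$-distributivity does not give $\kappa$-distributivity. So the density requirement must be weakened: I would only require $\mathcal{F}$ to contain sets of the form ``the conditions deciding $\dot{x}$'' for names $\dot{x}$ of bounded subsets of $\kappa$. For fewer than $\kappa$ such names one can code all of them into a single bounded subset of $\kappa$, and $(\kappa,2)$-distributivity then gives a dense set of conditions deciding it; this intersection is the one that must be nonempty. Making the Prikry property work with only this weaker filter requirement, probably by replacing $\mathbb{Q}$ by a suitable dense subset of $\mathbb{B}$ first, is where I expect the real work, and it may need the full strength of the Ben-Neria--Goldberg--Kaplan analysis.
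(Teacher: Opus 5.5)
Your backward direction is correct. The forward direction has a genuine gap, and it goes beyond the obstacle you flag. (The paper quotes this theorem without proof, so I am judging your argument on its own merits.)

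Assume even that $\mathbb{B}$ is $\kappa$-distributive, so that your conditions exist. The first and third bullets still fail, and the second depends on the third. The cause is that a filter on the \emph{set} $\mathbb{B}/b$ sits in the second coordinate and constrains $\le$.

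\emph{The order $\le^*$ is not even $\sigma$-closed.} The filter generated by a chain of $\kappa$-complete filters can be improper. Take a countably infinite maximal antichain $\{e_i:i<\omega\}$ and put $b_n=-\bigvee_{i<n}e_i$, so $\bigwedge_n b_n=0$. Let $\mathcal{F}_n$ be the $\kappa$-complete filter generated by the open dense sets and the cone $\mathbb{B}/b_n$. Then $\langle(1_{\mathbb{B}},\mathcal{F}_n):n<\omega\rangle$ is $\le^*$-decreasing, but any lower bound would contain $\bigcap_n\mathbb{B}/b_n=\emptyset$.

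\emph{The map $(b,\mathcal{F})\mapsto b$ is not even a weak projection.} Let $\mathcal{W}$ be a $\kappa$-complete ultrafilter on $\mathbb{B}/b$ extending the open dense sets, as in your Prikry step. Let $\mathcal{F}\subseteq\mathcal{W}$ be the $\kappa$-complete filter generated by those sets together with the cones $\mathbb{B}/c$ that lie in $\mathcal{W}$.
\begin{itemize}
\item For $c<b$ the set $\mathbb{B}/c\cup\mathbb{B}/(b-c)$ is open dense below $b$, so exactly one of these two disjoint cones lies in $\mathcal{W}$, hence in $\mathcal{F}$.
\item So every $(c,\mathcal{G})\le(b,\mathcal{F})$ has $\mathbb{B}/c\in\mathcal{W}$. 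Otherwise $\mathbb{B}/(b-c)\in\mathcal{F}$ would have empty trace on $\mathbb{B}/c$.
\item Hence, below $(b,\mathcal{F})$, first coordinates stay in the ground-model set $\{c:\mathbb{B}/c\in\mathcal{W}\}$.
\item They therefore never meet the open set $\{d\le b:\mathbb{B}/d\notin\mathcal{W}\}$, which is dense below $b$ whenever $\mathbb{B}$ is atomless there.
\end{itemize}
So the second coordinate makes promises the $\mathbb{B}$-generic need not keep, and there is no trivial quotient. Your Prikry step reads $\varphi$ as a $\mathbb{B}$-statement, so it is unsupported.

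\emph{The missing idea.} Attach to each condition a $\kappa$-complete ultrafilter $W$ on the Boolean algebra $\mathbb{B}$ itself, closed under meets of fewer than $\kappa$ elements, and keep it invisible to $\le$.

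\emph{Key lemma.} If $\kappa$ is strongly compact and $\mathbb{B}$ is $(\kappa,2)$-distributive, then every $b\in\mathbb{B}^+$ lies in such a $W$.
\begin{itemize}
\item Let $\mathcal{U}$ be a fine $\kappa$-complete ultrafilter on $P_\kappa(\mathbb{B})$, $j=j_{\mathcal{U}}$, and $x=[\id]_{\mathcal{U}}$. Then $j``\mathbb{B}\subseteq x$ and $|x|^{M_{\mathcal{U}}}<j(\kappa)$.
\item In $M_{\mathcal{U}}$, $j(\mathbb{B})$ is $(j(\kappa),2)$-distributive. So there is a nonzero $c\le j(b)$ with $c\le y$ or $c\le -y$ for each $y\in x$.
\item Then $W=\{a\in\mathbb{B}:c\le j(a)\}$ is an ultrafilter containing $b$, and it is $\kappa$-complete because $\mathrm{crit}(j)\ge\kappa$.
\end{itemize}
Only fewer than $j(\kappa)$ binary decisions are needed here, never an intersection of open dense sets. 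That is exactly why $(\kappa,2)$-distributivity suffices, and your main obstacle does not arise.

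\emph{The construction.} Let $\mathbb{P}$ consist of the pairs $(b,W)$ with $b\in W$.
\begin{itemize}
\item Put $(c,W')\le(b,W)$ iff $c\le b$, and $(c,W')\le^*(b,W)$ iff in addition $W'=W$.
\item By the lemma, $(b,W)\mapsto b$ identifies the separative quotient of $\mathbb{P}$ with $\mathbb{B}^+$, so $\mathbb{P}$ is forcing-equivalent to $\mathbb{Q}$.
\item The order $\le^*$ is $\kappa$-closed because each $W$ is $\kappa$-complete.
\item The Prikry property holds: pass from $(b,W)$ to $(b\wedge[\![\varphi]\!],W)$ or to $(b\wedge[\![\neg\varphi]\!],W)$, according to which of $[\![\varphi]\!]$ and $[\![\neg\varphi]\!]$ belongs to $W$.
\end{itemize}
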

Therefore, in light of Corollary \ref{cor: no bounded subsets} and Theorem \ref{thm: BGK} the following question is natural.
\begin{question}
    Suppose that $\kappa$ is strongly compact (supercompact) and that $\mathbb{Q}$ is $(\kappa,2)$-distributive. Is there a weak projection from a strongly compact (supercompact) Prikry forcing into $\ro(\mathbb{Q})$?
\end{question}

\begin{prop}
    Assume that $\kappa$ is $\lambda^+$-supercompact and $2^\lambda=\lambda^+$. Then for every $(\omega,\kappa)$-predistributive forcing $\mathbb{Q}$ of cardinality $\lambda$, there is a $\lambda$-strongly compact measure $\mathcal{U}$ on $P_\kappa(\lambda)$ such that $\ro(\mathbb{Q})$ is a projection of $\mathbb{P}_{\mathcal{U}}$.
\end{prop}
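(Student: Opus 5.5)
We will reduce the statement to Corollary~\ref{cor: main thm applied to sc Prikry}. Since $|\mathbb{Q}|=\lambda$, every dense open subset of $\mathbb{Q}$ is a subset of $\mathbb{Q}$, so $|\mathcal{F}(\mathbb{Q})|\leq 2^\lambda=\lambda^+$. Hence $\mathbb{Q}$ is at least $\lambda^+$-pregenerated, but not obviously $\lambda$-pregenerated, and Corollary~\ref{cor: main thm applied to sc Prikry} does not apply directly on $P_\kappa(\lambda)$. The plan is to combine $\lambda^+$-supercompactness with the trace-property argument of Theorem~\ref{thm: weak dist implies proj}, and then to project the resulting generic down to a measure on $P_\kappa(\lambda)$, as in the proof of Corollary~\ref{cor: distibutive consistency}.

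First, fix a normal fine $\mathcal{W}$ on $P_\kappa(\lambda^+)$. The set $\mathcal{F}(\mathbb{Q})$ has size at most $\lambda^+$ and $|\mathbb{Q}|\leq\lambda^+$. By Corollary~\ref{cor: main thm applied to sc Prikry}, applied with $\lambda^+$ in place of $\lambda$, $\ro(\mathbb{Q})$ is a projection of $\mathbb{P}_{\mathcal{W}}$. Using Remark~\ref{Remark following the RK-construction}(4), or more directly the proof of Theorem~\ref{thm: omega generated from projection} (which only needs $|\mathbb{Q}|\leq\lambda^+$ on $P_\kappa(\lambda^+)$), we obtain, for each $q\in\mathbb{Q}$, a decreasing sequence $\langle q_n:n<\omega\rangle\subseteq j^{\mathcal{W}}_\omega(\mathbb{Q})$ that generates an $M^{\mathcal{W}}_\omega$-generic filter containing $j_\omega(q)$. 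Each $q_n$ is of the form $j_{m_n,\omega}(p_n)$ with $p_n\in M_{m_n}$.

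Second, we transfer this to a measure on $P_\kappa(\lambda)$. Since $\mathbb{Q}\subseteq\lambda$, each $p_n$ is an element of $j_{0,m_n}(\lambda)$. Let $j=j_{\mathcal{W}}$; by closure, $j``\lambda^+\in M_{\mathcal{W}}$. We derive a fine $\kappa$-complete $\mathcal{U}$ on $P_\kappa(\lambda)$ from a seed coding $j``\lambda$ together with the finitely many relevant points needed at the first stage. At later stages the iteration of $\mathcal{U}$ should reproduce the seeds, via a factor map $k:M_{\mathcal{U}}\to M_{\mathcal{W}}$ and its iterates $k_\omega:M^{\mathcal{U}}_\omega\to M^{\mathcal{W}}_\omega$, so that the $q_n$ lie in the range of $k_\omega$. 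Pulling back gives $q'_n$ with $k_\omega(q'_n)=q_n$. Because $k_\omega$ is elementary and $k_\omega\circ j^{\mathcal{U}}_\omega=j^{\mathcal{W}}_\omega$, the sequence $\langle q'_n\rangle$ is decreasing and diagonalizes $j^{\mathcal{U}}_\omega``\mathcal{F}_q(\mathbb{Q})$. Then Theorem~\ref{Thm: more general equivalence} and Remark~\ref{Remark: last remark of section 3} give the projection $\mathbb{P}_{\mathcal{U}}\to\ro(\mathbb{Q})$.

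The main obstacle is this second step. We need a single $\mathcal{U}$ on $P_\kappa(\lambda)$ that works simultaneously for all $q\in\mathbb{Q}$ (there are $\lambda$ of them) and whose iterate contains seeds for entire $\omega$-sequences. The hypothesis $2^\lambda=\lambda^+$ enters here: $\mathcal{F}(\mathbb{Q})$ is $\lambda^+$-generated, and by $\lambda^+$-closure of $M_{\mathcal{W}}$ the set $j``\mathcal{F}(\mathbb{Q})$ is available in $M_{\mathcal{W}}$. I would first try to avoid iterating seeds altogether. The idea is to work in $M_{\mathcal{W}}$, where $j(\mathbb{Q})$ is $(\omega,j(\kappa))$-predistributive and $j``\mathcal{F}_q(\mathbb{Q})$ has size $\lambda^+<j(\kappa)$. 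Predistributivity then yields $r_q\leq j(q)$ and countably many dense sets $d^q_n$ that refine all of $j``\mathcal{F}_q(\mathbb{Q})$. We code $\langle r_q,\langle d^q_n\rangle_n:q\in\mathbb{Q}\rangle$ (a $\lambda$-sequence, hence in $M_{\mathcal{W}}$) together with $j``\lambda$ into a seed in $P_{j(\kappa)}(j(\lambda))$, exactly as in Corollary~\ref{cor: distibutive consistency}. Running the snake construction of Theorem~\ref{thm: weak dist implies proj} along the iteration of the derived $\mathcal{U}$ should then produce the required sequence directly in $M^{\mathcal{U}}_\omega[G_\omega]$, where the countable splitting is supplied by these coded $d^q_n$ instead of by the trace property.
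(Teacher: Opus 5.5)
Your proposal has a genuine gap at the step of getting a measure on $P_\kappa(\lambda)$.

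\textbf{The main plan.} Your first step is correct. Since $|\mathcal{F}(\mathbb{Q})|\le 2^\lambda=\lambda^+$, Corollary~\ref{cor: main thm applied to sc Prikry} on $P_\kappa(\lambda^+)$ does give a projection of $\mathbb{P}_{\mathcal W}$ to $\ro(\mathbb{Q})$. The transfer to $P_\kappa(\lambda)$, which you flag yourself, is where it breaks. The iterated factor maps $k_m:M^{\mathcal U}_m\to M^{\mathcal W}_m$ are determined by the single seed $a$ of $\mathcal U$: later stages use only the images $j^{\mathcal W}_{0,i}(a)$. So there is no room to add ``relevant points'' at later stages. The conditions $p_n\in M^{\mathcal W}_{m_n}$ produced by the proof of Theorem~\ref{thm: omega generated from projection} are chosen freely in $M^{\mathcal W}_\omega[H_\omega]$. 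They depend on the later seeds of $\mathcal W$ (images of $j``\lambda^+$), which a seed in $P_{j(\kappa)}(j(\lambda))$ does not capture. Nothing puts them in the range of $k_{m_n}$.

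\textbf{The fallback.} This has the right key idea, and it is the paper's. Since $2^\lambda=\lambda^+$ and $M_{\mathcal W}$ is $\lambda^+$-closed, $j``\mathcal F_q(\mathbb Q)\in M_{\mathcal W}$ has size $\lambda^+<j(\kappa)$. So predistributivity of $j(\mathbb Q)$ in $M_{\mathcal W}$ gives $r_q\le j(q)$ and $\langle d^q_n:n<\omega\rangle$. The next step fails as written. The $d^q_n$ are dense subsets of $j(\lambda)$, not elements of it, so they cannot be put into a seed from $P_{j(\kappa)}(j(\lambda))$. Corollary~\ref{cor: distibutive consistency} codes conditions $x_q\in j(\mathbb Q)$, not dense sets, and nothing guarantees $d^q_n\in\rng(k)$. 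The ``snake along the iteration'' is then left as a hope.

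\textbf{The missing step.} Already in $M_{\mathcal W}$, pick a decreasing sequence $p^q_0\ge p^q_1\ge\cdots$ with $p^q_0\le r_q$ and $p^q_n\in d^q_n$. Each $j(D)$ is open, so for every $D\in\mathcal F_q(\mathbb Q)$ a tail of the $p^q_n$ lies in $j(D)$. Now only $\lambda$ many ordinals below $j(\lambda)$ need to be coded, together with $j``\lambda$, into the seed, as in Corollary~\ref{cor: distibutive consistency}. Pulling back along $k$ gives decreasing $\bar p^q_n\in j_{\mathcal U}(\mathbb Q)$ whose tails enter every $j_{\mathcal U}(D)$. That is, $\mathcal F_q(\mathbb Q)\le^{\fin}_{sK}\mathcal U$ with every $W_n\le_{RK}\mathcal U$, which is all Definition~\ref{Def: strong RK} asks. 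Theorem~\ref{Thm: more general equivalence} together with Remark~\ref{Remark: last remark of section 3} then gives the projection. No iteration of $\mathcal W$, no trace property and no snake construction is needed.
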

\begin{proof}
    We may assume that $\mathbb{Q}=\lambda$. Suppose that $\kappa$ is $\lambda^+$-supercompact, and  $2^\lambda=\lambda^+$. Let $\mathcal{W}$ be a $\lambda^+$-supercompact measure.  In $M_{\mathcal{W}}$, apply the fact that $j_{\mathcal{W}}(\mathbb{Q})$ is $(\omega,j_U(\kappa))$-predistributive to find $(d_n: n<\omega)$, such that for every $D\in\mathcal{F}_q(\mathbb{Q})$ there is $n<\omega$ such that $d_n\subseteq j_{\mathcal{W}}(D)$. Now we can recursively construct a sequence $p_0\geq p_1...$ such that $p_n\in d_n$ and for every $D$ there is a tail of $n$'s such that $p_n\in j_{\mathcal{W}}(D)$. Similar to the proof of Corollary \ref{cor: distibutive consistency}, we can derive a fine $\kappa$-complete measure $\mathcal{U}$ on $P_\kappa(\lambda)$, such that $p_n\in \text{rng}(k)$ for every $n$, where $k:M_{\mathcal{U}}\to M_{\mathcal{W}}$ is the factor map. Then Prikry forcing with $\mathcal{U}$ will project on $\ro(\mathbb{Q})$ by Theorem \ref{Thm: more general equivalence} (and Remark \ref{Remark: last remark of section 3}).
\end{proof}
\begin{remark}\label{remak: large cardinal assumption}
 A sufficient large cardinal assumption for this theorem is a lightface $\lambda$-$\Pi^1_1$-subcompact cardinal\footnote{We say that $\kappa$ is a lightface $\lambda$-$\Pi^1_1$-subcompact cardinal if for any $\Pi^1_1$-sentence $\Psi$, if $\langle H(\lambda^{+}),\in {\rangle}\models \Psi$, there are $\rho<\bar{\lambda}<\kappa$ such that $\langle H(\bar{\lambda}),\in{\rangle}\models \Psi$ and an elementary embedding $j:\langle H(\bar{\lambda}),\in {\rangle}\to \langle H(\kappa^{+}),\in {\rangle}$ such that $j(\rho)=\kappa$.} (see \cite[Def. 36]{Benhamou_Hayut_Gitik}). Indeed, the proof is completely analogous to that of \cite[Prop.37]{Benhamou_Hayut_Gitik}.
\end{remark}

We conclude these corollaries with the full level-by-level characterization of the subforcings of the strongly compact Prikry forcing of cardinality $\lambda$.
\begin{cor}\label{LevelByLevel Cor}
    Suppose that $\kappa$ is $2^\lambda$-strongly compact. Among the forcing $\mathbb{Q}$ of cardinality $\lambda$, the projections of a $\lambda$-strongly compact Prikry forcing are exactly the $(\omega,\kappa)$-predistributive forcings.
\end{cor}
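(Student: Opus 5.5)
The proof splits into the two inclusions; the forward one is almost immediate. Suppose $\mathbb{P}_{\mathcal{U}}$ projects onto $\mathbb{Q}$ for some fine $\kappa$-complete $\mathcal{U}$ over $P_\kappa(\lambda)$; as before, the statement is read with $\ro(\mathbb{Q})$ in place of $\mathbb{Q}$ where needed, which changes nothing since predistributivity is invariant under passing to $\ro(\mathbb{Q})$. The plan is to show that $\mathbb{P}_{\mathcal{U}}$ itself is $(\omega,\kappa)$-predistributive and then invoke Proposition \ref{thm: closed under proj}. For the first step I would mimic the $\Sigma$-Prikry argument of Proposition \ref{prop: sigma Prikry and kappa-dist}(2), using $\ell(T)$ equal to the length of the stem.

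Concretely, let $\langle D_\alpha:\alpha<\gamma\rangle$ with $\gamma<\kappa$ and a condition $T$ be given. By the Strong Prikry Property, for each $\alpha$ there is $S_\alpha\leq^* T$ and $n_\alpha$ such that every $n_\alpha$-step extension of $S_\alpha$ lies in $D_\alpha$. Since the direct extensions of a fixed stem are $\kappa$-closed, we may take a single $S\leq^* T$ below all $S_\alpha$, by intersecting the trees levelwise using the $\kappa$-completeness of $\mathcal{U}$. Then $d_n:=\{R\leq S: R \text{ has at least } n \text{ more steps than } S\}$ are dense open below $S$, and $d_{n_\alpha}\subseteq D_\alpha$ by openness. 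This is ZFC, as the paper notes.

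The converse is the substantive direction. Let $\mathbb{Q}$ be $(\omega,\kappa)$-predistributive with $|\mathbb{Q}|=\lambda$. Since $\mathcal{F}(\mathbb{Q})\subseteq\mathcal{P}(\mathbb{Q})$ has size at most $2^\lambda$, $\mathbb{Q}$ is trivially $2^\lambda$-pregenerated. As $\kappa$ is $2^\lambda$-strongly compact, there is a fine $\kappa$-complete $\mathcal{W}$ on $P_\kappa(2^\lambda)$. I would apply Corollary \ref{cor: main thm applied to sc Prikry}, or more precisely Theorem \ref{thm: weak dist implies proj} together with Proposition \ref{Proposition: turnining a weak projection to a projection}, to $\mathbb{P}_{\mathcal{W}}$. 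The $\kappa$-trace property of $\mathbb{P}_{\mathcal{W}}$ is the input promised by Lemma \ref{lemma: goodness}, or else follows via Fact \ref{fact: trace lemma}-style arguments. Moreover $\mathbb{P}_{\mathcal{W}}$ collapses $2^\lambda$ to $\kappa$, so $\mathcal{F}_q(\mathbb{Q})^V$ becomes generated by $\kappa$ many sets. This gives a projection $\mathbb{P}_{\mathcal{W}}\to\ro(\mathbb{Q})$.

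The main obstacle is pushing this down to a measure on $P_\kappa(\lambda)$. I would use Theorem \ref{Thm: more general equivalence} and Remark \ref{Remark: last remark of section 3} to convert the projection into, for each $q$, a decreasing sequence $\vec q\subseteq j^{\mathcal{W}}_\omega(\mathbb{Q})$ diagonalizing $j^{\mathcal{W}}_\omega``\mathcal{F}_q(\mathbb{Q})$. Each $q_n$ lives in a finite iterate $M^{\mathcal{W}}_{m}$ as an element of $j(\lambda)$, where $\mathbb{Q}=\lambda$. I would then derive $\mathcal{U}$ on $P_\kappa(\lambda)$ from a seed in $M_{\mathcal{W}}$ coding a set covering $j_{\mathcal{W}}``\lambda$ together with the relevant representatives, as in the proof of Corollary \ref{cor: distibutive consistency}. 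The aim is that the factor map $k:M_{\mathcal{U}}\to M_{\mathcal{W}}$, iterated to $k_\omega:M^{\mathcal{U}}_\omega\to M^{\mathcal{W}}_\omega$, has the $q_n$ in its range. The pulled-back sequence then diagonalizes $j^{\mathcal{U}}_\omega``\mathcal{F}_q(\mathbb{Q})$ by elementarity, and Theorem \ref{Thm: more general equivalence} yields the projection from $\mathbb{P}_{\mathcal{U}}$.

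The delicate points are twofold. First, the $q_n$ arise at various finite stages of the iteration, so one must ensure the seeds at each stage lie in the range of the iterated factor maps. I would try to handle this by taking the seed for $\mathcal{U}$ to include an enumeration, of size less than $j(\kappa)$, of all the stage-one representatives. Second, the Remark's requirement to handle all $q\in\mathbb{Q}$ at once, via $\lambda$ many sequences, might exceed what a single seed can encode. If it does, I would fall back on building the $M_{\mathcal{W}}$-side sequence $p_0\ge p_1\ge\cdots$ directly in $M_{\mathcal{W}}$, as in the preceding Proposition.
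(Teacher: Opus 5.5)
Your forward direction is correct. It is the $\Sigma$-Prikry argument of Proposition~\ref{prop: sigma Prikry and kappa-dist}, using that fewer than $\kappa$ trees with a common stem can be intersected levelwise, followed by Proposition~\ref{thm: closed under proj}.

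The gap is in the converse, exactly at the step you flag: getting a measure on $P_\kappa(\lambda)$ rather than on $P_\kappa(2^\lambda)$. Your main route does not close it. The projection from $\mathbb{P}_{\mathcal{W}}$ is valid but beside the point. The sequences that Theorems~\ref{thm: omega generated from projection} and~\ref{Thm: more general equivalence} extract from it have entries $q_n=j^{\mathcal{W}}_{m_n,\omega}(p_n)$ with $p_n\in M^{\mathcal{W}}_{m_n}$ and $m_n$ unbounded. Write the seed of $\mathcal{U}$ as $a=j_{\mathcal{W}}(g)([\mathrm{id}]_{\mathcal{W}})$. Then the iterated factor map $k_m\colon M^{\mathcal{U}}_m\to M^{\mathcal{W}}_m$ has range exactly the elements $j^{\mathcal{W}}_{0,m}(f)(\bar c_0,\dots,\bar c_{m-1})$, where the $\bar c_i$ are the generators of the $\mathcal{W}$-iteration, such that $f(\vec x)=h(g(x_0),\dots,g(x_{m-1}))$ for $\mathcal{W}^m$-almost all $\vec x$ and some $h$. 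For $m\geq 2$ this requires the joint dependence of $f$ on the coordinates to factor through the small sets $g(x_i)\in P_\kappa(\lambda)$. The $p_n$ come from uncontrolled choices in the proof of Theorem~\ref{thm: omega generated from projection}, so nothing gives them this form. Enumerating ``stage-one representatives'' in the seed addresses only $m=1$, and your second worry (handling all $q$ with one $\mathcal{U}$) is also left open.

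Your one-line fallback is the right route: it is the argument of the Proposition just before this corollary, adapted to strong compactness. It makes $\mathbb{P}_{\mathcal{W}}$, the trace property and the iteration unnecessary, but two points must be supplied.

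First, $j_{\mathcal{W}}``\mathcal{F}(\mathbb{Q})\notin M_{\mathcal{W}}$ here. So you apply the $(\omega,j(\kappa))$-predistributivity of $j(\mathbb{Q})$ inside $M_{\mathcal{W}}$ to a cover $X\in M_{\mathcal{W}}$ with $j``\mathcal{F}(\mathbb{Q})\subseteq X\subseteq\mathcal{F}(j(\mathbb{Q}))$ and $|X|<j(\kappa)$. This is exactly where $2^\lambda$-strong compactness enters, since $|\mathcal{F}(\mathbb{Q})|\leq 2^\lambda$. Do this in $M_{\mathcal{W}}$ for every $q'$ in a small cover $A\supseteq j``\lambda$ simultaneously. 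This yields one function $P\in M_{\mathcal{W}}$, $P(q',n)=p^{q'}_n$, with each $\langle p^{q'}_n\rangle_n$ decreasing below $q'$ and eventually inside every member of $X$. That settles the ``all $q$'' issue.

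Second, putting these conditions into the seed does not put them into $\mathrm{rng}(k)$. That range is the hull of $j``V\cup\{a\}$, not $a$ itself, and the order-type decoding of Corollary~\ref{cor: distibutive consistency} relies on $j``\mu\in M$. You need a decodable code that is compatible with fineness. For instance, take the seed $a=(A\times\{0\})\cup((A\triangle c)\times\{1\})$ on $P_\kappa(\lambda\times 2)$, where $c=\{j(e)(q',n,P(q',n),\kappa): q'\in A,\ n<\omega\}$ for a bijection $e\colon\lambda\times\omega\times\lambda\times\lambda\to\lambda$ in $V$. Since $\kappa\notin\mathrm{rng}(j)$, we get $c\cap j``\lambda=\emptyset$, so the derived measure (transferred to $P_\kappa(\lambda)$) is fine. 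Moreover $P$ is decodable from $a$, hence $P\in\mathrm{rng}(k)$.

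Then $k^{-1}(P(j(q),n))$ is a decreasing sequence in $j_{\mathcal{U}}(\mathbb{Q})$ below $j_{\mathcal{U}}(q)$ that eventually enters every $j_{\mathcal{U}}(D)$. Applying $j^{\mathcal{U}}_{1,\omega}$ gives $\mathcal{F}_q(\mathbb{Q})\leq^{\fin}_{sK}\mathcal{U}$ for every $q$, and Theorem~\ref{Thm: more general equivalence} with Remark~\ref{Remark: last remark of section 3} finishes the proof.
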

In the next section, we will provide several other characterizations of the $(\omega,\kappa)$-predistributive class.
The idea of constructing generic filters in \ref{thm: weak dist implies proj}, can be also used in the ground model to obtain partially generic filters:
\begin{prop}
     If $\mathbb{P}$ is $(\omega,\kappa)$-predistributive then $\mathrm{MA}_{<\kappa}(\mathbb{P})$ holds, namely, for any fewer than $\kappa$-may dense sets in $\mathbb{P}$ there is a filter $G$ on $\mathbb{P}$ which meets all the dense sets.
 \end{prop}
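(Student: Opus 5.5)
The plan is to run the matrix (``snake'') construction from the proof of Theorem \ref{thm: weak dist implies proj} directly in $V$. Since we only have fewer than $\kappa$ dense sets, we can apply predistributivity once to all of them and do not need the trace property. Fix dense sets $\langle D_\alpha:\alpha<\gamma\rangle$ with $\gamma<\kappa$. Replacing each by its downward closure if necessary (a filter meeting the dense open set $\{r:\exists d\in D_\alpha\, r\leq d\}$ also meets $D_\alpha$ by upward closure), we may assume each $D_\alpha\in\mathcal{F}(\mathbb{P})$. We may also fix an arbitrary $p\in\mathbb{P}$ and aim for a filter containing $p$.

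First we apply Definition \ref{def: predistributive} to $p$ and $\langle D_\alpha:\alpha<\gamma\rangle$. This gives $q\leq p$ and $\langle d_n:n<\omega\rangle\in{}^\omega\mathcal{F}(\mathbb{P}/q)$ such that every $\alpha<\gamma$ has some $n$ with $d_n\subseteq D_\alpha$. Next we build, by recursion on $n$, a decreasing sequence $q=r_0\geq r_1\geq r_2\geq\cdots$ with $r_{n+1}\in d_n$. This is possible because $d_n$ is dense below $q$ and $r_n\leq q$, so some $r_{n+1}\leq r_n$ lies in $d_n$. Only countably many choices are made, so dependent choice suffices.

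Let $G$ be the upward closure of $\{r_n:n<\omega\}$. It is a filter because the sequence is decreasing, and it contains $p$. For each $\alpha$, pick $n$ with $d_n\subseteq D_\alpha$; then $r_{n+1}\in d_n\subseteq D_\alpha$, so $G\cap D_\alpha\neq\emptyset$.

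No step here is a real obstacle. Unlike Theorem \ref{thm: weak dist implies proj}, no snake is needed: every $d_n$ is already dense below the single condition $q$, so one decreasing sequence meets all of them. The only points to check are the reduction from dense to dense open sets and the observation that each $r_n\leq q$, which keeps the $d_n$ available.
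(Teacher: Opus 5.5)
Your proposal is correct and follows essentially the same approach as the paper: apply $(\omega,\kappa)$-predistributivity once to get $q$ and $\langle d_n:n<\omega\rangle$, then take the filter generated by a single decreasing sequence $r_{n+1}\in d_n$, which yields a countably generated filter meeting every $D_\alpha$. Your reduction from dense sets to dense open sets, and the observation that the filter can be made to contain any prescribed $p$, are small details the paper leaves implicit.
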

 \begin{proof}
     We will show a stronger statement, that there is a countably generated filter which meets a given list of dense sets. Let $\langle D_\alpha\mid \alpha<\gamma\rangle$ for some $\gamma<\kappa$ be a sequence of dense sets in $\mathbb{Q}$ Applying  $(\omega,\kappa)$-predistributivity, we can find a witnessing  $p$ and sets $E_n$ dense below $p$. Constructing any decreasing sequence $p_0\geq p_1\geq...$ such that $p_n\in E_n$, we conclude that the filter generated by the $p_n$'s intersect every $D_\alpha$. 
 \end{proof}
 The proof above, together with the argument in \ref{thm: weak dist implies proj}, also shows that if $\kappa$ is singular of countable cofinality, then $(\omega,\kappa)$-predistributive forcings satisfy $MA_{\kappa^+}$. As a result we see that if $\mathbb{P}$ is a non-atomic forcing which is $\lambda$-pregenerated, then for every $\lambda<\kappa$, $\mathbb{P}$ is not $(\omega,\kappa)$-predistributive. 

For the rest of this section, we further examine $(\omega,\kappa)$-predistributive forcings and models with the $\kappa$-trace property, focusing in particular on the relationships between them. 
We start with the following lemma:
\begin{lemma}[Trace Lemma]\label{lemma: goodness}
     If $\mathbb{P}$ is a $(\omega,\kappa)$-predistributive forcing notion preserving $\kappa$, then the following are equivalent: \begin{enumerate}
         \item $\one_\P\Vdash_{\mathbb{P}}``\cf(\kappa)=\omega$''.
         \item $\mathbb{P}$ has the $\kappa$-trace property.
     \end{enumerate}
\end{lemma}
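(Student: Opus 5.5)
We prove the two implications separately; the direction $(2)\Rightarrow(1)$ is essentially a triviality. Suppose $\mathbb{P}$ has the $\kappa$-trace property and let $G$ be $V$-generic. Apply the trace property in $V[G]$ to the identity function $\mathrm{id}\colon\kappa\to\kappa$ (note $\kappa\subseteq V$). This produces a sequence $\langle B_n:n<\omega\rangle\in V[G]$ of bounded subsets of $\kappa$ with $\bigcup_n B_n=\kappa$. Put $\beta_n:=\sup B_n<\kappa$. Since $\kappa$ is preserved, it remains a cardinal, and $\kappa=\sup_n\beta_n$, so $\cf(\kappa)=\omega$ in $V[G]$. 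As $G$ was arbitrary, $\one\Vdash\cf(\kappa)=\omega$.

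For $(1)\Rightarrow(2)$ we follow the template of Proposition \ref{lem: preserving kappa}. Fix $G$ and $f\colon\kappa\to\ord$ in $V[G]$, and let $p\in G$ and $\dot f$ be such that $p\Vdash\dot f\colon\check\kappa\to\ord$. By (1), and strengthening $p$ if needed, we may also fix a name $\dot c$ with $p\Vdash$ ``$\dot c\colon\omega\to\check\kappa$ is increasing and cofinal''. We then run a density argument: it suffices to show that every $p'\le p$ has an extension $q$ forcing that the desired $\langle B_n\rangle$ exists.

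The key step is to apply predistributivity below each $\gamma<\kappa$. For $\gamma<\kappa$ and $\alpha<\gamma$, let $D^\gamma_\alpha:=\{r\le p : r\parallel\dot f(\alpha)\}$. Predistributivity for the single family $\langle D^\gamma_\alpha:\alpha<\gamma\rangle$ yields some $q\le p'$ and $\langle d_n\rangle\in{}^\omega\mathcal{F}(\mathbb{P}/q)$ such that, setting $E_n:=\{\alpha<\gamma:d_n\subseteq D^\gamma_\alpha\}\in V$, we have $\bigcup_n E_n=\gamma$. Then for any generic $G\ni q$ and $r_n\in G\cap d_n$, the condition $r_n$ decides $\dot f\restriction E_n$, so $f\restriction E_n\in V$. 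This handles one bounded piece $\gamma$.

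The main obstacle is handling all of $\kappa$ at once, since predistributivity only applies to fewer than $\kappa$ dense sets and we cannot intersect infinitely many witnesses $q$. My plan is to do a fusion along the cofinal $\omega$-sequence inside $V[G]$ instead. Work in $V[G]$ with $\gamma_k:=c(k)$. For each $k$ we want to show that $V[G]$ contains some $\omega$-partition of $\gamma_k$ into ground-model sets on each of which $f$ restricts to a set in $V$. To get this, we use the density argument: the set of $q$ that admit a predistributivity witness for the family $\langle D^{\gamma}_\alpha:\alpha<\gamma\rangle$ is dense below $p$, for each fixed $\gamma$ in $V$. Genericity therefore gives, for every $\gamma<\kappa$, some $q_\gamma\in G$ with witnesses $\langle d^\gamma_n\rangle$, and hence sets $E^\gamma_n\in V$ covering $\gamma$ with $f\restriction E^\gamma_n\in V$. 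In $V[G]$ we choose these for $\gamma=\gamma_k$ only; this uses only countably many choices, and each individual choice is available by genericity. Finally we set $B_{\langle k,n\rangle}:=E^{\gamma_k}_n$, which are bounded in $\kappa$. These countably many pieces cover $\kappa$ because the $\gamma_k$ are cofinal in $\kappa$, and each restriction $f\restriction B_{\langle k,n\rangle}$ lies in $V$. This gives the $\kappa$-trace property. The one point I expect to need care with is the uniformity of choosing $r_n\in G\cap d^{\gamma_k}_n$ inside $V[G]$, which amounts to countable choice there and is harmless.
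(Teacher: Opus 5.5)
Your proof is correct and follows essentially the paper's route: you split $\kappa$ along a cofinal $\omega$-sequence in $V[G]$ into bounded pieces, apply predistributivity to the fewer than $\kappa$ dense sets deciding $\dot f$ on each piece, and use elements of $G$ in the witnessing $d_n$'s. Your use of the ground-model sets $E_n=\{\alpha<\gamma: d_n\subseteq D^\gamma_\alpha\}$ streamlines the paper's inductive construction of the $r_{n,i}$ and the strictly increasing $C_{n,i}$. One small technicality: $D^\gamma_\alpha$ is dense open only below $p$, not in all of $\mathbb{P}$, so you should apply predistributivity to $\mathbb{P}/p$ (or add to $D^\gamma_\alpha$ the conditions incompatible with $p$).
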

\begin{proof}
Let $G\s\mathbb{P}$ be $V$-generic. By the $\kappa$-trace property applied to $\id:\kappa\rightarrow\kappa$, there is a sequence $\langle B_n: n<\omega\rangle\in V[G]$ of bounded subsets of $\kappa$ such that $\bigcup_{n<\omega}B_n=\kappa$. Thus $V[G]\models\cf(\kappa)=\omega$, as witnessed by $\langle\sup(B_n): n<\omega\rangle$.  

Conversely, let $f\colon \kappa\rightarrow A$ be a function in $V[G]$ with $A\s V$. Work in $V[G]$ and fix $\langle \nu_n: n<\omega\rangle$ an increasing cofinal sequence in $\kappa$. In the proof we define auxiliary sequences $\langle r_{n,i}: i,n<\omega\rangle$ and $\langle C_{n,i}: i,n<\omega\rangle$ such that the following properties hold for each $n<\omega:$
\begin{enumerate}
    \item[$(\alpha)$] $\langle r_{n,i}: i<\omega\rangle\in{}^\omega G$ is a $\leq$-decreasing sequence.
    \item[$(\beta)$] $\langle C_{n,i}: i<\omega\rangle$ is a $\subsetneq$-increasing sequence of subsets of $[\nu_{n-1},\nu_n)$ such that $\bigcup_{i<\omega}C_{n,i}=[\nu_{n-1},\nu_n).$ Here, by convention, $\nu_{-1}:=0.$
    \item[$(\gamma)$] $r_{n,i}\Vdash_{\mathbb{P}}``\dot{f}\restriction C_{n,i}\in\check{V}$'', where $\dot{f}$ is a $\mathbb{P}$-name such that $\dot{f}_G=f.$ 
\end{enumerate} Fix $n<\omega$ such that $f\restriction [\nu_{n-1},\nu_n)\notin V$ -- otherwise one simply sets $B_n:=[\nu_{n-1},\nu_n)$ -- and let $p\in G$ and $\tau$ be a $\mathbb{P}$-name such that $p\Vdash_{\mathbb{P}}``\tau=\dot{f}\restriction [\nu_{n-1},\nu_n)$''. The construction of the above sequences is accomplished by induction on $i<\omega.$

\smallskip

\underline{Case $i=0$:} For each $\xi\in [\nu_{n-1},\nu_n)$ let $D_\xi:=\{r\in\mathbb{P}: r\parallel \tau(\xi)\}$, and for each $r\in\mathbb{P}$ define $C_r:=\{\xi\in [\nu_{n-1},\nu_n): r\parallel\tau(\xi)\}$. 

\begin{claim}\label{claim: D_0 is dense below p}
  Let $E_0$ be the set of all conditions $r\leq p$ such that: \begin{enumerate}
        \item\label{def of D_0, 1} There is $\langle d_j: j<\omega\rangle\in{}^{\omega}\mathcal{F}(\mathbb{P}/r)$ such that for each $\xi\in [\nu_{n-1},\nu_n)$ there is $j<\omega$ with $d_j\s D_\xi$; 
        \item\label{def of D_0, 2} $C_r$ is non-empty and $r\Vdash_{\mathbb{P}}\tau\restriction\check{C}_{r}\in \check{V}$.
    \end{enumerate}
    Then $E_0$ is dense below $p\in G.$
\end{claim}
\begin{proof}[Proof of claim]
    Fix $q\leq p$. Since $\mathbb{P}$ is $(\omega,\kappa)$-predistributive, we may find a condition $q'\leq q$ witnessing Clause \eqref{def of D_0, 1}. Now we $\leq$-extend $q'$ to $r$ to make sure that $r\Vdash_{\mathbb{P}} \tau(\check{\nu}_{n-1})=\check{e}_{\nu_{n-1}}$ for some $e_{\nu_{n-1}}\in V$, which yields $C_r\neq \emptyset$. Set $h:=\{\langle \xi, e_\xi\rangle\in [\nu_{n-1},\nu_n)\times V : \xi\in C_r\}$, where $e_\xi$ is the unique $e\in V$ with $r\Vdash_{\mathbb{P}}\tau(\check{\xi})=\check{e}$. We conclude that $r\Vdash_{\mathbb{P}} \tau\restriction \check{C}_r=\check{h}\in \check{V}$, and so also Clause \eqref{def of D_0, 2} is met.
\end{proof}
 Stipulate $r_{n,0}\in G\cap E_0$, $C_{n,0}:=C_{r_{n,0}}$, and let $\langle d_j: j<\omega\rangle$ be the sequence witnessing Clause \eqref{def of D_0, 1}.

\smallskip

\underline{Induction step:} Suppose that $\langle r_{n,i}: i\leq j\rangle$ and $\langle C_{n,i}: i\leq j\rangle$ have been constructed complying with Clauses $(\alpha)$, $(\beta)$ and $(\gamma)$ above, for $j<\omega$. Note that $C_{n,j}$ is not $[\nu_{n-1}, \nu_{n})$ for otherwise $r_{n,j}\in G$ would have decided  the value of $f\restriction [\nu_{n-1},\nu_n)$, contrary to our departing assumption. As a result the set
$$E_{j+1}:=\{r\leq r_{n,j}: r\in d_j\;\wedge\; r\Vdash_{\mathbb{P}}\text{$``\tau\restriction \check{C}_r\in \check{V}$''}\;\wedge\; C_{n,j}\subsetneq C_r\}$$
is dense below $r_{n,j}\in G$. So again we can pick $r_{n,j+1}\in G\cap E_{j+1}$ and set $C_{n,j+1}:=C_{r_{n,j+1}}.$

\medskip

This finishes the construction of $\langle r_{n,i}, C_{n,i}: i<\omega\rangle$ witnessing $(\alpha)$, $(\beta)$ and $(\gamma)$.

\begin{claim}
    $\bigcup_{i<\omega}C_{n,i}=[\nu_{n-1},\nu_n)$ and $f\restriction C_{n,i}\in V$, for all $i<\omega$.
\end{claim}
\begin{proof}[Proof of claim]
    By construction,  $r_{n,i}\Vdash_{\mathbb{P}} \dot{f}\restriction \check{C}_{n,i}=\tau\restriction \check{C}_{n,i}\in \check{V}$ hence (as $r_{n,i}\in G$) $f\restriction C_{n,i}\in V.$ For the other claim, suppose $\chi\in [\nu_{n-1},\nu_n)$, and pick $j<\omega$ with $d_j\s D_\chi$. By construction, $r_{n,j+1}\in E_{j+1}$, and so $r_{n,j+1}\in d_j$. In particular, $r_{n,j+1}\in D_\chi=\{r\in\mathbb{P}: r\parallel\tau(\chi)\}$. Recalling that $C_{r_{n,j+1}}=\{\xi\in[\nu_{n-1},\nu_n): r_{n,j+1}\parallel\tau(\xi)\}$, it follows that $\chi\in C_{r_{n,j+1}}=C_{n,j+1}$.
\end{proof}
The above argument was carried out for a fixed $n<\omega$ so 
we obtain a sequence 
$\langle C_{n,i}: i,n<\omega\rangle$ such that $\bigcup_{i<\omega}C_{n,i}:=[\nu_{n-1},\nu_n)$. Pick a  bijection $F\colon \omega\rightarrow\omega\times\omega$ (e.g., the inverse of G\"odel pairing
function) and, for all $m<\omega$, stipulate $$B_m:=\begin{cases}
   [\nu_{m-1},\nu_m), & \text{if $f\restriction [\nu_{m-1},\nu_m)\in V$};\\
   C_{F(m)}, & \text{otherwise.}
\end{cases}
$$
Clearly $\langle B_m: m<\omega\rangle$ is as desired; namely $\bigcup_{m<\omega} B_m=\kappa$, each $B_m$ is bounded, and $f\restriction B_m\in V$ for each $m<\omega$.  
\end{proof}
\begin{cor}
Let $\mathcal{U}$ be a strongly compact (supercompact) measure over $P_\kappa(\lambda)$ and let $\mathbb{Q}$ be a forcing notion. If there is a projection from $\mathbb{P}_{\mathcal{U}}$ into $\mathbb{Q}$ and $\one_\mathbb{Q}\Vdash_{\mathbb{Q}}``\cf(\kappa)=\omega$'', then $\mathbb{Q}$ has the $\kappa$-trace property.
\end{cor}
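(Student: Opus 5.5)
The plan is to reduce the corollary to the Trace Lemma (Lemma \ref{lemma: goodness}) applied to $\mathbb{Q}$ itself. That lemma has three hypotheses: $\mathbb{Q}$ is $(\omega,\kappa)$-predistributive, $\mathbb{Q}$ preserves $\kappa$, and $\one_\mathbb{Q}\Vdash``\cf(\kappa)=\omega$''. The third is assumed outright. Once all three are in hand, the implication $(1)\Rightarrow(2)$ of the Trace Lemma gives the $\kappa$-trace property for $\mathbb{Q}$.

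For predistributivity, I would first invoke Proposition \ref{prop: sigma Prikry and kappa-dist}\eqref{item 2: sigma Prikry}. The strongly compact Prikry forcing $\mathbb{P}_{\mathcal{U}}$ is $\Sigma$-Prikry with the constant sequence $\Sigma=\langle\kappa,\kappa,\dots\rangle$, so $\sup\Sigma=\kappa$. Concretely:
\begin{itemize}
\item the length function is $\ell(t,T)=|t|$;
\item $\leq^*$ restricted to a fixed level $\mathbb{P}_m$ is $\kappa$-closed, since $\mathcal{U}$ is $\kappa$-complete and we can intersect fewer than $\kappa$ trees with a common stem;
\item the Strong Prikry Property was recorded earlier in the paper.
\end{itemize}
If one prefers not to check the $\Sigma$-Prikry axioms, the same argument as in the proof of Proposition \ref{prop: sigma Prikry and kappa-dist}(2) can be run directly for $\mathbb{P}_{\mathcal{U}}$. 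Given fewer than $\kappa$ dense open sets $D_\alpha$ and a condition $p$, use the Strong Prikry Property to get, for each $\alpha$, a direct extension $p_\alpha$ of $p$ and $n_\alpha<\omega$ such that all $n_\alpha$-step extensions of $p_\alpha$ lie in $D_\alpha$. Intersect the trees $p_\alpha$ using $\kappa$-completeness to get a single direct extension $p^*$. Then take $d_n$ to be the set of extensions of $p^*$ of length at least $|\text{stem}(p^*)|+n$; each $d_n$ is dense open below $p^*$ and $d_{n_\alpha}\subseteq D_\alpha$. Having established that $\mathbb{P}_{\mathcal{U}}$ is $(\omega,\kappa)$-predistributive, Proposition \ref{thm: closed under proj} transfers this to $\mathbb{Q}$ along the given projection.

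For preservation of $\kappa$, note that $\mathbb{P}_{\mathcal{U}}$ adds no bounded subsets of $\kappa$, by the Prikry Property together with $\kappa$-closure of $\leq^*$. Any such set added by $\mathbb{Q}$ would lie in $V[H]\subseteq V[G]$, where $G$ is $\mathbb{P}_{\mathcal{U}}$-generic and $H$ is the $\mathbb{Q}$-generic it induces. So $\mathbb{Q}$ adds no bounded subsets of $\kappa$. In particular $\mathbb{Q}$ adds no surjection from some $\delta<\kappa$ onto $\kappa$: any subset of $\delta$, such as the code of one, would already lie in $V$. Hence $\kappa$ remains a cardinal in $V^{\mathbb{Q}}$. (Alternatively, use the earlier proposition that predistributive forcings preserve cardinals below $\kappa$, together with the fact that $\kappa$ is a limit cardinal when it carries a fine measure.)

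The only step needing any care is the verification that $\mathbb{P}_{\mathcal{U}}$ is $(\omega,\kappa)$-predistributive, and the direct Strong Prikry argument above settles it without difficulty. The rest is bookkeeping.
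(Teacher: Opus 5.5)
Your proposal is correct and follows the paper's proof: you transfer $(\omega,\kappa)$-predistributivity from $\mathbb{P}_{\mathcal{U}}$ to $\mathbb{Q}$ via Proposition \ref{thm: closed under proj}, check that $\mathbb{Q}$ preserves $\kappa$, and then apply the Trace Lemma \ref{lemma: goodness} using the cofinality hypothesis. The paper leaves two facts implicit, namely that $\mathbb{P}_{\mathcal{U}}$ is itself $(\omega,\kappa)$-predistributive and that $\mathbb{Q}$ "clearly" preserves $\kappa$; your direct Strong Prikry argument and your no-new-bounded-subsets argument supply both correctly.
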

\begin{proof}
    By Proposition \ref{thm: closed under proj}, $\mathbb{Q}$ is $(\omega,\kappa)$-predistributive and clearly preserves $\kappa$. Thus, Lemma \ref{lemma: goodness} applies, yielding the $\kappa$-trace property for $\mathbb{Q}$.
\end{proof} 

The $\kappa$-trace property admits a combinatorial characterization arising from a weakening of $(\omega,\kappa)$-predistributivity. Such a weakening comes up naturally if we consider the following characterization, noticed by Goldberg, of $(\omega,\kappa)$-predistributive forcings:
\begin{prop}[Goldberg]\label{prop: equiv}
Let $\mathbb{Q}$ be a separative forcing notion. The following are equivalent:
\begin{enumerate}
    \item $\mathbb{Q}$ is $(\omega,\kappa)$-predistributive.
    \item For every $V$-generic $G\subseteq \mathbb{Q}$, any $\gamma<\kappa$ and any function $t:\gamma\to \ord$, there is  $\langle B_n: n<\omega\rangle\in V$ such that $\gamma=\bigcup_{n<\omega} B_n$ and for every $n<\omega$, $t\restriction B_n\in V$.
\end{enumerate}
\end{prop}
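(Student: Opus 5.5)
Both directions go through the standard translation between dense open sets and names for functions on $\gamma$. Throughout, fix $p\in\mathbb{Q}$, $\gamma<\kappa$, and a name $\dot t$ with $p\Vdash \dot t:\check\gamma\to\ord$.

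\emph{Direction (1)$\Rightarrow$(2).} For $\alpha<\gamma$ let $D_\alpha=\{r\le p: r\parallel \dot t(\alpha)\}$, extended by the conditions incompatible with $p$ so that it is dense open in $\mathbb{Q}$. By predistributivity there are $q\le p$ and $\langle d_n:n<\omega\rangle\in{}^\omega\mathcal{F}(\mathbb{Q}/q)$ such that every $\alpha$ has some $n$ with $d_n\subseteq D_\alpha$. Put $B_n=\{\alpha<\gamma: d_n\subseteq D_\alpha\}$; this sequence lies in $V$ and covers $\gamma$. If $G\ni q$ is generic, pick $r_n\in G\cap d_n$. Then $r_n$ decides $\dot t(\alpha)$ for every $\alpha\in B_n$, so $r_n$ forces $\dot t\restriction B_n$ to equal a ground-model function. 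Hence the set of conditions forcing the conclusion of (2) is dense below $p$, and (2) holds in every generic extension.

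\emph{Direction (2)$\Rightarrow$(1).} Here we must manufacture the sets $d_n$. Given $p$ and $\langle D_\alpha:\alpha<\gamma\rangle$, use separativity to define the name $\dot t(\alpha)$ as the name for the $\le$-maximal-rank element of $G\cap D_\alpha$, or better, as any element of $G\cap D_\alpha$ chosen by a fixed well-order of $\mathbb{Q}$. Each $\dot t(\alpha)$ is a name for an element of $\mathbb{Q}\subseteq\ord$ (after coding). By (2) applied to some generic containing $p$, there are $q\le p$, a sequence $\langle B_n\rangle\in V$ covering $\gamma$, and functions $g_n\in V$ with $q\Vdash \dot t\restriction B_n=\check g_n$. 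This needs a mild mixing argument, since the $B_n$ and $g_n$ depend on $G$; we take $q$ to force particular values of the ground-model objects $\langle B_n\rangle$ and $\langle g_n\rangle$.

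Now define $d_n=\{r\le q: r\le g_n(\alpha) \text{ for all } \alpha\in B_n\}$. Since $q$ forces $g_n(\alpha)\in G$ for each $\alpha\in B_n$, and $g_n(\alpha)\in D_\alpha$, we get $d_n\subseteq D_\alpha$ for $\alpha\in B_n$, because $D_\alpha$ is open. The remaining point, and the main obstacle, is that $d_n$ must be dense below $q$; this is where separativity enters. Fix $r\le q$. We know $r\Vdash \check g_n(\alpha)\in\dot G$ for all $\alpha\in B_n$, and separativity gives $r\le g_n(\alpha)$ for each such $\alpha$. So every $r\le q$ already lies in $d_n$, and $d_n=\mathbb{Q}/q$ is trivially dense open.

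This looks too strong, and it is: it says $q$ lies below $D_\alpha$ for all $\alpha$. The flaw is that $q$ forces the whole function $g_n$ only if $q$ decides $\langle g_n\rangle$. The correct statement is weaker: the values $B_n,g_n$ are forced only by conditions in $G$, not by a single $q$. So I would instead fix the names $\dot B_n,\dot g_n$, assume without loss that $q$ decides $\langle \dot B_n:n<\omega\rangle$ as $\langle B_n\rangle$ (the covering), and let $d_n$ be the set of $r\le q$ deciding $\dot g_n$. Each such $d_n$ is dense open below $q$. For $r\in d_n$ with decided value $g$ and $\alpha\in B_n$, we have $r\Vdash g(\alpha)\in \dot G\cap D_\alpha$, so separativity gives $r\le g(\alpha)\in D_\alpha$, and $r\in D_\alpha$ by openness. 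Deciding the whole sequence $\langle B_n\rangle$ is the delicate step. If a single $q$ cannot decide it, I would instead encode $\alpha\mapsto(\text{least } n \text{ with } \alpha\in B_n)$ into the function $t$ itself, and apply (2) a second time to this coded function; this yields ground-model pieces on which the index is also decided.
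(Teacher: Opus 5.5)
Your proposal is correct once the first attempt is discarded, and it is essentially the paper's proof. In (1)$\Rightarrow$(2) you set $B_n=\{\alpha: d_n\subseteq D_\alpha\}$, where $D_\alpha$ is the set of conditions deciding $\dot t(\alpha)$. In (2)$\Rightarrow$(1) you let $d_n$ be the conditions below $q$ deciding $\dot t\restriction B_n$, and you get $d_n\subseteq D_\alpha$ from separativity and openness, exactly as the paper does.

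The step you call delicate is immediate, and it is precisely where the hypothesis $\langle B_n:n<\omega\rangle\in V$ (rather than $\in V[G]$) is used. Because this sequence is a ground-model object, the forcing theorem gives some $q\le p$ in $G$ forcing that the sets $\check B_n$ cover $\gamma$ and that $\dot t\restriction\check B_n\in\check V$ for all $n$. So your fallback of re-applying (2) to a coded function is unnecessary; it would only return another ground-model sequence in exactly the same position.
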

\begin{proof}
First we verify $(1)\to (2)$. Let $\dot{t}$ be a $\mathbb{Q}$-name for $t$ and let $p\in G$ force that $p\Vdash_{\mathbb{Q}}\dot{t}:\check{\gamma}\to \ord$. For each $\alpha$, let $D_\alpha$ be the dense set below $p$ of conditions deciding $\dot{t}(\alpha)$. By $(\omega,\kappa)$-predistributivity there is $q\leq p$ and a sequence $\langle E_n: n<\omega\rangle$ dense below $q$ such that for every $\alpha$ there is $n$ such that $E_n\subseteq D_\alpha$. Let $B_n=\{\alpha<\gamma: E_n\subseteq D_\alpha\}$. Then $\bigcup_n B_n=\gamma$. By density, we may find such a $q$ in the generic $G$. Take any $q_n\in E_n\cap G$. Then for each $\alpha\in B_n$, $q_n\in D_\alpha$ and therefore $q_n$ decides $\dot{t}(\alpha)=t(\alpha)$. Hence $t\restriction B_n\in V$ for every $n<\omega$.
    To see $(2)\to (1)$, let $\langle D_\alpha: \alpha<\gamma\rangle$ be a collection of dense open sets and $p$ be a condition. Let $G$ be generic with $p\in G$ and let $t(\alpha)$ be a choice for a condition in $G\cap D_\alpha$ (this can easily be coded in $V$ as an ordinal function). Then by $(2)$, there is $\langle B_n\mid n<\omega\rangle$  such that $\gamma=\bigcup_nB_n$ and $t\restriction B_n\in V$ for every $n$.
    Let $\dot{t}$ be a name such that $(\dot{t})_G=t$ and let $q\leq p$, $q\in G$ force that for every $n<\omega$, $\dot{t}\restriction B_n\in V$, and for every $\alpha$, $\dot{t}(\alpha)\in D_\alpha$. Let $E_n$ be the dense open set below $q$ of conditions which decide $\dot{t}\restriction B_n$. Let $\alpha<\gamma$. Then there is $n<\omega$ such that $\alpha \in B_n$. We claim that $E_n\subseteq D_\alpha$, indeed, for each $q'\in E_n$, there is $x\in D_\alpha$ such that $q'\Vdash \dot{t}(\alpha)=x\in \dot{G}\cap D_\alpha$. Since $q'\Vdash x\in \dot{G}$, $q'\leq x$ (we assume separativity), and since $D_\alpha$ is open $q'\in D_\alpha$ as wanted.
 \end{proof}

 By Proposition \ref{prop: equiv} the connection with the trace property is more evident and suggests the following natural weakening:
 \begin{definition}\label{def: weakly pre gen}
     Let $\kappa$ be an uncountable cardinal. A forcing $\mathbb{P}$ is \emph{weakly} $(\omega,\kappa)$-\emph{predistributive} if for any $V$-generic $G\s\mathbb{P}$ and any function $f\in{}^{<\kappa}V$ in $V[G]$ there is  $\langle B_n: n<\omega\rangle\in V[G]$ such that $\dom(f)=\bigcup_n B_n$ with $f\restriction B_n\in V$ for each $n<\omega$.
 \end{definition}
 As expected, there is a combinatorial definition of weakly $(\omega,\kappa)$-predistributive forcings, along the line of Proposition \ref{prop: equiv}.
\begin{lemma}
     Let $\mathbb{P}$ be a forcing poset. Then the following are equivalent: \begin{enumerate}
         \item $\mathbb{P}$ is weakly $(\omega,\kappa)$-predistributive.
         \item  For every $\langle D_\alpha: \alpha<\gamma\rangle\in{}^{<\kappa}\mathcal{F}(\mathbb{P})$, there are countably many maximal antichains $\langle A_n: n<\omega\rangle$, such that for every $\alpha<\gamma$, $D_\alpha\cap(\bigcup_{n<\omega}A_n)$ is predense.
     \end{enumerate}
 \end{lemma}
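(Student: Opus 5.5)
Both directions go through names and maximal antichains, modelled on the proof of Proposition \ref{prop: equiv}, except that the covering sequence $\langle B_n\rangle$ now lives in $V[G]$ rather than in $V$. The role played there by the dense sets $E_n$ will be played here by maximal antichains $A_n$.

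For $(1)\Rightarrow(2)$, fix $\langle D_\alpha:\alpha<\gamma\rangle$ and let $G$ be generic. In $V[G]$ define $f:\gamma\to\mathbb{P}\subseteq V$ by choosing $f(\alpha)\in G\cap D_\alpha$, using a well-order of $\mathbb{P}$ in $V$ so that $f$ has a name. Weak predistributivity gives $\langle B_n\rangle\in V[G]$ covering $\gamma$ with $f\restriction B_n\in V$. Let $\dot B_n$ and $\dot f$ be names for these objects. For each $n$, the set of conditions that decide both $\dot B_n$ and $\dot f\restriction\dot B_n$ is dense open, so we can fix a maximal antichain $A_n$ inside it. We will shrink each element of $A_n$, if necessary, so that it also forces the relevant properties of $\dot f$ and $\langle\dot B_n\rangle$; first, though, we handle the failure of a single condition to force those properties.

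Now fix $\alpha$ and $p\in\mathbb{P}$. Choose $r\le p$ and $n$ with $r\Vdash \alpha\in\dot B_n$, and extend $r$ to meet some $a\in A_n$. Then $a$ decides $\dot f(\alpha)=x$ with $x\in D_\alpha$, and $a$ forces $x\in\dot G$. By separativity, $a\le x$, and since $D_\alpha$ is open, $a\in D_\alpha$. Hence every $p$ is compatible with an element of $D_\alpha\cap\bigcup_n A_n$, which is predensity.

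The main obstacle is that the top condition need not force that $\dot f$ and $\langle\dot B_n\rangle$ have the required properties. I would deal with this by choosing a maximal antichain $W$ of conditions $w$, each with its own names $\dot f^w,\dot B^w_n$ forcing these properties, and then mixing the names (Lemma \ref{lem: mixing}). After mixing, $\one$ forces everything, since the construction of $f$ can be carried out in every generic extension.

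For $(2)\Rightarrow(1)$, take $f:\gamma\to V$ in $V[G]$ with name $\dot f$. By mixing, we may assume $\one\Vdash\dot f:\check\gamma\to\check X$ for some set $X\in V$. Let $D_\alpha$ be the set of conditions deciding $\dot f(\alpha)$; each is dense open. Apply $(2)$ to get antichains $A_n$. In $V[G]$, let $a_n$ be the unique element of $G\cap A_n$, which exists by maximality, and set $B_n=\{\alpha : a_n\in D_\alpha\}$. Then $f\restriction B_n$ is read off from $a_n$ in $V$, so $f\restriction B_n\in V$.

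It remains to show the $B_n$ cover $\gamma$. Fix $\alpha$. Since $D_\alpha\cap\bigcup_n A_n$ is predense, $G$ meets it at some $a\in A_n$. Then $a=a_n$, so $\alpha\in B_n$. The sequence $\langle B_n\rangle$ is definable from $G$, so it lies in $V[G]$, which completes this direction.
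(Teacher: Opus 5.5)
Your proof is correct and follows essentially the paper's route. $(2)\Rightarrow(1)$ is identical: the unique $a_n\in G\cap A_n$ and $B_n=\{\alpha: a_n\in D_\alpha\}$. For $(1)\Rightarrow(2)$ you likewise apply weak predistributivity to a choice function $\alpha\mapsto$ an element of $G\cap D_\alpha$, and take maximal antichains of conditions deciding the $n$-th ground-model piece, using separativity and openness. The only cosmetic difference is that you decide $\dot B_n$ and $\dot f\restriction\dot B_n$, whereas the paper decides a single condition $\dot q_n\in\dot G$ lying below $\rng(f\restriction B_n)$. Your explicit check that the element $a\in A_n$ compatible with $r$ itself lies in $D_\alpha$, and your mixing argument making $\one$ force the relevant properties, are a bit more careful than the paper's sketch, which stops at an element of the dense set $E_n$ rather than of the antichain $A_n$.
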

 \begin{proof}
    First, we verify $(2)\rightarrow (1)$. Let $f\in{}^{<\kappa}V$ be a function in $V[G]$. We can find a name $\dot{f}$ such that $\one_{\P}\Vdash\dot{f}:\check{\gamma}\to V$ for some $\gamma<\kappa$.  For each $\alpha<\gamma$ let $D_\alpha=\{q: q || \dot{f}(\alpha)\}$. By the assumption there is a sequence $\langle A_n\mid n<\omega\rangle$ of antichains witnessing $(2)$. For  each $n$, let $q_n\in G\cap A_n$, and consider $B_n=\{\alpha: q_n\in D_\alpha\}$. Then each $B_n\in V$, but since the sequence $\langle q_n: n<\omega\rangle\in V[G]$ we can only guarantee that $\langle B_n: n<\omega\rangle\in V[G]$. To see that $\langle B_n: n<\omega\rangle$ is a partition of $\gamma$, pick $\alpha<\kappa$. By our assumption, $D_\alpha\cap (\bigcup_{n<\omega}A_n)$ is predense, so there is $q'\in G\cap (\bigcup_{n<\omega}A_n)\cap D_\alpha$. Let $n$ be such that $q'\in A_n$. Since $A_n$ is an antichain, $q'=q_n$, which by definition means that $\alpha\in B_n$. 
    
   To see $(1) \rightarrow (2)$, let $\langle D_\alpha: \alpha<\gamma\rangle$ be a sequence of dense open sets, with $\gamma<\kappa$, and let $G$ be $V$-generic. Set $f:\gamma\to G\subseteq V$ to be a choice function sending $\alpha$ to $p_\alpha\in D_\alpha\cap G$. By our assumption, there is a partition $\langle B_n: n<\omega\rangle\in V[G]$ of $\gamma$, such that for every $n<\omega$, $f\restriction B_n\in V$. This means that there is $q_n\in G$ such that $q_n\Vdash \rng(f\restriction B_n)\subseteq G$ which in turn implies (by separativity) that $q_n\leq \rng(f\restriction B_n)$. Let $\dot{q_n}$ be a name for $q_n$ such that $\one_{\P}\Vdash``$ for every $\alpha$ there is $n$ such that $\dot{q}_n\in D_\alpha"$. Let $$E_n=\{q: q\Vdash \dot{q}_n\geq q\}.$$ Then by Zorn's lemma, $E_n$ contains a maximal antichain $A_n$. To conclude, we claim that for each $\alpha$, $D_\alpha\cap(\bigcup_{n<\omega}A_n)$ is predense. Indeed, let $r$ be any condition and let $H$ be generic with $r\in H$. Then there is $n$ such that $(\dot{q}_n)_{H}\in D_\alpha$ and therefore there is $r'\leq(\dot{q}_n)_H,r$ which is in $E_n$.     
 \end{proof}
 We can now state and prove a combinatorial characterization of the trace property. 
 \begin{prop}
     Let $\mathbb{P}$ be a forcing poset preserving $\kappa$. Then the following are equivalent: \begin{enumerate}
         \item $\one_\P\Vdash_{\mathbb{P}}``\cf(\kappa)=\omega$'' and $\mathbb{P}$ is weakly $(\omega,\kappa^+)$-predistributive.
         \item $\mathbb{P}$ has the $\kappa$-trace property.
     \end{enumerate}
 \end{prop}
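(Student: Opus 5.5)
The plan is to prove the two implications separately, modelling both on the Trace Lemma (Lemma \ref{lemma: goodness}).

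\emph{(2) $\Rightarrow$ (1).} Let $G$ be $V$-generic. Applying the $\kappa$-trace property to $\id\colon\kappa\to\kappa$ gives bounded sets $B_n$ with $\bigcup_n B_n=\kappa$. Then $\langle\sup B_n: n<\omega\rangle$ is cofinal in $\kappa$, so $\cf(\kappa)=\omega$ in $V[G]$.

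For weak $(\omega,\kappa^+)$-predistributivity, take $f\in{}^{\gamma}V$ in $V[G]$ with $\gamma<\kappa^+$. If $\gamma\le\kappa$, fix a bijection $e\colon\kappa\to\gamma$ in $V$ (or pad with a trivial function when $\gamma<\kappa$). Apply the trace property to $f\circ e$, obtaining $B_n$ with $(f\circ e)\restriction B_n\in V$. Then $e``B_n$ covers $\gamma$, and $f\restriction e``B_n\in V$ because $e\in V$. If $\kappa<\gamma<\kappa^+$, the same argument works with a bijection $e\colon\kappa\to\gamma$ in $V$; it exists because $|\gamma|^V=\kappa$. So in every case we only need a $V$-bijection from $\kappa$ onto $\gamma$, and the boundedness of the $B_n$ is simply discarded.

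\emph{(1) $\Rightarrow$ (2).} Let $f\colon\kappa\to V$ be in $V[G]$ and fix a cofinal sequence $\langle\nu_n:n<\omega\rangle$ in $\kappa$, with $\nu_{-1}=0$. Each interval $I_n=[\nu_{n-1},\nu_n)$ has size less than $\kappa<\kappa^+$. Hence weak $(\omega,\kappa^+)$-predistributivity applies to $f\restriction I_n$, viewed as a function on an ordinal via the order isomorphism, which lies in $V$. This yields $\langle C_{n,i}: i<\omega\rangle\in V[G]$ covering $I_n$ with each $f\restriction C_{n,i}\in V$.

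The one subtlety is that, for each $n$, the witnessing sequence must be chosen inside $V[G]$. We use the axiom of choice in $V[G]$ to pick these countably many sequences simultaneously. Every $C_{n,i}$ is bounded in $\kappa$, since it sits inside the bounded interval $I_n$. Now re-index the family $\{C_{n,i}: n,i<\omega\}$ as $\langle B_m:m<\omega\rangle$ via a bijection $\omega\to\omega\times\omega$, exactly as at the end of the proof of Lemma \ref{lemma: goodness}. The $B_m$ are bounded, they cover $\kappa$, and $f\restriction B_m\in V$ for every $m$, so $V[G]$ has the $\kappa$-trace property.

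I do not expect a real obstacle. The only point to get right is the reduction of arbitrary domains $\gamma<\kappa^+$ (and arbitrary ranges, using the remark after Definition \ref{def trace property}) to functions on $\kappa$ through ground-model bijections. This is harmless because composing with a map in $V$ preserves membership in $V$ of every restriction.
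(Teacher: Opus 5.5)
Your proof is correct. For (2)$\Rightarrow$(1) you do what the paper does, and you supply the detail the paper calls ``clear by definition'': transporting a function on $\gamma<\kappa^+$ to one on $\kappa$ along a ground-model bijection, or padding when $\gamma<\kappa$. (For infinite $\gamma<\kappa$ no bijection $\kappa\to\gamma$ exists, so the padding, or a ground-model surjection, is what you actually use there.)

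For (1)$\Rightarrow$(2) your route is genuinely different. The paper applies weak $(\omega,\kappa^+)$-predistributivity once, to $f$ itself; this is allowed since $\dom(f)=\kappa<\kappa^+$. The resulting pieces $B_m$ need not be bounded, and the paper makes them bounded by setting $B^\ast_{F(m,n)}=B_m\cap\nu_n$. That needs no choice of countably many witnesses and no transport along order isomorphisms.

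You instead cut $\kappa$ into the intervals $[\nu_{n-1},\nu_n)$ first. You apply the hypothesis to each interval, select the witnesses using choice in $V[G]$, and re-index, exactly as in the proof of the Trace Lemma. This costs a little more bookkeeping, but it gains something. Each interval has order type $<\kappa$, so you only ever use weak $(\omega,\kappa)$-predistributivity.

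Your argument therefore shows more than the statement: if $\mathbb{P}$ preserves $\kappa$ and $\one\Vdash\cf(\kappa)=\omega$, then weak $(\omega,\kappa)$-predistributivity already gives the $\kappa$-trace property. Combined with (2)$\Rightarrow$(1), weak $(\omega,\kappa)$- and weak $(\omega,\kappa^+)$-predistributivity coincide under that hypothesis. So $\kappa^+$ in clause (1) could be weakened to $\kappa$, and you should say so explicitly rather than citing the $\kappa^+$ version.
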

 \begin{proof}
Suppose $\mathbb{P}$ has the $\kappa$-trace property. In the proof of Lemma \ref{lemma: goodness}, we have already observed that $\one\Vdash_{\mathbb{P}}``\cf(\kappa)=\omega$''. Moreover, it is clear by Definition \ref{def: weakly pre gen} that $\mathbb{P}$ is weakly $(\omega,\kappa^+)$-predistributive. 

Conversely, suppose $\mathbb{P}$ is weakly $(\omega,\kappa^+)$-predistributive let $G\s \mathbb{P}$ be $V$-generic, and let $f:\kappa\to E$ be a function in $V[G]$ with $E\s V$. Since $\kappa$ has countable cofinality in $V[G]$ we may pick a countable cofinal sequence $\langle\nu_n: n<\omega\rangle\in V[G]$ with limit $\kappa$. Since $\mathbb{P}$ is weakly $(\omega,\kappa^+)$-predistributive, there is $\langle B_m: m<\omega\rangle\in V[G]$ of $\kappa$ with $f\restriction B_m\in V$ for each $m<\omega$. Letting $F\colon \omega\times\omega\rightarrow\omega$ be the G\"odel pairing function, define $B^\ast_{F(m,n)}:=B_m\cap\nu_n$, for each $m, n<\omega$. For each $i<\omega$, stipulate $F^{-1}(i)=(m,n)$. It is clear that $f\restriction B^\ast_i=(f\restriction B_m)\restriction \nu_n\in V$, and so $\langle B_i^\ast: i<\omega\rangle\in V[G]$ is a suitable witness for the $\kappa$-trace property.
 \end{proof}
Let us conclude this section with a summarizing table of the relevant definitions and their respective characterizations. 

\begin{table}[htpb]
    \centering
    \renewcommand{\arraystretch}{1.6} 
    
    \begin{tabular}{
        >{\raggedright\arraybackslash}p{2.5cm} 
        >{\raggedright\arraybackslash}p{4.6cm} 
        >{\raggedright\arraybackslash}p{4.6cm}
    }
        \toprule
        \textbf{Property of $\mathbb{P}$} & \textbf{Characterization in $V[G]$} & \textbf{Combinatorial Characterization} \\
        
        $(\omega,\kappa)$-predistributive & 
        $\forall f \in {}^{<\kappa}\text{On}\cap V[G]$, $\exists \mathcal{B} \in V\cap {}^\omega P(\gamma)$ such that $\gamma = \bigsqcup \mathcal{B}$ and $f \upharpoonright \mathcal{B}_n \in V$ for all $n$ & 
        $\forall \vec{\mathcal{D}} \in {}^{<\kappa}\mathcal{F} (\mathbb{Q}) \ \forall p \ \exists q \leq p$ $\exists \vec{E} \in {}^\omega\mathcal{F}_q(\mathbb{Q})$ such that $ \ \forall \alpha \ \exists n \ \vec{E}_n \subseteq \vec{D}_\alpha$ \\
        
        Weakly $(\omega,\kappa)$-predistributive & 
        $\forall f \in {}^{<\kappa}\text{On}\cap V[G]$, $\exists \mathcal{B} \in V[G]\cap {}^\omega P(\gamma)$ such that $\gamma = \bigsqcup \mathcal{B}$ and $f \upharpoonright \mathcal{B}_n \in V$ for all $n$ & 
        $\forall \vec{\mathcal{D}}\in {}^{<\kappa}\mathcal{F}(\mathbb{Q}) \  \exists \vec{A} \in {}^\omega P(\mathbb{Q})$ such that $\vec{A}_n$ is an antichain and $\forall \alpha \ \vec{D}_\alpha \cap \bigcup_n \vec{A}_n$ is predense \\
        
        $\kappa$-trace property & 
        $\forall f \in {}^{\kappa}\text{On}\cap V[G]$, $\exists \mathcal{B} \in V[G]\cap {}^{\omega}P_{\text{bd}}(\kappa)$ such that $\kappa=\bigsqcup\mathcal{B}$ and $ f \upharpoonright \mathcal{B}_n \in V$  for all $n$ & 
        Weakly $(\omega,\kappa^+)$-predistributive and $\one\Vdash_{\mathbb{P}}\cf(\kappa)=\omega$
        \\
        
        \bottomrule
    \end{tabular}
\end{table}

\section{The Tukey type of generic filters and large projection}\label{sec: Tukey type}
In this section we will extend our analysis using the cofinal structure of generic filters. The cofinal structure refers to the study of cofinal subsets of a given partially ordered set\footnote{If $\mathbb{P}$ is a partially ordered set, then a subset $\mathcal{B}\subseteq \mathbb{P}$ is called \textit{cofinal} if for every $p\in\mathbb{P}$ there is $b\in \mathcal{B}$ such that $p\leq_{\mathbb{P}} b$.}. This structure will emphasize the difficulty one has to face when trying to generalize the results of this paper to the class of forcings which are larger than $\lambda$. We will then provide some partial results about this larger class. The main quantity related to the cofinal structure of an ordered set $\mathbb{P}$ is the \textit{cofinality of $\mathbb{P}$} (also known as the \textit{character of $\mathbb{P}$}):
$$\cf(\mathbb{P})=\min\{|\mathcal{B}|\mid \mathcal{B}\subseteq \mathbb{P}\text{ is cofinal in }\mathbb{P}\}.$$ In what follows we view forcing notions as partially ordered sets with their reverse order. Note that cofinal sets coincide with dense sets. We also order filters of the forcing with the order induced from the forcing, which makes them \textit{directed}. Let us immediately start with the main observation of this section:
\begin{theorem}\label{Tukey-type omega Theorem}
    Let $\mathbb{Q}$ be any forcing. Then the following are equivalent:
    \begin{enumerate}
        \item There is a dense $D\subseteq\mathbb{P}_{\mathcal{U}}$ and a forcing projection $\pi:D\to\mathbb{Q}$ which only depends on the stem, i.e, for any $(s,S)\in D$ and any $\U$-tree $T$ with stem $s$, $\pi(s,S)=\pi(s,T)$.
        \item There is a $\mathbb{P}_{\mathcal{U}}$-name $\dot{H}$ such that $\one_\P\Vdash ``\dot{H}\subseteq\mathbb{Q}\text{ is V-generic}, \ \cf(H)=\omega"$,  and for any $q\in\mathbb{Q}$ there is $p\in\mathbb{P}_{\mathcal{U}}$ such that $p\Vdash q\in \dot{H}$.
        \item $\Q$ has a dense subset of size at most $\lambda$ and there is $D\subseteq\mathbb{P}_{\mathcal{U}}$ dense and a projection $\pi:D\to\mathbb{Q}$.
        
    \end{enumerate}
\end{theorem}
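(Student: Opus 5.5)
I will prove the cycle $(1)\Rightarrow(2)\Rightarrow(3)\Rightarrow(1)$.

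\emph{$(1)\Rightarrow(2)$.} Let $\pi:D\to\mathbb{Q}$ be a projection depending only on the stem, and let $\dot H$ be the name for the upward closure of $\pi``\dot G$. By Fact \ref{fact: basics of weak projections}, $\dot H$ is forced to be $V$-generic. Fact \ref{fact: proj and complete emb} gives, for each $q$, a condition $p$ forcing $q\in\dot H$. For $\cf(H)=\omega$, work in $V[G]$ with Prikry sequence $\langle x_n\rangle$. For each $n$ I pick some $p_n\in G\cap D$ whose stem has length at least $n$. Since $\pi$ depends only on stems, $\pi(p_n)$ is determined by the stem, which is an initial segment of the generic sequence. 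These values are decreasing up to compatibility in $H$, and every element of $\pi``(G\cap D)$ is above some $\pi(p)$ where $p$ has stem $\langle x_0,\dots,x_{k-1}\rangle$. So $\{\pi(p): p\in G\cap D,\ \text{stem}(p)=\vec x\restriction k,\ k<\omega\}$ is a countable cofinal subset of $H$, since there are only countably many possible stems along $G$. Taking a decreasing refinement via directedness of $H$ gives $\cf(H)=\omega$.

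\emph{$(2)\Rightarrow(3)$.} Fix a name $\langle\dot q_n:n<\omega\rangle$ for a cofinal sequence in $\dot H$. By the Prikry property and the strong Prikry property, for each $p$ and $n$ there is $p'\leq^* p$ and $m$ such that every $m$-step extension of $p'$ decides $\dot q_n$. Hence the set $Q_0$ of all $q$ such that some condition forces $\dot q_n=q$ for some $n$ has size at most the number of stems, which is $\lambda^{<\kappa}\cdot$(number of trees decided over a given stem). To sharpen this to $\lambda$, note that deciding conditions may be taken with arbitrary stems, so I bound instead by $|{}^{<\omega}P_\kappa(\lambda)|$ times countable. This is where $\lambda^{<\kappa}$ versus $\lambda$ may bite; I would use that the values are forced by $m$-step extensions indexed by stems in $\mathcal{L}_m$ of a single tree. $Q_0$ is dense: given $q$, choose $p\Vdash q\in\dot H$, then extend to force $\dot q_n\leq q$, giving an element of $Q_0$ below $q$. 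The projection comes from Fact \ref{fact: proj and complete emb}(\ref{item 3: proj and com emb}).

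\emph{$(3)\Rightarrow(1)$, the main obstacle.} Let $Q_0$ be dense of size $\le\lambda$; identify it with $\lambda$. By Theorem \ref{thm: omega generated from projection} and Theorem \ref{Thm: more general equivalence}, together with Remark \ref{Remark: last remark of section 3}, for each $q$ we get $\mathcal{F}_q(\mathbb{Q})\leq^{\fin}_{sK}\mathcal{U}$. This is witnessed by decreasing $q_n\in j_\omega(\mathbb{Q})$ with $q_n=j_{m_n,\omega}(p_n)$ and $p_n=[g_n]$ for functions $g_n$ on $P_\kappa(\lambda)^{m_n}$. I then mimic the proof of Theorem \ref{thm: seed implies weak projection}. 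The value $\pi$ assigns to a stem $\vec x$ of length $k$ is obtained by recursively applying the $g$'s to blocks of $\vec x$, with the choice of the next witness depending on the current value. On the measure-one set where the values are decreasing, I shrink the tree to get a dense $D$. Genericity then comes from the $\FIN$-limit property: for $D'$ dense open, $\mathcal{U}$-almost every continuation eventually lands in $D'$.

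To ensure the range is dense (a full projection), I use $|Q_0|\le\lambda$ as in Remark \ref{Remark: full projection}. I modify the first block function on a null set of size $\lambda$ so that every $q\in Q_0$ occurs, restarting the construction below $q$ with the witnesses for $\mathcal{F}_q$. The delicate point is verifying the projection clause $\pi``(D/p)$ is dense below $\pi(p)$. For this, the witnesses chosen below $\pi(p)$ must be applied to the tail of the stem, so the recursion has to restart locally at each value.
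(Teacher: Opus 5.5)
Your $(1)\Rightarrow(2)$ is correct and more direct than the paper's route, which goes $(1)\Rightarrow(3)\Rightarrow(2)$ through Theorem \ref{thm: omega generated from projection}. Your $(2)\Rightarrow(3)$ is also fine: conditions with a common stem are compatible, so all conditions with a given stem that decide $\dot q_n$ decide it the same way. The resulting bound is really $\lambda^{<\kappa}$, and looking at levels of a single tree does not lower it. However, the paper's $(1)\Rightarrow(3)$ makes the same identification of the number of stems with $\lambda$, so this is not the real issue.

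The genuine gap is $(3)\Rightarrow(1)$. A witness $\langle g_n,m_n\rangle$ for $\mathcal{F}_q(\mathbb{Q})\leq^{\fin}_{sK}\mathcal{U}$ only gives, for each dense open $D'$, some $N$ with $\{\vec x: g_n(\vec x)\in D'\}\in\mathcal{U}^{m_n}$ for $n\geq N$. That is control over almost every stem. A stem-only projection, by contrast, must reach $D'$ below \emph{every} stem of a condition in its domain. The exceptional null sets vary with $D'$, and there are far more than $\kappa$ dense open sets, so no single shrinking of the tree avoids them all.

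Concretely, redefine each $g_n$ to be $\one_{\mathbb{Q}}$ on tuples whose first entry lies in a fixed $\mathcal{U}$-null set $Z$. This does not change $[g_n]_{\mathcal{U}^{m_n}}$, so you still have a witness. Yet the map obtained by feeding stems into the $g_n$'s is constantly $\one_{\mathbb{Q}}$ below every stem starting in $Z$.

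Restarting at each new value does not cure this. If you switch to the witness for the current value after every application, you only ever apply the first function of each witness. The $\FIN$-limit property is a tail property, so nothing guarantees that any step enters $D'$. The recursion of Theorem \ref{thm: seed implies weak projection} works only because in the distributive case a single seed already satisfies $\mathcal{F}_q(\mathbb{Q})\subseteq(g_q)_*(\mathcal{U}^{n_q})$. An intermediate restart schedule would have to be tuned to all $D'$ at once, and your sketch gives no such schedule.

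The missing idea is to read the stem-only map off names, where genericity is forced by every condition, rather than off ultrapower representatives. This is the paper's $(2)\Rightarrow(1)$, and $(3)\Rightarrow(2)$ is exactly Theorem \ref{thm: omega generated from projection}, which you already invoke. The construction goes as follows.

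Take names with $\one\Vdash$ ``$\langle\dot q_n\rangle$ is decreasing and generates $\dot H$''. Given any $p_*$, the strong Prikry property yields $m_n$ and $p_n\leq^* p_*$ all of whose $m_n$-step extensions decide $\dot q_0,\dots,\dot q_n$. Let $p^*\leq^* p_n$ for all $n$. Define $\pi(\vec x)$ to be the value of $\dot q_n$ decided by $p^*$ with stem extended by $\vec x$, for the largest $n$ with $m_n\leq|\vec x|$.

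Monotonicity follows from the $\dot q_n$ being forced decreasing. Density holds below every stem: given $(\vec x,B)\leq p^*$ and $D'$, extend to $(\vec y,C)$ forcing $\dot q_n\in D'$ for all $n\geq N$, then lengthen $\vec y$ inside $C$ to length at least $m_N$. Gluing along a maximal antichain of such $p^*$ preserves stem-dependence, since cones below incompatible conditions have disjoint sets of stems. Finally, the second clause of (2) makes the range dense.
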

\begin{remark} By Theorem \ref{Thm: more general equivalence}, and Fact \ref{fact: proj and complete emb} the above is equivalent to $\mathcal{F}(\mathbb{Q})\leq^{\fin}_{sK}\mathcal{U}$.
\end{remark}
\begin{proof}
    To see $(1)\Rightarrow(3)$, we note that when there is a projection then the image is dense in $\Q$, and since the projection depends only on the stem, the image is of size $\leq\lambda$. 
    To see that $(3)\Rightarrow (2)$ we use Theorem \ref{thm: omega generated from projection} and let $\dot{H}$ a name for $\pi``\dot{G}$. 
    Finally to see $(2)\Rightarrow (1)$, suppose that $\one_{\P_{\mathcal{U}}}\Vdash``\dot{q}_0\geq \dot{q}_1\geq \dot{q}_2...$ generates $\dot{H}$".   
    Now fix an arbitrary $p_*\in \P_{\mathcal{U}}$. 
    For each $n$, find $m_n$ and  $p_n \leq^* p_*$ such that every $m_n$-step extension of $p_n$ decides $(\dot{q}_0,...,\dot{q}_n)$. 
    Let $p^*=(\langle\rangle,A^*)\leq^* p_n$ for all $n$. 
    We will define a projection $\pi:\mathbb{P}_{\mathcal{U}}/p^*\to \mathbb{Q}$ which depends only on the stem.
    We will start by defining $\pi \restriction \bigcup_{n<\omega}[A^*]^{m_n}$, then extend to $\pi \restriction [A^*]^{<\omega}$ by setting $\pi(\vec{x})=\pi(\vec{x}\restriction m_n)$ for the largest\footnote{We may assume that $\dot{H}$ is forced to be not in the ground model. This ensures the existence of such a largest $n$. Also, by thinning out if necessary, we can assume that $m_{n+1}>m_{n}$.} $n$ such that $m_n \leq |\vec{x}|$.
    $\pi(\vec{x})$ for $\vec{x}\in [A^*]^{m_n}$ to be $q$ such that $p^{*\smallfrown}\vec{x}\Vdash \dot{q}_n=q$. 
    If $\vec{x}\sqsubseteq \vec{y}$, since the values of $\dot{q}_n$'s are forced to be decreasing, $\pi(\vec{x})\geq \pi(\vec{y})$. Let $D\subseteq\mathbb{Q}$ be dense open, and let $(\vec{x},B)\leq (\langle\rangle,A^*)$. Then there is $(\vec{y},C)\leq (\vec{x},B)$ and some $N$ such that $(\vec{y},C)\Vdash \forall n\geq N, \dot{q}_n\in \check{D}$. Then pick any $\vec{y}\sqsubseteq\vec{z}$ such that $|\vec{z}|\geq m_N$ and $\vec{z}\setminus \vec{y}\subseteq C$. Then $\pi(\vec{z})\in D$. Indeed, $\pi(\vec{z})$ is forced by $(\vec{z},C\setminus\max(\vec{z})+1)$ to be $\dot{q}_m$ for some $m\geq N$ but also the same condition forces that $\dot{q}_m\in D$ so $\pi(\vec{z})\in D$. Hence $\pi$ is a weak projection. Since we started with an arbitrary $p_*$, we can now define a weak projection $\pi$ from a dense subset $D\subseteq \mathbb{P}_{\mathcal{U}}$ to $\mathbb{Q}$ (by defining the projection along an antichain). To see that this is a projection, it suffices to show that $\rng(\pi)$ is dense. Indeed, let $q\in\mathbb{Q}$, then there is $p$ such that $p\Vdash q\in\dot{H}$. Then we can find $p'\leq p$ and $n$ such that $p'\Vdash \dot{q}_n\leq q'$ (indeed, the $\dot{q}_n$'s are forced to generate $\dot{H}$). We can further extend $p'$ to $p''$ and find $q^*$ such that $p''\Vdash q^*=\dot{q}_n$ and therefore $q^*\leq q$.
    Find $d\in D$ such that $d\leq p''$ and make sure that the stem of $d$ has size at least $m_n$, Then $\pi(d)\leq q$.
\end{proof}

\begin{cor}
    Let $\mathbb{Q}$ be a $(\omega,\kappa)$-predistributive $\lambda$-pregenerated forcing. Then there is  $D\subseteq \mathbb{P}_{\mathcal{U}}$ dense, and there is a projection $\pi:D\to\mathbb{Q}$ which only depends on the stem.
\end{cor}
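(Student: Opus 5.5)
The plan is to obtain the generic-side condition (2) of Theorem \ref{Tukey-type omega Theorem} and then invoke the implication $(2)\Rightarrow(1)$ of that theorem. Here $\mathcal{U}$ is a fine $\kappa$-complete (strongly compact or supercompact) measure on $P_\kappa(\lambda)$ as in Corollary \ref{cor: main thm applied to sc Prikry}. So I need a $\mathbb{P}_{\mathcal{U}}$-name $\dot{H}$ with three properties: it is forced to be a $V$-generic filter on $\mathbb{Q}$, it is forced to be countably generated, and every $q\in\mathbb{Q}$ is forced into $\dot{H}$ by some condition.

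The first step is to note that $\mathbb{P}_{\mathcal{U}}$ has the $\kappa$-trace property. $\mathbb{P}_{\mathcal{U}}$ is $\Sigma$-Prikry and forces $\cf(\kappa)=\omega$, so Fact \ref{fact: trace lemma} applies. Alternatively, $\mathbb{P}_{\mathcal{U}}$ is $(\omega,\kappa)$-predistributive by Proposition \ref{prop: sigma Prikry and kappa-dist} and preserves $\kappa$, so the Trace Lemma \ref{lemma: goodness} applies.

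Next I use the $\lambda$-pregeneration of $\mathbb{Q}$. Fix a generating family $\langle D_\alpha:\alpha<\lambda\rangle$ of $\mathcal{F}(\mathbb{Q})$. Intersecting with $\mathbb{Q}/q$ gives generators of $\mathcal{F}_q(\mathbb{Q})$ for each $q$. Since $\mathbb{P}_{\mathcal{U}}$ collapses $\lambda$ to $\kappa$, in $V[G]$ this family is reindexed as a $\kappa$-sequence. So $\one\Vdash$ ``$\mathcal{F}_q(\mathbb{Q})^V$ is generated by ${\leq}\kappa$ sets'' for every $q$.

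Now Theorem \ref{thm: weak dist implies proj} (or Corollary \ref{cor of the main thm}) gives, in $V[G]$ and for each $q$, a countable set generating a $V$-generic filter containing $q$. To obtain a single name $\dot{H}$ meeting every $q$, I would repeat the argument of Proposition \ref{Proposition: turnining a weak projection to a projection}. Take a maximal antichain of $\mathbb{P}_{\mathcal{U}}$ of size at least $\max(\lambda,|\mathbb{Q}|)$; for instance, the conditions with distinct one-element stems give $\lambda^{<\kappa}$ of them. Apply the Maximal Principle on each piece and glue with the Mixing Lemma. At each piece I choose a name for a countably generated generic containing the assigned $q_\alpha$, so the mixed name is forced to be generic and countably generated, which is $\cf(\dot{H})=\omega$.

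The main obstacle is the cardinality bookkeeping in this mixing step. An antichain witnessing the failure of $\lambda$-cc suffices to cover a dense subset of $\mathbb{Q}$. But Theorem \ref{Tukey-type omega Theorem}(2) asks that each $q\in\mathbb{Q}$ be forced in, and $\mathbb{Q}$ itself may be larger than $\lambda$. I would first try to absorb all of $\mathbb{Q}$ using the $\lambda^{<\kappa}$-sized antichain. If that fails, I would work with $\ro(\mathbb{Q})$ and note that it suffices to force in the members of a dense subset of size ${\leq}\lambda$, because then every $q$ lies above such a member and is in $\dot{H}$ by upward closure. Obtaining that dense subset from $\lambda$-pregeneration alone may need an additional hypothesis such as $|\mathbb{Q}|\le\lambda$, as in Corollary \ref{cor: main thm applied to sc Prikry}.

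Once (2) holds, Theorem \ref{Tukey-type omega Theorem} $(2)\Rightarrow(1)$ gives a dense $D\subseteq\mathbb{P}_{\mathcal{U}}$ and a projection $\pi:D\to\mathbb{Q}$ depending only on the stem.
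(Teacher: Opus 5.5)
Your route is the intended one. The corollary is stated without proof right after Theorem \ref{Tukey-type omega Theorem}, and it follows by combining Corollary \ref{cor: main thm applied to sc Prikry} with that theorem; Corollary \ref{cor: main thm applied to sc Prikry} is Theorem \ref{thm: weak dist implies proj} applied to $\mathbb{P}_{\mathcal{U}}$, plus the antichain-and-mixing argument of Proposition \ref{Proposition: turnining a weak projection to a projection}. The only difference is cosmetic. You read off countable generation of $\dot{H}$ directly from the matrix construction in Theorem \ref{thm: weak dist implies proj}, instead of going through clause (3) and the iterated-ultrapower argument of Theorem \ref{thm: omega generated from projection}.

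Your cardinality hedge should be settled: the hypothesis $|\mathbb{Q}|\le\lambda$ is genuinely necessary. It appears in Corollary \ref{cor: main thm applied to sc Prikry} and is silently carried over here. Neither of your fallback plans can work from $\lambda$-pregeneration alone. A projection depending only on the stem takes at most $\lambda^{<\kappa}$ values and has dense range, so clause (1) by itself already produces a small dense subset of $\mathbb{Q}$.

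A concrete counterexample is the one in Remark \ref{Remark: full projection}. Let $\mathbb{Q}=\{\one\}\cup A$, where $A$ is an antichain of more than $\lambda^{<\kappa}$ atoms.
\begin{itemize}
\item Every dense subset of $\mathbb{Q}$ contains $A$, so $\mathcal{F}(\mathbb{Q})$ is generated by the single set $A$, and $\mathbb{Q}$ is $\lambda$-pregenerated.
\item $\mathbb{Q}$ adds nothing, so it is $\kappa$-distributive and hence $(\omega,\kappa)$-predistributive.
\item The range of a stem-only projection would be a set of size ${\le}\lambda^{<\kappa}$ that must contain $A$, which is impossible.
\end{itemize}

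So the corollary must be read with $|\mathbb{Q}|\le\lambda$, or with a dense subset of that size. Under that reading your argument is complete. You can also keep $\mathbb{Q}$ itself rather than passing to $\ro(\mathbb{Q})$: forcing the members of a dense subset into $\dot{H}$ puts every $q\in\mathbb{Q}$ into $\dot{H}$ by upward closure.
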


\begin{example}
    For each pair of the following forcing notions there is a projection which only depends on the stem. 
    \begin{enumerate}
        \item $\mathbb{P}_{\mathcal{U}}$ and $\mathbb{P}(\mathcal{W})$, where $\mathcal{W}$ is a measure on $\kappa$ and $\mathbb{P}(\mathcal{W})$ denotes vanilla Prikry forcing.
        \item $\mathbb{P}_{\mathcal{U}}$ and $\mathbb{P}_{\mathcal{U_\alpha}}$, where $\kappa\leq\alpha<\lambda$ and $\mathcal{U}_\alpha$ is the projection of $\mathcal{U}$ to $\mathcal{P}_\kappa(\alpha)$. Note that the usual weak projection $(\langle s_0,\dots, s_n\rangle, A)\mapsto(\langle s_0\cap\alpha,\dots, s_n\cap\alpha\rangle, \{X\cap\alpha: X\in A\})$ depends on the measures.
        \item A cone of AIM forcing $\mathbb{Q}/q$ and $\mathbb{Q}_{\vec{\beta}}$, where $\Vec{\beta}=\langle\beta_i:\ell(q)\leq i<\omega\rangle$ is a $\leq$-increasing sequence with $\beta_i\in a_i^q$ for all $i\geq \ell(q)$. Note that the usual weak projection $\pi:\mathbb{Q}/q\to\mathbb{Q}_{\vec{\beta}}$, given by $\pi(p)_i=f_i^p(\beta_i)$ for $\ell(q)\leq i<\ell(p)$, and $\pi(p)_i=\{x\cap\beta_i: x\in A_i^p\}$ for $i\geq \ell(p)$, depends on the measures (see \cite{cummings2018ordinal}).
        \item Merimovich forcing $\mathbb{P}$ and $\mathbb{P}_d$, with $d \in [\lambda \setminus \kappa]^{<\lambda}$ and $\kappa = \min(d)$ (see \cite[$\S4$]{PovedaThei2024baire}).
    \end{enumerate}
    \end{example}

The above theorem illustrates how the ``shape" of a generic for a forcing $\mathbb{Q}$ of cardinality $\lambda$ which is a projection of $\mathbb{P}_{\mathcal{U}}$ is as simple as it can be: it is generated by an $\omega$-sequence of conditions.
This is no longer true when we consider forcings of larger cardinality; in which case, more complicated shapes of generating sets appear. 
To formally express what a ``shape'' is, it is natural to use the language of the Tukey order:
\begin{definition}[Tukey Order]\label{Def: Tukey}
    Let $(P,<_P)$ and $(Q,<_Q)$ be two directed partial orders.
    We say that $P \leq_T Q$ iff there is a map $f : Q \to P$ such that whenever $A \subseteq Q$ is cofinal in $<_Q$ then $f``A$ is cofinal in $<_P$.
    We say $P \equiv_T Q$ iff $P \leq_T Q \leq_T P$.
\end{definition}
Theorem \ref{Tukey-type omega Theorem} says that if $\Q$ is a projection of $\mathbb{P}_{\mathcal{U}}$ of cardinality $\leq\lambda$, then a generic filter $H$ for $\Q$ which has Tukey-type $\omega$ in $V[G]$.
A more general (and trivial) observation is the following:
\begin{fact}
    Let $\mathbb{P},\mathbb{Q}$ be forcing notions and suppose $\pi:\mathbb{P}\to\mathbb{Q}$ is a weak projection.
    Then for any generic $G\subseteq \mathbb{P}$, $G\geq_T \pi``G$.
\end{fact}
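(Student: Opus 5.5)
We need to show: if $\pi:\mathbb{P}\to\mathbb{Q}$ is a weak projection and $G\subseteq\mathbb{P}$ is generic, then $G\geq_T \pi``G$. Here filters are ordered by the forcing order reversed, so a subset $A\subseteq G$ is cofinal exactly when every $p\in G$ has some $a\in A$ with $a\leq p$. By the Convention following Fact \ref{fact: basics of weak projections}, $\pi``G$ means its upward closure $H$, which is $V$-generic, hence directed. So the plan is to exhibit a Tukey map $f:G\to H$, that is, a map sending cofinal subsets of $G$ to cofinal subsets of $H$.

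The natural choice is $f=\pi\restriction G$. This lands in $H$, since $\pi(p)\in\pi``G\subseteq H$.

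Now let $A\subseteq G$ be cofinal and fix $q\in H$. By definition of the upward closure there is $p\in G$ with $\pi(p)\leq q$. Cofinality of $A$ in $G$ gives $a\in A$ with $a\leq p$. Since $\pi$ is order preserving (clause (a) of Definition \ref{def: projections}), $\pi(a)\leq\pi(p)\leq q$. Thus $f``A=\pi``A$ is cofinal in $H$, and so $H\leq_T G$, which is the assertion $G\geq_T\pi``G$.

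I expect no real obstacle here. The only point to keep straight is the direction convention: cofinal means dense in the reversed order, and one must use the upward closure of $\pi``G$ rather than $\pi``G$ itself. Directedness of both sides follows from their being filters, so the Tukey order applies. The genericity of $H$ is not actually needed for the Tukey reduction; only monotonicity of $\pi$ is used.
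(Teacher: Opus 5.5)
Your proof is correct. The paper states this fact without proof and calls it trivial, and your argument is exactly the intended one: $\pi\restriction G$ sends cofinal subsets of $G$ to cofinal subsets of the upward closure of $\pi``G$, using only that $\pi$ is order preserving. Your remark that genericity of $H$ is not needed is also accurate, since directedness of that upward closure already follows from $G$ being a filter and $\pi$ being monotone.
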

This observation provides a useful tool to rule out the existence of weak projections. 
Note that the Tukey-type of $G$ and $\pi``G$ is compared in $V[G]$. 
This invites the study of the cofinal structure of a generic filter, which we will see is very relevant to the structure of intermediate models of Prikry-type forcings. 
Let us start with some simple examples which show that Tukey-comparability of the generic filter is much coarser than the existence of a weak projection:
\begin{example}
    Let $G$ be $V$-generic for $\Add(\kappa,1)$ or $\col(\kappa,\lambda)$, then $G\equiv_T \kappa$: 
    Indeed, let $g=\bigcup G$.
    Then $\langle g\restriction \alpha: \alpha<\kappa\rangle$ is a linearly ordered set which generates $G$, which suffices to conclude that $G\equiv_T \kappa$. 
\end{example}
More generally, if $G$ is $V$-generic for a $\kappa$-distributive forcing and in some model $V[G]\subseteq M$, $|G|=\kappa$, then $M\models G\equiv_T \kappa$. Moreover, if $M\models \cf(\kappa)=\lambda$, then $G\equiv_T\lambda$.
 The next theorem establishes the equivalence of $\kappa$-directedness of the generic filter and $\kappa$-distributivity of the forcing. In particular, any forcing with generics being of Tukey type $\kappa$ must be $\kappa$-distributive.
\begin{theorem}\label{theorem: equivalent directed}
    Let $\mathbb{P}$ be a forcing notion and $\kappa$ be a regular cardinal. The following are equivalent:
    \begin{enumerate}
        \item $\Vdash_{\mathbb{P}}``\dot{G}$ is $\kappa$-directed".
        \item $\mathbb{P}$ is ${<}\kappa$-distributive.
    \end{enumerate}
\end{theorem}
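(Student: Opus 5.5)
We prove both directions by translating between the generic filter and the filter $\mathcal{F}(\mathbb{P})$ of dense open sets. Throughout, ``$G$ is $\kappa$-directed'' means that every subset of $G$ of size ${<}\kappa$, computed in $V[G]$, has a lower bound in $G$. ``${<}\kappa$-distributive'' means that every intersection of fewer than $\kappa$ dense open sets is dense open; equivalently, no new ${<}\kappa$-sequences of ordinals are added, as in Definition~\ref{def ditributive} and Fact~\ref{factProj}(2). We may pass to the separative quotient throughout.

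For $(2)\Rightarrow(1)$, let $G$ be generic and $X=\{p_\alpha:\alpha<\gamma\}\subseteq G$ with $\gamma<\kappa$, enumerated in $V[G]$. By ${<}\kappa$-distributivity the sequence $\langle p_\alpha:\alpha<\gamma\rangle$ lies in $V$. Consider
\[
E=\{r\in\mathbb{P}: r\leq p_\alpha\text{ for all }\alpha<\gamma\}\cup\{r\in\mathbb{P}: r\perp p_\alpha\text{ for some }\alpha<\gamma\},
\]
and show it is dense in $V$. Given $r$, set $D_\alpha=\{s: s\leq p_\alpha\text{ or } s\perp p_\alpha\}$; each $D_\alpha$ is dense open, so $\bigcap_{\alpha<\gamma}D_\alpha$ is dense. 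Pick $s\leq r$ in this intersection. Either $s\perp p_\alpha$ for some $\alpha$, or $s\leq p_\alpha$ for all $\alpha$, and in both cases $s\in E$. By genericity $G$ meets $E$. Since $G$ is a filter containing every $p_\alpha$, no element of $G$ is incompatible with any $p_\alpha$, so $G\cap E$ contains a common lower bound of $X$.

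For $(1)\Rightarrow(2)$, let $p\in\mathbb{P}$ and $\langle D_\alpha:\alpha<\gamma\rangle$ be dense open with $\gamma<\kappa$; we must find $q\leq p$ in $\bigcap_\alpha D_\alpha$. Take $G$ generic with $p\in G$. In $V[G]$ choose $p_\alpha\in G\cap D_\alpha$ for each $\alpha$ (using choice in $V[G]$). The set $\{p\}\cup\{p_\alpha:\alpha<\gamma\}$ has size ${<}\kappa$, so by $(1)$ it has a lower bound $q\in G$. Since each $D_\alpha$ is open, $q\in\bigcap_\alpha D_\alpha$ and $q\leq p$. So in $V[G]$ there is such a $q$, and since $q\in V$ some condition forces this.

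The step needing care is pulling this back to $V$: the argument shows $p$ forces ``$\exists q\in\check{\mathbb{P}}\,(q\leq p\wedge q\in\bigcap_\alpha\check D_\alpha)$''. This is a $\Delta_0$ statement about ground-model objects, so by absoluteness it holds in $V$. Hence below every $p$ the intersection is nonempty, which is exactly density. The only other subtlety is whether ``$\kappa$-directed'' is meant only for ground-model subsets; with that reading the backward direction loses the use of choice in $V[G]$. I therefore take the $V[G]$ reading, which is what makes $(1)$ strong enough. Regularity of $\kappa$ is not really needed beyond making ``size ${<}\kappa$'' match ``$\gamma<\kappa$''.
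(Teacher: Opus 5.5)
Your proof is correct and follows the paper's argument. For $(2)\Rightarrow(1)$ it is identical: the sequence $\langle p_\alpha\rangle$ lies in $V$, and any condition of $G$ in $\bigcap_\alpha D_\alpha$, with $D_\alpha=\{s: s\leq p_\alpha \text{ or } s\perp p_\alpha\}$, is a lower bound. For $(1)\Rightarrow(2)$ you use the same idea, taking a lower bound in $G$ of fewer than $\kappa$ conditions chosen in $V[G]$.

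The only difference is in $(1)\Rightarrow(2)$. The paper applies the idea to conditions deciding the values $\dot f(\beta)$ of a name for a function $f:\gamma\to V$. That lands directly in the generic-extension form of distributivity and avoids your absoluteness pull-back to $V$. You are also right that the $V[G]$ reading of $\kappa$-directedness is the one needed, and it is the one the paper uses.
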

\begin{proof}
    For $(1)\Rightarrow(2)$, let $f:\gamma\to V\in V[G]$ be any function with $\gamma<\kappa$. 
    Fix a name $\dot{f}$ for $f$.
    For each $\beta<\gamma$, there is $p_\beta\in G$ such that $p_\beta\Vdash \dot{f}(\beta)=f(\beta)$. 
    Since $G$ is $\kappa$-directed, there is $p\in G$ such that $p \leq p_\beta$ for all $\beta<\gamma$. 
    So $p$ decides every value of $f$ and therefore $f\in V$. 

    For $(2)\Rightarrow (1)$, suppose $\mathbb{P}$ is ${<}\kappa$-distributive. 
    To see that $G$ is $\kappa$-directed, let $\{p_\alpha: \alpha<\gamma<\kappa\}\subseteq G$. 
    By distributivity, $\{p_\alpha: \alpha<\gamma\}\in V$.
    Hence for each $\alpha < \gamma$ we can define the dense open set $D_\alpha = \{ q \in \mathbb{P} :  p_\alpha \leq q \text{ or } q\perp p_\alpha\}$.
    Again by distributivity, $\bigcap_{\alpha<\gamma}D_\alpha$ is dense open so there is $p\in G\cap \bigcap_{\alpha<\gamma}D_\alpha$. 
    Clearly $p$ is compatible with each $p_\alpha$, so $p \leq p_\alpha$ for all $\alpha < \gamma$.
\end{proof}
We say that a directed set is \textit{$(\omega,\kappa)$-directed closed} if for any $\gamma<\kappa$ and any sequence $\langle d_i: i<\gamma\rangle\subseteq D$ there is another sequence $\langle e_n: n<\omega\rangle$ such that for every $i<\gamma$ there is $n<\omega$ such that $d_i\leq e_n$. 
\begin{theorem}
    Let $\mathbb{P}$ be a forcing notion and $\kappa$ be a regular cardinal. The following are equivalent:
    \begin{enumerate}
        \item $\Vdash_{\mathbb{P}}``\dot{G}$ is $(\omega,\kappa)$-directed closed".
        \item $\mathbb{P}$ is weakly $(\omega,\kappa)$-predistributive.
    \end{enumerate}
\end{theorem}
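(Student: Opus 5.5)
The plan is to mirror the proof of Theorem \ref{theorem: equivalent directed}, using the combinatorial characterization of weak $(\omega,\kappa)$-predistributivity via Definition \ref{def: weakly pre gen} (functions $f\in{}^{<\kappa}V$ in $V[G]$ split into countably many ground-model pieces). Throughout, I will work with the separative quotient, so that $q\Vdash p\in\dot G$ implies $q\leq p$.

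For $(1)\Rightarrow(2)$, let $G$ be generic and $f:\gamma\to V$ in $V[G]$ with $\gamma<\kappa$, and fix a name $\dot f$. For each $\beta<\gamma$ choose $p_\beta\in G$ deciding $\dot f(\beta)$; this choice is made in $V[G]$, which is fine since we only need $\langle B_n\rangle\in V[G]$. Applying $(\omega,\kappa)$-directed closure of $G$ in $V[G]$ to $\langle p_\beta:\beta<\gamma\rangle$, we obtain $\langle e_n:n<\omega\rangle\subseteq G$ such that every $p_\beta$ lies above some $e_n$. Then set $B_n=\{\beta<\gamma: e_n\leq p_\beta\}$. Since $\bigcup_n B_n=\gamma$, and $e_n$ decides $\dot f(\beta)$ for every $\beta\in B_n$, we get $f\restriction B_n=\{(\beta,x):\beta\in B_n,\ e_n\Vdash\dot f(\beta)=x\}$. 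This looks computable in $V$, except that $B_n$ itself must be in $V$. It is: $B_n=\{\beta: e_n\leq p_\beta\}$ is not obviously in $V$, but we may replace $B_n$ by $B_n'=\{\beta<\gamma: e_n \text{ decides } \dot f(\beta)\}\in V$. We have $B_n\subseteq B_n'$, and $f\restriction B_n'$ is computed in $V$ from $e_n$ and $\dot f$. Hence the $B_n'$ cover $\gamma$ and each $f\restriction B_n'\in V$.

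For $(2)\Rightarrow(1)$, let $\langle d_i:i<\gamma\rangle\subseteq G$ in $V[G]$ with $\gamma<\kappa$. View $f:i\mapsto d_i$ as a function in ${}^{\gamma}V$. Weak predistributivity gives $\langle B_n\rangle\in V[G]$ covering $\gamma$ with $f\restriction B_n\in V$. Then $X_n:=\rng(f\restriction B_n)$ is a ground-model subset of $G$ of size $<\kappa$. I need a single $e_n\in G$ below all of $X_n$. Since $X_n\in V$, the set $E=\{q: q\leq x \text{ for all } x\in X_n \text{ or } q\perp x \text{ for some } x\in X_n\}$ is dense open in $V$, and $G$ meets it. A condition of $G$ cannot be incompatible with an element of $G$, so $G$ contains some $e_n$ below all of $X_n$. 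Choosing such an $e_n$ for each $n$ gives the required sequence $\langle e_n\rangle$, with every $d_i$ above some $e_n$.

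The main obstacle is only bookkeeping: in $(1)\Rightarrow(2)$, making the pieces $B_n$ lie in $V$ (or at least making $f\restriction B_n\in V$), which is handled by enlarging $B_n$ to the set of coordinates decided by $e_n$; and ensuring that the directedness hypothesis in $(1)$ is read in $V[G]$ for sequences from $V[G]$. Regularity of $\kappa$ is not really needed beyond matching the definitions.
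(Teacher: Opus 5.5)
Your proposal is correct and follows essentially the same route as the paper's proof. In $(1)\Rightarrow(2)$, your enlarged sets $B_n'=\{\beta<\gamma: e_n \text{ decides } \dot f(\beta)\}$ are exactly the paper's $B_n$; in $(2)\Rightarrow(1)$, your dense open set $E$ (whose density is where separativity is used) rephrases the paper's step that some $q_n\in G$ forces $\rng(f\restriction B_n)\subseteq\dot G$ and therefore, by separativity, lies below every element of that set.
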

\begin{proof}
    To show, $(1)\Rightarrow(2)$ let $\gamma < \kappa$ and $f:\gamma\to V$ be any function in $V[G]$.
    Fix a name $\dot{f}$ for $f$ and a $p$ such that $p \forces \dot{f} : \gamma \to V$. 
    For each $\beta<\gamma$, there is $p_\beta\in G/p$ such that $p_\beta\Vdash \dot{f}(\beta)=f(\beta)$. 
    Since $G$ is $(\omega,\kappa)$-directed closed, there is a sequence $\langle q_n: n<\omega\rangle\subseteq G$ such that for each $\beta$ there is $n$ such that $q_n\leq p_\beta$. 
    For each $n < \omega$ let $B_n=\{\beta: q_n ||\dot{f}(\beta)\}$. 
    Then $B_n\in V$ and $\langle B_n: n<\omega\rangle\in V[G]$ is a partition of $\gamma$ and $f\restriction B_n\in V$ for every $n \in \omega$. 
    Hence $\mathbb{P}$ is weakly $(\omega,\kappa)$-predistributive.
    
    For $(2)\Rightarrow (1)$, suppose $\mathbb{P}$ is weakly $(\omega,\kappa)$-predistributive. To see that $G$ is $(\omega,\kappa)$-directed let $\gamma<\kappa$ and $\langle p_\alpha: \alpha<\gamma\rangle\subseteq G$. 
    By weak $(\omega,\kappa)$-predistributivity, there is a partition $\langle B_n: n<\omega\rangle$ of $\gamma$ so that for each $n$, $\langle p_\alpha: \alpha\in B_n\rangle\in V$. this means that there is a condition $q_n\in G$ that forces $\langle p_\alpha: \alpha\in B_n\rangle\subseteq G$. 
    By separativity, this means that $q_n \leq p_\alpha$ for every $\alpha\in B_n$. 
    Hence the sequence $\langle q_n: n<\omega\rangle\subseteq G$ is as wanted. 
\end{proof}

To characterize the forcings whose generic is exactly of Tukey-type $\kappa$, for a regular cardinal $\kappa$, we observe that $D\equiv \kappa$ if and only if $D$ is $\kappa$-directed and generated by $\kappa$-many sets. 
For a ordered set $\mathbb{P}$, define the cofinality of $\mathbb{P}$, denoted by $\cf(\mathbb{P})$, to be the minimal cardinality of a generating set for $\mathbb{P}$. Since we already characterized posets whose generic is $\kappa$-directed (i.e. $\kappa$-distributive), it remains to characterize when is it forced that $\cf(\dot{G})\leq\kappa$. This is related to the notion of $\kappa$-pregenerated forcings in Definition \ref{def: pregenerated}:
\begin{prop}
    If $\mathbb{\Q}$ is  $\lambda$-pregenerated and $\dot{G}$ names a $\mathbb{Q}$-generic filter then $\Vdash_{\mathbb{Q}}\cf(\dot{G})\leq\lambda$.
\end{prop}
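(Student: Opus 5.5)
Fix a generating family $\mathcal{C}=\{D_\alpha:\alpha<\lambda\}\subseteq\mathcal{F}(\mathbb{Q})$ in $V$ witnessing that $\mathbb{Q}$ is $\lambda$-pregenerated (Definition \ref{def: pregenerated}), and let $G$ be $V$-generic for $\mathbb{Q}$. For each $\alpha<\lambda$ we use genericity to pick, in $V[G]$, a condition $g_\alpha\in G\cap D_\alpha$. The candidate generating set is $X=\{g_\alpha:\alpha<\lambda\}$, which has size at most $\lambda$. It remains to show that $X$ is cofinal in $G$ under the reverse order, i.e. that every $r\in G$ lies above some $g_\alpha$.

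Fix $r\in G$ and consider
$$E_r=\{s\in\mathbb{Q}: s\leq r \text{ or } s\perp r\},$$
which is dense open in $\mathbb{Q}$ and lies in $V$. Since $\mathcal{C}$ generates $\mathcal{F}(\mathbb{Q})$, some $D_\alpha\subseteq E_r$. Then $g_\alpha\in D_\alpha\subseteq E_r$, and $g_\alpha$ is compatible with $r$ because both lie in the filter $G$. Hence $g_\alpha\leq r$, as required.

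This shows $\cf(G)\leq\lambda$ in $V[G]$. The statement is really about an arbitrary name $\dot{G}$ for a $\mathbb{Q}$-generic filter over $V$, possibly living in a larger extension. The argument goes through verbatim in any model containing such a filter $H$: the $D_\alpha$ and $E_r$ are in $V$, and $H$ meets every dense set of $V$. So $\cf(H)\leq\lambda$ whenever $H$ is $V$-generic.

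The only point needing care is the cofinality/genericity step: $\mathcal{F}(\mathbb{Q})$ being generated in the sense that every dense open set contains a member of $\mathcal{C}$. If the paper's notion of ``generated'' instead allows finite intersections, the same argument still works. In that case we index $X$ by finite subsets of $\lambda$, choosing $g_F\in G\cap\bigcap_{\alpha\in F}D_\alpha$; this set is dense open, so such $g_F$ exists, and $|X|$ remains at most $\lambda$. The appeal to separativity (already the paper's standing convention) is not even needed here, since the argument uses only compatibility.
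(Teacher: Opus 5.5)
Your proof is correct, and it takes a more direct route than the paper's. The paper also picks $q_\alpha\in G\cap B_\alpha$, but then closes $X_0=\{q_\alpha:\alpha<\lambda\}$ under chosen lower bounds in $G$ of finite subsets, obtaining a directed set $X^*$ of size $\lambda$. It then argues that the upward closure of $X^*$ is a $V$-generic filter contained in $G$, hence equal to $G$. That last step, the maximality of generic filters, is exactly what your dense open set $E_r=\{s: s\le r \text{ or } s\perp r\}$ proves. In effect you inline that fact and observe that the closure step is unnecessary: the bare selection $\{g_\alpha:\alpha<\lambda\}$ is already cofinal in $G$. It is therefore automatically directed, since any cofinal subset of a directed set is. What the paper's closure buys is that it handles the ``filter-generated'' reading of $\lambda$-generated without comment: a lower bound of $q_{\alpha_1},\dots,q_{\alpha_n}$ lies in $\bigcap_i B_{\alpha_i}$ by openness. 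Your indexing by finite subsets of $\lambda$ achieves the same thing just as cheaply. Your remark that the argument works for any filter meeting all dense sets of $V$, in any outer model, is also correct.
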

\begin{proof}
Suppose $\mathcal{F}(\Q)$ is generated by $\{ B_\alpha : \alpha < \lambda \}$.
By shrinking the $B_\alpha$'s if necessary, we may assume that they are all open dense.
Suppose $G \subseteq \mathbb{\Q}$ is generic.
For each $\alpha$ choose some $q_\alpha \in G \cap B_\alpha$.
Let $X_0 = \{q_\alpha : \alpha < \lambda \}$.
We extend $X_0$ to a directed set of size $\lambda$ inductively.
Define a function $F_1 : [X_0]^{<\omega} \to G$ where $F_1(s)$ is a lower bound of $s$ and $F_1(s) \in X_0$ if possible.
Set $X_1 = F_1 `` X_0$.
Define $X_{n+1}$ and $F_{n+1} : [X_n]^{<\omega} \to G$ similarly so that $F_{n+1} \upharpoonright [X_{n-1}]^{<\omega} = F_n$.
Let $X^* = \bigcup_{n \in \omega} X_n$.
Then $X^*$ is directed and meets every dense subset of $\Q$ so the upwards closure of $X^*$ is a generic filter.
This upwards closure is contained in $G$, so in fact $X^*$ generates $G$, and $|X^*| = \lambda \cdot \omega  = \lambda$.
So $G$ is generated by a set of size $\lambda$.
\end{proof}

\begin{remark}
    The above implication is not reversible:
Consider the Cohen forcing $\Q = \add(\omega,1)$ and let $G$ be generic, and $f = \bigcup G$.
$G$ is generated by a countable set: $\{ f \upharpoonright n: n \in \omega \}$.
However, no non-trivial forcing can be $\omega$-pregenerated, since this would allow us to construct a generic in the ground model. 
\end{remark}
\begin{question}
    \ {} \begin{enumerate}
        \item Is there a characterization of $\lambda$-pregenerated forcings in terms of the Tukey-type of the generic filter? 
        \item What is the combinatorial characterization of posets $\mathbb{Q}$ such that $\Q\Vdash \cf(\dot{G})\leq\check{\kappa}$?
    \end{enumerate}
\end{question}
To see examples where we have more complicated Tukey-types, we need the following characterization:
\begin{lemma}[Tukey \cite{Tukey}]\label{lemma: equivalent condition}
Let $\lambda$ and $\kappa$ be cardinals such that $\cf([\lambda]^{<\kappa},\subseteq)=\lambda$. Then the following are equivalent for every directed set $\mathbb{P}$:
\begin{enumerate}
    \item $[\lambda]^{<\kappa}\leq_T \mathbb{P}$.
    \item For every $\kappa$-directed order $\mathbb{D}$ of size $\leq\lambda$, $\mathbb{D}\leq_T\mathbb{P}$.
    \item There is $\langle p_\alpha\mid \alpha<\lambda\rangle\subseteq \mathbb{P}$ such that for every $I\in[\lambda]^\kappa$, $\{p_\alpha\mid \alpha\in I\}$ is unbounded in $\mathbb{P}$.
\end{enumerate}
\end{lemma}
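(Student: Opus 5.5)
The plan is to prove the cycle $(3)\Rightarrow(1)\Rightarrow(2)\Rightarrow(3)$. Throughout, fix a cofinal family $\mathcal{C}=\{c_\alpha:\alpha<\lambda\}\subseteq[\lambda]^{<\kappa}$, which exists since $\cf([\lambda]^{<\kappa},\subseteq)=\lambda$.

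For $(3)\Rightarrow(1)$, define $f:[\lambda]^{<\kappa}\to\mathbb{P}$ as follows. For $a\in[\lambda]^{<\kappa}$, if $\{p_\alpha:\alpha\in a\}$ is bounded in $\mathbb{P}$, let $f(a)$ be some upper bound; otherwise let $f(a)$ be arbitrary. Wait: the Tukey map must go in the direction $f:\mathbb{P}\to[\lambda]^{<\kappa}$, sending cofinal sets to cofinal sets. Equivalently, it suffices to give a map $g:\mathbb{P}\to[\lambda]^{<\kappa}$ that sends bounded subsets of $\mathbb{P}$ to bounded subsets of $[\lambda]^{<\kappa}$, and I will use the unbounded-preserving formulation: $[\lambda]^{<\kappa}\leq_T\mathbb{P}$ iff there is $h:[\lambda]^{<\kappa}\to\mathbb{P}$ mapping unbounded sets to unbounded sets. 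I will quote this standard equivalence of Tukey.

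Take $h(\{\alpha\})=p_\alpha$, and more generally let $h(a)=p_{\min a}$, say, or better $h(a)=p_{\alpha(a)}$ for some injective choice $\alpha(a)$. The cleanest route is a composite. First, $[\lambda]^{<\kappa}\equiv_T$ the set of singletons-generated structure, in the following sense. A subset $X\subseteq[\lambda]^{<\kappa}$ is unbounded iff $|\bigcup X|\geq\kappa$. Then choose $\alpha_a\in a$ with distinct values on a large set; this needs care. Instead I will map $h(a)=p_{\beta(a)}$, where $a\mapsto\beta(a)$ is an injection $[\lambda]^{<\kappa}\to\lambda$ (cardinality $\lambda^{<\kappa}$ may exceed $\lambda$, so here I restrict to $\mathcal{C}$ and use $[\lambda]^{<\kappa}\equiv_T(\mathcal{C},\subseteq)$). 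That fails too, since unbounded sets of $c_\alpha$'s need not yield $\kappa$ many distinct indices. So I revert to the direct approach: send $a$ to an upper bound of $\{p_\alpha:\alpha\in a\}$ if one exists. I expect this to be the main obstacle, and I will resolve it by using $\mathbb{P}\to[\lambda]^{<\kappa}$, $g(p)=\{\alpha:p_\alpha\leq p\}$. By $(3)$, $g(p)$ has size $<\kappa$. If $B\subseteq\mathbb{P}$ is bounded by $p$, then each $g(b)\subseteq g(p)$, so $g``B$ is bounded. Hence $g$ preserves boundedness, which gives $[\lambda]^{<\kappa}\leq_T\mathbb{P}$ in the convention used here.

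For $(1)\Rightarrow(2)$, let $\mathbb{D}$ be $\kappa$-directed with $|\mathbb{D}|\leq\lambda$, and enumerate $\mathbb{D}=\{d_\alpha:\alpha<\lambda\}$. Define $k:[\lambda]^{<\kappa}\to\mathbb{D}$ by letting $k(a)$ be an upper bound of $\{d_\alpha:\alpha\in a\}$, which exists by $\kappa$-directedness. The image of a cofinal $X\subseteq[\lambda]^{<\kappa}$ is cofinal in $\mathbb{D}$: for any $d_\alpha$, some $a\in X$ contains $\alpha$. So $\mathbb{D}\leq_T[\lambda]^{<\kappa}\leq_T\mathbb{P}$, and transitivity finishes this step.

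For $(2)\Rightarrow(3)$, apply $(2)$ to $\mathbb{D}=(\mathcal{C},\subseteq)$, which is $\kappa$-directed when $\kappa$ is regular (for singular $\kappa$, use $\mathcal{C}$ closed under unions of fewer than $\cf\kappa$ many elements, adjusting the statement accordingly). This gives a map sending bounded sets of $\mathbb{P}$ to bounded sets of $\mathcal{C}$, namely $g:\mathbb{P}\to\mathcal{C}$. For each $\alpha$, choose $p_\alpha$ with $\alpha\in g(p_\alpha)$; this is possible because $g$ sends the cofinal set $\mathbb{P}$ to a cofinal set. If $I\in[\lambda]^\kappa$ and $\{p_\alpha:\alpha\in I\}$ were bounded, then $g``\{p_\alpha:\alpha\in I\}$ would be bounded by some $c\in[\lambda]^{<\kappa}$ containing $I$, which is impossible.
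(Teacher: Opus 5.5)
Your step $(1)\Rightarrow(2)$ is correct. The argument for $(3)\Rightarrow(1)$, however, fails.

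In the paper's convention, $[\lambda]^{<\kappa}\leq_T\mathbb{P}$ needs a map $\mathbb{P}\to[\lambda]^{<\kappa}$ carrying cofinal sets to cofinal sets. Dually, it needs a map $[\lambda]^{<\kappa}\to\mathbb{P}$ carrying unbounded sets to unbounded sets. ``Sends bounded sets to bounded sets'' is not a Tukey criterion at all, since every constant map has this property.

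Your $g(p)=\{\alpha:p_\alpha\leq p\}$ is monotone. So it is a cofinal map exactly when its range is cofinal in $[\lambda]^{<\kappa}$, i.e.\ when $\{p_\alpha:\alpha\in a\}$ is bounded in $\mathbb{P}$ for every $a\in[\lambda]^{<\kappa}$. That is a $\kappa$-directedness demand, and a merely directed $\mathbb{P}$ need not meet it. Take $\kappa=\lambda=\omega_1$, $\mathbb{P}=[\omega_1]^{<\omega}$ and $p_\alpha=\{\alpha\}$. Clause (3) holds, but $g(p)=p$ is always finite, so the range of $g$ is not cofinal in $[\omega_1]^{<\omega_1}$.

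The same confusion affects $(2)\Rightarrow(3)$. A map $g:\mathbb{P}\to\mathcal{C}$ carrying cofinal sets to cofinal sets need not carry bounded sets to bounded sets, so an arbitrary $p_\alpha$ with $\alpha\in g(p_\alpha)$ does not suffice. Instead, pick $p_\alpha$ with $\alpha\in g(p')$ for \emph{all} $p'\geq p_\alpha$. Such a $p_\alpha$ exists: otherwise $\{p':\alpha\notin g(p')\}$ would be cofinal in $\mathbb{P}$ with non-cofinal image. Then an upper bound $p$ of $\{p_\alpha:\alpha\in I\}$ would give $I\subseteq g(p)$, contradicting $|g(p)|<\kappa$.

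For $(3)\Rightarrow(1)$, the route you abandoned is the right one, and your objection to it is wrong when $\kappa$ is regular. Enumerate $\mathcal{C}=\{c_\xi:\xi<\lambda\}$ without repetitions and set $h(c_\xi)=p_\xi$.

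If $X\subseteq\mathcal{C}$ has $|X|<\kappa$, then $\bigcup X\in[\lambda]^{<\kappa}$ by regularity. So some member of $\mathcal{C}$ covers it, and $X$ is bounded. Hence every unbounded $X$ has size ${\geq}\kappa$. Then $h``X\supseteq\{p_\xi:\xi\in I\}$ for some $I\in[\lambda]^\kappa$, and this set is unbounded by (3). Thus $h$ is a Tukey map, $\mathcal{C}\leq_T\mathbb{P}$, and $[\lambda]^{<\kappa}\equiv_T\mathcal{C}$ because $\mathcal{C}$ is cofinal.

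Regularity of $\kappa$ is indispensable here and should be added to the hypotheses; it is not just a technicality of $(2)\Rightarrow(3)$. For $\kappa=\aleph_\omega$ (say under GCH with $\lambda=\aleph_{\omega+1}$), $\mathbb{P}=[\lambda]^{\leq\omega}$ with $p_\alpha=\{\alpha\}$ satisfies (3). But $[\lambda]^{<\aleph_\omega}$ has a countable unbounded subset, whereas every countable subset of $\mathbb{P}$ is bounded, so (1) fails.
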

\begin{example}
    If $G$ is generic for either $\Add(\kappa,\lambda)$ or $\col(\kappa,<\lambda)$, then in the extension $G\equiv_T [\lambda]^{<\kappa}$. Note that $|G|=\lambda$ in both cases, and is $\kappa$-directed (by Theorem \ref{theorem: equivalent directed}). Moreover, any $\kappa$-many elements of $G$ are unbounded (even in the forcing) since conditions are of size ${<}\kappa$. By Lemma \ref{lemma: equivalent condition}, $G\equiv([\lambda]^{<\kappa},\subseteq)$.  
\end{example}
Next, let us analyze the Tukey-type of Prikry forcing generics, or on $\omega$, those of Mathias and Laver forcings.
The definitions of Mathias and Laver forcing can be found in \cite{Jech2003}.
\begin{prop}
    Let $F$ be a filter over $\kappa$ for $\kappa\geq\omega$.
    Let $G \subseteq \Q$ be generic where $\Q$ be either:
    \begin{enumerate}
        \item Prikry forcing with $F$.
        \item Laver forcing relative to a filter (i.e. splittings are $F$-measure one) (or Tree Prikry forcing relative to $F$).
        \item Mathias forcing relative to $F$
    \end{enumerate}
   Then $G\equiv_T F$.
\end{prop}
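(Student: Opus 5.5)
We must show $G\equiv_T F$ in $V[G]$, where $F$ is the ground-model filter (ordered by reverse inclusion, a directed set in $V[G]$). The plan is to exhibit maps in both directions, using the generic sequence $\vec{x}=\langle x_n:n<\omega\rangle$ (the Prikry, Laver or Mathias real/sequence).

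\textbf{Direction $G\leq_T F$.} Define $f:F\to G$ as follows. For $A\in F$, the genericity criterion (every $A\in F$ contains a tail of $\vec{x}$ for Prikry and Mathias; for Laver/tree Prikry, the Hamkins-style criterion: for every $H:[\kappa]^{<\omega}\to F$ in $V$, eventually $x_n\in H(\vec{x}\restriction n)$) lets us set $f(A)$ to be the condition $(\vec{x}\restriction m_A, A\setminus (\max(\vec{x}\restriction m_A)+1))$, where $m_A$ is least such that all later $x_n$ lie in $A$. In the Laver case we instead take the tree with stem $\vec{x}\restriction m_A$ all of whose splittings are $A$. By the criterion this condition lies in $G$. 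To see that cofinal sets map to cofinal sets, let $\mathcal{A}\subseteq F$ be cofinal (so every $B\in F$ has some $A\in\mathcal{A}$ with $A\subseteq B$) and let $p=(s,B)\in G$. Pick $A\in\mathcal{A}$ with $A\subseteq B$. Then $f(A)$ has stem $\vec{x}\restriction m_A$ and measure-one part contained in $B$. If $m_A\geq|s|$ then $f(A)\leq p$, since the new points lie in $B$ by the choice of $m_A$ and the fact that $p\in G$. If $m_A<|s|$, we replace $f$ by $f(A)=$ the condition with stem $\vec{x}\restriction\max(m_A,k)$, but $k$ depends on $p$. The remedy is to let $f$ be defined on $F$ using a fixed coding so that the stem length is absorbed. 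Simplest: note that $G$ is generated by the conditions $(\vec{x}\restriction m,A)$ with $m\geq m_A$, and that the directed set $F\times\omega$ is Tukey equivalent to $F$ (since $F$ is not countably directed-generated trivially, or because $\omega\leq_T F$ whenever $F$ is non-principal and $\sigma$-incomplete; if $F$ is $\sigma$-complete, first note that $F\times\omega\equiv_T F\times\omega$ is what we actually compare and handle it similarly). So I would prove $G\leq_T F\times\omega$ via $(A,m)\mapsto(\vec{x}\restriction\max(m,m_A),A\setminus\cdots)$ and then argue $F\times\omega\leq_T F$. This last point is the \textbf{main obstacle}: it uses that $\cf(\kappa)=\omega$ in $V[G]$, via $A\mapsto\min\{n:x_n\notin A\}$-type coding. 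Concretely, for $A\in F$ let $A_m=A\setminus x_m$ (respectively $A\setminus(\sup x_m+1)$); then $A\mapsto$ pairs recovered from shrinkings gives a Tukey map.

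\textbf{Direction $F\leq_T G$.} Map $g:G\to F$ by $p\mapsto A_p$, the measure-one set of $p$ (for trees, the intersection of the splitting sets along the generic branch above the stem, or the splitting set of the stem node, chosen so that $A_{p'}\subseteq A_p$ when $p'\leq p$ along $\vec{x}$). Given a cofinal $\mathcal{G}\subseteq G$ and $B\in F$, density yields $q\in G$ whose sets lie inside $B$, and then some $p\in\mathcal{G}$ with $p\leq q$, so $A_p\subseteq B$. Thus $g``\mathcal{G}$ is cofinal in $F$, which gives $F\leq_T G$ and completes the proof.
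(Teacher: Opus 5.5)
\textbf{Verdict: genuine gap.} The direction $G\le_T F$ is not proved: the ``main obstacle'' is spurious, and the detour around it is broken. Separately, in the Laver/tree case the step treating a tree as a pair $(s,B)$ fails, and there the conclusion itself can be false.

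\textbf{The direction $F\le_T G$ is fine.} The paper gets it differently, from your map $f(A)=(\vec{x}\restriction m_A,A)$, by noting that $f$ sends unbounded families to unbounded ones. A common extension of all $f(A)$, $A\in\mathcal{A}$, would have its measure-one part inside $\bigcap\mathcal{A}$. For trees, take $A_p$ to be the splitting set at the stem of $p$, and use the dense set of conditions all of whose splitting sets lie in $B$. Drop the intersection along the generic branch, which need not be in $F$; monotonicity of $p\mapsto A_p$ is not needed.

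\textbf{The gap in $G\le_T F$.} For Prikry and Mathias forcing the case $m_A<|s|$ cannot occur. If $p=(s,B)\in G$, then $B$ lies above $\max s=x_{|s|-1}$. So every $A\in\mathcal{A}$ with $A\subseteq B$ omits $x_{|s|-1}$, whence $m_A\ge|s|$ and $f(A)\le p$. (Under a looser convention, first replace $B$ by $B\setminus(\max s+1)\in F$.) This is the paper's proof: $f$ is monotone with cofinal range, which gives $G\le_T F$ at once.

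Your detour through $F\times\omega$ is not actually carried out. The sentence about the $\sigma$-complete case is circular, and ``pairs recovered from shrinkings'' defines no map. The suggested coding $A\mapsto\min\{n:x_n\notin A\}$ is not a Tukey map into $\omega$: the cofinal family $\{A\in F: x_0\notin A\}$ is sent to $\{0\}$. It can be repaired, since $A\mapsto(A,m_A)$ is Tukey from $F$ to $F\times\omega$ in $V[G]$. But checking that uses the same shrinking, after which the detour is superfluous.

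\textbf{The Laver/tree case.} Your cofinality check treats a tree condition as a pair $(s,B)$. A tree has a splitting set at every node. For the uniform tree with splitting $A$ to lie inside $p$, you need a single $A\in F$ below all of them. Such an $A$ need not exist unless $F$ is closed under diagonal intersections; for a normal measure uniform trees are dense and you are back in case (1).

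In fact $G\le_T F$ can fail here. Take Laver forcing with the Fr\'echet filter on $\omega$, so $F\equiv_T\omega$ and $G$ would have to be countably generated. Suppose $\{T_n:n<\omega\}\subseteq G$ were cofinal. By the c.c.c.\ (the forcing is $\sigma$-centered), it is covered by a countable $C\in V$. For $T\in C$ let $h_T(k)=\min\{\min\mathrm{succ}_T(t): t\in T \text{ extends the stem},\ |t|=k\}$, and choose $f\in V$ eventually above every $h_T$. By genericity $\vec{x}$ eventually dominates $f$. So for large $N$, the tree of extensions $t$ of $\vec{x}\restriction N$ with $t(k)\ge f(k)$ for $k\ge N$ lies in $G$. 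Yet no $T_n$ is a subtree of it, since that would force $h_{T_n}(k)\ge f(k)$ for large $k$.

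The paper's own argument is written only for Mathias forcing and Prikry forcing with a normal measure, where conditions genuinely are pairs. So it does not cover this case either.
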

\begin{proof}
All these forcings have the following in common:
\begin{enumerate}
    \item If $G$ is $V$-generic for $\Q$, then $G$ induces a sequence $\vec{c}_G$ (of some length $\omega\leq \ell\leq\kappa)$ such that $G=\{(\vec{c}_G\restriction \gamma,T)\in \mathbb{Q}: \gamma<\ell, \ \vec{c}_G\restriction [\gamma,\ell)\subseteq T\}$
    \item $A\in F$ if and only if there is $\gamma<\ell$ such that $\vec{c}\restriction [\gamma,\ell)\subseteq A$.
\end{enumerate}

    For the Mathias forcing relative to a filter $F$, or the normal Prikry forcing, recall that if $\langle c_n: n<\omega\rangle$ is generic then $A\in F$ if and only if there is $N<\omega$ such that $\forall n\geq N$, $c_n\in A$. Define in $V[G]$, $f:F\to G$ by $f(A)=(c_0,...,c_{N_A},A)$ where $N_A$ is the list such that for all $n\geq N_A$, $c_n\in A$. Then $f$ is well defined, monotone, onto and also unbounded, since if $\mathcal{A}\subseteq F$ is unbounded, then $\bigcap\mathcal{A}\notin F$ and $\{f(A): A\in\mathcal{A}\}$ is unbounded (even in $\mathbb{P}$) since any stronger condition will give a bound for $\bigcap\mathcal{A}$.
\end{proof}
 We get the following corollary (below $Pr(U)$ denotes the classical Prikry forcing with $U$):
\begin{cor}
    If $Pr(U)$ weakly projects to $Pr(W)$, then in $V^{Pr(U)}$, $U\geq_T W$.
\end{cor}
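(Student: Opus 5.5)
The plan is to combine two earlier results: the Fact immediately preceding the Tukey order examples (a weak projection $\pi:\mathbb{P}\to\mathbb{Q}$ gives $G\geq_T \pi``G$ in $V[G]$), and the proposition just proved (for $\mathbb{Q}=Pr(W)$, a generic filter $H$ satisfies $H\equiv_T W$). Everything is computed in $V[G]$ for a $V$-generic $G\subseteq Pr(U)$. The filters $U$ and $W$ are the ground-model ultrafilters, viewed in $V[G]$ as directed sets under reverse inclusion; this is the reading the statement intends.

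First, fix a weak projection $\pi:Pr(U)\to Pr(W)$ and a $V$-generic $G\subseteq Pr(U)$. By Fact \ref{fact: basics of weak projections}, $H:=\pi``G$ (upward closed) is $V$-generic for $Pr(W)$. By the trivial Fact, $G\geq_T H$ in $V[G]$. The witness is the map $\pi\restriction G:G\to H$: it is order preserving, and it sends cofinal subsets of $G$ to cofinal subsets of $H$, because $H$ is the upward closure of $\pi``G$.

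Second, apply the preceding proposition twice, once in $V[G]$ with $F=U$ and once in $V[H]$ with $F=W$. This gives $G\equiv_T U$ in $V[G]$ and $H\equiv_T W$ in $V[H]$. The Tukey maps constructed in that proof are $A\mapsto (c_0,\dots,c_{N_A},A)$ together with its inverse direction. They are defined from the generic sequence alone, so they make sense in any model containing that sequence. Their verification only uses that $\bigcap\mathcal{A}\notin F$ for an unbounded $\mathcal{A}\subseteq F$, and the characterization of $F$ by tails of $\vec c$, which remains true in $V[G]$.

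The main obstacle is exactly this step: checking that $H\equiv_T W$ still holds when computed in the larger model $V[G]$. It is a potential issue because Tukey reductions are witnessed by maps, and unboundedness of a family $\mathcal{A}\subseteq W$ in $V[G]$ refers to families that need not lie in $V[H]$. So I will re-run the proof in $V[G]$ with $\vec c_H\in V[G]$. If $\mathcal{A}\subseteq W$ is unbounded in $(W,\supseteq)$, then no $B\in W$ is contained in every member of $\mathcal{A}$. If the images $(\vec c\restriction N_A,A)$ for $A\in\mathcal{A}$ had a lower bound $(\vec c\restriction N,B)\in H$, then the tail of $\vec c$ beyond $N$ would lie in $B\cap\bigcap\mathcal{A}$. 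Hence $B\subseteq A$ for all $A\in\mathcal{A}$ modulo an initial segment, which can be absorbed by adjusting $B$ inside $W$ since $W$ is uniform. This is a contradiction, and it uses nothing about which model $\mathcal{A}$ lives in. The reverse reduction is handled symmetrically. Finally, chain the reductions in $V[G]$: $U\equiv_T G\geq_T H\equiv_T W$, and transitivity of $\leq_T$ yields $U\geq_T W$.
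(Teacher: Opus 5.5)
Your proposal is correct and is essentially the paper's argument: the paper gives no separate proof and derives the corollary by chaining the Fact $G\geq_T \pi``G$ with the proposition that $G\equiv_T F$ for Prikry generics, which yields $U\geq_T G\geq_T H\geq_T W$ in $V[G]$. Your extra check that $H\geq_T W$ still holds in $V[G]$ addresses a point the paper leaves implicit. It is simpler than you make it: a lower bound $(\vec c\restriction N,B)$ of $(\vec c\restriction N_A,A)$ satisfies $B\subseteq A$ directly by the definition of the Prikry order, so your ``modulo an initial segment'' adjustment is unnecessary. Alternatively, $(s,B)\mapsto B$ is a monotone map from $H$ onto a cofinal subset of $W$, and that property is upward absolute.
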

\begin{cor}\label{cor: shape}
    If $\mathbb{P}_{\mathcal{U}}$ weakly projects to $\mathbb{Q}$, then $\Vdash_{\mathbb{P}_{\mathcal{U}}} \mathcal{U}\geq_T \dot{G}_{\mathbb{Q}}$
\end{cor}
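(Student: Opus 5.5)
The plan is to combine three facts: a weak projection gives a Tukey reduction of generic filters, the Prikry generic is Tukey-equivalent to its measure, and Tukey reducibility is transitive. Throughout, fix a $V$-generic $G\subseteq\mathbb{P}_{\mathcal{U}}$ and work in $V[G]$. Let $\pi$ be the weak projection and let $H$ be (the upward closure of) $\pi``G$, which is $V$-generic for $\mathbb{Q}$ by Fact \ref{fact: basics of weak projections}. By the Fact preceding Theorem \ref{theorem: equivalent directed}, $G\geq_T \pi``G = H$. Since $\leq_T$ is transitive (compose the Tukey maps; cofinal sets go to cofinal sets), it suffices to show $\mathcal{U}\geq_T G$ in $V[G]$, where $\mathcal{U}$ is ordered by reverse inclusion.

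For $\mathcal{U}\geq_T G$ I will give a map $f:\mathcal{U}\to G$ sending cofinal subsets of $\mathcal{U}$ to cofinal subsets of $G$, following the Prikry/Mathias case of the previous proposition. Let $\langle x_n:n<\omega\rangle$ be the generic sequence. Since conditions are trees, I would not use a single measure-one set. Instead I would use the characterization of generic sequences (Hamkins' Theorem 4.3, quoted in the preliminaries): a condition $(t,T)$ lies in $G$ iff $t$ is an initial segment of $\vec{x}$ and $x_k\in\text{succ}_T(\vec x\restriction k)$ for all $k\geq|t|$. From this, every $(t,T)\in G$ determines a countable family of sets $\text{succ}_T(\vec x\restriction k)\in\mathcal{U}$, for $k\geq |t|$, and each of these sets contains $x_k$.

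The main obstacle is that a single set $A\in\mathcal{U}$ does not code a whole tree. To handle this I will go through the Tukey type of countable powers and use $\sigma$-completeness in $V$:

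\begin{enumerate}
\item Show that $\mathcal{U}^\omega\equiv_T\mathcal{U}$ in $V[G]$ (product order), or more cheaply that $\mathcal{U}^\omega\geq_T G$ suffices together with $\mathcal{U}\geq_T\mathcal{U}^\omega$.
\item Define $g:\mathcal{U}^\omega\to G$ by sending $\langle A_k:k<\omega\rangle$ to the condition with stem $\vec x\restriction N$ whose tree is $\{\vec x\restriction N{}^\smallfrown s : s \text{ is } \prec\text{-increasing and } s(i)\in A_{N+i}\}$. Here $N$ is least with $x_k\in A_k$ for all $k\geq N$; such an $N$ exists by genericity, since the tree condition $(\langle\rangle,\{s : s(i)\in A_i \text{ for all } i\})$ can be handled by density arguments. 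Check that $g$ is monotone.
\item Check that $g$ sends cofinal sets to cofinal sets: given $(t,T)\in G$, take $A_k=\text{succ}_T(\vec x\restriction k)$ for $k\geq |t|$. Any componentwise-smaller tuple in a cofinal family is mapped below $(t,T)$.
\end{enumerate}

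For step 1, note that $\mathcal{U}\times\mathcal{U}\equiv_T\mathcal{U}$ holds via intersections when $\mathcal{U}$ is viewed with the Fréchet-type tail behavior in $V[G]$. For countable powers I would use the map $A\mapsto\langle A,A,\dots\rangle$ in the reverse direction. The difficulty is that $\mathcal{U}$ is only $\sigma$-complete in $V$, and the sequence $\langle A_k\rangle$ need not be in $V$. If direct intersection fails, I would replace $\mathcal{U}^\omega$ by a tail filter. Define $\mathcal{F}^*=\{A\subseteq P_\kappa(\lambda): x_n\in A \text{ for almost all } n\}$ in $V[G]$; it generates $\mathcal{U}$, since by the generic characterization $\mathcal{U}$ consists exactly of the ground-model sets containing a tail. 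Then prove $G\leq_T\mathcal{U}$ by coding each tree's countably many relevant successor sets into one set using countable completeness in $V$ applied along the ground-model name. This coding step is where I expect the real work.
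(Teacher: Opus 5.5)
\textbf{There is a genuine gap: the step $\mathcal{U}\geq_T G$ is not proved, and the construction you sketch for it fails.}

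Your reduction is the paper's. The Fact preceding Theorem~\ref{theorem: equivalent directed} gives $G\geq_T H$ for the induced $\mathbb{Q}$-generic $H$, so by transitivity everything rests on $\mathcal{U}\geq_T G$ in $V[G]$. The paper takes this from the preceding proposition ($G\equiv_T F$), via the cofinal map $A\mapsto(\vec x\restriction N_A,A)$. You rightly observe that this map only makes sense for conditions of the form $(s,A)$. The specific problems with your substitute are these.

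\emph{Step 3 is false.} $g(\vec B)$ uses $B_{N+i}$ as the successor set of \emph{every} node of length $N+i$. But $B_{N+i}\subseteq\mathrm{succ}_T(\vec x\restriction(N+i))$ says nothing about $\mathrm{succ}_T(u)$ for the other nodes $u$ of that length. So in general $g(\vec B)\not\subseteq T$, and already $\vec B=\vec A$ can fail to map below $(t,T)$. The order on $\mathbb{P}_{\mathcal{U}}$ is inclusion of whole trees, so the countably many successor sets along $\vec x$ do not locate $(t,T)$ inside $G$. The same objection defeats your final plan of coding ``the countably many relevant successor sets'' into one set.

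\emph{Step 2 is not well defined.} In $V[G]$ the power $\mathcal{U}^\omega$ contains sequences outside $V$, e.g.\ $A_k=P_\kappa(\lambda)\setminus\{x_k\}$. For such a sequence no $N$ exists and no ground-model tree is produced; your density argument only covers sequences in $V$.

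\emph{Step 1 is unsupported.} The same example shows $A\mapsto\langle A,A,\dots\rangle$ is not cofinal in $\mathcal{U}^\omega$, since every $B\in\mathcal{U}$ contains a tail of $\vec x$.

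\emph{The tail filter cannot substitute.} $\mathcal{F}^*$ is generated by the countably many sets $\{x_m:m\geq n\}$, so $\mathcal{F}^*\equiv_T\omega$. Moreover $\mathcal{U}=\mathcal{F}^*\cap V$ is not cofinal in it, since $\{x_n:n<\omega\}\in\mathcal{F}^*$ contains no set in $\mathcal{U}$. A reduction $G\leq_T\mathcal{F}^*$ would make $G$ countably generated, which fails. Indeed, below a suitable direct extension the generators range over a ground-model set of at most $\lambda^{<\kappa}$ conditions, and one tree can omit a node from each of them.

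\textbf{What is missing, and how it could be supplied.} You need to code the successor sets of $T$ at \emph{all} nodes into a single measure-one set by a diagonal intersection. You then send $B$ to a ground-model tree built from $B$ alone through which $\vec x$ runs.

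If $\mathcal{U}$ is normal, work with $(s,A)$-conditions and the paper's map works verbatim. Remove $x_0,\dots,x_{|t|-1}$ from the measure-one set to force $N_A\geq|t|$.

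For trees, consider for example the case $\lambda=\kappa$.
\begin{itemize}
\item Enumerate the nodes as $\langle s_\alpha:\alpha<\kappa\rangle$ and fix $\pi$ with $[\pi]_{\mathcal{U}}=\kappa$.
\item Code $(t,T)\in G$ by $h(t,T)=\{y: y\in\bigcap_{\alpha<\pi(y)}\mathrm{succ}_T(s_\alpha)\}\setminus\{x_0,\dots,x_{|t|-1}\}\in\mathcal{U}$. Here read $\mathrm{succ}_T(s_\alpha)$ as everything if $s_\alpha$ is not a node of $T$ above $t$.
\item Send $B\in\mathcal{U}$ to the tree with stem $\vec x\restriction N_B$ whose successor set at $s_\alpha$ is $\{y\in B:\pi(y)>\alpha\}$. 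Here $N_B$ is least such that $\vec x$ runs through this tree from $N_B$ on; it exists by genericity.
\item If $B\subseteq h(t,T)$, induction on nodes shows this tree lies below $(t,T)$. This gives the cofinal map.
\end{itemize}

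For a non-normal fine $\mathcal{U}$ on $P_\kappa(\lambda)$ you need an analogous diagonal intersection over $\lambda^{<\kappa}$ nodes. That is the real content of this step. The paper's written proof of the proposition only treats the $(s,A)$ case, so it does not supply this either.
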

But what is the Tukey-type of $\mathcal{U}$ in $V[G]$? In \cite{TomBen} it was shown that every fine $\kappa$-complete ultrafilter over $P_\kappa(\lambda)$, where $\lambda$ is regular, has maximal Tukey complexity, i.e. $\mathcal{U}\geq_T [2^\lambda]^{<\kappa}$.
\begin{prop}
    Suppose that  $G$ is $V$-generic for $\mathbb{P}_{\mathcal{U}}$. 
    The following holds of $\mathcal{U}$ in $V[G]$:
\begin{enumerate}
    \item $\mathcal{U}=\bigcup_{n<\omega}\mathcal{U}_n$ such that each $\mathcal{U}_n$ is $V$-complete in the sense that for any $\mathcal{A}\in V\cap P(\mathcal{U})$, $\bigcap\mathcal{A}\in\mathcal{U}$.
    \item Let $\lambda$ be regular in $V[G]$. Then $V[G]\models \mathcal{U}\geq [\lambda]^{<\kappa}$ if and only if $V\models \mathcal{U}\geq [\lambda]^{<\kappa}$.
    \item If $\mathcal{U}$ is normal, then in $V[G]$, $\mathcal{U}$ is $\nu$-indecomposable for every regular uncountable $\nu<\kappa$.
\end{enumerate}
\end{prop}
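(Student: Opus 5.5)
We prove the three items separately. Throughout, $G$ is $V$-generic for $\mathbb{P}_{\mathcal{U}}$ with Prikry sequence $\langle x_m:m<\omega\rangle$. The main tool is the genericity criterion for Prikry sequences: applied with the constant function $F\equiv A$, it shows that for every $A\in\mathcal{U}$ a tail of the $x_m$ lies in $A$. The forward direction of (2) is the step I expect to be hardest.

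For (1), I would set, in $V[G]$,
\[\mathcal{U}_n=\{A\in\mathcal{U}:\forall m\geq n\ (x_m\in A)\}.\]
By the genericity criterion just recalled, $\mathcal{U}=\bigcup_n\mathcal{U}_n$. For $V$-completeness I argue by contrapositive. Fix $\mathcal{A}\in V$ with $\mathcal{A}\subseteq\mathcal{U}$ and $\bigcap\mathcal{A}\notin\mathcal{U}$, so that $C:=P_\kappa(\lambda)\setminus\bigcap\mathcal{A}\in\mathcal{U}$. Conditions whose stem has length $>n$ and contains some $x_m$ with $m\ge n$ lying in $C$ are dense. Indeed, given $(t,T)$, first extend inside $T$ until the stem has length $\geq n$. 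Then use $\mathrm{succ}_T(t)\cap C\in\mathcal{U}$ to pick the next point in $C$. Hence some $x_m$ with $m\geq n$ misses some $A\in\mathcal{A}$, so $\mathcal{A}\not\subseteq\mathcal{U}_n$. Therefore any $\mathcal{A}\in V\cap P(\mathcal{U}_n)$ has $\bigcap\mathcal{A}\in\mathcal{U}$.

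For (2) I would use Tukey's criterion, Lemma \ref{lemma: equivalent condition}(3): a $\lambda$-sequence of sets in $\mathcal{U}$ all of whose $\kappa$-subfamilies are unbounded.

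For the direction from $V[G]$ down to $V$, take a witnessing sequence $\langle A_\alpha:\alpha<\lambda\rangle\in V[G]$. Each $A_\alpha$ lies in $\mathcal{U}\subseteq V$, so only the enumeration is new. For each $\alpha$ there is a condition deciding a name for $A_\alpha$. Stems are counted by $\lambda^{<\kappa}$, and $\lambda$ is regular in $V[G]$, so I would find a single condition in $G$ and $\lambda$ many $\alpha$ whose values it decides. These values form a family $\mathcal{X}\in V$ of size $\lambda$. Any $\kappa$-subfamily of $\mathcal{X}$ in $V$ is a $\kappa$-subfamily of the original sequence, so it is unbounded in $V[G]$ and hence also in $V$, since $\mathcal{U}$ is the same set.

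For the direction from $V$ up to $V[G]$, fix a witness $\langle A_\alpha\rangle\in V$ and $I\in[\lambda]^\kappa\cap V[G]$ with, towards a contradiction, $\bigcap_{\alpha\in I}A_\alpha\in\mathcal{U}$. I would try to use (1) to reduce to a ground-model subfamily. Fix $n$ with $\bigcap_{I}A_\alpha\in\mathcal{U}_n$. The set $K=\{\alpha<\lambda:A_\alpha\in\mathcal{U}_n\}$ contains $I$. By (1), every $V$-subset of $\{A_\alpha:\alpha\in K\}$ has intersection in $\mathcal{U}$. It therefore suffices to find some $J\in[K]^\kappa\cap V$, which contradicts the choice of $\langle A_\alpha\rangle$ in $V$.

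Producing $J$ is the obstacle. My first attempt would be a density/fusion argument. Work below a condition forcing $\dot I\subseteq\dot K$, and use the Prikry property to decide membership in $\dot K$ by direct extensions. The idea is that for a fixed $n$, whether $A_\alpha\in\mathcal{U}_n$ depends only on the tail $\langle x_m:m\geq n\rangle$, and this is decided by a strengthening of the tree above level $n$. Failing that, I would use the $\kappa$-trace property (Fact \ref{fact: trace lemma}) to split $I$ into ground-model pieces. Here the difficulty is guaranteeing a single piece of size $\kappa$.

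For (3), let $\nu<\kappa$ be regular uncountable, and let $f:X\to\nu$ in $V[G]$ partition a set $X\in\mathcal{U}$. Set $Y_\beta=f^{-1}[\beta]$ for $\beta<\nu$; each $Y_\beta$ is a subset of $P_\kappa(\lambda)$. Since $\mathcal{U}$ is normal, $\mathbb{P}_{\mathcal{U}}$ has the Prikry property with $\kappa$-closed direct extension order. Hence it adds no new $\nu$-sequences of ground-model objects, and the sequence $\langle Y_\beta\rangle$ lies in $V$, with each $Y_\beta\in V$. If no $Y_\beta$ were in $\mathcal{U}$, then by $\kappa$-completeness in $V$ the intersection $\bigcap_\beta(P_\kappa(\lambda)\setminus Y_\beta)$ would lie in $\mathcal{U}$. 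But it is disjoint from $X=\bigcup_\beta Y_\beta$, a contradiction. So some union of fewer than $\nu$ pieces is in $\mathcal{U}$, which is $\nu$-indecomposability.
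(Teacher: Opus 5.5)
Your treatment of (1) is fine, but (2) and (3) each contain a genuine gap.

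\textbf{Item (2), from $V$ up to $V[G]$.} This direction is left unfinished, and the missing idea is elementary. However you read boundedness of $\{A_\alpha:\alpha\in I\}$ in $(\mathcal{U},\supseteq)$ inside $V[G]$, it gives a single $A\in\mathcal{U}$, which is a ground-model set, with $A\subseteq A_\alpha$ for all $\alpha\in I$. Then $I'=\{\alpha<\lambda: A\subseteq A_\alpha\}$ lies in $V$ and contains $I$, so it has size at least $\kappa$. Any $J\in[I']^\kappa\cap V$ is bounded by $A$, contradicting the choice of the $V$-witness. This $I'$ is exactly the ground-model set of size $\kappa$ you were looking for; no $\mathcal{U}_n$, fusion or trace property is needed.

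\textbf{Item (2), from $V[G]$ down to $V$.} The pigeonhole should use that only countably many stems occur in $G$; with $\lambda^{<\kappa}\geq\lambda$ stems, regularity of $\lambda$ gives nothing. Even then it yields a common \emph{stem} $t^*$, not a common \emph{condition*}. The deciding conditions $(t^*,B_\alpha)\in G$ have different measure-one parts, and you cannot intersect $\lambda$, or even $\kappa$, of them. Values decided by stem-$t^*$ conditions outside $G$ need not be the true $A_\alpha$. So your claim that every $\kappa$-subfamily of $\mathcal{X}$ in $V$ is a subfamily of the original sequence does not follow.

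The paper avoids this by shrinking the decided values.
\begin{itemize}
\item Work below a condition forcing that $\langle\dot A_\alpha:\alpha<\lambda\rangle$ is a witness.
\item Let $J\in V$ be the set of $\alpha$ decided by some condition with stem $t^*$ below it. $J$ has size $\lambda$, since it contains the indices found in $V[G]$.
\item For $\alpha\in J$, fix $(t^*,B_\alpha)\Vdash\dot A_\alpha=A'_\alpha$ and put $A^*_\alpha=A'_\alpha\cap B_\alpha$.
\end{itemize}
Suppose $A^*=\bigcap_{\alpha\in I}A^*_\alpha\in\mathcal{U}$ for some $I\in[J]^\kappa\cap V$. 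Then $(t^*,A^*)$ is one condition extending every $(t^*,B_\alpha)$ with $\alpha\in I$. It forces $A^*\subseteq\bigcap_{\alpha\in I}\dot A_\alpha$, which is a contradiction.

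\textbf{Item (3).} The step asserting $\langle Y_\beta:\beta<\nu\rangle\in V$ with each $Y_\beta\in V$ is false. The Prikry property with $\kappa$-closed $\leq^*$ only shows that no bounded subsets of $\kappa$ are added. The Prikry sequence itself yields new subsets of $P_\kappa(\lambda)$, and new $\nu$-sequences of ground-model sets such as $\omega\cdot\xi+n\mapsto x_n$. For partitions lying in $V$ the conclusion is trivial by $\kappa$-completeness; the entire content of (3) concerns partitions $f\in V[G]\setminus V$.

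A symptom of the problem is that your argument uses neither normality nor the uncountability of $\nu$, yet the conclusion fails for $\nu=\omega$. Put $f(x)=\min\{n:|x\cap\kappa|<|x_n\cap\kappa|\}$. For $Z\subseteq\{0,\dots,N\}$, the set $f^{-1}[Z]$ is disjoint from the ground-model set $\{x:|x\cap\kappa|\geq|x_N\cap\kappa|\}\in\mathcal{U}$, so it contains no member of $\mathcal{U}$.

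A correct proof must analyse a name $\dot f$ for a new partition; the paper defers this to \cite[Ex.~18.6]{kanamori1994}. In the classical argument for a normal measure on $\kappa$, direct extensions and a diagonal intersection show the following: on a ground-model measure-one set, $f(\alpha)$ depends only on the finite part of the Prikry sequence below $\alpha$. Hence only countably many values occur there, and this is exactly where $\nu>\omega$ is used.
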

\begin{proof}
    For $(1)$ simply take $\mathcal{U}_n=\{A\in \mathcal{U}: \forall m\geq n, (\vec{c}_G)_m\in A\}$ where $\vec{c}$ is the Prikry sequence.
    
    For $(2)$, if $\mathcal{U}\geq [\lambda]^{<\kappa}$ in $V$, then there is a sequence witnessing this, say $\langle A_\alpha: \alpha<\lambda\rangle\in V\cap [\mathcal{U}]^\lambda$. 
    We claim that the sequence remains a witness in $V[G]$.
    Indeed, if $I\in [\lambda]^\kappa\cap V[G]$, and $\bigcap_{i\in I}A_i\in\mathcal{U}$, let $A\in \mathcal{U}$ be such that $A\subseteq \bigcap_{i\in I}A_i$.
    Then $A\in V$ and we may define $I'=\{i<\lambda: A\subseteq A_i\}\in V$.  
    Since $I\subseteq I'$, then $I'\in [\lambda]^\kappa\cap V$, contradicting the fact that $\langle A_\alpha:\alpha<\lambda\rangle $ is a witness. 
    In the other direction, if $\langle A_\alpha: \alpha<\lambda\rangle\in V[G]\cap [\mathcal{U}]^\lambda$ is a witness, then for every $\alpha$ we find $(t_\alpha,B_\alpha)\Vdash \dot{A}_\alpha=A'_\alpha$ (below a condition which forces everything). 
    By the regularity of $\lambda$, there is $t^*$ and $J\in [\lambda]^\lambda$ such that for every $\alpha\in J$, $t_\alpha=t^*$. Let $A^*_\alpha=B_\alpha\cap A'_\alpha$. 
    We claim that $\langle A^*_\alpha: \alpha\in J\rangle$ is a witness in $V$: 
    Otherwise, there is $I\in [J]^\kappa$ such that $A^*:=\bigcap_{\alpha\in I}A^*_\alpha\in U$. 
    But then $\langle t^*,A^*\rangle$ forces for each $\alpha\in I$ that $\dot{A}_\alpha=A'_\alpha$ and therefore forces that $A^*\subseteq \bigcap_{\alpha\in I}\dot{A}_\alpha$. 
    But then $\langle t^*,A^*\rangle$ forces contradictory information. 
    
    Finally, for $(3)$, see for example \cite[Ex. 18.6]{kanamori1994}.
\end{proof}
From $(2)$ we conclude that Prikry forcing with
a $p$-point ultrafilter (or any other non-Tukey-top ultrafilter) cannot add a Prikry sequence for a Tukey-top ultrafilter.
\begin{cor}
    If $Pr(U)$ weakly projects to $Add(\kappa,\lambda)$, then $U\geq_T [\lambda]^{<\kappa}$.
\end{cor}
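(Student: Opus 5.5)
The plan is to chain Corollary \ref{cor: shape} with the Tukey description of the Cohen generic, and then move that inequality from $V[G]$ back to $V$ using item (2) of the preceding proposition.

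First, assume $\lambda$ is regular; this is the intended case, and for singular $\lambda$ one would argue on a cofinal sequence of regular cardinals. Suppose $Pr(U)$ weakly projects to $\Add(\kappa,\lambda)$. Since $Pr(U)$ is $\mathbb{P}_{\mathcal{U}}$ in the case $\lambda=\kappa$ of the paper's framework, the Fact preceding Corollary \ref{cor: shape} gives, for a $Pr(U)$-generic $G$,
\[ V[G]\models U\geq_T G\geq_T \pi``G=:H, \]
where $H$ is $V$-generic for $\Add(\kappa,\lambda)$. Here $G\equiv_T U$ by the proposition on Prikry generics.

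Second, show $V[G]\models H\geq_T[\lambda]^{<\kappa}$ via Lemma \ref{lemma: equivalent condition}(3). Let $p_\alpha\in H$ be the condition $\{\langle(\alpha,0),H(\alpha,0)\rangle\}$, i.e.\ the generic restricted to coordinate $(\alpha,0)$. I must check that any $\kappa$-many of them, indexed by $I\in([\lambda]^\kappa)^{V[G]}$, are unbounded in $H$. A lower bound in $\Add(\kappa,\lambda)$ would have domain of size $\geq\kappa$, which is impossible since conditions have size $<\kappa$. Since this unboundedness holds in the whole forcing, it is absolute to $V[G]$. The lemma needs $\cf([\lambda]^{<\kappa})=\lambda$; I would either assume it or use condition (3) directly as the witness notion, as in the paper's proof of item (2). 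Transitivity then yields $V[G]\models U\geq_T[\lambda]^{<\kappa}$, in the sense of having a $\lambda$-sequence in $U$ with every $\kappa$-subfamily unbounded.

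Third, apply item (2) of the proposition just proved, with $\mathcal{U}=U$. Its proof pulls a $V[G]$-witness $\langle A_\alpha\rangle$ back to $V$ by deciding names with a common stem $t^*$ on a set $J$ of size $\lambda$. This uses regularity of $\lambda$ in $V[G]$, which holds when $\lambda>\kappa$ is regular, since $Pr(U)$ is $\kappa^+$-cc; the case $\lambda=\kappa$ is trivial. The result is $V\models U\geq_T[\lambda]^{<\kappa}$.

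The main obstacle is the mismatch between ``Tukey above'' and the witness formulation. The Tukey map from $U$ onto $H$ in $V[G]$ must transfer the unbounded family to a family in $U$. To handle this, I would pick $A_\alpha\in U$ mapping above $p_\alpha$ under a monotone cofinal map, so that boundedness of $\{A_\alpha:\alpha\in I\}$ in $U$ would bound $\{p_\alpha:\alpha\in I\}$. This needs the map to be monotone, which is justified by the standard normalization of Tukey maps between directed sets.
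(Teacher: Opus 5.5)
Your proposal is correct and follows the route the paper intends for this corollary, which it states without proof. In $V[G]$ you get $U\equiv_T G\geq_T H$ from the Fact before Corollary~\ref{cor: shape} and the proposition on Prikry generics. The single-point conditions $p_\alpha$ then give a $\lambda$-sequence in $H$ whose $\kappa$-subfamilies are unbounded, and item (2) of the last proposition pulls the resulting witness in $U$ back to $V$. Working with the witness form in $V[G]$, where $H$ is no longer $\kappa$-directed, is the right choice.

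The only loose spot is your appeal to a general monotone normalization of Tukey maps, and you do not need it. Any cofinal $f\colon U\to H$ lets you choose $A_\alpha\in U$ such that $f(B)\le p_\alpha$ for every $B\in U$ with $B\subseteq A_\alpha$. Then any bound for $\{A_\alpha:\alpha\in I\}$ yields a bound for $\{p_\alpha:\alpha\in I\}$.
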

Let us finish this paper by several observation regarding the forcing projection of $\P_{\mathcal{U}}$ if larger cardinality. 
First, we have the following:
\begin{prop}
    Suppose that $\kappa$ is supercompact, $\Q$ is a $\lambda^+$-cc forcing, $(\omega,\kappa)$-predistributive  and not $\kappa$-distributive. Then $\Q$ adds an $\omega$-sequence unbounded in some $V$-cardinal in the interval $[\kappa,\lambda]$. In particular, the same holds for non-distributive projections of $\P_{\mathcal{U}}$. 
\end{prop}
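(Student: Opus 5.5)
Since $\Q$ is not $\kappa$-distributive, some condition forces a new function $\dot f\colon\check\gamma\to\ord$ with $\gamma<\kappa$; fix the least such $\gamma$ and work below a condition $p$ forcing $\dot f\notin V$. By Corollary \ref{cor: no bounded subsets}, which uses that $\kappa$ is supercompact, $\Q$ adds no bounded subsets of $\kappa$. So every new object of size $<\kappa$ must reach into ordinals $\geq\kappa$. By the $\lambda^+$-cc, each value $\dot f(\alpha)$ has fewer than $\lambda^+$ possible values, so we may code $\dot f$ as a function into $\lambda$, replacing each value by its index in the ground-model set of possible values. I would also pass to a version of $f$ whose values are ground-model sets of ordinals of size $\le\lambda$, coded appropriately.

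Next apply Proposition \ref{prop: equiv} (Goldberg's characterization). In $V[G]$ there is $\langle B_n:n<\omega\rangle\in V$ with $\gamma=\bigcup_n B_n$ and $f\restriction B_n\in V$ for every $n$. The sequence $\langle f\restriction B_n: n<\omega\rangle$ lies in $V[G]$, and each of its terms lies in $V$. Since $f\notin V$, this $\omega$-sequence is not in $V$, because its union would recover $f$. Thus $\Q$ adds a new $\omega$-sequence $\langle g_n:n<\omega\rangle$ of ground-model functions $g_n\colon B_n\to\lambda$. Let $X=\prod_n{}^{B_n}\lambda\cap V$; in $V$, enumerate the possible values of each coordinate as ordinals below $\mu_n:=|{}^{B_n}\lambda|^V\le\lambda^{<\kappa}$. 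This produces a new sequence $\langle\beta_n:n<\omega\rangle$ of ordinals.

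The remaining step is to extract from this new sequence an $\omega$-sequence that is cofinal in some $V$-cardinal $\delta\in[\kappa,\lambda]$. It is also the main obstacle. I would take $\delta$ minimal such that some new $\omega$-sequence of ordinals below $\delta$ is added (below $p$). Then $\delta\ge\kappa$, since no bounded subsets of $\kappa$, and hence no $\omega$-sequences below $\kappa$, are added. By minimality, every proper initial segment $\langle\beta_n\rangle\cap\eta$ for $\eta<\delta$ lies in $V$, a standard "fresh set" argument. If $\sup_n\beta_n<\delta$, this contradicts freshness: the sequence would be bounded, and its restriction to the bound would already be in $V$. Hence the sequence is cofinal in $\delta$.

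Two points remain to check. The first is that $\delta$ is a $V$-cardinal: if not, a ground-model bijection with $|\delta|^V<\delta$ transfers the new sequence to a smaller ordinal, contradicting minimality. The second is that $\delta\le\lambda$, which needs the $\lambda^+$-cc. I expect this to be the delicate part. Every new $\omega$-sequence of ordinals is covered coordinatewise by ground-model sets of size $\le\lambda$, by the chain condition. Coding via those covers gives a new $\omega$-sequence below $\lambda$, or below $\lambda^{<\kappa}$, in which case one must argue that $\lambda^{<\kappa}$-many options can be reduced to $\lambda$ using $\lambda^+$-cc again. So the minimal $\delta$ is at most $\lambda$.

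For the final clause, any non-distributive projection $\Q$ of $\P_{\mathcal U}$ is $(\omega,\kappa)$-predistributive by Proposition \ref{thm: closed under proj}. It is also $\lambda^+$-cc by Fact \ref{projectionFacts}, since $\P_{\mathcal U}$ is $\lambda^{<\kappa}$-cc; this assumes $\lambda^{<\kappa}=\lambda$, or else one replaces $\lambda$ by $\lambda^{<\kappa}$. So the main part applies.
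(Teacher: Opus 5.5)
Your proposal is correct and follows essentially the paper's argument: you get a new $\omega$-sequence of ordinals from predistributivity plus non-$\kappa$-distributivity (via Proposition \ref{prop: equiv}, where the paper simply cites Corollary \ref{distributive projections}), code it below $\lambda$ with the $\lambda^+$-cc, and take the minimal $V$-cardinal carrying a new $\omega$-sequence, which is then cofinal and $\geq\kappa$ by Corollary \ref{cor: no bounded subsets}. Two minor corrections: the hedge about $\lambda^{<\kappa}$ in the bounding step is unnecessary, since the chain-condition cover of any new $\omega$-sequence of ordinals has size $\le\lambda$ regardless of how the sequence was coded; and ``no $\omega$-sequences below $\kappa$ are added'' should read ``no new $\omega$-sequences below any $\eta<\kappa$,'' because for $\P_{\mathcal{U}}$ the sequence $\langle x_n\cap\kappa\rangle$ is a new $\omega$-sequence cofinal in $\kappa$.
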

\begin{proof}
  By Corollary \ref{distributive projections}, $\Q$ must add an  $\omega$-sequence  $\langle \alpha_n\mid n<\omega\rangle$. By the chain condition, we can cover $\{\alpha_n\mid n<\omega\}$ by some set $A\in V$, $|A|\leq \lambda$. By properly enumerating, we see that an $\omega$-sequence was added below $\lambda$. Let $\lambda'\leq\lambda$ be minimal such that some $\omega$-sequence was added below $\lambda'$, then by minimality, $\lambda'$ must be a $V$-cardinal, and the new $\omega$-sequence must be unbounded in $\lambda'$.  Since $\kappa$ is supercompact, $\Q$ must also be $(\kappa,2)$-distributive, and therefore $\lambda'\geq\kappa$.
\end{proof}
\begin{prop}
    $\add(\kappa,\kappa^{++})$ is not a weak projection of a $P_\kappa(\kappa^+)$-strongly compact Prikry forcing.
\end{prop}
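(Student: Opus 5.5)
Suppose toward a contradiction that $\pi:\mathbb{P}_{\mathcal{U}}\to\add(\kappa,\kappa^{++})$ is a weak projection, where $\mathcal{U}$ is a fine $\kappa$-complete ultrafilter on $P_\kappa(\kappa^+)$. The plan is to compare Tukey types in $V[G]$, using Corollary \ref{cor: shape} together with a cardinality count. By Corollary \ref{cor: shape}, in $V[G]$ we have $\mathcal{U}\geq_T H$, where $H=\pi``G$ is $V$-generic for $\add(\kappa,\kappa^{++})$.

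\textbf{Step 1: the generic $H$ is Tukey-large.} Let $h_\alpha=\bigcup H\restriction(\{\alpha\}\times\kappa)$ be the Cohen subsets added, and for each $\alpha<\kappa^{++}$ pick $p_\alpha\in H$ with $\alpha\times\{0\}\in\dom(p_\alpha)$; for instance $p_\alpha$ decides $h_\alpha(0)$. Every condition of $\add(\kappa,\kappa^{++})$ has size ${<}\kappa$. Hence for any $I\subseteq\kappa^{++}$ of size $\kappa$ in $V[G]$, no element of $H$ lies below all $p_\alpha$ with $\alpha\in I$. Note that $\kappa^{++}$ is preserved, since $\mathbb{P}_{\mathcal{U}}$ is $(\kappa^{+})^{<\kappa}$-cc and we may assume $2^\kappa$ is small enough, or else only use that the $p_\alpha$ are distinct. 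Consequently every unbounded family in $H$ has size at least $\kappa$, and any Tukey map $f:\mathcal{U}\to H$ must send some set of size $\kappa^{++}$ injectively enough to matter.

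\textbf{Step 2: the contradiction via cofinality.} The cleanest route is the following. A Tukey reduction $H\leq_T\mathcal{U}$ gives an unbounded-preserving map $g:H\to\mathcal{U}$. Apply $g$ to the $p_\alpha$. Since $\mathcal{U}$ is generated by $V\cap\mathcal{U}$, which has size $2^{\kappa^+}$ in $V$, we can go further. By the decomposition $\mathcal{U}=\bigcup_n\mathcal{U}_n$ into $V$-complete pieces from the last proposition, pigeonhole $\kappa^{++}$ many $\alpha$ into one piece $\mathcal{U}_n$. Then we would like $\kappa$ many of them whose $g$-images have intersection in $\mathcal{U}$, which contradicts unboundedness. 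The difficulty is that $V$-completeness applies only to subfamilies lying in $V$.

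\textbf{Alternative step 2 (more robust).} Use Theorem \ref{thm: omega generated from projection}-style reasoning directly. Every $p\in\mathbb{P}_{\mathcal{U}}$ has a stem in $[P_\kappa(\kappa^+)]^{<\omega}$, a set of size $\kappa^{+}$ (since $(\kappa^+)^{<\kappa}=\kappa^+$ after the collapse, or in $V$ under mild assumptions). In $V[G]$, for each $\alpha$, choose $r_\alpha\in G$ forcing $\pi(r_\alpha)\leq p_\alpha$. By pigeonhole on stems, $\kappa^{++}$ many $r_\alpha$ share a stem $s$. All of these lie in $G$, so they are pairwise compatible. By the Prikry property, the direct extensions with stem $s$ form a $\kappa$-closed order, so any $\kappa$ of them have a common lower bound $r$, namely the tree obtained by intersecting, which lies in $\mathcal{U}$ by $\kappa$-completeness. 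The sequence $\langle r_\alpha\rangle$ is not in $V$, but the stem-$s$ conditions involved are, and we need the bound only for a set $I\in V$. Covering by a $V$-set of size $\kappa^+$, using the $\kappa^{++}$-cc of $\mathbb{P}_{\mathcal{U}}$ (given that $2^{\kappa^+}$ is not an issue, we work below a condition), we find $I\in V$ of size $\kappa$ whose $r_\alpha$ are decided in $V$. Then $\pi(r)$ lies below $\kappa$ many $p_\alpha$, which is impossible in $\add(\kappa,\kappa^{++})$.

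The main obstacle is extracting, in $V$, a $\kappa$-sized family of the $r_\alpha$ with a common stem. I would handle it through the chain condition combined with the Prikry property: for each $\alpha$, find a direct extension of a fixed $p^*$ and some $n_\alpha$ such that all $n_\alpha$-step extensions decide the relevant information. Then pigeonhole on $n_\alpha$ and on the level in $V$, and intersect $\kappa$ many trees using $\kappa$-completeness.
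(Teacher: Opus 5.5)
\textbf{There is a genuine gap: the key step of your alternative Step 2 is invalid.} You claim that because $\leq^*$ on conditions with stem $s$ is $\kappa$-closed, any $\kappa$ of the $r_\alpha$ have a common lower bound, obtained by intersecting their trees, with the intersection staying $\mathcal{U}$-large by $\kappa$-completeness. Your last paragraph makes the same move. But $\kappa$-completeness, and the $\kappa$-closure of $\leq^*$, only handle \emph{fewer than} $\kappa$ sets or conditions.

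By fineness, the sets $\{x\in P_\kappa(\kappa^+):\xi\in x\}$ for $\xi<\kappa$ all lie in $\mathcal{U}$, yet their intersection is empty. So $\kappa$ pairwise compatible conditions with a common stem need not have any common extension. For ${<}\kappa$ many $r_\alpha$ you do get a bound $r$. However, $\pi(r)\le p_\alpha$ for ${<}\kappa$ many $\alpha$ contradicts nothing, because $\add(\kappa,\kappa^{++})$ is $\kappa$-closed.

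Pigeonholing stems or the numbers $n_\alpha$ cannot repair this, and the step producing $I\in V$ by ``covering'' is not justified. What you need is one condition below $\kappa$ many deciding conditions, and nothing in $\mathbb{P}_{\mathcal{U}}$ supplies that.

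Your first (Tukey) route is also a dead end. By \cite{TomBen} and part (2) of the proposition on $\mathcal{U}$ in $V[G]$, $\mathcal{U}$ still contains $\kappa^{++}$ sets any $\kappa$ of which have intersection outside $\mathcal{U}$. So $\mathcal{U}$ has exactly the property of $H$ you isolate in Step 1, and $\mathcal{U}\geq_T H$ yields no contradiction.

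\textbf{Comparison with the paper.} The paper never looks for a lower bound. It pigeonholes $\kappa^{++}$ many $\alpha$ onto one stem $t^*$ (there are only $\kappa^+$ stems). Since conditions with the same stem are compatible, the value of $\dot A(\alpha)$ decided by any $(t^*,C)$ is a ground-model function $a$ of $\alpha$. The paper then asserts that $\dot A$ is forced to agree with $a$ on a set of size $\kappa^{++}$.

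That last assertion hides the obstruction you ran into. By the Prikry property, every stem decides every $\dot A(\alpha)$. Forcing $\dot A\restriction J=a\restriction J$ for some $J\in V$ of size $\kappa$ would again require intersecting $\kappa$ of the trees $C$.

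\textbf{The statement seems to fail for suitable $\mathcal{U}$} once $\kappa$ is, say, $2^{\kappa^+}$-strongly compact. Here is a sketch.
\begin{enumerate}
\item Fix $R\subseteq{}^{\kappa^{++}}2$ of size $\kappa^+$ that is dense in the ${<}\kappa$-box topology (Engelking--Kar\l{}owicz).
\item Take a $\kappa$-complete ultrafilter $\mathcal{W}$ on $R$ containing every set $\{r:r(\alpha)=0\}$ and every set $\{r: r \text{ extends a member of } \mathcal{A}\}$, where $\mathcal{A}$ ranges over maximal antichains of $\add(\kappa,\kappa^{++})$. These sets have the ${<}\kappa$-intersection property because $R$ is dense.
\item Take a fine $\mathcal{U}$ on $P_\kappa(\kappa^+)$ and a map $\Phi$ with $\mathcal{W}=\Phi_*\mathcal{U}$; for instance, derive $\mathcal{U}$ from the product of a fine measure with $\mathcal{W}$.
\item Let $h$ be the coordinatewise sum mod $2$ of the $\Phi(x_n)$ along the Prikry sequence. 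Each coordinate flips only finitely often, so $h$ is well defined.
\end{enumerate}
Then $h$ is $V$-generic for $\add(\kappa,\kappa^{++})$. Given $(t,T)$ and a maximal antichain $\mathcal{A}$, apply the defining property of $\mathcal{W}$ to $\mathcal{A}$ translated by the current partial sum. This yields one new point $x$ that steers $h$ onto some $q\in\mathcal{A}$. Then shrink the tree to freeze the ${<}\kappa$ coordinates in $\dom q$, which uses $\kappa$-completeness only on ${<}\kappa$ sets.

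This is the $P_\kappa(\kappa^+)$ analogue of \cite{BENHAMOU_GITIK_2024}. The paper's argument, shifted down one cardinal, would also contradict that result.
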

\begin{proof}
    Otherwise, let $A\subseteq \kappa^{++}$ be $\add(\kappa,\kappa^{++})$-generic (i.e. $<\kappa$-approximations are in $V$ and if $I\subseteq\kappa^{++}$ is of size $\geq\kappa$, then $A\restriction I\notin V$). In $V$, for each $\alpha<\kappa^{++}$, let $p_\alpha=(t_\alpha,B_\alpha)$ decide $\dot{A}(\alpha)$. Then there is $I\in [\kappa^{++}]^{\kappa^{++}}$ such that $t_\alpha=t^*$ for every $\alpha\in I$. Let $B=\{\alpha: \exists C, \ (t^*,C)|| \dot{A}(\alpha)\}$. Note that the decision $\dot{A}(\alpha)$ does not depend on $C$, hence $B$ is a set in $V$ of size $\kappa^{++}$ which is forced to agree with $\dot{A}$. Contradiction.
\end{proof}
The above proof shows something more general:
If a forcing of size $\kappa^{++}$ is absorbed, then the generic is just a (characteristic) function on $\kappa^{++}$, $f\in V[G]$. 
Then there is a partition $\langle B_n: n<\omega\rangle$ of $\kappa^{++}$ in $V[G]$ such that $f\restriction B_n\in V$ for every $n$. 
Note that this is difference from weak $(\omega,\kappa)$-predistributivity since the proof only works for binary functions (i.e. sets). 
\begin{cor}\label{larger projections}
    Suppose that $\mathcal{U}$ is a fine $\kappa$-complete ultrafilter over $P_\kappa(\kappa^+)$. Then $\mathbb{P}_{\mathcal{U}}$ cannot weakly project to $\add(\kappa^+,\kappa^{++})$ nor to $\add(\kappa^{++},1)$. Moreover, $\mathbb{P}_{\mathcal{U}}$ cannot add fresh subsets to $\kappa^{++}$.
\end{cor}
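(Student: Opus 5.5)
The plan is to mimic the proof of the preceding proposition about $\add(\kappa,\kappa^{++})$, which uses only that $\mathbb{P}_{\mathcal{U}}$ has $\kappa^+$ many stems (as $|P_\kappa(\kappa^+)|^{<\omega}=\kappa^+$, assuming $\kappa^{<\kappa}=\kappa$, or in general counting $\kappa^{+,<\kappa}$ stems, which is at most $\kappa^{+}$ in relevant cases; if needed one only uses fewer than $\kappa^{++}$ stems) together with the fact that all conditions with a common stem are compatible. I first prove the ``moreover'' part. Then I derive the failure of weak projections, since both $\add(\kappa^+,\kappa^{++})$ and $\add(\kappa^{++},1)$ add fresh subsets of $\kappa^{++}$.

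For the fresh-subset claim, suppose $p\Vdash$ ``$\dot{A}\subseteq\kappa^{++}$ and $\dot{A}\cap\beta\in V$ for all $\beta<\kappa^{++}$''. I want to find $q\le p$ and a set $B\in V$ with $q\Vdash\dot{A}=\check{B}$. For each $\beta<\kappa^{++}$, choose $p_\beta\le p$ and $a_\beta\in V$ with $p_\beta\Vdash\dot{A}\cap\beta=a_\beta$. Since there are at most $\kappa^+$ stems and $\kappa^{++}$ is regular, there is a stem $t^*$ and an unbounded $I\subseteq\kappa^{++}$ with $\mathrm{stem}(p_\beta)=t^*$ for all $\beta\in I$. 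Any two conditions with stem $t^*$ are compatible. Hence for $\beta<\beta'$ in $I$ the sets $a_\beta$ and $a_{\beta'}\cap\beta$ must agree, because a common extension forces both. So $B:=\bigcup_{\beta\in I}a_\beta\in V$ is coherent.

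Now let $q$ have stem $t^*$ and lie below $p$; this is possible since $p_\beta\le p$ forces the stem of $p$ to be an initial segment of $t^*$, and we can intersect $p$'s tree above $t^*$. I claim $q\Vdash\dot{A}=\check{B}$. If not, some $r\le q$ and some $\gamma$ have $r$ forcing that $\gamma\in\dot{A}\triangle\check{B}$. Pick $\beta\in I$ with $\beta>\gamma$. The step I expect to be the main obstacle is that $r$ has a longer stem, so it need not be compatible with $p_\beta$. To handle this I will use the same trick as the previous proof: define $B$ via decisions by arbitrary conditions with stem $t^*$. Formally, work with the statement ``$\gamma\in\dot{A}$''. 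By the Prikry property, for each $\gamma$ some direct extension of $q$ decides it. Since conditions with stem $t^*$ are pairwise compatible, the decision is independent of the particular direct extension, and it agrees with $a_\beta$ for $\beta\in I$ above $\gamma$. So $B=\{\gamma:\exists C\,(t^*,C)\Vdash\gamma\in\dot{A}\}$. Then the set of conditions forcing $\dot{A}\cap\beta=a_\beta$ for all $\beta\in I$ is below $q$ by the Prikry property applied to $\kappa^{++}$ many statements. This last step fails without completeness, so instead I argue as follows. For any generic $G\ni q$ and any $\gamma$, take $\beta\in I$ above $\gamma$. Then $p_\beta$ and $q$ share the stem. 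The statement $\gamma\in\dot{A}$ is decided by a direct extension $q'$ of $q$ with the same answer as $p_\beta$, by compatibility. Finally, density of such $q'$ below $q$ in $\le^*$ does not guarantee $q'\in G$, so one passes to the $\kappa$-complete intersection over $\gamma<\kappa^{++}$ within a fixed level. If this fails, I fall back on the alternative route: deriving the conclusion from Corollary \ref{cor: shape}, comparing Tukey types with $\mathcal{U}$.

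For the projections, $\add(\kappa^{++},1)$ adds a fresh subset of $\kappa^{++}$, namely its generic function. $\add(\kappa^+,\kappa^{++})$ adds a set $A\subseteq\kappa^{++}$, coding $\kappa^{++}$ columns, whose restrictions to index sets of size at most $\kappa$ lie in $V$. Its restriction to any $\beta<\kappa^{++}$ has size $\kappa^+$, so I instead rerun the stem-pigeonhole argument directly as in the previous proposition. Choose $p_\alpha$ deciding $\dot{A}(\alpha)$ and find $\kappa^{++}$ many with one stem $t^*$. The set decided by stem-$t^*$ conditions is then a ground-model set of size $\kappa^{++}$ agreeing with the generic, which contradicts genericity.
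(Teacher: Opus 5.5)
Your proposal has two genuine gaps.

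First, the ``moreover'' clause is never proved, and the route you take inside $V$ cannot work. Your treatment of $\add(\kappa^{++},1)$ rests on this clause. A generic filter through your $q$ need not contain any of the deciders $p_\beta$. Also, $\kappa$-completeness amalgamates only fewer than $\kappa$ of their trees, so there is no intersection over $\kappa^{++}$ many $\gamma$. The Tukey fallback is not carried out, and it cannot help: $\mathcal U$ is Tukey-top, so comparing it with the generic of $\add(\kappa^{++},1)$ gives no obstruction.

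In fact the claim $q\Vdash\dot A=\check B$ can be false. Take $B_1=B_0\cup\{\delta\}$. Let $\dot A$ name $B_1$ if the Prikry point following $t^*$ lies outside a fixed $X\in\mathcal U$, and $B_0$ otherwise. Every stem-$t^*$ decider then reports $B_0$. Yet a condition $(t^*,Q)$ with $\mathrm{succ}_Q(t^*)\not\subseteq X$ does not force $\dot A=\check B_0$.

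The missing idea is to pigeonhole in $V[G]$, over deciders that lie in $G$:
\begin{itemize}
\item For a stem $t$, let $B_t\in V$ be the union of all $a$ such that $(t,C)\Vdash\dot A\cap\check\beta=\check a$ for some $\beta$ and some $C$. By your compatibility observation these $a$ cohere, so $B_t\cap\beta=a$ for every such pair.
\item In $V[G]$, the stems of conditions in $G$ are the countably many initial segments $t_n$ of the generic sequence. Each $\beta$ has a decider of $\dot A\cap\beta$ in $G$, because $A\cap\beta\in V$.
\item As $\kappa$ is measurable, $(\kappa^+)^{<\kappa}=\kappa^+$. So there are $\kappa^+$ stems, $\mathbb P_{\mathcal U}$ is $\kappa^{++}$-c.c.\ and $\kappa^{++}$ remains regular.
\item Hence one $t_n$ serves unboundedly many $\beta$, and $A=B_{t_n}\in V$.
\end{itemize}
This countable decomposition by the stems of the generic sequence is the idea behind the paper's remark following the $\add(\kappa,\kappa^{++})$ proposition. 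For fresh sets it works because the decisions cohere, and with it your derivation of the $\add(\kappa^{++},1)$ clause goes through.

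Second, for $\add(\kappa^+,\kappa^{++})$ you rightly drop the reduction to fresh sets. When $2^\kappa=\kappa^+$ this forcing is $\kappa^{++}$-Knaster and adds no fresh subsets of $\kappa^{++}$. But your replacement repeats the step that failed above. The ground-model function recording stem-$t^*$ decisions is guaranteed to agree with the generic only at coordinates whose stem-$t^*$ decider lies in $G$. That set is in $V[G]$, not in $V$. A single condition forces agreement only on a ground-model set of size ${<}\kappa$, which is harmless for genericity.

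This cannot be repaired by more care. Run verbatim with $\lambda=\kappa$ ($\kappa$ stems, $\kappa^+$ coordinates), the argument would show that tree Prikry forcing never adds an $\add(\kappa,\kappa^+)$-generic. That contradicts the Benhamou--Gitik theorem quoted in the introduction.

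The paper's own justification of this clause rests on the same step. In the preceding proposition, the assertion that $B$ is forced to agree with $\dot A$ is unsupported. In the partition remark after it, the pieces lie in $V[G]$ but need not lie in $V$, so $f\restriction B_n\in V$ is not justified. You therefore cannot close the gap by appealing to it; this clause needs a genuinely different argument.
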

\section{Questions and concluding discussion}\label{section questions}
We summarize our contribution to the understanding of the intermediate models of the strongly compact Prikry forcing and the remaining tasks, together with other related research avenues.

Most of our results classify the subforcings of size $\lambda$ of the strongly compact Prikry forcing with an ultrafilter on $\pkl$. We have seen that:
\begin{enumerate}
    \item [(i)] Given that $\kappa$ is supercompact, the class of forcings which are projections of a strongly compact Prikry forcing coincide with the class of $(\omega,\kappa)$-predistributive forcings. 
    \item [(ii)] Inside the class of forcings of size $\leq\lambda$, the class of forcings which can be projections of a $\lambda$-strongly compact Prikry forcing on $\pkl$ is exactly the class of $(\omega,\kappa)$-predistributive forcings.
    \item [(iii)] Inside the class of forcings of size $\leq\lambda$, the class of projections of every strongly compact Prikry forcings on $\pkl$ include all the $(\omega,\kappa)$-predistributive $\lambda$-pregenerated forcings.
    \item [(iv)] A projection of the $\lambda$-strongly compact Prikry forcing must:
    \begin{enumerate}
        \item have a dense subset of size $2^\lambda$.
        \item be $(\kappa,\lambda)$-centered.
        \item be $(\omega,\kappa)$-predistributive.
    \end{enumerate}
\end{enumerate}
This suggests the following questions:
\begin{question}
    If $\mathcal{U}$ is a fine, $\kappa$-complete ultrafilter over $\pkl$, which forcings of size $> \lambda$ are projections of $\P_\mathcal{U}$?
\end{question}
We have ruled out some classical ones in \ref{larger projections}, and item (iv) provides some restriction. A related question is the following (formulated for $\lambda=\kappa^+$ for concreteness, but of interest for any other $\lambda>\kappa$):
\begin{question}
    Suppose that $\mathbb{Q}$ is a projection of the $\kappa^+$-strongly compact Prikry forcing, which does not have a dense subset of size $\kappa^+$. Must $\mathbb{Q}$ singularize $\kappa^+$?
\end{question}
Regarding uncountable cofinality, we should be interested in the Magidor-Radin supercompact forcing from \cite{SupercompatRadinExtender}:
\begin{question}
    Which forcings are consistently projections of the Magidor-Radin supercompact forcing?
\end{question}
Some initial results can be found in \cite{TomAle}. 

We believe that central problems arising from our work regard the supercompact Prikry forcing. We proved limited results regarding $(\omega,\kappa)$-predistributive $\lambda$-pregenerated forcings in Corollary \ref{cor for supercompact}.
\begin{question}
    What forcings are consistently projections of the $\lambda$-supercompact Prikry forcing? Which are projections of every $\lambda$-strongly compact Prikry forcing? 
\end{question}
Some rigidity is expected in analogy to Gitik-Kanovei-Koepke's result \cite{PrikryCaseGitikKanKoe} for the normal Prikry forcing (on $\kappa$).
The following should be the first test questions on the supercompact case, since we have positive answers in the strongly compact case. More specifically we ask:
\begin{question}
    What forcings of size $\leq\lambda$ are projections of the $\lambda$-supercompact Prikry forcings?\end{question}
    More concretely: \begin{question}Can $\add(\kappa^+,1)$ consistently be a projection of $\P_\U$ for $\U$ a normal measure over $P_\kappa(\kappa^+)$? How about $\Col(\kappa,\kappa^+)$?
\end{question}
Our results show that these forcings are always projections of $\P_\U$ for $\U$ a $\kappa^+$-strongly compact measure.

In section $5$ we analyze the existence of projections via the cofinality and other order theoretic properties of the generic filter.
We believe this analysis can be extended much further, providing avenues for future research.

\emph{\small Acknowledgments.
The first author's work is supported by the National Science Foundation under Grant
No. DMS-2346680. The first author would like to thank Moti Gitik, Gabriel Goldberg, Yair Hayut and Andreas Lietz for interesting discussions on topics related to the work in this paper. The second author's work is funded by the National Science Center, Poland under the Weave-Unisono Call, registration number UMO-2023/05/Y/ST1/00194. This work was partially supported by the Simons Foundation grant (award no. SFI-MPS-T-Institutes-00010825) and from State Treasury funds as part of a task commissioned by the Minister of Science and Higher Education under the project “Organization of the Simons Semesters at the Banach Center - New Energies in 2026-2028” (agreement no. MNiSW/2025/DAP/491).}
\begin{center}   
        \includegraphics[width=0.8\linewidth]{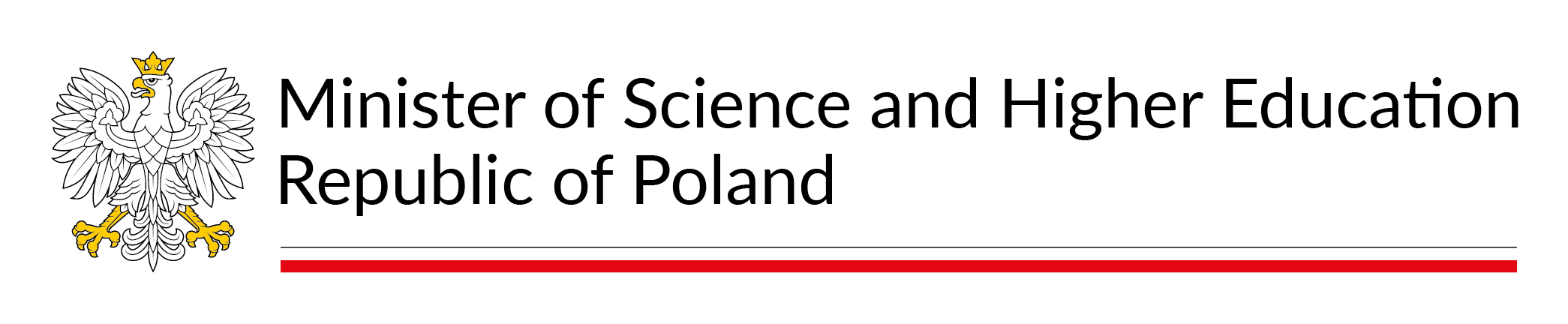}
    \end{center}

    \begin{center}   
        \includegraphics[width=0.8\linewidth]{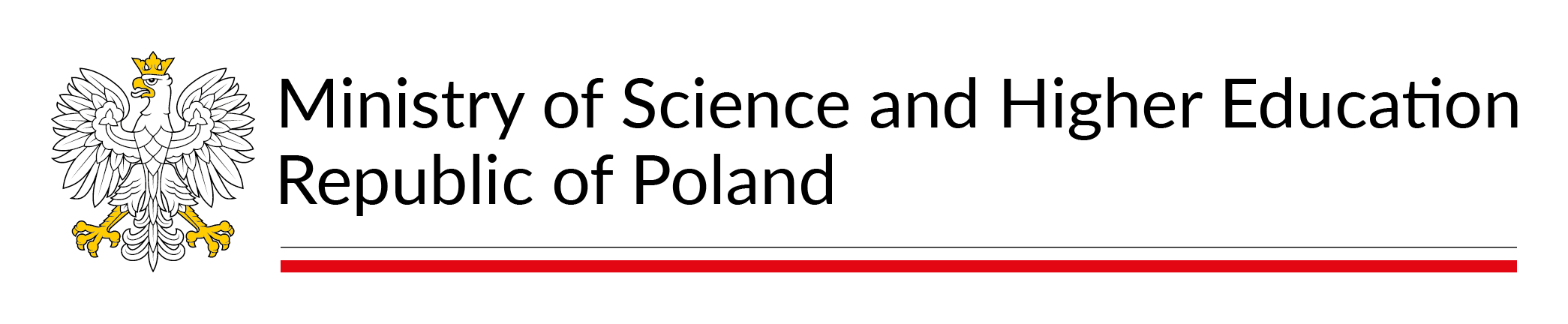}
    \end{center}
    \bibliographystyle{alpha}
\bibliography{biblio}
\end{document}